\theoremstyle{plain}
\newtheorem{thm}{Theorem}[section]
\newtheorem{prop}[thm]{Proposition}
\newtheorem{lem}[thm]{Lemma}
\newtheorem{cor}[thm]{Corollary}
\theoremstyle{definition}
\newtheorem{defn}[thm]{Definition}
\theoremstyle{remark}
\newtheorem{example}{Example}
 \def\C{{\mathbb{C}}} \def\F{{\mathbb{F}}}   \def\N{{\mathbb{N}}}    \def\R{{\mathbb{R}}}  \def\Z{{\mathbb{Z}}}
\newcommand\vre{\varepsilon}
\newcommand\varpesilon{\vre}
\newcommand\varespilon{\vre}
\newcommand\Ad{\operatorname{Ad}}
\newcommand\Ball{{\operatorname{Ball}}}
\newcommand\Fix{{\operatorname{Fix}}}
\newcommand\Hamm{{\operatorname{Hamm}}}
\newcommand\Hom{\operatorname{Hom}}
\newcommand\id{\operatorname{id}}
\newcommand\Map{{\operatorname{Map}}}
\newcommand\Meas{{\operatorname{Meas}}}
\newcommand\Proj{\operatorname{Proj}}
\newcommand\Prob{\operatorname{Prob}}
\renewcommand\Re{\operatorname{Re}}
\newcommand\rea{\operatorname{Re}}
\newcommand\supp{\operatorname{supp}}
\newcommand\Sym{\operatorname{Sym}}
\newcommand\Sub{\operatorname{Sub}}
\newcommand\Span{\operatorname{span}}
\newcommand\topo{\operatorname{top}}
\newcommand\tr{\operatorname{tr}}
\newcommand\Tr{\operatorname{Tr}}
\newcommand{\actson}{\curvearrowright}
\newcommand{\actons}{\curvearrowright}
\newcommand{\acston}{\actson}
\newcommand{\ip}[1]{\langle #1 \rangle}
\begin{document}
\title{Local weak$^{*}$-Convergence, algebraic actions, and a max-min principle}      
\author{Ben Hayes}\thanks{The author acknowledges support from NSF grants DMS-1600802 and DMS-1827376.}
\address{University of Virginia\\
          Charlottesville, VA 22904}
\email{brh5c@virginia.edu}
\date{\today}
\maketitle

\begin{abstract}
We continue our study of when topological and measure-theoretic entropy agree for algebraic action of sofic groups. Specifically, we provide a new abstract method to prove that an algebraic action is strongly sofic. The method is based on passing to a ``limiting object" for sequences of measures which are asymptotically supported on topological microstates. This limiting object has a natural poset structure that we are able to exploit to prove a max-min principle: if the sofic approximation has ergodic centralizer, then the largest subgroup on which the action is a local weak$^{*}$-limit of measures supported on topological microstates is equal to the smallest subgroup which absorbs all topological microstates. We are able to provide a version for the case when the centralizer is not ergodic. We give many applications, including show that for residually finite groups completely positive (lower) topological entropy (in the presence) is equivalent to completely positive (lower) measure entropy in the presence.

\end{abstract}

\tableofcontents

\section{Introduction}

This paper furthers the study of the entropy theory on algebraic actions of sofic groups along the lines we previously developed in \cite{Me12, Me13}. Given a countable, discrete, group $G,$ an \emph{algebraic action} of $G$ is an action $G\actons  X$ by continuous automorphisms of a compact, metrizable group $X.$ We may view the study of these actions a part of topological dynamics, by viewing this as an action of $G$ by homeomorphisms of a compact, metrizable space. We may also view these actions as part of the study of ergodic theory, by giving $X$ its Haar measure, denoted $m_{X},$ and thinking of the action $G\actson X$ as a probability measure-preserving action. A major theme of the study  of algebraic actions is that there is a heavy interaction between the topological dynamics of the action and the measure-theoretic dynamics of the action.

For this paper, we are primarily interested in connecting the topological entropy to the measure entropy of algebraic actions. In the case that  $G$ is amenable, we have a entirely satisfactory understanding of the connections between the entropy theory of the algebraic action as a topological dynamical system and the algebraic action as a measure-theoretic dynamical system. For example, when $G$ is amenable and $G\actson X$ is an algebraic action we know:
\begin{itemize}
\item the topological and measure entropy (with respect to the Haar measure) of $G\actson X$ agree \cite{Den}, \label{I:equality of entropy}
\item the maximal measure-theoretic factor action with zero measure entropy (i.e. the Pinsker factor) is of the form $G\actson X/Y$ for some closed, normal, $G$-invariant subgroup $Y$ of $X$, and this is also the topological Pinsker factor \cite{Berg, ChungLi},
\item  $G\actson X$ has completely positive topological entropy if and only if $G\actson (X,m_{X})$ has completely positive measure entropy \cite{Berg, ChungLi}.
\end{itemize}

Thus the classical objects of entropy theory (measure entropy, topological entropy, topological/measure-theoretic Pinsker factors) are the same as viewed from either the topological or measure-theoretic lens, and in the case of the Pinsker factor, we know that the Pinsker factor retains the algebraic structure of $G\actson X$ (i.e. it is another algebraic action).

Given the recent extension of entropy theory to sofic groups by Bowen, Kerr-Li \cite{Bow, KLi} it is natural to try and generalize this to the case when $G$ is sofic. Because of our recent work, we know of general conditions which guarantee that there are strong connections between the entropy theory of $G\actson X$ as a topological dynamical system and the entropy theory of $G\actson X$ as a probability measure-preserving action. This general condition is that of strong soficity (see the discussion before Corollary \ref{C: equivalence cpe ultrafilter} for the definition of strong soficity), and under this condition we have the following theorem which is a combination of work in \cite{Me12, Me13}.

\begin{thm}[Theorem 1.1 in \cite{Me12}, Corollary 1.4 in \cite{Me13}]\label{T:known results intro}
Suppose $G$ is a countable, discrete, sofic group with sofic approximation $\sigma_{k}\colon G\to S_{d_{k}}.$ Let $G\actson X$ be an algebraic action, and suppose that $G\actson (X,m_{X})$ is strongly sofic. Then:
\begin{itemize}
\item The topological and measure entropy of $G\actson X$ with respect to $(\sigma_{k})_{k}$ agree, i.e. $h_{(\sigma_{k})_{k},\topo}(G\actson X)=h_{(\sigma_{k})_{k}}(G\actson (X,m_{X})),$
\item The outer Pinsker factor of $G\actson (X,m_{X})$ is algebraic, i.e. of the form $G\actson (X/Y,m_{X/Y})$ where $Y$ is a closed, normal, $G$-invariant subgroup,
\item The action $G\actson (X,m_{X})$ has completely positive measure entropy in the presence if and only if $G\acston X$ has completely positive topological entropy in the presence.
\end{itemize}

\end{thm}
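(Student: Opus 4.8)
The plan is to prove the three conclusions in sequence, using the entropy equality as the engine that forces the other two. Throughout I work with the spaces of topological microstates, namely the approximately equivariant maps $\phi\colon\{1,\dots,d_{k}\}\to X$ compatible with $\sigma_{k}$, together with the sub-collection of measure microstates, those $\phi$ whose empirical distribution $\frac{1}{d_{k}}\sum_{i}\delta_{\phi(i)}$ and higher local statistics approximate $m_{X}$ in the local weak$^{*}$ sense. The single structural fact I will lean on is that, because $X$ is a group, the full microstate space $\Map(\sigma_{k},\dots)$ sits inside the compact group $X^{d_{k}}$ as an approximate subgroup, so it carries a natural translation symmetry and a conditional Haar measure.

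First I would establish $h_{(\sigma_{k})_{k}}(G\actson(X,m_{X}))=h_{(\sigma_{k})_{k},\topo}(G\actson X)$. The inequality $\le$ is formal: every measure microstate for $m_{X}$ is a topological microstate, so a $\delta$-separated family of the former is a $\delta$-separated family of the latter, and normalizing the logarithms of the counts by $d_{k}$ and taking the appropriate limits gives the bound. The reverse inequality is exactly the role of strong soficity: by hypothesis $m_{X}$ is a local weak$^{*}$-limit of measures $\zeta_{k}$ asymptotically supported on the topological microstate spaces, and I would take $\zeta_{k}$ to be the conditional Haar measure on $\Map(\sigma_{k},\dots)$. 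The task is then to convert the local weak$^{*}$ convergence $\zeta_{k}\to m_{X}$ into a counting statement: that the number of measure microstates for $m_{X}$ agrees, to exponential order in $d_{k}$, with the number of topological microstates, whence $h_{(\sigma_{k})_{k},\topo}\le h_{(\sigma_{k})_{k}}(m_{X})$. The main obstacle is precisely this upgrade, since convergence of the averaged statistics $\zeta_{k}$ does not by itself localize individual microstates near $m_{X}$; I would overcome it using the homogeneity of the group structure on $X^{d_{k}}$, which lets one translate exponential mass around the microstate space without changing entropy and forces a Haar-random microstate to equidistribute toward $m_{X}$, while carefully ordering the two approximation scales (the separation scale controlling the count and the local weak$^{*}$ scale controlling proximity to $m_{X}$) so that both can be sent to zero.

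Next, for the outer Pinsker factor, I would first note that the preceding equality, and its proof, apply verbatim to every algebraic quotient $G\actson X/Z$, since topological microstates push forward under $X\to X/Z$ and strong soficity is thereby inherited. Following the template of Chung--Li \cite{ChungLi} in the amenable case, the strategy is to show that the outer Pinsker $\sigma$-algebra $\Pi$ of $G\actson(X,m_{X})$ is invariant, as a $\sigma$-algebra, under the translation action $T_{x}$ of $X$ on itself: translation is a measure-preserving automorphism of $(X,m_{X})$ under which the microstate spaces, and hence all outer-entropy computations, are equivariant, so the entropy-theoretically defined $\Pi$ cannot be disturbed by it. Setting $Y=\{x\in X: T_{x}\ \text{acts as the identity on}\ \Pi\ \text{modulo}\ m_{X}\text{-null sets}\}$, one checks $Y$ is a closed normal $G$-invariant subgroup and that $\Pi$ coincides with the $\sigma$-algebra of $Y$-invariant Borel sets; this exhibits the outer Pinsker factor as the algebraic action $G\actson(X/Y,m_{X/Y})$.

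Finally, the equivalence of completely positive entropy in the presence follows by combining the two previous points. Completely positive measure entropy in the presence is equivalent to triviality of the outer Pinsker factor, which by the second conclusion means $Y=X$. On the topological side, the analogous structure theory produces an algebraic topological outer Pinsker factor $G\actson(X/Y',m_{X/Y'})$, and applying the entropy equality of the first conclusion to each algebraic factor $G\actson X/Z$ forces $Y=Y'$; hence completely positive topological entropy in the presence is likewise equivalent to $Y=X$, and the two notions coincide. The only delicate point is that ``in the presence'' entropy is computed relative to the ambient extension, so I would re-run the separated-set comparison of the first paragraph on the relative microstate spaces to guarantee that the topological and measure counts still match after passing to factors.
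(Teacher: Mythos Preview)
This theorem is not proved in the present paper at all: it is quoted from the author's earlier work \cite{Me12,Me13} as background, and the paper then builds new machinery whose purpose is to \emph{verify} the hypothesis of strong soficity in various situations. So there is no ``paper's own proof'' to compare against here; what you have written is an attempted reconstruction of arguments that live in \cite{Me12} and \cite{Me13}.

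That said, your sketch has real gaps that would not survive scrutiny. For the entropy equality, you never invoke the third clause in the definition of strong soficity (the doubly-empirical condition $\mu_{k}\otimes\mu_{k}(\{(\phi,\psi):|\frac{1}{d_{k}}\sum_{j}f(\phi(j),\psi(j))-\int f\,d(\mu\otimes\mu)|<\varepsilon\})\to 1$), and this is precisely the ingredient that upgrades local weak$^{*}$ convergence to the statement that \emph{individual} microstates equidistribute toward $m_{X}$; your appeal to ``homogeneity of the group structure on $X^{d_{k}}$'' and ``conditional Haar measure'' is not a substitute, since strong soficity does not hand you a specific $\zeta_{k}$ and the Haar measure on $X^{d_{k}}$ is not supported on microstates. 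For the Pinsker factor, your translation argument is problematic when $X$ is nonabelian: left translation $T_{x}$ does not commute with the $G$-action (one has $g\circ T_{x}=T_{gx}\circ g$), so $T_{x}$ is not an automorphism of the $G$-system and there is no direct reason the outer Pinsker $\sigma$-algebra should be $T_{x}$-invariant; the actual argument in \cite{Me13} has to route through strong soficity and relative entropy in the presence rather than a bare symmetry of $(X,m_{X})$. Finally, your third step presupposes an ``algebraic topological outer Pinsker factor'' with $Y=Y'$, but no such structure theorem is available a priori in the sofic setting; the equivalence of the two CPE notions is instead obtained in \cite{Me13} as a consequence of the algebraicity of the (measure-theoretic) outer Pinsker factor together with the entropy equality applied to that specific quotient.
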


The reader should not worry too much about the difference between ``outer Pinsker factor" and ``Pinsker factor", or between ``completely positive measure/topological entropy in the presence" and ``completely positive measure/topological entropy" in the passage from amenable groups to sofic groups. It suffices to say simply that ``Outer Pinsker factor" is the correct analogue of ``Pinsker factor" in the nonamenable case, and ``completely positive measure/topological entropy in the presence" is the correct analogue of ``completely positive measure/topological entropy" in the nonamenable case, as the naive generalizations of  `Pinsker factor" and ``completely positive measure/topological entropy" to the nonamenable setting do not   have the right permanence properties (see, e.g., \cite{Me9,Me13}).

 We mention that strong soficity of $G\actson (X,m_{X})$ has been verified in every case where the measure-theoretic entropy has been computed and is not $-\infty$ (see e.g. \cite{BowenEntropy, BowenLi, Me5, GabSew}). However one major issue with Theorem \ref{T:known results intro} is that the verification that an action is strongly sofic can be rather involved. For instance, the verification of strong soficity for balanced algebraic actions (i.e. an algebraic action which is the dual of the natural $G$ action on $\Z(G)^{\oplus n}/\Z(G)^{\oplus n}A$ for some $A\in M_{n}(\Z(G))$)  carried out in Section 5 of \cite{Me5} is the most difficult and technical portion of an already difficult and technical paper.

Part of the reason for these technical proofs it that by their very nature they  involve rather explicit computations and often messy analytic arguments. These concrete methods give you lots of insight into the results and these are good, if involved, proofs. However, a major drawback of them is that the actions to which these methods apply have to take a fairly specific form.
We would like to simply put general conditions  (e.g. mixing, ergodicity, complete positive entropy etc.) on an action that do not force the action to be of a very specific type (e.g. a balanced algebraic action), and one can simply verify that an algebraic action is strongly sofic by just checking these general conditions. A major goal of this paper is to address these issues, and we do this primarily by introducing some general ``machinery" that one is able to apply as a black box to show that certain dynamical properties of the action imply strong soficity. We postpone discussion of our methods to later in the introduction and illustrate the utility of our methods by giving the major applications of our main result first. For example, Theorems \ref{T:res finite case intro}-\ref{T:topological conj intro} are precise examples of when we are able to deduce strong soficity from relatively general dynamical  conditions.

\begin{thm}\label{T:res finite case intro}
Let $G$ be a residually finite group, and let $(G_{n})_{n=1}^{\infty}$ be a decreasing sequence of finite-index, normal subgroups with $\bigcap_{n=1}^{\infty}G_{n}=\{1\}.$  Let $G\actson X$ be an algebraic action and suppose that $G\actson (X,m_{X})$ is sofic with respect to $(G_{n})_{n}.$ If $G\actson (X,m_{X})$ is ergodic, then $G\actson (X,m_{X})$ is strongly sofic with respect to $(G_{n})_{n}.$ In particular:
\begin{enumerate}[(i)]
\item The topological entropy of $G\actson X$ and the measure entropy of $G\actson (X,m_{X})$ agree, i.e. $h_{(G_{n})_{n},\topo}(G\actons X)=h_{(G_{n})_{n}}(G\actson (X,m_{X})),$
\item The Outer Pinsker factor of $G\actson (X,m_{X})$ with respect to $(G_{n})_{n}$ is algebraic: i.e. it is of the form $G\actson (X/Y,m_{X/Y})$ for a closed, normal, $G$-invariant subgroup $Y$ of $X,$
\item $G\actson X$ has completely positive topological entropy in the presence if and only if $G\actson (X,m_{X})$ has completely positive measure entropy in the presence.
\end{enumerate}

\end{thm}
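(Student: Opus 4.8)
The plan is to deduce everything from the max--min principle together with Theorem \ref{T:known results intro}: once we show that $G\actson (X,m_X)$ is strongly sofic with respect to $(G_n)_n$, the three conclusions (i)--(iii) are exactly the three bullet points of Theorem \ref{T:known results intro} applied to this particular sofic approximation. So the entire content of the argument is to verify strong soficity, after which we simply cite Theorem \ref{T:known results intro}.

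First I would pin down the relevant sofic approximation. Residual finiteness via $(G_n)_n$ furnishes the sofic approximation $\sigma_n\colon G\to \Sym(G/G_n)$ given by left translation, $\sigma_n(g)(hG_n)=ghG_n$, equipped with the uniform measure on the finite set $G/G_n$. The crucial structural observation is that this sequence has \emph{ergodic centralizer}: since $G_n$ is normal, $G/G_n$ is a finite group and $\sigma_n$ factors through its left regular representation, whose centralizer in $\Sym(G/G_n)$ is precisely the right regular representation of $G/G_n$. The right-translation action is (freely) transitive at every finite level, which is exactly the condition guaranteeing that the sofic approximation has ergodic centralizer in the limit. Thus the hypothesis needed to invoke the max--min principle is automatic for $(G_n)_n$, with no assumption on the action beyond soficity; in particular ergodicity of the \emph{centralizer} is a feature of the approximation, not of the action.

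Next I would apply the max--min principle, the main result of this paper. Because the centralizer is ergodic, it yields a single closed, normal, $G$-invariant subgroup $Z\le X$ which is simultaneously the largest subgroup on which $G\actson X$ arises as a local weak$^{*}$-limit of measures supported on topological microstates and the smallest subgroup absorbing all topological microstates. Strong soficity of $G\actson (X,m_X)$ amounts to the statement $Z=X$, so the problem reduces to proving that this common subgroup is all of $X$.

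The final and hardest step is to show $Z=X$ using ergodicity of the \emph{action}. Here the soficity hypothesis supplies measure microstates whose empirical distributions locally weak$^{*}$-converge to $m_X$, and these are in particular topological microstates that populate the limiting object. I would argue that if $Z$ were a proper subgroup, then the limiting object would be subordinate to the quotient $G\actson X/Z$ in a way forcing the local weak$^{*}$-limit of these measure microstates---hence $m_X$ itself---to be compatible with a nontrivial proper $G$-invariant subgroup; ergodicity of $G\actson (X,m_X)$ rules this out, since an ergodic Haar measure is not carried by any proper closed $G$-invariant subgroup. The main obstacle is making this last implication precise: one must track how the absorbing subgroup interacts with the (now point-supported, by ergodicity of the centralizer) limiting object, and verify that ergodicity of the action is exactly what collapses $Z$ to $X$ rather than to some proper invariant subgroup on which microstates might otherwise concentrate. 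Once $Z=X$ is established, strong soficity follows, and (i)--(iii) are immediate from Theorem \ref{T:known results intro}.
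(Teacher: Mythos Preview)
Your overall architecture matches the paper's: the sofic approximation coming from $(G_n)_n$ has ergodic commutant (the paper cites \cite[Theorem 5.7]{KerrLi2} for this), so the max--min principle (Corollary \ref{L:ergodicity on Loeb space}) produces a genuine closed subgroup $Y\le X$ which is simultaneously the maximal element of $\mathcal{S}_{\omega}(G\actson X)$ and the minimal subgroup absorbing all topological microstates. You then want to show $Y=X$ and conclude strong soficity. This is exactly the paper's route, via Corollary \ref{C:weak assumption implies soficity}.

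However, you misplace the role of ergodicity of the action in two ways. First, the argument that $Y=X$ is cleaner than you suggest and does \emph{not} use ergodicity at all: soficity of $G\actson (X,m_X)$ gives a measure microstate $\theta\colon Z_\omega\to X$ with $\theta_*(u_\omega)=m_X$; since $Y$ absorbs all topological microstates, $\theta(z)\in Y$ for a.e.\ $z$; hence $m_X=\theta_*(u_\omega)$ is supported on $Y$, and so $X=\supp(m_X)\subseteq Y$. No quotient $X/Z$, no appeal to ergodicity---$\supp(m_X)=X$ holds for any Haar measure. Your sentence ``an ergodic Haar measure is not carried by any proper closed $G$-invariant subgroup'' is true but superfluous here.

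Second, and more importantly, your claim that ``strong soficity amounts to the statement $Z=X$'' is not quite right: $Y=X$ only yields a sequence $\mu_k\in\Prob(X^{d_k})$ asymptotically supported on topological microstates with $\mu_k\to^{lw^*}m_X$. Strong soficity requires the additional doubly-empirical condition. The paper closes this gap by invoking \cite[Lemma 5.15]{AustinAdd} and \cite[Corollary 2.14]{Me12}, and \emph{this} is precisely where ergodicity of $G\actson(X,m_X)$ is used. You should relocate the ergodicity hypothesis from the $Y=X$ step (where it is not needed) to the upgrade from local weak$^*$ convergence to strong soficity (where it is essential). With that correction your outline is the paper's proof.
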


The significant difference between Theorem \ref{T:res finite case intro} is that, in the residually finite case, once the algebraic is sofic and ergodic, one obtains \emph{for free} that it is strongly sofic, and therefore we automatically deduce all these nice corollaries we have one equality of entropy, structure of the Pinsker etc. \emph{without} having to do any additional technical and involved proof.
At this point we mention that there are counterexamples to equality of measure and topological entropy of algebraic actions, but in all known counterexamples  it is true that $G\actson (X,m_{X})$ has entropy $-\infty.$ Experts in the subject view these counterexamples as unsatisfactory, and the question of whether the topological and measure entropy of algebraic actions agree when the measure entropy is not $-\infty$ is still open. The above theorem gives an affirmative answer when $G$ is residually finite and the sofic approximation comes from a decreasing chain of finite-index, normal, subgroups with trivial intersection, and the action is ergodic. The assumption that $G$ is residually finite is  a special case, but it is a common ``test case" to check if results should be true for general sofic groups. Thus Theorem \ref{T:res finite case intro} should be regarded as strong positive evidence the topological and measure entropy agree for algebraic actions with nonnegative measure entropy.

\begin{thm}\label{T:cpe intro}
Let $G$ be a residually finite group, and let $(G_{n})_{n=1}^{\infty}$ be a decreasing sequence of finite-index, normal subgroups. Let $G\actson X$ be an algebraic action. Then $G\actson X$ has completely positive lower topological entropy in the presence if and only if $G\actson (X,m_{X})$ has completely positive lower measure entropy in the presence. In fact, if $G\actson X$ has completely positive lower topological entropy in the presence, then $G\actson (X,m_{X})$ is strongly sofic with respect to $(G_{n})_{n}.$

\end{thm}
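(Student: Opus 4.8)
The plan is to route both implications through \emph{strong soficity}, using the limiting poset of subgroups and the max-min principle as the engine. Throughout, let $Y_{\max}$ denote the largest closed, normal, $G$-invariant subgroup of $X$ on which $G\actson X$ is a local weak$^{*}$-limit of measures supported on topological microstates, and let $Y_{\min}$ denote the smallest closed, normal, $G$-invariant subgroup that absorbs all topological microstates. By definition strong soficity of $G\actson(X,m_X)$ with respect to $(G_n)_n$ is the assertion $Y_{\max}=X$. The sofic approximation here is the sequence of left-translation actions on $G/G_n$; after replacing $G$ by $G/\bigcap_n G_n$ if necessary we may assume $\bigcap_n G_n=\{1\}$. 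One first checks that this approximation has ergodic centralizer---the centralizer is the right-translation action on $G/G_n$, which is transitive, hence ergodic, at each finite stage and passes to the sofic limit---so that the max-min principle applies in its ergodic form and gives $Y_{\max}=Y_{\min}$; should ergodicity fail, the non-ergodic version proved in the paper is used instead.

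The crux is a single implication: property $(\star)$---that every nontrivial algebraic factor $G\actson X/Y$ has positive lower topological entropy in the presence---implies strong soficity. I would prove the contrapositive. If $G\actson(X,m_X)$ is not strongly sofic then $Y_{\min}=Y_{\max}=:Y$ is a proper subgroup, so $G\actson X/Y$ is a nontrivial algebraic factor. Because $Y$ absorbs every topological microstate, the images under the quotient map $X\to X/Y$ of the topological microstates of $G\actson X$ concentrate near the identity coset; hence the topological microstate spaces of $G\actson X/Y$ grow subexponentially and the lower topological entropy in the presence of $G\actson X/Y$ is non-positive. This contradicts $(\star)$, so $(\star)$ forces $Y_{\min}=X$ and therefore $Y_{\max}=X$, i.e.\ strong soficity.

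With this lemma the theorem follows formally. Completely positive lower topological entropy in the presence implies $(\star)$ immediately, since algebraic factors are in particular topological factors; this is exactly the ``in fact'' assertion, giving strong soficity. Completely positive lower measure entropy in the presence also implies $(\star)$: by the lower, in-the-presence form of the sofic variational principle, for each $Y$ the lower topological entropy in the presence of $G\actson X/Y$ dominates the lower measure entropy in the presence of $G\actson(X/Y,m_{X/Y})$, which is positive by hypothesis. Thus each of the two completely-positive-entropy hypotheses yields strong soficity, and under strong soficity the lower-entropy analogue of the third item of Theorem \ref{T:known results intro} shows that completely positive lower topological entropy in the presence and completely positive lower measure entropy in the presence are equivalent. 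Chaining these gives the stated biconditional.

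The step I expect to be the genuine obstacle is the quantitative heart of the second paragraph: converting the poset-theoretic statement ``$Y$ absorbs all topological microstates'' into the dynamical conclusion that $G\actson X/Y$ has non-positive lower topological entropy in the presence. This demands matching the definition of absorption inside the limiting object with an explicit counting bound on the topological microstate spaces of the quotient, uniformly along $(\sigma_n)_n$, and is where the local weak$^{*}$ formalism must do the most work. By comparison, the variational inequality and the verification of centralizer ergodicity are routine, the latter being replaceable by the non-ergodic version of the max-min principle if needed.
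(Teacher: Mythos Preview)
Your overall architecture matches the paper's: use the max--min principle (with ergodic centralizer, which indeed holds for the sofic approximation coming from $(G_n)_n$) to produce a proper absorbing subgroup $Y$ when $m_X$ is not a local weak$^*$ limit, and observe that the factor $X/Y$ then has zero topological entropy in the presence. You are also right that this last step is \emph{not} the hard one; the paper dispatches it in a single sentence (``since $Y$ absorbs all topological microstates, it is easy to see that $h_{(\sigma_k)_{k\to\omega},\topo}(G\actson X/Y:X)=0$'').

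There is, however, a genuine gap. You write that ``by definition strong soficity of $G\actson(X,m_X)$ is the assertion $Y_{\max}=X$.'' This is not correct: $Y_{\max}=X$ says only that $m_X$ is a local weak$^*$ limit of measures supported on topological microstates, whereas strong soficity requires the stronger local-and-doubly-empirical convergence. The bridge between the two is supplied by \emph{ergodicity} of $G\actson(X,m_X)$ (via \cite[Lemma~5.15]{AustinAdd} and \cite[Corollary~2.14]{Me12}), and you never establish this. For the measure-cpe hypothesis ergodicity is immediate, but for the topological-cpe hypothesis it is not: the paper proves it as a separate lemma (Lemma~\ref{L:cpe implies mixing}), showing in fact that completely positive topological entropy forces $G\actson(X,m_X)$ to be \emph{mixing}. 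The argument is representation-theoretic---a failure of mixing yields a nontrivial irreducible representation $\pi$ of $X$ whose $G$-orbit under precomposition has infinite stabilizer $H$, and then $\pi(X)$ is an isometric (hence zero-entropy) $H$-factor, contradicting cpe via a coinduction trick (Proposition~\ref{P:top cpe passes to subgroups}). This is the step your outline is missing, and without it the implication ``topological cpe in the presence $\Rightarrow$ strongly sofic'' does not go through.

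A minor point: you call $Y_{\max},Y_{\min}$ \emph{normal} subgroups. The paper does not assert or need this; $X/Y$ is used only as a topological $G$-factor, not as an algebraic one.
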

As in \cite{Me13}, the reader should read ``completely positive topological/measure entropy in the presence" as the correct generalization of completely positive topological/measure entropy from the amenable case to the nonamenable case.
Again there is an interesting open question lurking in the background, which is whether completely positive lower topological entropy in the presence is equivalent to completely positive lower measure entropy in the presence for algebraic actions. This would be a perfect analogue of \cite{Berg, ChungLi} in the nonamenable case. Theorem \ref{T:cpe intro} should be regarded as strong evidence that this conjecture is true in general, since it is true in the residually finite case.

We mention here that recent results of Alpeev \cite{AlpeevMaximal} imply that every sofic group has a sofic approximation so that every \emph{sofic} algebraic action is strongly sofic with respect to that sofic approximation. One important difference between Theorem \ref{T:cpe intro} and the results of Alpeev is that Alpeev has to  assume that $G\actson (X,m_{X})$ is sofic, in order to deduce that completely positive topological entropy and completely positive measure-theoretic entropy agree with respect to the sofic approximation he constructs.  Here we obtain as a \emph{corollary} that $G\actson (X,m_{X})$ is sofic with respect to $(\sigma_{k})_{k}$ provided $G\actson X$ has completely positive lower topological entropy in the presence. Soficity of the action $G\actson (X,m_{X})$ is far from obvious from the assumption that $G\actson X$ has completely positive lower topological entropy in the presence. Additionally, Theorem \ref{T:cpe intro} applies to a specific and concrete sofic approximation, whereas the results of Alpeev only apply to a sofic approximation he constructs by an abstract method. To the best of our knowledge, there isn't a concrete example of a nonamenable sofic group $G,$ and a concrete sofic approximation of $G$ which satisfies the hypotheses Alpeev requires in order to apply his methods.

We remark on one other interesting aspect of Theorem \ref{T:cpe intro}, which is that it the proof that $G\actson (X,m_{X})$ is sofic using Theorem \ref{T:cpe intro} is rather different than most other proofs of soficity. Most proofs of soficity of actions go as follows: fix a sofic approximation $\sigma_{k}\colon G\to S_{d_{k}}$ of $G,$ and let $G\actson (X,\mu)$ be a probability measure-preserving action, without loss of generality we may assume $X$ is compact and the actions is by homeomorphisms. One then chooses a ``random map" $\phi\colon \{1,\dots,d_{k}\}\to X$ (with respect to some sequence of probability measures $\mu_{k}$ on $X^{d_{k}}$), and shows that with high probability this maps is ``almost equivariant and almost measure-preserving". This often requires heavily involved calculations, and delicate probabilistic arguments (see e.g. (see  \cite[Theorem 8.1]{Bow}, \cite[Theorem 8.2]{GabSew}).   The proof of soficity using Theorem \ref{T:cpe intro} is quite different in this regard, relying on an abstract apparatus.   Difficult estimates and inequalities are not required, and the results follow from fairly general principles. This is why Theorem \ref{T:cpe intro} applies to algebraic actions which satisfy an abstract dynamical assumption (complete positive entropy), instead of only to a fairly specific class of actions as in previous results (the main exception being the results of \cite{GabSew}). The main other argument for soficity that may be regarded as essentially an entirely ``soft" proof is that of Popa in \cite[Theorem 0.3]{PoppArg}, which involves an ultraproduct argument. As we shall reveal later, our proofs also involve ultraproduct analysis, though in quite a different manner than Popa.
%
%
%
%
%
%
%
%
%
%
%
For each of Theorems \ref{T:res finite case intro},\ref{T:cpe intro}, we are able to deduce strong soficity by only assuming something on the topological structure of the action $G\actson X$ ( having completely positive topological entropy). In certain cases, we can give an indication as to why one would expect to deduce strong soficity from structure of the topological dynamical system $G\acston X.$ We show that strong soficity of $G\actson (X,m_{X})$ is, in fact, a topological conjugacy invariant amount the class of algebraic actions, provided that $G\actson (X,m_{X})$ is ergodic.

\begin{thm}\label{T:topological conj intro}
Let $G$ be a countable, discrete, sofic group with sofic approximation $\sigma_{k}\colon G\to S_{d_{k}}.$ Let $G\actson X,G\actson Y$ be two algebraic actions, and suppose that $G\actson (X,m_{X}),G\actson (Y,m_{Y})$ are ergodic. If $Y$ is a topological $G$-factor of $X,$ and $G\actson (X,m_{X})$ is strongly sofic, then $G\actson (Y,m_{Y})$ is strongly sofic. This is true even when the factor map $X\to Y$ is not a group homomorphism.

\end{thm}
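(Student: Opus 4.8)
The plan is to transport strong soficity along the factor map by pushing forward topological microstates, and then to invoke the max-min principle (our main result) on $Y$ in order to convert the resulting limit into a statement about the Haar measure $m_Y$. The essential difficulty, and the reason the emphasis ``even when the factor map is not a group homomorphism'' matters, is that a general topological factor map $\pi\colon X\to Y$ need not push $m_X$ forward to $m_Y$: all we will obtain directly is that $\pi_*m_X$ (an ergodic, full-support, but in general non-Haar $G$-invariant measure on $Y$) is a local weak$^*$-limit of measures supported on topological microstates, and the whole point is to upgrade this to $m_Y$. Were $\pi$ a homomorphism we would have $Y=X/\ker\pi$ and $\pi_*m_X=m_Y$ for free, so the non-homomorphic case is exactly where the poset machinery is forced upon us.

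First I would record that $\pi$, being a continuous $G$-equivariant surjection between compact metrizable groups, is uniformly continuous and intertwines the two actions; hence composition with $\pi$ sends a topological microstate $\phi\colon\{1,\dots,d_{k}\}\to X$ for $\sigma_{k}$ to a topological microstate $\pi\circ\phi\colon\{1,\dots,d_{k}\}\to Y$, with only a controlled loss in the microstate parameters coming from the modulus of continuity of $\pi$. This gives a pushforward $\pi_*$ on microstate spaces. Since $G\actson(X,m_{X})$ is strongly sofic, there are probability measures $\nu_{k}$ asymptotically supported on the topological microstates of $X$ whose local weak$^*$-limit is $m_{X}$; applying $\pi_*$ and using that local weak$^*$-convergence is preserved by composition with the continuous map $\pi$, I conclude that $(\pi_*)_*\nu_{k}$ are asymptotically supported on topological microstates of $Y$ and converge locally weak$^*$ to $\pi_*m_{X}$. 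In particular $G\actson(Y,\pi_*m_{X})$ is a local weak$^*$-limit of measures supported on topological microstates of $Y$.

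The key step is to identify the smallest closed, normal, $G$-invariant subgroup $W_{\min}\le Y$ that absorbs all topological microstates of $Y$. Because $\pi$ is surjective and $m_{X}$ has full support, $\pi_*m_{X}$ has full support on $Y$; combined with the previous paragraph, the pushed-forward microstates equidistribute according to a full-support measure, so they are spread throughout $Y$ and cannot be concentrated near cosets of any proper closed subgroup. This should force $W_{\min}=Y$. I expect this support/equidistribution argument --- translating ``$\pi_*m_{X}$ has full support'' into ``no proper subgroup absorbs the microstates'' in the precise sense of the absorption relation --- to be the main obstacle, since it requires matching a topological full-support statement to the Hamming-type notion of absorption that defines $W_{\min}$, and this is where I would spend the most care.

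Finally, I would apply the max-min principle to the algebraic action $G\actson Y$, using ergodicity of $G\actson(Y,m_{Y})$ to supply the hypothesis needed to run the principle (passing, if necessary, to a sofic approximation with ergodic centralizer, or else invoking the non-ergodic-centralizer version). The nontrivial direction of the principle gives $W_{\max}\ge W_{\min}$, where $W_{\max}$ is the largest subgroup on which the action is a local weak$^*$-limit of topological-microstate measures; since $W_{\min}=Y$ this yields $W_{\max}=Y$. By definition $W_{\max}=Y$ says precisely that $m_{Y}=m_{W_{\max}}$ is a local weak$^*$-limit of measures supported on topological microstates of $Y$, i.e. $G\actson(Y,m_{Y})$ is strongly sofic, as desired. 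Ergodicity of $G\actson(X,m_{X})$ enters upstream, in guaranteeing the microstate measures $\nu_{k}$ witnessing strong soficity of $X$ in the form used for the pushforward.
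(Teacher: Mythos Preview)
Your proposal is correct and matches the paper's argument: push forward the witnessing measures via $\pi$ to obtain $\pi_*m_X\in\mathcal{L}_\omega(G\actson Y)$, then use the absorption characterization (every $\mu\in\mathcal{L}_\omega$ satisfies $\mu\preceq S$ for the maximal $S\in\mathcal{S}_\omega(G\actson Y)$) together with $\supp(\pi_*m_X)=\pi(\supp m_X)=\pi(X)=Y$ to force $S(z)=Y$ almost everywhere, whence $m_Y\in\mathcal{L}_\omega(G\actson Y)$. One small clarification: the max-min principle itself needs no ergodicity or ergodic-centralizer hypothesis (it produces $S\in\Meas(Z_\omega,\Sub(Y))$, and your full-support step shows $S(z)=Y$ pointwise a.e.); ergodicity of $G\actson(Y,m_Y)$ enters only at the very end, via Austin's lemma, to upgrade ``$m_Y$ is a local weak$^*$ limit'' back to ``strongly sofic.''
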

At this point, we will explain briefly the abstract machinery alluded to after Theorem \ref{T:cpe intro} which underlies all the proofs of the paper.
To discover this machinery, we first reduce our problems from showing strong soficity, to local weak$^{*}$-convergence (see \ref{D:important notions for the paper} for the definition of local weak$^{*}$-convergence). The main fact we use is the following: if $G\actson X$ is an algebraic action, and $G\actson (X,m_{X})$ is ergodic, then $G\actson (X,m_{X})$ is strongly sofic if and only if there is a sequence of measures $\mu_{k}\in \Prob(X^{d_{k}})$ which are asymptotically supported on topological microstates and locally weak$^{*}$-converge to $m_{X}$ (by \cite[Corollary 5.7]{AustinAdd} and \cite[Corollary 2.14]{Me12}). Thus we aim to show that existence of a sequence of measures $\mu_{k}\in \Prob(X^{d_{k}})$ which are asymptotically supported on topological microstates and locally weak$^{*}$-converge to $m_{X}.$
 In fact, all of Theorems  \ref{T:res finite case intro}-\ref{T:res finite case intro}, apply to the situation when $G\actson (X,m_{X})$ is not ergodic, but with the conclusion of strong soficity replaced by the existence of a sequence of measures asymptotically supported on topological microstates and locally weak$^{*}$-converge to the Haar measure.

After reducing to the study of local weak$^{*}$-convergence, we apply an ultrafilter/ultraproduct analysis. An ultrafilter is an abstract object that allows one to take generalized limits (called ultralimits) of any bounded sequence of complex numbers in such a way that preserves the usual order and algebraic structure of $\R,\C$ (e.g. preservation of inequalities, sums, products, etc).
The existence of a free ultrafilter on the natural numbers is an elementary consequence of Zorn's lemma. For the reader uninitiated into usage of ultrafilters, the half-page material contained on ultrafilter in Section 3.1 of \cite{KapovichLeeb} is already enough for our purposes, which can be learned by the dedicated reader in a matter of days, or arguably hours. Our ultrafilter arguments are thus elementary, in the sense that they are easy consequences of the usual axioms of set theory, and require minimal background and preparation to understand. The use of ultrafilters is common and well-accepted in the study of sofic groups (see \cite{CapLup, ESZ2, pestov_kwiatkowska_2012, LyonsThom, Pest, ThomDiophantine}) and geometric group theory in general (see \cite{DrutuSapir, RoeCoarse}).  Ultrafilter have already been used several times in the study of measure entropy for actions of nonamenable groups (see \cite{BowenGroupoid, KerrLi2, LiLiang2, LiangPeters, Me7, Me9}), and our paper is another entry into this tradition.

Given an ultrafilter $\omega$ on the natural numbers, and a sequence $(d_{k})_{k}$ of natural numbers, one can construct  the \emph{Loeb} measure space $(Z_{\omega},u_{\omega})$ which is, in a sense, the ``ultralimit" of the space probability space $(\{1,\dots,d_{k}\},u_{d_{k}})$ (where $u_{d_{k}}$ is the uniform measure on the finite set $\{1,\dots,d_{k}\}$). If we fix a compact, metrizable space $X,$ then any sequence of functions $f_{k}\colon \{1,\dots,d_{k}\}\to X$ induces a measurable map $(f_{k})_{k\to\omega}\colon Z_{\omega}\to X.$
Thus the Loeb measure space allows us to take ``arbitrary limits" of sequences of functions, just like ultralimits allow us to take arbitrary limits of bounded sequences. We use $\Meas(Z_{\omega},X)$ for the space of $u_{\omega}$-measurable functions $Z_{\omega}\to X$ modulo the equivalence relation of equality almost everywhere.

This is related to local weak$^{*}$ convergence for the following reason. Suppose we fix a sofic group $G,$ and a sofic approximation $\sigma_{k}\colon G\to S_{d_{k}}$ (where $S_{n}$ is the symmetric group on $n$ letters). Suppose that $X$ is a compact, metrizable space and $G\actson X$ by homeomorphisms. Given a sequence $(\mu_{k})_{k}\in \Prob(X^{d_{k}})$ we let $(\mu_{k,j})_{j=1}^{d_{k}}$ be its marginals onto the different coordinates.  The collection of marginals may be regarded as a function $\{1,\dots,d_{k}\}\to \Prob(X).$ So by our comments in the preceding paragraph, we may take an ``ultralimit" of these functions and end up with an element $\mathcal{E}((\mu_{k})_{k})\in \Meas(Z_{\omega},\Prob(X)).$ We let $\mathcal{L}_{\omega}(G\actson X)$ be the set of all $\mathcal{E}((\mu_{k})_{k})$ where $(\mu_{k})_{k}$ is asymptotically supported on topological microstates for $G\actson X$ as $k\to\omega.$ It is an exercise in understanding the definitions to see that if  $\mu=\mathcal{E}((\mu_{k})_{k})\in \Prob(X),$ then $\mu_{k}\to^{lw^{*}}\mu$ if and only if $\mu_{k}$ locally weak$^{*}$ converges to $\mu$ as $k\to\omega.$  We may thus view $\mathcal{L}_{\omega}(G\actson X)$ as a space of ``generalized local weak$^{*}$ limits" of measures asymptotically supported on topological microstates.

There is one other important definition we need for our main theorem. Let $\omega,(\sigma_{k})_{k},G,X$ be as in the preceding paragraph. Given a closed subset $Y$ of $X,$ we say that $Y$ \emph{absorbs all topological microstates for $G\actson X$ with respect to $(\sigma_{k})_{k},\omega$} if for every (almost surely) $G$-equivariant $\Theta\in \Meas(Z_{\omega},X)$ we have that $\Theta(z)\in Y$ for almost every $z\in Z_{\omega}.$  This is equivalent to saying that for any sequence $(\phi_{k})_{k}$ of topological microstates, we have that $\phi_{k}(j)$ is ``close" to $X$ for ``most $j$" as $k\to\omega.$

Let $\mathcal{F}(X)$ be the set of nonempty closed subset of $X$ equipped with the Hausdorff (in the sense of Hausdorff metric) topology. We can also makes of absorbing all topological microstates not just for elements of $\mathcal{F}(X),$ but also for a $ Y\in \Meas(Z_{\omega},\mathcal{F}(X)).$ This just means that for every almost surely $G$-equivariant $\Theta\in \Meas(Z_{\omega},X)$ we have $\Theta(z)\in Y(z)$ for almost every $z\in Z_{\omega}$ (see Proposition \ref{P:what it means to absorb} for a way to say this in terms of topological microstates).  If $X$ is a compact group, and the $G$ action is by continuous automorphisms, then it is especially interesting to analyze elements of $\Meas(Z_{\omega},\mathcal{F}(X))$ which absorb all topological microstates and so that $Y(z)$ is a closed \emph{subgroup} of $X.$ We use $\Sub(X)$ for the space of closed subgroups of $X,$ which is a closed subset of $\mathcal{F}(X).$ Lastly, if $Y\in \Meas(Z_{\omega},\Sub(X)),$ we let $m_{Y}\in \Meas(Z_{\omega},\Prob(X))$ be given by $m_{Y}(z)=m_{Y(z)}.$ We let $\mathcal{S}_{\omega}(G\actson X)=\{Y\in \Meas(Z_{\omega},\Sub(X)):m_{Y}\in \mathcal{L}_{\omega}(G\actson X).$ We are now ready to state the two main ``max-min" theorems of the paper, which gives a precise duality between being a generalized local weak$^{*}$ limit and absorbing all topological microstates.

%

\begin{thm}\label{T:maintheoremintro}
Let $G$ be a sofic group and $\sigma_{k}\colon G\to S_{d_{k}}$ a sofic approximation. Fix a free ultrafilter $\omega$ on the natural numbers. Let $G\actson X$ be an algebraic action. Then
\begin{enumerate}[(i)]
\item There is a maximal element $Y\in \Meas(Z_{\omega},\Sub(X))$ satisfying the condition that $m_{Y}$ is a generalized local weak$^{*}$-limit of measures supported on topological microstates for $G\actson X$ with respect to $(\sigma_{k})_{k},\omega,$ i.e. $m_{Y}\in \mathcal{L}_{\omega}(G\actson X).$   \label{T:existence intro}
\item If $Y$ is an in (i), then $Y$ is also the minimal element of $\Meas(Z_{\omega},\Sub(X))$ which absorbs all topological microstates with respect to $(\sigma_{k})_{k},\omega.$
\item If $Y$ is an in (i), then $Y=X$ if and only if there is a sequence of measures $\mu_{k}\in \Prob(X^{d_{k}})$ which are asymptotically supported on topological microstates and locally weak$^{*}$-converge to $m_{X}$ as $k\to\omega.$
\end{enumerate}
\end{thm}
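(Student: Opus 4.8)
The plan is to produce a single subgroup-valued map $W\in\Meas(Z_\omega,\Sub(X))$ that solves both extremal problems simultaneously, by setting $W(z)$ to be the closed subgroup of $X$ generated by $\{\Theta(z):\Theta\in\Meas(Z_\omega,X)\text{ is almost surely }G\text{-equivariant}\}$. The virtue of this definition is that the minimality-and-absorbing half of (ii) is nearly tautological: every $G$-equivariant $\Theta$ satisfies $\Theta(z)\in W(z)$ by construction, so $W$ absorbs all topological microstates; and if $Y'\in\Meas(Z_\omega,\Sub(X))$ is absorbing then $\Theta(z)\in Y'(z)$ a.e.\ for every equivariant $\Theta$, whence $W(z)\subseteq Y'(z)$ a.e.\ because $Y'(z)$ is a closed subgroup. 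Thus $W$ is the minimal absorbing element, and all the genuine work is in proving that $W$ is also the maximal element of $\mathcal{S}_\omega(G\actson X)$. Before anything else I would record the measurability of $z\mapsto W(z)$ and of $m_W$; the one subtlety is that one must extract a \emph{countable} family $(\Theta_i)_i$ of equivariant maps with $W(z)=\overline{\langle\Theta_i(z):i\rangle}$ a.e., obtained from a separable generating set for the equivariant maps, and this same countable family is what makes the absorbing/minimality argument above run through a countable intersection of conull sets.

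For the implication $m_{Y'}\in\mathcal{L}_\omega\Rightarrow Y'\subseteq W$ (the maximality half of (i) and the remaining content of (ii)), I would pass from a microstate measure to a measure on equivariant maps. Given $(\mu_k)_k$ asymptotically supported on topological microstates with $\mathcal{E}((\mu_k)_k)=m_{Y'}$, form the product measure $\bigotimes_k\mu_k$ on $\prod_k X^{d_k}$ and push it forward under the map sending $(x_k)_k$ to its ultralimit $(x_k)_{k\to\omega}\in\Meas(Z_\omega,X)$. Since $\mu_k$ is asymptotically supported on microstates, the resulting measure $\nu$ is concentrated on $G$-equivariant maps, and a Fubini computation identifies the fibrewise barycenter as $m_{Y'}(z)=\int\delta_{\Theta(z)}\,d\nu(\Theta)$ for a.e.\ $z$. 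Integrating the indicator of the complement of $W$ gives $\int_{Z_\omega}m_{Y'}(z)(X\setminus W(z))\,du_\omega(z)=\int\Big(\int_{Z_\omega}\mathbb{1}_{X\setminus W(z)}(\Theta(z))\,du_\omega(z)\Big)d\nu(\Theta)=0$, so $\supp m_{Y'}(z)\subseteq W(z)$ a.e.; as $\supp m_{Y'}(z)=Y'(z)$ by full support of Haar measure, this is exactly $Y'\subseteq W$.

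The main obstacle is the reverse containment $m_W\in\mathcal{L}_\omega$, i.e.\ existence in (i). Here I would first establish the structural closure of $\mathcal{L}_\omega(G\actson X)$: (a) it contains $z\mapsto\delta_{\Theta(z)}$ for every equivariant $\Theta$, taking $\mu_k=\delta_{\phi_k}$ for microstates $\phi_k$ representing $\Theta$; (b) it is closed under the pointwise convolution $(\mu,\lambda)\mapsto(z\mapsto\mu(z)*\lambda(z))$ --- this is exactly where the \emph{algebraic} hypothesis enters, since the coordinatewise product of two topological microstates is again a microstate (the action is by automorphisms, so $\psi(\sigma_k(g)j)=\phi(\sigma_k(g)j)\phi'(\sigma_k(g)j)\approx g\psi(j)$), and the coordinatewise convolution of the associated measures has the correct marginal ultralimit; (c) it is closed under countable convex combinations and under limits in the relevant weak$^*$-topology on $\Meas(Z_\omega,\Prob(X))$. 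Granting these, take $(\Theta_i)_i$ generating $W(z)$ as above and set $\lambda(z)=\sum_i 2^{-i}\tfrac12\big(\delta_{\Theta_i(z)}+\delta_{\Theta_i(z)^{-1}}\big)$, a symmetric element of $\mathcal{L}_\omega$ whose support generates $W(z)$ (note $\Theta_i^{-1}$ is equivariant because automorphisms preserve inverses). The Ces\`aro averages $\frac1n\sum_{j=1}^n\lambda^{*j}$ lie in $\mathcal{L}_\omega$ by (b) and (c) and converge pointwise weak$^*$ to $m_{W(z)}$ by the classical equidistribution theorem for a measure whose support generates a compact group; closedness then gives $m_W\in\mathcal{L}_\omega$.

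With $W$ shown to lie in $\mathcal{S}_\omega$ and to dominate every member of $\mathcal{S}_\omega$, parts (i) and (ii) follow with $Y=W$. For (iii), if $Y=W=X$ then $m_X=m_W\in\mathcal{L}_\omega$, and the remark identifying $\mathcal{E}((\mu_k)_k)=m_X$ with local weak$^*$-convergence as $k\to\omega$ produces the desired sequence; conversely, such a sequence gives $m_X\in\mathcal{L}_\omega$, so the constant map $z\mapsto X$ lies in $\mathcal{S}_\omega$ and maximality of $W$ forces $W=X$. Beyond the convolution-closure in (b), the principal technical risks I anticipate are the measurable-selection points --- countable a.e.\ generation of $W$, measurability of $m_W$, and justification of the Fubini identity for $\nu$ over the possibly non-separable Loeb space $Z_\omega$ --- which I would handle via the Effros Borel structure on $\Sub(X)$ and the separability of the space of equivariant maps.
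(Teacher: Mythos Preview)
Your overall architecture --- define $W(z)$ as the closed subgroup generated by the values of all equivariant maps, verify absorbing/minimality tautologically, then show $m_W\in\mathcal{L}_\omega$ via a convolution-and-Ces\`aro limit --- is natural and close in spirit to the paper's approach. The paper also exploits that $\mathcal{L}_\omega$ is closed under pointwise convolution, convex combinations, adjoints, and limits in measure, and recovers Haar measures as limits of convolution powers. But there is a genuine gap in your argument at the step where you extract a \emph{countable} family $(\Theta_i)_i$ of equivariant maps with $W(z)=\overline{\langle\Theta_i(z):i\rangle}$ a.e.

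You justify this by invoking ``a separable generating set for the equivariant maps,'' but the space $\Meas(Z_\omega,X)$ is not separable, and neither in general is its subset of $G$-equivariant maps: the Loeb measure algebra is enormous, and (for instance) whenever $\Fix_G(X)$ has at least two points, every measurable map $Z_\omega\to\Fix_G(X)$ is equivariant, already a non-separable family. The paper identifies exactly this as the principal obstruction: closure of $\mathcal{S}_\omega(G\actson X)$ under \emph{countable} joins follows from the convolution/closure properties you list, but the existence of a maximal element requires closure under \emph{arbitrary} joins, and ``it is \emph{not} clear how to obtain the existence of a maximal element in $\mathcal{S}_\omega(G\actson X)$ if one only knows that $\mathcal{S}_\omega(G\actson X)$ is closed under countably infinite joins, since $\mathcal{S}_\omega(G\actson X)$ is not separable.'' The paper's fix is to embed $\mathcal{S}_\omega$ order-reversingly into projections on $L^2(Z_\omega,L^2(X))$ via $Y\mapsto\iota(\lambda_*(m_Y))$, observe that this image is SOT-closed, and use that an arbitrary meet of projections is the SOT-limit of the net of finite meets. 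This is precisely what lets one bypass countability. Your proposal as written does not supply an alternative mechanism for handling uncountable joins, so the argument for $m_W\in\mathcal{L}_\omega$ is incomplete.

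A secondary concern: your step showing $Y'\subseteq W$ by pushing forward $\bigotimes_k\mu_k$ along the ultralimit map into $\Meas(Z_\omega,X)$ is fraught with measurability hazards --- for a free ultrafilter the map $(x_k)_k\mapsto(x_k)_{k\to\omega}$ need not be measurable with respect to the product $\sigma$-algebra (already for i.i.d.\ Bernoulli coordinates, the event $\{k:x_k=1\}\in\omega$ is typically non-measurable), so neither $\nu$ nor the Fubini identity is justified. This step is in any case unnecessary: once $W$ absorbs all topological microstates, Proposition~\ref{P:what it means to absorb} gives $m_{Y'}\preceq W$ for every $m_{Y'}\in\mathcal{L}_\omega$ directly, hence $Y'\leq W$. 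So the real work, and the real gap, is entirely in producing $m_W\in\mathcal{L}_\omega$ without assuming countable generation.
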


\begin{cor}
Let $G$ be a sofic group and $\sigma_{k}\colon G\to S_{d_{k}}$ a sofic approximation. Fix a free ultrafilter $\omega$ on the natural numbers. Let $G\actson X$ be an algebraic action. Then the following are equivalent:
\begin{enumerate}[(i)]
\item There does not exist a sequence $\mu_{k}\in \Prob(X^{d_{k}})$ with $\mu_{k}\to^{lw^{*}}m_{X}.$
\item There is a sequence $(Y_{k})_{k}\in \Sub(X)^{d_{k}}$ with $\lim_{k\to\omega}(Y_{k})_{*}(u_{d_{k}})\ne \delta_{X}$ and so that $(Y_{k})_{k}$ absorbs all topological microstates for $G\actson X$ along $\omega.$
\end{enumerate}
\end{cor}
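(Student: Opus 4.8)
The plan is to deduce the corollary formally from Theorem \ref{T:maintheoremintro}, the only real work being to translate between sequences $(Y_k)_k\in\Sub(X)^{d_k}$ and their ultralimits in $\Meas(Z_\omega,\Sub(X))$. Write $Y_0\in\Meas(Z_\omega,\Sub(X))$ for the element produced by Theorem \ref{T:maintheoremintro}(i); by part (ii) it is simultaneously the minimal element absorbing all topological microstates, and by part (iii) we have $Y_0(z)=X$ for $u_\omega$-a.e.\ $z$ if and only if there is a (microstate-supported) sequence $\mu_k\in\Prob(X^{d_k})$ with $\mu_k\to^{lw^*}m_X$; thus condition (i) of the corollary is, via (iii), exactly the assertion that $Y_0(z)\neq X$ on a set of positive $u_\omega$-measure. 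Before using this I would record two standard features of the Loeb construction. First, the map sending an internal sequence $(Y_k)_k\in\Sub(X)^{d_k}$ to $(Y_k)_{k\to\omega}\in\Meas(Z_\omega,\Sub(X))$ is onto modulo equality a.e.; this uses compactness and metrizability of $\Sub(X)\subseteq\mathcal{F}(X)$ together with $\aleph_1$-saturation of the ultraproduct. Second, the law of $(Y_k)_{k\to\omega}$ under $u_\omega$ equals $\lim_{k\to\omega}(Y_k)_*(u_{d_k})$, as one checks by testing against continuous functions on $\Sub(X)$ and using the defining property of $u_\omega$; since $\{X\}$ is closed in $\Sub(X)$, this gives $\lim_{k\to\omega}(Y_k)_*(u_{d_k})=\delta_X$ if and only if $(Y_k)_{k\to\omega}=X$ a.e. I also read ``$(Y_k)_k$ absorbs all topological microstates along $\omega$'' as the statement that $(Y_k)_{k\to\omega}$ absorbs them in the sense of the definition preceding Proposition \ref{P:what it means to absorb}.

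For (i)$\Rightarrow$(ii), assume (i), so that $Y_0(z)\neq X$ on a positive-measure set. Using the surjectivity above, choose a representing sequence $(Y_k)_k\in\Sub(X)^{d_k}$ with $(Y_k)_{k\to\omega}=Y_0$. Since $Y_0$ absorbs all topological microstates (being the minimal absorbing element), $(Y_k)_k$ absorbs them along $\omega$; and by the law identification, $\lim_{k\to\omega}(Y_k)_*(u_{d_k})$ is the law of $Y_0$, which is $\neq\delta_X$ because $Y_0\neq X$ on a positive-measure set. This is precisely (ii).

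For (ii)$\Rightarrow$(i), let $(Y_k)_k$ be as in (ii) and put $Y'=(Y_k)_{k\to\omega}$. Then $Y'$ absorbs all topological microstates, so minimality of $Y_0$ (Theorem \ref{T:maintheoremintro}(ii)) gives $Y_0(z)\subseteq Y'(z)$ for a.e.\ $z$. By the law identification, $\lim_{k\to\omega}(Y_k)_*(u_{d_k})\neq\delta_X$ means $Y'(z)\neq X$ on a positive-measure set; on that set $Y_0(z)\subseteq Y'(z)\subsetneq X$, so $Y_0(z)\neq X$ there. Hence $Y_0\neq X$ on a positive-measure set, and Theorem \ref{T:maintheoremintro}(iii) shows there is no microstate-supported sequence with $\mu_k\to^{lw^*}m_X$, which is (i).

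Once the dictionary is in place both implications are immediate, so the main point requiring care is the preliminary dictionary itself: the surjectivity of the internal-representation map onto $\Meas(Z_\omega,\Sub(X))$ and the identification of the law of an ultralimit with the ultralimit of the laws. The former relies on saturation of the ultraproduct plus separability of $\Sub(X)$, the latter on the construction of $u_\omega$; both are most cleanly stated and proved once in the main text and then invoked here. I would also verify at the outset that the partial order on $\Meas(Z_\omega,\Sub(X))$ is pointwise a.e.\ inclusion, since that is what turns ``minimal absorbing'' into the containment $Y_0\subseteq Y'$ used in the second implication.
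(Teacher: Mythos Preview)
Your proposal is correct and follows essentially the same route as the paper's proof (Corollary \ref{C:sick corollary  bro}): both directions go through the maximal/minimal element $Y_0$ of Theorem \ref{T:maintheoremintro}, using the representation of any $Y\in\Meas(Z_\omega,\Sub(X))$ by a sequence $(Y_k)_k$ (Proposition \ref{P:basics compact Loeb}(\ref{I:going backwards loeb})) and the identification $\lim_{k\to\omega}(Y_k)_*(u_{d_k})=Y_*(u_\omega)$ (Proposition \ref{P:computing integrals in ultraproducts}). The only cosmetic difference is that the paper does not invoke $\aleph_1$-saturation for the surjectivity of the representation map but proves it directly via a density-plus-closedness argument in the appendix; otherwise your ``dictionary'' matches the paper's Propositions exactly.
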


Observe that if $G$ is a sofic group with sofic approximation $\sigma_{k}\colon G\to S_{d_{k}},$ then we automatically have an induced embedding of $G$ into the metric ultraproduct $\prod_{k\to\omega}S_{d_{k}}.$ We have a natural action of $\prod_{k\to\omega}S_{d_{k}}$ on $(Z_{\omega},u_{\omega}).$ If we add the assumption that the centralizer $G$ inside $\prod_{k\to\omega}S_{d_{k}}$ acts ergodically on $(Z_{\omega},u_{\omega})$ in Theorem \ref{T:maintheoremintro} then we obtain a nicer statement. We mention that consideration of ergodic centralizer has already been of relevance to sofic entropy theory (see  \cite[Section 5]{KerrLi2}).

\begin{cor}\label{C:easier in ergodic commutant}
Let $G$ be a sofic group and $\sigma_{k}\colon G\to S_{d_{k}}$ a sofic approximation. Fix a free ultrafilter $\omega$ on the natural numbers, and suppose that the centralizer of $G$ inside $\prod_{k\to\omega}S_{d_{k}}$ acts ergodically on the Loeb measure space. Let $G\actson X$ be an algebraic action. Let $Y$ be the minimal subgroup of $X$ which absorbs all topological microstates with respect to $\omega.$ Then $Y$ is the largest subgroup of $X$ so that there is a sequence $\mu_{k}\in \Prob(X^{d_{k}})$ with $\mu_{k}\to^{lw^{*}}m_{Y}$ as $k\to\omega.$

\end{cor}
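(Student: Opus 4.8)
The plan is to derive this corollary from Theorem \ref{T:maintheoremintro} by showing that the ergodicity hypothesis collapses the measurable field of subgroups produced there into a genuine (constant) subgroup. First I would invoke Theorem \ref{T:maintheoremintro} to obtain the element $Y\in\Meas(Z_{\omega},\Sub(X))$ that is simultaneously the maximal field with $m_{Y}\in\mathcal{L}_{\omega}(G\actson X)$ and the minimal field absorbing all topological microstates. The entire content of the corollary is then to identify this $Y$ with an honest closed subgroup $Y_{0}\le X$ and to reinterpret its two extremal properties at the level of honest subgroups and honest sequences $(\mu_{k})_{k}$.

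The key step is to show that $Y$ is invariant under the action of the centralizer $C$ of $G$ inside $\prod_{k\to\omega}S_{d_{k}}$ on $(Z_{\omega},u_{\omega})$. The mechanism is that, for $c\in C$ and any a.s.\ $G$-equivariant $\Theta\in\Meas(Z_{\omega},X)$, the precomposition $\Theta^{c}(z)=\Theta(c\cdot z)$ is again $G$-equivariant, since $\Theta^{c}(g\cdot z)=\Theta(cg\cdot z)=\Theta(gc\cdot z)=g\cdot\Theta^{c}(z)$ using that $c$ commutes with $G$. Consequently $z\mapsto Y(c\cdot z)$ is again an absorbing field: for any equivariant $\Theta$ the map $z\mapsto\Theta(c^{-1}\cdot z)$ is equivariant, hence absorbed by $Y$, and translating by $c$ shows $\Theta(z)\in Y(c\cdot z)$ a.e. Minimality of $Y$ then gives the a.e.\ inclusion of $Y$ into $z\mapsto Y(c\cdot z)$, and applying the same to $c^{-1}$ yields $Y(c\cdot z)=Y(z)$ for $u_{\omega}$-a.e.\ $z$. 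Because $\Sub(X)$ is compact metrizable, hence standard Borel, an invariant measurable $\Sub(X)$-valued map under the ergodic $C$-action is a.e.\ constant; call its value $Y_{0}\in\Sub(X)$. By part (ii) of Theorem \ref{T:maintheoremintro} together with constancy, $Y_{0}$ is then the minimal subgroup of $X$ absorbing all topological microstates (cf.\ Proposition \ref{P:what it means to absorb}), which is exactly the object named in the statement, and this simultaneously establishes that such a minimal \emph{constant} subgroup exists.

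It remains to transfer the maximality in part (i) into the claim that $Y_{0}$ is the \emph{largest} realizable local weak$^{*}$-limit. On one hand $m_{Y_{0}}\in\mathcal{L}_{\omega}(G\actson X)$ means by definition that $m_{Y_{0}}=\mathcal{E}((\mu_{k})_{k})$ for some $(\mu_{k})_{k}$ asymptotically supported on topological microstates, and by the correspondence between $\mathcal{E}$ and local weak$^{*}$-convergence recorded in the introduction this is precisely a sequence with $\mu_{k}\to^{lw^{*}}m_{Y_{0}}$ as $k\to\omega$; so $Y_{0}$ is realizable. On the other hand, if $Y'$ is any closed subgroup with some microstate sequence $\mu_{k}'\to^{lw^{*}}m_{Y'}$, then the constant field $Y'$ satisfies $m_{Y'}=\mathcal{E}((\mu_{k}')_{k})\in\mathcal{L}_{\omega}(G\actson X)$, so maximality of $Y$ in part (i) with respect to the a.e.-inclusion order on $\Meas(Z_{\omega},\Sub(X))$ forces $Y'\le Y_{0}$. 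Hence $Y_{0}$ is the largest such subgroup.

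The main obstacle I anticipate is the invariance step: pinning down a characterization of the minimal absorbing field $Y$ purely in terms of the values of $G$-equivariant maps (as in Proposition \ref{P:what it means to absorb}) that is manifestly equivariant under the $C$-action, together with the attendant measurability bookkeeping — in particular choosing a version of $Y$ for which the inclusions $Y(c\cdot z)\subseteq Y(z)$ hold off a genuinely null set and verifying that the minimal element is measurably generated, as a closed subgroup, by these values so that the permutation argument applies verbatim. Once the $C$-invariance of $Y$ is secured, the reduction to a constant subgroup via ergodicity and the max-min translation are essentially bookkeeping against Theorem \ref{T:maintheoremintro}.
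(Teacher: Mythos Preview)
Your proposal is correct and follows essentially the same route as the paper (see Corollary~\ref{L:ergodicity on Loeb space}): use that the centralizer $G'_{\omega}$ acts on $\Meas(Z_{\omega},X)$ preserving the relevant structure, deduce that the extremal element $Y$ from Theorem~\ref{T:maintheoremintro} is $G'_{\omega}$-invariant by uniqueness, and conclude constancy by ergodicity. The only cosmetic difference is that you argue invariance via the \emph{minimal absorbing} characterization of $Y$ (showing $z\mapsto Y(c\cdot z)$ is again absorbing), whereas the paper argues via the \emph{maximal in $\mathcal{S}_{\omega}$} characterization (the $G'_{\omega}$-action preserves $\mathcal{S}_{\omega}(G\actson X)$, so its unique maximum is fixed); by Theorem~\ref{T:maintheoremintro} these are the same object, so the two arguments are interchangeable.
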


If $G$ is a residually finite group, and $\sigma_{k}$ comes from a sequence of finite quotients as in Theorem \ref{T:res finite case intro}, then it is known by \cite[Theorem 5.7]{KerrLi2}  that the centralizer of $G$ inside  $\prod_{k\to\omega}S_{d_{k}}$ acts ergodically on $Z_{\omega}.$ This is why the assumptions of residual finiteness are present in Theorems \ref{T:res finite case intro},\ref {T:cpe intro}, in each case this can be replaced by ergodicity of the action of centralizer of $G$ inside  $\prod_{k\to\omega}S_{d_{k}}$ on $Z_{\omega}.$ Theorems \ref{T:res finite case intro},\ref{T:cpe intro} are easy and effortless corollaries of Corollary \ref{C:easier in ergodic commutant}.

The reader may be  aware of an alternate way to obtain a ``generalized local weak$^{*}$ limit", which has already appeared in the probability literature (see e.g. \cite{LW*Measures, LWGraphs}) and has been recently studied in the sofic entropy context by Ab\'{e}rt-Weiss \cite{AWVertical}. Observe that a sequence of measures $\mu_{k}\in \Prob(X^{d_{k}})$ locally weak$^{*}$ converges to $\mu$ if and only if
\[\frac{1}{d_{k}}\sum_{j=1}^{d_{k}}\delta_{\mu_{k,j}}\to^{wk^{*}}_{k\to\infty}\delta_{\mu}\in \Prob(\Prob(X))\]
where $\mu_{k,j}$ is the $j^{th}$ marginal of $\mu_{k}.$ The idea would be to then consider an arbitrary sequence $\mu_{k}\in \Prob(X^{d_{k}})$ and take a limit along an ultrafilter of $\frac{1}{d_{k}}\sum_{j=1}^{d_{k}}\delta_{\mu_{k,j}}$ and get an element of $\Prob(\Prob(X))$ as a ``generalized local weak$^{*}$-limit." This is a tempting alternative to what we have done, since $\Prob(\Prob(X))$ is metrizable and $\Meas(Z_{\omega},\Prob(X))$ is not. Unfortunately, passage to the space $\Prob(\Prob(X))$ obliterates all of the algebraic structure of $\Prob(X))$, e.g. one cannot take convolutions of measures on $\Prob(\Prob(X))$ in a manner that respects local weak$^{*}$-limits in the sense of Ab\'{e}rt-Weiss. The object $\Meas(Z_{\omega},\Prob(X))$ is, in a precise sense, finer than that $\Prob(\Prob(X))$ (e.g. the Ab\'{e}rt-Weiss notion of a generalized weak$^{*}$-limit factors through ours) and because this remembers the algebraic structure of $\Prob(X).$ This is crucial for the proof of our main theorem. We refer the reader to  the comments following Definition \ref{D:coarser lw* limit} for  a more precise discussion of both notions of generalized local weak$^{*}$ limits, why ours is suited for the results of the paper, and why the Ab\'{e}rt-Weiss approach does not work to prove the main results of the paper.

By the above discussion, for a $G$-invariant element of $\mu\in\Prob(\Prob(X)),$ we can thus makes sense of what it means for a sequence $\mu_{k}\in \Prob(X^{d_{k}})$  to locally weak$^{*}$ converge to $\mu.$ As was discovered by Ab\'{e}rt-Weiss \cite{AWVertical}, we can thus define a lw$^{*}$-entropy of $\mu,$ which we denote by $h_{(\sigma_{k})_{k}}^{lw^{*}}(\mu).$ Ab\'{e}rt-Weiss have shown that this is an isomorphism invariant of $\mu,$ and one may also argue as in Section 6.1 of \cite{AustinAdd} to see this. While it may be the case that the topological entropy of $G\actson X$ is not equal to the measure entropy of $G\actson (X,m_{X}),$ in the context of lw$^{*}$-entropy  we can show that  it  that there is a ``subgroup" $Y$ for which the topological entropy of $G\actson X$ is equal to the lw$^{*}$ entropy of $G\actson (Y,m_{Y}).$ The caveat here is that one needs to regard $Y$ as a \emph{$G$-invariant random subgroup} of $X,$ i.e.a $G$-invariant element of $\Prob(\Sub(X)).$ For $Y\in \Prob(\Sub(X)),$ we let $m_{Y}$ be the element of $\Prob(\Prob(X))$ which is the pushforward of $Y$ under the map $K\mapsto m_{K}.$

\begin{thm}\label{T:lw*intro theorem}
Let $G$ be a sofic group and $\sigma_{k}\colon G\to S_{d_{k}}$ a sofic approximation. Fix a free ultrafilter $\omega$ on the natural numbers, and let $G\actson X$ be an algebraic action.
\begin{enumerate}[(i)]
\item There is a $G$-invariant random subgroup $Y$ of $X$ so that $h_{(\sigma_{k})_{k\to\omega}}^{lw^{*}}(m_{Y})=h_{(\sigma_{k})_{k},\topo}(G\actson X).$
\item If the centralizer of $G$ inside $\prod_{k\to\omega}S_{d_{k}}$ acts ergodically on $(Z_{\omega},u_{\omega}),$ then there is a subgroup $Y$ of $X$ so that
\[h_{(\sigma_{k})_{k\to\omega}}^{lw^{*}}(G\actson (Y,m_{Y}))=h_{(\sigma_{k})_{k\to\omega}}(G\actson X)=h_{(\sigma_{k})_{k\to\omega}}(G\actson Y).\] \label{I:equality of lw* entropy ergodic commutant case}
\end{enumerate}
\end{thm}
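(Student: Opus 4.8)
The plan is to deduce both parts from the max--min principle of Theorem \ref{T:maintheoremintro} by pushing the maximal measurable family of subgroups forward to an honest invariant random subgroup and then matching entropies. I would first apply Theorem \ref{T:maintheoremintro} to obtain the maximal $\mathbf{Y}\in\Meas(Z_{\omega},\Sub(X))$ with $m_{\mathbf{Y}}\in\mathcal{L}_{\omega}(G\actson X)$, which is simultaneously the minimal family absorbing all topological microstates. The first point to check is that $\mathbf{Y}$ is $G$-equivariant: since every element of $\mathcal{L}_{\omega}$ is a limit of marginals of measures on approximately equivariant microstates, $m_{\mathbf{Y}}$ is $G$-equivariant as an element of $\Meas(Z_{\omega},\Prob(X))$, and as $g\cdot m_{\mathbf{Y}(z)}=m_{g\mathbf{Y}(z)}$ determines the subgroup, this forces $\mathbf{Y}(gz)=g\mathbf{Y}(z)$ almost everywhere. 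Because $u_{\omega}$ is $G$-invariant, the pushforward $Y:=\mathbf{Y}_{*}u_{\omega}\in\Prob(\Sub(X))$ is a $G$-invariant random subgroup; this is the $Y$ of part (i).

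For part (i) it then remains to prove $h_{(\sigma_{k})_{k\to\omega}}^{lw^{*}}(m_{Y})=h_{(\sigma_{k})_{k},\topo}(G\actson X)$, which I would establish by two inequalities (writing $h_{\topo}$ for the right-hand side). The upper bound is the softer direction: any sequence $\mu_{k}\in\Prob(X^{d_{k}})$ asymptotically supported on topological microstates with $\mu_{k}\to^{lw^{*}}m_{Y}$ puts all of its mass on the microstate set $\Omega_{k}\subseteq X^{d_{k}}$, so the lw$^{*}$-entropy count is dominated by $\frac{1}{d_{k}}\log N_{\epsilon}(\Omega_{k})$, whose limit along $\omega$ is $h_{\topo}$. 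The lower bound is where the \emph{maximality} of $\mathbf{Y}$ is essential: the measures realizing $m_{\mathbf{Y}}$ converge in the finer space $\Meas(Z_{\omega},\Prob(X))$ and hence, a fortiori, to the coarser pushforward $m_{Y}\in\Prob(\Prob(X))$; because no strictly larger family of subgroups is an lw$^{*}$-limit, these measures can be spread across essentially all of $\Omega_{k}$. I would make this precise by showing that a strict deficiency $h_{(\sigma_{k})_{k\to\omega}}^{lw^{*}}(m_{Y})<h_{\topo}$ would produce a positive-entropy family of topological microstates not absorbed by $\mathbf{Y}$, contradicting the minimality clause of Theorem \ref{T:maintheoremintro}(ii).

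For part (ii) I would specialize part (i) using ergodicity of the centralizer. By Corollary \ref{C:easier in ergodic commutant} ergodicity forces $\mathbf{Y}$ to be almost surely constant, equal to a genuine subgroup $Y\le X$, so the random subgroup of part (i) collapses to $\delta_{Y}$ and $m_{Y}\in\Prob(\Prob(X))$ becomes a deterministic point mass. On deterministic lw$^{*}$-limits the Ab\'{e}rt--Weiss lw$^{*}$-entropy reduces to ordinary sofic measure entropy, giving $h_{(\sigma_{k})_{k\to\omega}}^{lw^{*}}(G\actson(Y,m_{Y}))=h_{(\sigma_{k})_{k\to\omega}}(G\actson Y)$. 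Part (i) supplies $h_{(\sigma_{k})_{k\to\omega}}^{lw^{*}}(m_{Y})=h_{(\sigma_{k})_{k},\topo}(G\actson X)$, and since $Y$ absorbs all topological microstates of $X$ while the inclusion $Y\subseteq X$ makes each microstate of $Y$ one of $X$, the two microstate sets agree asymptotically and the topological entropies of $X$ and $Y$ coincide. Chaining these identities yields the displayed chain of equalities (and, as a byproduct, the measure and topological entropies of $Y$ agree, without separately invoking strong soficity).

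The main obstacle I anticipate is the lower bound in part (i): converting the order-theoretic maximality of $\mathbf{Y}$ in $\Meas(Z_{\omega},\Sub(X))$ into the quantitative assertion that $m_{Y}$ captures the \emph{entire} exponential growth rate of topological microstates, with no entropy escaping in directions transverse to $\mathbf{Y}$. This is delicate precisely because the lw$^{*}$-entropy is computed in the coarser space $\Prob(\Prob(X))$, which forgets the algebraic (convolution) structure that the max--min argument relies on; the crux is to verify that passage to this coarser invariant neither creates nor loses entropy, so that the covering-number count of $\Omega_{k}$ is reproduced exactly.
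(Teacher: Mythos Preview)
Your overall architecture is right: take the maximal $\mathbf{Y}\in\mathcal{S}_{\omega}(G\actson X)$, push forward to $Y=\mathbf{Y}_{*}u_{\omega}$, and prove two inequalities. The upper bound is fine. The genuine gap is your lower bound for part (i), and your proposed contradiction argument does not work as stated. You write that a strict deficiency $h^{lw^{*}}(m_{Y})<h_{\topo}$ would produce microstates \emph{not absorbed} by $\mathbf{Y}$. But by Theorem~\ref{T:maintheoremintro}(ii), \emph{every} topological microstate is absorbed by $\mathbf{Y}$; absorption is automatic and cannot be contradicted. The issue is not whether microstates lie in $\mathbf{Y}$, but whether there exists a sequence of measures that simultaneously (a) locally weak$^{*}$-converges to $m_{Y}$ and (b) has large $S_{\varepsilon,\delta}$-count. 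Your contradiction gives no mechanism linking these.

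The paper closes this gap with a convolution trick you are missing. Given an arbitrary $(\mu_{k})_{k}$ asymptotically supported on topological microstates (chosen to witness entropy close to $h_{\topo}$, via Lemma~\ref{L:easy sup argument topo}), one considers $\mu_{k}*m_{\widetilde{Y}_{k}}$ where $\mathbf{Y}=(\widetilde{Y}_{k})_{k\to\omega}$. Since $\mu=\mathcal{E}((\mu_{k})_{k})\preceq\mathbf{Y}$ by absorption, one has $\mu*m_{\mathbf{Y}}=m_{\mathbf{Y}}$ pointwise, so the convolved sequence still locally weak$^{*}$-converges to $m_{Y}$. The key quantitative fact (Lemma~\ref{L:basc convolution shiz}) is that for a translation-invariant metric, $S_{\varepsilon,\delta}(\mu_{k}*m_{\widetilde{Y}_{k}},\rho_{2})\geq S_{\varepsilon,\delta}(\mu_{k},\rho_{2})$: convolution by any probability measure can only increase the covering count. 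Hence $h((\mu_{k}*m_{\widetilde{Y}_{k}})_{k\to\omega})\geq h((\mu_{k})_{k\to\omega})$, and the left side is a witness for $h^{lw^{*}}(m_{Y})$. This is precisely how the algebraic structure (convolution) survives the passage to the coarser space $\Prob(\Prob(X))$ that worried you. For part (ii), once you have part (i) your reduction via Corollary~\ref{C:easier in ergodic commutant} is correct, though your claim that deterministic lw$^{*}$-entropy equals ordinary sofic measure entropy would need separate justification; the paper instead argues the topological-entropy equalities directly from absorption.
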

\vspace{-.3in}

The statement of the above theorem naturally leads to the consideration and importance of $G$-invariant random subgroups. The notion of \emph{invariant random subgroups}, i.e. probability measures on $\Prob(\Sub(H))$ for a discrete group $H$ which are invariant under the conjugation action of $H$. This name was given by Ab\'{e}rt-Glasner-Virag \cite{AVY14}, but related ideas had been in the mathematical community for some time, first appearing in work of Zimmer (see \cite{StuckZimmer}). Related results had also been proved before Ab\'{er}t-Glasner-Virag by Aldous-Lyons \cite{AldousLyons}, Bergeron-Gaboriau \cite{BergeronGaboriau}, and Vershik \cite{Vershik12}.
The study of invariant random subgroups has been quite active in recent years with connections to $L^{2}$-invariants \cite{7s12}, geometric group theory \cite{IRSGrigor, OsinIRS}, ergodic theory \cite{AVY14, 7s12, StuckZimmer, BowenIRSFurst, RobinAF, BurtonKechrisMax}, as well as many other subjects.
While not quite the same object, we think that $G$-invariant random subgroups in the algebraic action situation will be important to the study of entropy theory for the same reasons that invariant random subgroups are of relevance to several fields of mathematics.

We remark that  part (\ref{I:equality of lw* entropy ergodic commutant case}) of Theorem \ref{T:lw*intro theorem} is optimal in the residually finite case. We make this precise by the following result.

\begin{thm}\label{T:properties of Y intro}
Let $G$ be a countable, discrete, sofic group with a sofic approximation $\sigma_{k}\colon G\to S_{d_{k}}.$ Let $\omega$ be a free ultrafilter on the natural numbers, and suppose that the centralizer of $G$ inside $\prod_{k\to\omega}S_{d_{k}}$ acts ergodically on $(Z_{\omega},u_{\omega})$. Let $Y$ be the maximal element of $\mathcal{S}_{\omega}(G\actson X)$ (so $Y\in \Sub(X))$ by Corollary \ref{C:easier in ergodic commutant}). Then we have the following properties of $Y:$\begin{enumerate}[(a)]
\item For every $G$-invariant $K\in \Sub(X)$ with $Y\subsetneq K,$ we  have that $G\actson (K,m_{K})$ is not sofic with respect to $(\sigma_{k})_{k}.$ In particular, if $Y\ne X,$ then the action $G\actson (X,m_{X})$ is not sofic with respect to $(\sigma_{k})_{k}.$
\item $Y$ is the largest subgroup so that $h_{(\sigma_{k})_{k\to\omega}}^{lw^{*}}(G\actson (Y,m_{Y}))\ne \infty,$
\item $Y$  is the smallest subgroup of $X$ so that $h_{(\sigma_{k})_{k\to\omega},\topo}(G\actson X)=h_{(\sigma_{k})_{k\to\omega},\topo}(G\actson Y),$
\item $Y$ is the largest subgroup of $X$ so that $h_{(\sigma_{k})_{k\to\omega}}^{lw^{*}}(G\actson (Y,m_{Y}))=$ $h_{(\sigma_{k})_{k\to\omega},\topo}(G\actson Y)$.
\end{enumerate}\end{thm}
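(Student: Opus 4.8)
The plan is to read off all four statements from the duality packaged in Corollary~\ref{C:easier in ergodic commutant} --- which, under the ergodic-centralizer hypothesis, identifies $Y\in\Sub(X)$ both as the maximal element of $\mathcal{S}_{\omega}(G\actson X)$ and as the minimal closed $G$-invariant subgroup absorbing all topological microstates --- together with the entropy identity of Theorem~\ref{T:lw*intro theorem}(\ref{I:equality of lw* entropy ergodic commutant case}). The engine is a single observation I would isolate first: \emph{if $G\actson(K,m_{K})$ is sofic with respect to $(\sigma_{k})_{k}$ along $\omega$, then $K\subseteq Y$}. I would prove this through the ultraproduct description of soficity: a sofic $G\actson(K,m_{K})$ yields an almost surely $G$-equivariant measure-preserving map $\Phi\colon Z_{\omega}\to K$ with $\Phi_{*}u_{\omega}=m_{K}$. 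Viewing $\Phi$ as a $G$-equivariant element of $\Meas(Z_{\omega},X)$, the absorption property of $Y$ forces $\Phi(z)\in Y$ almost everywhere, so $m_{K}(Y)=1$; since $Y$ is closed and $m_{K}$ has full support in $K$, we get $K\subseteq Y$. Statement (a) is the contrapositive applied to $K$ with $Y\subsetneq K$ (and to $K=X$ for the final assertion). I would also record at this point that, since $Y\in\mathcal{S}_{\omega}$ forces $m_{Y}\in\mathcal{L}_{\omega}$ and hence soficity of $G\actson(Y,m_{Y})$, the quantity $h^{lw^{*}}_{(\sigma_{k})_{k\to\omega}}(m_{Y})$ is finite; by Theorem~\ref{T:lw*intro theorem}(\ref{I:equality of lw* entropy ergodic commutant case}) this gives $h_{(\sigma_{k})_{k\to\omega},\topo}(G\actson X)=h^{lw^{*}}_{(\sigma_{k})_{k\to\omega}}(m_{Y})<\infty$, so finiteness of the topological entropy is automatic and will be used freely below.

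For (b) I would couple the observation with the dichotomy for lw$^{*}$-entropy built into its definition (counting over measures rather than configurations, after Ab\'{e}rt--Weiss): $h^{lw^{*}}_{(\sigma_{k})_{k\to\omega}}(m_{K})$ is finite exactly when $G\actson(K,m_{K})$ is sofic, and equals $+\infty$ otherwise. Then $K\not\subseteq Y$ yields a non-sofic action, hence $h^{lw^{*}}(m_{K})=\infty$, while $h^{lw^{*}}(m_{Y})\ne\infty$; thus $\{K:h^{lw^{*}}(m_{K})\ne\infty\}$ consists of subgroups of $Y$ and contains $Y$, i.e.\ $Y$ is the largest subgroup with $h^{lw^{*}}\ne\infty$. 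Statement (d) follows in the same breath: for $K\not\subseteq Y$ we have $h^{lw^{*}}(m_{K})=\infty$ but $h_{(\sigma_{k})_{k\to\omega},\topo}(G\actson K)\le h_{(\sigma_{k})_{k\to\omega},\topo}(G\actson X)<\infty$, so the two entropies disagree, whereas $Y$ realizes equality by Theorem~\ref{T:lw*intro theorem}(\ref{I:equality of lw* entropy ergodic commutant case}); hence $Y$ is the largest subgroup on which $h^{lw^{*}}(m_{K})=h_{\topo}(G\actson K)$.

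Statement (c) needs one genuinely new ingredient. The equality $h_{\topo}(G\actson Y)=h_{\topo}(G\actson X)$ is again Theorem~\ref{T:lw*intro theorem}(\ref{I:equality of lw* entropy ergodic commutant case}). For minimality I would first note that topological microstates for $G\actson K$ are topological microstates for $G\actson X$, hence absorbed by $Y$, so they in fact land in $K\cap Y$ and $h_{\topo}(G\actson K)=h_{\topo}(G\actson(K\cap Y))$; this reduces the claim to showing that no \emph{proper} closed $G$-invariant $K\subsetneq Y$ satisfies $h_{\topo}(G\actson K)=h_{\topo}(G\actson Y)$. Applying Theorem~\ref{T:lw*intro theorem}(\ref{I:equality of lw* entropy ergodic commutant case}) to the algebraic action $G\actson K$ produces $Y_{K}\subseteq K$ with $h^{lw^{*}}(m_{Y_{K}})=h_{\topo}(G\actson K)$; were this equal to $h_{\topo}(G\actson Y)=h^{lw^{*}}(m_{Y})$, then $Y_{K}\subseteq Y$ would carry equal finite lw$^{*}$-entropy, and invoking strict monotonicity of $K'\mapsto h^{lw^{*}}_{(\sigma_{k})_{k\to\omega}}(m_{K'})$ along the sofic range would force $Y_{K}=Y$, whence $Y\subseteq K$, contradicting $K\subsetneq Y$.

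The hard part is precisely this strict monotonicity --- equivalently, that $Y$ is the \emph{unique} maximal subgroup whose (finite) lw$^{*}$-entropy equals its topological entropy. The difficulty is localized at finite-index inclusions: topological sofic entropy is insensitive to passing to a finite-index $G$-invariant subgroup, so the comparison cannot be run through $h_{\topo}$ alone, and a naive configuration count would wrongly predict $h_{\topo}(G\actson K)=h_{\topo}(G\actson Y)$ for such $K$. The resolution must exploit the ergodic-centralizer hypothesis through the Loeb-space analysis: it forces $h^{lw^{*}}(m_{K'})$ to record the Haar ``size'' of $K'$, so that a proper inclusion strictly decreases it (by $\log[Y:Y_{K}]$ in the finite-index case) and simultaneously rules out the degenerate scenario in which $m_{Y}$ fails to be a local weak$^{*}$-limit. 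I expect reconciling the finite-index behaviour of $h^{lw^{*}}$ with that of $h_{\topo}$ inside $\Meas(Z_{\omega},\Prob(X))$ to be the crux of the entire argument.
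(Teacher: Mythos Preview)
The paper never gives an explicit proof of Theorem~\ref{T:properties of Y intro}; it is announced in the introduction as a corollary of the main machinery, but no argument for it appears in \S\ref{S:major applications} or elsewhere. So I can only assess your proposal on its own terms.

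Your engine for (a) is correct, and it is the right one. However, a sign error runs through your treatment of (b)--(d): when there is no sequence $(\mu_{k})_{k}$ asymptotically supported on topological microstates with $\mu_{k}\to^{lw^{*}}m_{K}$, the supremum defining $h^{lw^{*}}_{(\sigma_{k})_{k\to\omega}}(m_{K})$ is over the \emph{empty} set and therefore equals $-\infty$, not $+\infty$. (The paper's ``$\ne\infty$'' in (b) is evidently a typo for ``$\ne -\infty$''.) Your separate claim that $h^{lw^{*}}(m_{Y})$ is finite is also unjustified: by Theorem~\ref{T:lw*intro theorem} it equals $h_{(\sigma_{k})_{k\to\omega},\topo}(G\actson X)$, which may well be $+\infty$, so ``finiteness of the topological entropy is automatic'' is false. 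Once the sign is corrected and the spurious finiteness is dropped, (b) and (d) do go through as you intend: for (b), $h^{lw^{*}}(m_{K})\ne -\infty$ is exactly the condition $K\in\mathcal{S}_{\omega}(G\actson X)$, whence $K\le Y$; for (d), if $K\not\subseteq Y$ then $h^{lw^{*}}(m_{K})=-\infty$ while $h_{(\sigma_{k})_{k\to\omega},\topo}(G\actson K)\ge 0$ (the constant microstate at $e$ always exists), so the two disagree, and $Y$ itself realizes equality.

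For (c) the gap you isolate is real and you do not close it. After the (correct) reduction to $K\subseteq Y$, you invoke strict monotonicity of $K'\mapsto h^{lw^{*}}(m_{K'})$ on subgroups of $Y$, but you give no proof; the ``Haar size'' heuristic is not an argument, and the convolution inequality in Lemma~\ref{L:basc convolution shiz} yields only weak monotonicity. Worse, without a finite-entropy hypothesis the statement can fail outright: take $G$ trivial (so the centralizer is the full ultraproduct, which acts ergodically) and $X$ any infinite compact metrizable group; then $Y=X$, $h_{\topo}(G\actson X)=+\infty$, and every infinite closed subgroup $K\subsetneq X$ also has $h_{\topo}(G\actson K)=+\infty$, so $X$ is not the smallest such subgroup. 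Either an additional hypothesis (e.g.\ $h_{\topo}<\infty$) or a genuinely new argument (say, a coset-translation refinement of Lemma~\ref{L:basc convolution shiz} producing a strict gain) is needed; you should supply one or flag (c) as open rather than sketch a route whose decisive step is unproved.
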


\vspace{-.01in}

The above result  makes it clear that the maximal element $Y$ of $\mathcal{S}_{\omega}(G\actson X)$ is of utmost importance for the entropy theory of $G\actson X.$
We make a few remarks on the use of ultrafilters in the paper. While ultrafilter analysis is quite efficient for our purposes, it is true that ultrafilters are quite abstract objects. it is not possible to concretely write down a free ultrafilter on the natural numbers. Because of this, it is desirable to obtain ultrafilter-free version of ours results, when possible. Fortunately, it well-known how to establish statements which are true for all sufficiently large $n$ from statements that are true for every free ultrafilter. This is why we are able to prove Theorems \ref{T:res finite case intro}-\ref{T:cpe intro} using Theorem \ref{T:maintheoremintro}. It is also not hard to state ultrafilter-free version of Theorem \ref{T:maintheoremintro} and Corollary \ref{C:easier in ergodic commutant}, though the statements are a little more awkward. The reader may consult Corollaries \ref{C:messy corollary} and \ref{C: ultraftiler free ergodic commutant} in this paper for the precise ultrafilter-free version of Theorem \ref{T:maintheoremintro} and Corollary \ref{C:easier in ergodic commutant}.

We close with remarks on the organization of the paper. We start in Section \ref{S:background setup} by giving some setup to the statement of the main theorem. Specifically in Section \ref{S:background setup} we give the precise definitions of the ultraproduct spaces we are working with for the majority of the paper, as well as the algebraic and lattice structure of these spaces. We end Section \ref{S:background setup}   by stating Theorem \ref{T:main theorem restated},  which is a lattice-theoretic result about these ultraproduct spaces, and then  begin Section \ref{S:major applications} deducing the main theorem (Theorem \ref{T:maintheoremintro}) from this lattice-theoretic result. This lattice-theoretic structure on ultraproduct spaces, is extremely powerful for the purposes of this paper. To illustrate this, in Section \ref{S:major applications}, we show that Corollary \ref{C:easier in ergodic commutant},  \ref{T:res finite case intro}-\ref{T:cpe intro}, \ref{T:lw*intro theorem}, \ref{T:properties of Y intro}, are quick corollaries of Theorem  \ref{T:maintheoremintro}, and that Theorem \ref{T:maintheoremintro} follows easily from \ref{T:main theorem restated}.     We then spend all of Section \ref{S:main proof} proving Theorem \ref{T:main theorem restated}. The proof is short, but requires a mild amount of preliminary material on operators on Hilbert spaces, which is why we postpone the proof until later in the paper. We then spend Section \ref{S:ultrafilter free versions} giving ultrafilter-free version of the main results, and in Section \ref{S:ergodic commutant sketch} we give a sketch of a proof of a simpler case of Corollary \ref{C:easier in ergodic commutant} that does not use ultrafilters. We spend Section \ref{S:Not SS actions} studying a few algebraic actions which are not strongly sofic, and computing (or at least obtaining information about) the maximal (generalized) subgroup whose Haar measure is a (generalized) local weak$^{*}$-limit. Appendix \ref{A:Loeb Measures} gives proofs of the main results on spaces of measurable maps on Loeb spaces that we need, and Appendix \ref{A:empirical limit} shows how we can also use our techniques to deduce the existence of a sequence of measures which locally and empirically converge to the Haar measure from the assumption of soficity of an action.

\textbf{Acknowledgments.} I thank Miklos Ab\'{e}rt, Andrei Alpeev, Tim Austin,  Lewis Bowen, Hanfeng Li and Brandon Seward for interesting discussions related to this paper. Much of this work was still done when I was a professor at Vanderbilt University. I thank Vanderbilt for providing an extremely productive work environment.

\section{Reduction of the main theorem: Lattice Structure on an ultraproduct of spaces of subgroups}

The main purpose of this section is to setup the appropriate the background for a lattice-theoretic statement which is sufficient to deduce Theorem  \ref{T:maintheoremintro}.

\subsection{Background and statement of the main reduction}\label{S:background setup}

We give some necessary background material for our proof of Theorem \ref{T:maintheoremintro}, which includes precise definitions of the spaces $\mathcal{L}_{\omega}(G\actson X),\mathcal{S}_{\omega}(G\actson X),$ and explain the algebraic and order structure that the spaces $\mathcal{L}_{\omega}(G\actson X),\mathcal{S}_{\omega}(G\actson X)$ posses.

We start by reminding the reader of the definition of ultrafilters and ultralimits.

\begin{defn}
Let $I$ be a set. An \emph{ultrafilter} on $I$ is a subset $\omega$ of the power set of $I$ satisfying the following axioms:
\begin{itemize}
\item $A,B\in \omega$ implies that $A\cap B\in \omega,$
\item if $A\in\omega$ and $A\subseteq B\subseteq I,$ then $B\in \omega,$
\item for every $A\in \omega,$ exactly one of $A,A^{c}\in \omega.$
\end{itemize}
We say that $\omega$ is \emph{free} if no finite set is in $I.$

Given a free ultrafilter $\omega$ on $I,$ a compact Hausdorff space $K,$ and $(x_{i})_{i\in I}\in K^{I},$ we say $x_{i}$ converges to $p\in K$ along $\omega$ if for every neighborhood $U$ of $p$ we have that $\{i\in I:x_{i}\in U\}\in \omega.$ We call $p$ the ultralimit of $x_{i}$ and write $\lim_{i\to\omega}x_{i}=p.$  It is an exercise to show that every element of $K^{I}$ converges along $\omega$ and the resulting ultralimit is unique.
\end{defn}

We will also frequently need to use notation for probability measures on compact spaces. If $X$ is a compact, metrizable space we let $\Prob(X)$ be the space of all Borel probability measure on $X.$ If $G$ is a countable, discrete, group and $G\actson X$ by homeomorphisms, we let $\Prob_{G}(X)$ be the set of $G$-invariant elements of $\Prob(X).$  If $A$ is a finite set, we let $u_{A}$ be the uniform measure on $A,$ if  $A=\{1,\cdots,n\},$ we typically use $u_{n}$ instead of $u_{\{1,\cdots,n\}}.$

\begin{defn}
Let $\omega$ be a free ultrafilter on the natural numbers, and let $(d_{k})_{k}$ be a sequence of natural numbers. Let
\[Z_{\omega}=\frac{\prod_{k}\{1,\dots,d_{k}\}}{(j_{k})_{k}\thicksim (j_{k}')_{k}\mbox{ if and only if } \{k:j_{k}=j_{k}'\}\in\omega}.\]
Given a sequence $A_{k}\subseteq \{1,\dots,d_{k}\},$ we let $\prod_{k\to\omega}A_{k}$ be the image in $Z_{\omega}$ of $\prod_{k}A_{k}$ under the quotient map $\prod_{k}\{1,\dots,d_{k}\}\to Z_{\omega}.$ We call the collection of $\prod_{k\to\omega}A_{k}$ ranking over all sequences $(A_{k})_{k}$ with $A_{k}\subseteq \{1,\dots,d_{k}\}$ the algebra of \emph{internal subsets of $Z_{\omega}$}. Let $\mathcal{B}$ be the $\sigma$-algebra of subsets of $Z_{\omega}$ generated by the internal subsets of $X.$ By \cite{Loeb}, there is a unique measure $u_{\omega}\colon \mathcal{B}\to [0,1]$ so that $u_{\omega}\left(\prod_{k\to\omega}A_{k}\right)=\lim_{k\to\omega}\frac{|A_{k}|}{d_{k}}.$ The triple $(X,\mathcal{B},u_{\omega})$ is called the \emph{Loeb measure space}. It can be shown that for every $A\in \mathcal{B},$ there is a sequence $(A_{k})_{k\to\omega}$ so that $u_{\omega}(A\Delta (A_{k})_{k\to\omega})=1.$ Given $(j_{k})_{k}\in \prod_{k}\{1,\dots,d_{k}\},$ we let $(j_{k})_{k\to\omega}$ be equivalence class of $(j_{k})_{k}$ as an element of $Z_{\omega}.$
\end{defn}

We will mainly be interested in measurable maps from the Loeb measure space into some metrizable space $X$.  By measurable, we mean that $f^{-1}(U)$ is measurable for every open subset $U$ of $X.$ It follows that if $f\colon Z_{\omega}\to X$ is measurable, then  $f^{-1}(E)$ is measurable for every Borel $E\subseteq X$ (Borel for us will mean the $\sigma$-algebra generated by the open subsets of $X$). We let $\Meas(Z_{\omega},X)$ be the space of measurable maps from the Loeb measure space into $X,$ where we identify two such maps if they agree almost everywhere. Given a metric $\rho$ on $X,$ we define a metric $\rho_{m}$ on $\Meas(Z_{\omega},X)$ by
\[\rho_{m}(\phi,\psi)=\int_{X}\rho(\phi(z),\psi(z))\,du_{\omega}(z).\]
It is straightforward to check that $\rho_{m}(f_{n},f)\to 0$ if and only if for every $\varepsilon>0$ we have that $\mu(\{x\in X:\rho(f_{n}(x),f(x))<\varepsilon\})\to 1.$
We need a few simple operations and terms related to the Loeb measure space. If $(X,\rho_{X}),(Y,\rho_{Y})$ are metric spaces and $f\colon X\to Y$ is Borel, we define $f_{*}\colon \Meas(Z_{\omega},X)\to \Meas(Z_{\omega},Y)$ by $f_{*}(\phi)=f\circ \phi.$

We collect some facts about the Loeb measure space that we will need later.

\begin{prop}\label{P:just the basics Loeb}
Let $(X,\rho_{X}),(Y,\rho_{Y})$ be metric spaces, $(d_{k})_{k}$ a sequence of natural numbers,  and $\omega$ a free ultrafilter on the natural numbers.
\begin{enumerate}[(i)]
\item \label{I:induced completeness} If $(X,\rho_{X})$ is a  complete metric space, then so is $(\Meas(Z_{\omega},X),\rho_{X,m}).$
\item \label{I:induced continuity Loeb} Suppose that $f\colon X\to Y$ is uniformly continuous.Then the induced map $f_{*}\colon \Meas(Z_{\omega},X)\to \Meas(Z_{\omega},Y)$ is uniformly continuous.
\item \label{I:closed image Loeb} Suppose that $f\colon X\to Y$ is uniformly continuous, and that there is a uniformly continuous map $g\colon X\to Y$ so that $g(f(x))=x$ for all $x\in X.$ Then $f_{*}(\Meas(Z_{\omega},X))$ is closed.

\end{enumerate}
\end{prop}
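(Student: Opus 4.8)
The plan is to treat the three parts in order, using throughout the observation recorded just before the proposition that $\rho_{X,m}(\phi_n,\phi)\to 0$ is equivalent to convergence in measure, together with the fact that for $\phi,\psi\in\Meas(Z_\omega,X)$ the function $z\mapsto \rho_X(\phi(z),\psi(z))$ is $u_\omega$-measurable (this is already implicit in the very definition of $\rho_{X,m}$, and follows for the limit functions below by the same argument used to establish measurability of the limit). Since replacing $\rho_X$ by $\min(\rho_X,1)$ changes neither the induced topology nor the notion of convergence in measure, I may and will assume all metrics are bounded by $1$, so that every integral appearing below is finite and bounded by the total mass $1$.

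For part (\ref{I:induced completeness}) I would run the usual $L^1$-completeness argument adapted to the Loeb space. Let $(\phi_n)_n$ be $\rho_{X,m}$-Cauchy and pass to a subsequence $(\phi_{n_j})_j$ with $\rho_{X,m}(\phi_{n_j},\phi_{n_{j+1}})\le 2^{-j}$. By monotone convergence the function $T(z)=\sum_{j}\rho_X(\phi_{n_j}(z),\phi_{n_{j+1}}(z))$ satisfies $\int_{Z_\omega} T\,du_\omega\le 1$, so $T(z)<\infty$ for $u_\omega$-a.e. $z$; for such $z$ the sequence $(\phi_{n_j}(z))_j$ is Cauchy in $X$ and hence, by completeness of $X$, converges to a point I call $\phi(z)$ (set $\phi$ equal to a fixed basepoint on the null set where $T=\infty$). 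The one genuinely delicate point is measurability of $\phi$, which I would establish without assuming $X$ separable by the distance-to-the-complement trick: for any open $U\subsetneq X$ the map $x\mapsto \rho_X(x,U^c)$ is $1$-Lipschitz, so $z\mapsto \rho_X(\phi(z),U^c)=\lim_j \rho_X(\phi_{n_j}(z),U^c)$ is a pointwise a.e.\ limit of real-valued measurable functions and therefore measurable, whence $\phi^{-1}(U)=\{z:\rho_X(\phi(z),U^c)>0\}$ is measurable. Finally the triangle inequality gives $\rho_X(\phi_{n_j}(z),\phi(z))\le \sum_{i\ge j}\rho_X(\phi_{n_i}(z),\phi_{n_{i+1}}(z))$, whose integral tends to $0$ by dominated convergence; thus $\phi_{n_j}\to\phi$, and a Cauchy sequence with a convergent subsequence converges.

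For part (\ref{I:induced continuity Loeb}) I would prove genuine uniform continuity by splitting the domain into a ``good'' and a ``bad'' set. Fix $\varepsilon>0$. Uniform continuity of $f$ yields $\delta_0>0$ with $\rho_Y(f(x),f(x'))<\varepsilon/2$ whenever $\rho_X(x,x')<\delta_0$. Writing $M$ for the bound on $\rho_Y$ and applying Markov's inequality in the form $u_\omega(\{z:\rho_X(\phi(z),\psi(z))\ge\delta_0\})\le \rho_{X,m}(\phi,\psi)/\delta_0$, I obtain
\[\rho_{Y,m}(f_*\phi,f_*\psi)\le \frac{\varepsilon}{2}+M\,\frac{\rho_{X,m}(\phi,\psi)}{\delta_0},\]
so any $\delta<\varepsilon\delta_0/(2M)$ forces $\rho_{Y,m}(f_*\phi,f_*\psi)<\varepsilon$, which is uniform continuity of $f_*$.

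For part (\ref{I:closed image Loeb}), reading the hypothesis in the only sensible way as a uniformly continuous $g\colon Y\to X$ with $g\circ f=\id_X$, I would show that $f_*$ is a closed embedding. Functoriality of $(\cdot)_*$ gives $g_*\circ f_*=(g\circ f)_*=\id$ on $\Meas(Z_\omega,X)$, and both $f_*$ and $g_*$ are continuous by part (\ref{I:induced continuity Loeb}). Now if $f_*(\phi_n)\to\psi$ in $\Meas(Z_\omega,Y)$, continuity of $g_*$ gives $\phi_n=g_*(f_*(\phi_n))\to g_*(\psi)=:\phi$, and then continuity of $f_*$ gives $f_*(\phi_n)\to f_*(\phi)$; by uniqueness of limits in the metric space $\Meas(Z_\omega,Y)$ we conclude $\psi=f_*(\phi)\in f_*(\Meas(Z_\omega,X))$. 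Hence the image is sequentially closed, and therefore closed. I expect the measurability of the pointwise limit in part (\ref{I:induced completeness}) to be the only real obstacle, since the Loeb $\sigma$-algebra is large and $X$ need not be separable; once that is handled by the Lipschitz distance-to-complement argument, parts (\ref{I:induced continuity Loeb}) and (\ref{I:closed image Loeb}) are essentially formal.
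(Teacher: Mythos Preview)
Your proof is correct. The paper does not actually prove this proposition: it simply remarks that the three items hold for an arbitrary measure space in place of the Loeb space and leaves the verification as an exercise. Your argument is exactly the standard one a reader would supply---the rapidly-Cauchy subsequence extraction for (\ref{I:induced completeness}), the Markov-inequality splitting for (\ref{I:induced continuity Loeb}), and the retraction argument for (\ref{I:closed image Loeb}).

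Two minor remarks. First, you are right that the hypothesis in (\ref{I:closed image Loeb}) contains a typo: the left inverse $g$ must go $Y\to X$, not $X\to Y$, for $g(f(x))=x$ to make sense; your reading is the intended one and is how the proposition is used throughout the paper (e.g.\ for $\lambda\colon\Prob(X)\to\Ball(B(L^2(X)))$). Second, your care about measurability of the pointwise limit in (\ref{I:induced completeness}) via the Lipschitz function $x\mapsto\rho_X(x,U^c)$ is more than the paper asks for---in every application the target is Polish (either compact metrizable or $\Ball(B(\mathcal H))$ with $\mathcal H$ separable), where the issue is routine---but it does no harm and makes the statement honest at the generality written.
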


Items (\ref{I:induced completeness})--(\ref{I:closed image Loeb}) are true with the Loeb measure replaced by a general measure space and are simple enough that their proofs are left as exercises to the reader. We only collect them here because we will use items (\ref{I:induced completeness})--(\ref{I:closed image Loeb}) frequently throughout the paper.

We will often be interested in $\Meas(Z_{\omega},X)$ when $(X,\rho)$ is compact. Fix a compact metric space $(X,\rho).$ In this case, we can simplify much of the discussion. For example, it is easy to see that given a sequence $(f_{n})_{n}\in \Meas(Z_{\omega},X),$ and $f\in \Meas(Z_{\omega},X)$ we have that $\rho_{m}(f_{n},f)\to 0$ if and only if for every neighborhood $\mathcal{O}$ of the diagonal in $X\times X,$ then $\mu(\{x\in X:(f(x),f_{n}(x))\in \mathcal{O}\})\to 1.$  It thus follows that the topology induced by $\rho_{m}$ does not depend upon $\rho.$ We call this the topology of convergence in measure. Because of these comments, in the compact case we will typically not specify a metric $\rho$ on $X,$ and instead can simply work with open neighborhoods of the diagonal in $X\times X.$ We need one last construction in the compact case.

For notation, given a set $A$ and an $n\in \N,$ we identify $A^{n}$ with all functions $\{1,\cdots,n\}\to A.$
 Fix a sequence $(f_{k})_{k}$ with $f_{k}\in X^{d_{k}}.$ Define a map $(f_{k})_{k\to\omega}\colon Z_{\omega}\to X$ by $(f_{k})_{k\to\omega}((j_{k})_{k\to\omega})=\lim_{k\to\omega}f_{k}(j_{k}).$ It is clear that if $\phi\colon X\to Y$ is Borel, then $\phi_{*}((f_{k}))_{k\to\omega}=(\phi\circ f_{k})_{k\to\omega}.$  It is shown in the appendix (see Proposition \ref{P:loeb basics yo} (\ref{I:borelmaps and shiz})) that $(f_{k})_{k\to\omega}$ is always a Borel map. Given a sequence of sets $E_{k}\subseteq X^{d_{k}},$ we let
\[\prod_{k\to\omega}E_{k}=\left\{(f_{k})_{k\to\omega}:(f_{k})_{k}\in \prod_{k}E_{k}\right\}.\]
A set of the form $\prod_{k\to\omega}E_{k}$ is called an \emph{internal set}.

Since we will use it quite often, we state some basic properties of internal sets in the following proposition. The proof of this proposition is given in Appendix \ref{A:Loeb Measures}.

\begin{prop}\label{P:basics compact Loeb}
Let $X$ be a compact metrizable space, $(d_{k})_{k}$ a sequence of natural numbers, and $\omega$ a free ultrafilter on the natural numbers.
\begin{enumerate}[(i)]
\item \label{I:going backwards loeb} For any $f\in \Meas(Z_{\omega},X),$ there exists $(f_{k})_{k}\in \prod_{k}X^{d_{k}}$ so that $f=(f_{k})_{k\to\omega}$ almost everywhere.
\item \label{I:some closed set etc Loeb} Internal subsets of $\Meas(Z_{\omega},X)$ are always closed.
\item If $Y$ is a Polish space, $f\colon X\to Y$ is continuous, and $E\subseteq \Meas(Z_{\omega},X)$ is a countable intersection of internal sets, then $f_{*}(E)$ is closed.
\item If $R\in [0,\infty)$ and $f\in L^{\infty}(Z_{\omega},u_{\omega}),$ and $f=(f_{k})_{k\to\omega}$ for some $f_{k}\in \ell^{\infty}(d_{k})$ with $\|f_{k}\|_{\infty}\leq R,$ then
\[\int f(z)\,du_{\omega}(z)=\lim_{k\to\omega}\frac{1}{d_{k}}\sum_{j=1}^{d_{k}}f_{k}(j).\]
\item If $f\in \Meas(Z_{\omega},X)$ and $(f_{k})_{k}\in \prod_{k}X^{d_{k}}$ is such that $f=(f_{k})_{k\to\omega}$ almost everywhere, then $f_{*}(u_{\omega})=\lim_{k\to\omega}(f_{k})_{*}(u_{d_{k}})$ in the weak$^{*}$ topology.
\end{enumerate}
\end{prop}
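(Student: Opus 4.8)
The plan is to isolate a single diagonalization lemma — essentially the countable ($\aleph_1$) saturation of the ultraproduct — and then read off items (i)--(iii) from it, treating (iv) as a direct computation with the Loeb measure and deducing (v) from (iv). I would state the lemma as: if $g^{(n)}=(g^{(n)}_{k})_{k\to\omega}$ is a sequence of internal functions converging in measure to some $g\in\Meas(Z_{\omega},X)$, then $g$ is itself internal, and one may choose a representing sequence $(f_{k})_{k}$ with $f_{k}\in\{g^{(n)}_{k}:n\in\N\}$ for every $k$. The proof is a telescoping diagonal argument: after passing to a subsequence with $\rho_{m}(g^{(n)},g^{(n+1)})<4^{-n}$, set $a^{(n)}_{k}=\frac{1}{d_{k}}\sum_{j=1}^{d_{k}}\rho(g^{(n)}_{k}(j),g^{(n+1)}_{k}(j))$, let $T_{n}=\{k:a^{(n)}_{k}<4^{-n}\}\in\omega$ (arranged decreasing), pick an index $n(k)\to\infty$ along $\omega$ with $k\in T_{n(k)}$, and set $f_{k}=g^{(n(k))}_{k}$. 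A telescoping estimate along the $T_{n}$ gives $\rho_{m}((f_{k})_{k\to\omega},g^{(N)})\le\frac{4}{3}4^{-N}$ for all $N$, forcing $(f_{k})_{k\to\omega}=g$.

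Given the lemma, I would obtain (i) by first showing internal functions are dense. Since $X$ is compact metrizable, an arbitrary $f$ is uniformly approximable by simple functions, and each simple function $\sum_{i}x_{i}\mathbf{1}_{B_{i}}$ is approximable in measure by a simple internal function $\sum_{i}x_{i}\mathbf{1}_{\prod_{k\to\omega}A^{(i)}_{k}}$; here I use that every Loeb-measurable set agrees up to a null set with an internal set, and that finite Boolean operations preserve internal sets, so the approximating level sets can be taken to partition $Z_{\omega}$. A simple internal function is internal, so internal functions are dense; the lemma says they form a closed set, hence they exhaust $\Meas(Z_{\omega},X)$, which is (i). Item (ii) is the lemma applied with the extra constraint $g^{(n)}_{k}\in E_{k}$: the representative it produces then satisfies $f_{k}\in E_{k}$, so the limit lies in $\prod_{k\to\omega}E_{k}$, proving internal sets are closed.

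The main obstacle is (iii), where the diagonal must simultaneously respect countably many internal membership constraints and a convergence constraint pushed through a continuous, non-invertible map $f$. Writing $E=\bigcap_{n}F_{n}$ with $F_{n}$ decreasing internal sets (intersections of internal sets are internal) and taking $\psi^{(m)}=f\circ\phi^{(m)}\to\psi$ with $\phi^{(m)}\in E$, I would build $\phi=(\phi_{k})_{k\to\omega}$ via a diagonal $m(k)\to\infty$ along $\omega$ drawn from the $\omega$-large sets $\{k:\phi^{(m)}_{k}\in E^{(1)}_{k}\cap\cdots\cap E^{(m)}_{k}\}$ and $\{k:\frac{1}{d_{k}}\sum_{j}\rho_{Y}(f(\phi^{(m)}_{k}(j)),\psi_{k}(j))<\delta_{m}+2^{-m}\}$, where $\delta_{m}=\rho_{m}(f\circ\phi^{(m)},\psi)\to 0$. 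The first family forces $\phi\in F_{n}$ for every $n$, hence $\phi\in E$; the second, using uniform continuity of $f$ on the compact $X$, forces $f\circ\phi=\psi$, so $\psi=f_{*}(\phi)\in f_{*}(E)$. This is exactly a countable-saturation argument, and the delicate point is that it succeeds even though $f_{*}$ need not have closed image for arbitrary measurable $E$; the hypothesis that $E$ is a countable intersection of internal sets is precisely what makes the saturation applicable.

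For (iv) I would verify the identity first on indicators of internal sets, where $\int\mathbf{1}_{\prod_{k\to\omega}A_{k}}\,du_{\omega}=\lim_{k\to\omega}|A_{k}|/d_{k}$ is the defining property of $u_{\omega}$, extend to simple internal functions by linearity, and then approximate a general bounded internal $f$ by partitioning $[-R,R]$ into short intervals $I_{i}$ and replacing $f$ by the simple internal function built from the internal level sets $\{j:f_{k}(j)\in I_{i}\}$; both the averages and the integral lie within $O(\varepsilon)$ of the simple approximation, and letting $\varepsilon\to 0$ gives the claim. Finally (v) is a clean consequence of (iv): for $\phi\in C(X)$ we have $\phi\circ f=(\phi\circ f_{k})_{k\to\omega}$ with $\|\phi\circ f_{k}\|_{\infty}\le\|\phi\|_{\infty}$, so (iv) yields $\int_{X}\phi\,d(f_{*}u_{\omega})=\int_{Z_{\omega}}\phi\circ f\,du_{\omega}=\lim_{k\to\omega}\frac{1}{d_{k}}\sum_{j}\phi(f_{k}(j))=\lim_{k\to\omega}\int_{X}\phi\,d((f_{k})_{*}u_{d_{k}})$, which is exactly the statement that $f_{*}u_{\omega}$ is the weak$^{*}$-ultralimit of $(f_{k})_{*}u_{d_{k}}$.
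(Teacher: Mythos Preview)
Your treatment of (i), (ii), (iv), (v) is correct and essentially matches the paper's approach (which is spread across three propositions in the appendix). The diagonalization lemma you isolate is exactly the content of the paper's completeness argument for internal sets; the density-via-simple-functions for (i), the level-set approximation for (iv), and the composition argument for (v) all agree with the paper.

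The gap is in (iii). Your parenthetical ``intersections of internal sets are internal'' is true for internal subsets of $Z_\omega$ itself, but it is not justified---and your argument genuinely needs it---for internal subsets of $\Meas(Z_\omega,X)$ as the paper defines them. The point is that $\prod_{k\to\omega}E_k$ is the image of $\prod_k E_k$ under the quotient by almost-everywhere equality, and images do not respect intersections. Concretely: take $d_k=k$, $X=[0,1]$, $A_k=\{(0,\dots,0)\}$, $B_k=\{(1/k,0,\dots,0)\}$. Then $A_k\cap B_k=\varnothing$ for every $k$, yet $\prod_{k\to\omega}A_k=\prod_{k\to\omega}B_k=\{0\}$; so for $\phi^{(m)}=0$ your set $\{k:\phi^{(m)}_k\in E^{(1)}_k\cap E^{(2)}_k\}$ is empty, not $\omega$-large, and the diagonal cannot be drawn from it. The paper avoids this by \emph{not} seeking a single representative of $\phi^{(m)}$ in the intersection: instead it fixes, for each $l$, a separate representative $(x^{(n)}_{l,k})_k$ of $\phi^{(n)}$ with $x^{(n)}_{l,k}\in E_{l,k}$, and adds to the diagonal bookkeeping the condition that the $l=1$ and general-$l$ representatives of the same $\phi^{(n)}$ are close in measure (which holds since they both represent $\phi^{(n)}$). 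The limit $\phi^{(\infty)}$ is then built from the $l=1$ representatives, and the closeness conditions force $\phi^{(\infty)}=(x^{(n(k))}_{l,k})_{k\to\omega}$ for every fixed $l$, hence $\phi^{(\infty)}\in E_l$ for every $l$. Your argument is easily repaired along these lines, but as written this step does not go through.
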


Fix a sequence $(d_{k})_{k}$ of natural numbers, and a free ultrafilter $\omega$ on the natural numbers. Let
\[\prod_{k\to\omega}S_{d_{k}}=\frac{\prod_{k}S_{d_{k}}}{\left\{(\sigma_{k})_{k}\in \prod_{k}S_{d_{k}}:\lim_{k\to\omega}d_{\Hamm}(\sigma_{k}(g),\id)=0\right\}}.\]
Given $(p_{k})_{k}\in \prod_{k}S_{d_{k}},$ we let $(p_{k})_{k\to\omega}\in \prod_{k\to\omega}S_{d_{k}}$ be the image of $(p_{k})_{k}$ under the quotient map. Observe that we have a well-defined, measure-preserving action $\prod_{k\to\omega}S_{d_{k}}\actson Z_{\omega}$ given by $(p_{k})_{k\to\omega}((j_{k})_{k\to\omega})=(p_{k}(j_{k}))_{k\to\omega}.$
Suppose that $G$ is a sofic group, and $\sigma_{k}\colon G\to S_{d_{k}}$ is a sofic approximation, we then have an induce injective homomorphism $\sigma_{\omega}\colon G\to \prod_{k\to\omega}S_{d_{k}}$ by $\sigma_{\omega}(g)=(\sigma_{n}(g))_{n\to\omega}.$ So we have an induced action of $G$ on $Z_{\omega}$ via $\sigma_{\omega}.$

Let $X$ be a compact, metrizable space and $G\actson X$ by homeomorphisms. A \emph{sequence of topological microstates with respect to $(\sigma_{k})_{k},\omega$} is a sequence $\phi_{k}\colon \{1,\dots,d_{k}\}\to X$ so that \[\lim_{k\to\omega}u_{d_{k}}(\{j:(g\phi(j),\phi(\sigma_{k}(g)(j))\in U\})=1\] for every neighborhood $U$ of the diagonal, and every $g\in G.$ We shall typically drop  ``with respect to $(\sigma_{k})_{k}"$, if $(\sigma)_{k}$ is clear from the context and simply say ``with respect to $\omega$." If a sequence $(\phi_{k})_{k}\in \prod_{k}X^{d_{k}}$ satisfies $\lim_{k\to\infty}u_{d_{k}}(\{j:(g\phi(j),\phi(\sigma_{k}(g)(j))\in U\})=1$ or every neighborhood $U$ of the diagonal, and every $g\in G,$ we call it a sequence of topological microstates with respect to $(\sigma_{k})_{k},$ again we often drop ``with respect to $(\sigma_{k})_{k}$" if it is clear from the context.

We mention for later use the related notion of measure-theoretic microstates. Suppose $X,(\sigma_{k})_{k},\omega$ are as in the preceding paragraph, and that $\mu\in \Prob_{G}(X).$ A \emph{sequence of measure-theoretic microstates for $G\actson (X,\mu)$ with respect to $(\sigma_{k})_{k},\omega$} is a sequence $(\phi_{k})_{k}$ of topological microstates which satisfy that $\lim_{k\to\omega}(\phi_{k})_{*}(u_{d_{k}})=\mu,$ where the limit is taken in the weak$^{*}$ topology.

\begin{prop}\label{P:characterize top micro}
Let $G$ be a sofic group with sofic approximation $\sigma_{k}\colon G\to S_{d_{k}},$ and let $\omega$ be a free ultrafilter on the natural numbers. Fix a compact, metrizable space $X$ and an action $G\actson X$ by homeomorphisms.
\begin{enumerate}[(i)]
\item \label{I:get this equivariant map} Suppose that $\phi_{k}\colon\{1,\dots,d_{k}\}\to X$ is a sequence of topological microstates with respect to $\omega,$ then the induced map $(\phi_{k})_{k\to\omega}\colon Z_{\omega}\to X$ is $G$-equivariant and measurable.
\item \label{I:get these microstates} Given  a $G$-equivariant, measurable map $\Phi\colon Z_{\omega}\to X,$ there is a sequence $(\phi_{k})_{k}$ of maps $\phi_{k}\colon \{1,\dots,d_{k}\}\to X$ which are topological microstates with respect to $\omega$ and so that $\Phi=(\phi_{k})_{k\to\omega}$ almost everywhere.
\end{enumerate}

\end{prop}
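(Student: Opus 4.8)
The plan is to exploit two elementary pointwise identities for the ultraproduct construction $(\cdot)_{k\to\omega}$ and to pass between almost-everywhere equality of functions on $Z_{\omega}$ and the defining ``$u_{d_{k}}$-measure'' form of the microstate condition, using the integration formula (part (iv) of Proposition~\ref{P:basics compact Loeb}) together with Markov's inequality. Throughout I fix a compatible metric $\rho$ on $X$ of diameter at most $1$. Measurability of $(\phi_{k})_{k\to\omega}$ in (i) is not something I need to reprove: it is the content of Proposition~\ref{P:loeb basics yo}(\ref{I:borelmaps and shiz}) in the appendix, so I will simply cite it. The two identities I use repeatedly are, first, that by definition of the action $\prod_{k\to\omega}S_{d_{k}}\actson Z_{\omega}$ one has the exact equality of functions
\[(\phi_{k})_{k\to\omega}\circ\sigma_{\omega}(g)=(\phi_{k}\circ\sigma_{k}(g))_{k\to\omega},\]
and second, that since $g$ acts on $X$ by a homeomorphism and ultralimits commute with continuous maps, $g\cdot (\phi_{k})_{k\to\omega}=(g\phi_{k})_{k\to\omega}$ as functions on $Z_{\omega}$.

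For (i), fix $g\in G$ and set $F=(g\phi_{k})_{k\to\omega}$ and $H=(\phi_{k}\circ\sigma_{k}(g))_{k\to\omega}$; by the two identities, proving equivariance at $g$ amounts to showing $F=H$ almost everywhere. Given $\varepsilon>0$, the microstate hypothesis says $\lim_{k\to\omega}\tfrac{|A_{k}|}{d_{k}}=1$, where $A_{k}=\{j:\rho(g\phi_{k}(j),\phi_{k}(\sigma_{k}(g)(j)))<\varepsilon\}$, so the internal set $\prod_{k\to\omega}A_{k}$ has full Loeb measure. For $z=(j_{k})_{k\to\omega}$ with $j_{k}\in A_{k}$ for $\omega$-most $k$, continuity of $\rho$ gives $\rho(F(z),H(z))=\lim_{k\to\omega}\rho(g\phi_{k}(j_{k}),\phi_{k}(\sigma_{k}(g)(j_{k})))\le\varepsilon$. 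Thus $\rho(F,H)\le\varepsilon$ off a null set; intersecting over $\varepsilon=1/n$ yields $F=H$ a.e., and intersecting the resulting conull sets over the countably many $g\in G$ shows $(\phi_{k})_{k\to\omega}$ is almost surely $G$-equivariant.

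For (ii), I first invoke part~\ref{I:going backwards loeb} of Proposition~\ref{P:basics compact Loeb} to choose $(\phi_{k})_{k}\in\prod_{k}X^{d_{k}}$ with $\Phi=(\phi_{k})_{k\to\omega}$ almost everywhere, and I claim this sequence already consists of topological microstates. Since $\sigma_{\omega}(g)$ is measure-preserving, $\Phi\circ\sigma_{\omega}(g)=(\phi_{k})_{k\to\omega}\circ\sigma_{\omega}(g)=(\phi_{k}\circ\sigma_{k}(g))_{k\to\omega}$ a.e., and combining this with $G$-equivariance $\Phi\circ\sigma_{\omega}(g)=g\cdot\Phi=(g\phi_{k})_{k\to\omega}$ a.e. gives $(g\phi_{k})_{k\to\omega}=(\phi_{k}\circ\sigma_{k}(g))_{k\to\omega}$ a.e. Letting $\psi_{k}(j)=\rho(g\phi_{k}(j),\phi_{k}(\sigma_{k}(g)(j)))\in[0,1]$, continuity of $\rho$ identifies $(\psi_{k})_{k\to\omega}$ with $\rho\big((g\phi_{k})_{k\to\omega},(\phi_{k}\circ\sigma_{k}(g))_{k\to\omega}\big)$, which vanishes a.e.; the integration formula applied to the bounded sequence $(\psi_{k})_{k}$ then gives $\lim_{k\to\omega}\tfrac{1}{d_{k}}\sum_{j=1}^{d_{k}}\psi_{k}(j)=\int(\psi_{k})_{k\to\omega}\,du_{\omega}=0$. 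Markov's inequality yields $u_{d_{k}}(\{j:\psi_{k}(j)\ge\varepsilon\})\le\varepsilon^{-1}\tfrac{1}{d_{k}}\sum_{j}\psi_{k}(j)\to 0$ along $\omega$ for every $\varepsilon>0$, and since for compact $X$ every neighborhood of the diagonal contains some $\{\rho<\varepsilon\}$, this is exactly the microstate condition at $g$. I expect the one genuinely substantive point in both directions to be precisely this translation between the measure-theoretic limit object on $Z_{\omega}$ and the finitary data $(\phi_{k})_{k}$; once it is set up through the internal-set description of $u_{\omega}$ and the integration formula, both implications collapse to Markov's inequality and the continuity of $\rho$.
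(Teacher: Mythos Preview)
Your proof is correct and follows essentially the same route as the paper's. The only cosmetic difference is in part~(ii): the paper directly identifies $u_{d_{k}}(\{j:\rho(\phi_{k}(\sigma_{k}(g)(j)),g\phi_{k}(j))<\varepsilon\})$ with the Loeb measure of the corresponding internal set and reads off that the limit is $1$, whereas you integrate the bounded distance function $\psi_{k}$ via Proposition~\ref{P:basics compact Loeb}(iv) and then apply Markov's inequality --- but these are two faces of the same translation between $u_{\omega}$ and the $u_{d_{k}}$.
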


\begin{proof}
Throughout we fix a compatible metric $\rho$ on $X.$

(\ref{I:get this equivariant map}):
Fix a $g\in G$ and an $\varepsilon>0.$ For $k\in \N,$ let $E_{k}=\{1\leq j\leq d_{k}:\rho(\phi_{k}(\sigma_{k}(g)(j)),g\phi_{k}(j))<\varepsilon\},$ and let $E=\{z\in Z_{\omega}:\rho(\Phi(\sigma_{\omega}(g)(z)),g\Phi(z))<\varepsilon\}.$ We then have that $E=\prod_{k\to\omega}E_{k},$ so
\[u_{\omega}(E)=\lim_{k\to\omega}u_{d_{k}}(E_{k})=1,\]
the last equality following by applying the definition of being a sequence of topological microstates to  $U=\{(x,y)\in X:\rho(x,y)<\varepsilon\}.$

(\ref{I:get these microstates}):
By Proposition \ref{P:loeb basics yo} (\ref{I:sequences and aljsakljal}), we may find a sequence of maps $(\phi_{k})_{k}\in \prod_{k}X^{d_{k}}$ so that $(\phi_{k})_{k\to\omega}=\Phi$ almost everywhere. Let $U$ be a neighborhood of the diagonal in $X\times X.$ Since $X$ is compact, we can find an $\varespilon>0$ so that $U\supseteq \{(x,y):\rho(x,y)<\varespilon\}.$ For every $g\in G,$ we then have
\begin{align*}
\lim_{k\to\omega}u_{d_{k}}(\{j:(\phi(\sigma_{k}(g)(j)),g\phi(j))\in U\})&\geq \lim_{k\to\omega}u_{d_{k}}(\{j:\rho(\phi(\sigma_{k}(g)(j)),g\phi(j))<\varespilon\})\\
&=u_{\omega}(\{z:\rho(\Phi(gz),g\Phi(z))<\varepsilon\})=1.
\end{align*}
Since $g\in G$ was arbitrary, we see that $(\phi_{k})_{k}$ is a sequence of topological microstates with respect to $\omega.$
\end{proof}

Because of the above proposition, we call a measurable, almost surely $G$-equivariant map $\Theta\colon Z_{\omega}\to X$ a topological microstate with respect to $(\sigma_{k})_{k},\omega.$ We typically drop the phrase ``with respect to $(\sigma_{k})_{k},\omega"$ if $(\sigma)_{k},\omega$ are clear from the context. If $\mu\in \Prob_{G}(X),$ then a measurable, almost surely $G$-equivariant map $\Theta\colon Z_{\omega}\to X$ with $(\Theta)_{*}(u_{\omega})=\mu$ will be called a \emph{measure microstate} with respect to $(\sigma_{k})_{k},\omega.$ As in the topological case,we typically drop the phrase  ``with respect to $(\sigma_{k})_{k},\omega"$ if $(\sigma)_{k},\omega$ are clear from the context. If there exists a measure microstate $\Theta\colon Z_{\omega}\to X,$ we then say that $G\actson (X,\mu)$ is \emph{sofic with respect to $(\sigma_{k})_{k},\omega.$}

As we stated in the introduction, much of the paper is related to the concept of local weak$^{*}$ convergence, we recall the notion here.
\begin{defn}\label{D:important notions for the paper}
Fix a free ultrafilter $\omega$ on the natural numbers. Let $G$ be a countable, discrete, sofic group with sofic approximation $\sigma_{k}\colon G\to S_{d_{k}},$ and let $X$ be a compact, metrizable space with $G\actson X$ by homeomorphisms.  Given a sequence $\mu_{k}\in \Prob(X^{d_{k}}),$ we say that:
\begin{itemize}
\item $\mu_{k}$ is \emph{asymptotically supported on topological microstates with respect to $(\sigma_{k})_{k}$ as $k\to\omega$} if for every $g\in G,$ and every open neighborhood $U$ of the diagonal in $X\times X$ we have:
\[\lim_{k\to\omega}\mu_{k}(\{\phi:u_{d_{k}}(\{j:(\phi(\sigma_{k}(g)(j)),g\phi(j))\in U\})\})=1,\]
\item for $\mu \in \Prob_{G}(X),$ we say that $\mu_{k}$ \emph{locally weak$^{*}$} converges to $\mu$ with respect to $(\sigma_{k})_{k}$ as $k\to\omega$ if for every weak$^{*}$ neighborhood $\mathcal{O}$ of $\mu$ we have
\[\lim_{k\to\omega}u_{d_{k}}(\{j:(\mathcal{E}_{j})_{*}(\mu_{k})\in \mathcal{O}\})=1.\]
Here $\mathcal{E}_{j}\colon X^{d_{k}}\to X$ is given by $\mathcal{E}_{j}(x)=x(j)$ for $1\leq j\leq d_{k}.$
\item For $\mu\in \Prob_{G}(X),$ we say that $\mu_{k}$ is asymptotically supported on measure  microstates for $G\actson (X,\mu)$ with respect to $(\sigma_{k})_{k}$ as $k\to\omega$ if $\mu_{k}$ is asymptotically supported on topological microstates for $G\actson X$ with respect to $(\sigma_{k})_{k}$ as $k\to\omega,$ and if for every weak$^{*}$-neighborhood $\mathcal{O}$ of $\mu$ we have
\[\lim_{k\to\omega}\mu_{k}(\{\phi:\phi_{*}(u_{d_{k}})\in \mathcal{O}\})=1.\]
\end{itemize}
\end{defn}

We shall typically drop some (or all) of ``for $G\actson (X,\mu)$", ``for $G\actson X$", or ``with respect to $(\sigma_{k})_{k}$" if the action or the sofic approximation is clear from the context (which it usually is). Thus we will often say ``$\mu_{k}$ is asymptotically supported on topological microstates as $k\to\omega.$" The notion of local weak$^{*}$-convergence has been well studied from the point of view of probability (see e.g. \cite{LW*AMW, LW*Measures,LWMEta, Newman1996SpatialIA}). Its relevance to sofic entropy was first systematically researched in \cite{AustinAdd}. We remark that related concepts were first introduced to the sofic entropy community in \cite{BowenEntropy} in the residually finite case (and in \cite{Me5} in the sofic case),  in \cite[Theorem 4.1]{BowenEntropy} \cite[Lemma 5.4]{Me5} it is already shown that local weak$^{*}$ convergence implies being asymptotically supported on measures  microstates if $G\actson (X,\mu)$ is ergodic.

For $G,X,(\sigma_{k})_{k},\omega$  as in Definition \ref{D:important notions for the paper}, we let $\mathcal{P}_{\omega}(G\actson X)$ be the set of all sequences $(\mu_{k})_{k}\in \prod_{k}X^{d_{k}}$ which are asymptotically supported on topological microstates as $k\to\omega.$ The space $\mathcal{P}_{\omega}(G\actson X)$ depends upon $(\sigma_{k})_{k},$ but we will suppress this from the notation as $(\sigma_{k})_{k}$ will essentially always be clear from the context.

We now turn to the  space of measurable functions on the Loeb measure space which will be most important for our study of local weak$^{*}$ convergence.

\begin{defn}
Let $G,(\sigma_{k})_{k},\omega$ be as in Definition \ref{D:important notions for the paper}. For $k\in \N,$ we set $\mathcal{E}^{(k)}\colon \Prob(X^{d_{k}})\to \Prob(X)^{d_{k}}$ by $\mathcal{E}^{(k)}(\mu)(j)=(\mathcal{E}_{j})_{*}(\mu_{k}).$  Define $\mathcal{E}\colon \prod_{k}\Prob(X^{d_{k}})\to \Meas(Z_{\omega},\Prob(X))$ by
\[\mathcal{E}((\mu_{k})_{k})=(\mathcal{E}^{(k)}(\mu_{k}))_{k\to\omega},\mbox{ for  $\mu=(\mu_{k})_{k}\in\prod_{k}\Prob(X^{d_{k}})$.}\]
We set $\mathcal{L}_{\omega}(G\actson X)=\mathcal{E}(\mathcal{P}_{\omega}(X,G)).$
\end{defn}

 Observe that if $\mu\in \Prob(X),$ then $\mu\in\mathcal{L}_{\omega}(G\actson X)$ if and only if there is a sequence $(\mu_{k})_{k}\in \prod_{k}\Prob(X^{d_{k}})$ of measures which are asymptotically supported on topological microstates and have $\mu_{k}\to^{lw^{*}}\mu$ as $k\to\omega.$ In this way we can think of $\mathcal{L}_{\omega}(G\actson X)$ as a space of generalized local weak$^{*}$ limits of measures supported on topological microstates. Moreover, the space $\mathcal{L}_{\omega}(G\actson X)$ has the added advantage that \emph{any} sequence $(\mu_{k})_{k}\in \prod_{k}\Prob(X^{d_{k}})$ which is asymptotically  supported on topological microstates as $k\to\omega$ has a ``local weak$^{*}$ limit" in $\mathcal{L}_{\omega}(G\actson X),$ namely $\mathcal{E}((\mu_{k})_{k}).$

The following is proved exactly as in Proposition \ref{P:characterize top micro}. Note that if $G,X$ are as above, then we have an induced action $G\actson \Prob(X)$ by $g\mu=g_{*}(\mu).$

\begin{prop}
Let $G$ be a countable discrete group with sofic approximation $\sigma_{k}\colon G\to S_{d_{k}}.$ Fix an action $G\actson X$ by homeomorphisms, where $X$ is a compact metrizable space, and a free ultrafilter on the natural numbers. Then every element of $\mathcal{L}_{\omega}(G\actson X)$ is $G$-equivariant with respect to the actions $G\actson \Prob(X),G\actson Z_{\omega}.$
\end{prop}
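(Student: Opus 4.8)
The plan is to mirror the proof of Proposition \ref{P:characterize top micro}(i), replacing the pointwise closeness of microstates by weak$^{*}$-closeness of their coordinate marginals and testing almost-sure equivariance against a countable family of continuous functions. Fix $(\mu_{k})_{k}\in \mathcal{P}_{\omega}(G\actson X)$ and write $\Phi=\mathcal{E}((\mu_{k})_{k})=(\mathcal{E}^{(k)}(\mu_{k}))_{k\to\omega}\in \Meas(Z_{\omega},\Prob(X))$. Since $\Prob(X)$ is compact metrizable (so weak$^{*}$-equality of two measures can be detected by a countable dense set of $F\in C(X)$) and $G$ is countable, it suffices to show, for each fixed $g\in G$ and each such $F$, that
\[ \int_{X} F\,d\Phi(\sigma_{\omega}(g)z)=\int_{X} F\,d\big(g\Phi(z)\big)\qquad (u_{\omega}\text{-a.e.}\ z); \]
intersecting the resulting countably many conull sets over $F$ and $g$ yields $\Phi(\sigma_{\omega}(g)z)=g\Phi(z)$ almost everywhere for every $g$, which is exactly equivariance for the actions $G\actson \Prob(X)$, $G\actson Z_{\omega}$.

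First I would unwind both sides as Ces\`aro ultralimits. For $z=(j_{k})_{k\to\omega}$, the definition of $(\cdot)_{k\to\omega}$ together with weak$^{*}$-continuity of $\nu\mapsto g_{*}\nu$ (so that pushforward commutes with the ultralimit in the compact space $\Prob(X)$) gives
\[ \int_{X} F\,d\Phi(\sigma_{\omega}(g)z)=\lim_{k\to\omega}\int_{X^{d_{k}}} F(\phi(\sigma_{k}(g)(j_{k})))\,d\mu_{k}(\phi), \]
\[ \int_{X} F\,d\big(g\Phi(z)\big)=\lim_{k\to\omega}\int_{X^{d_{k}}} F(g\phi(j_{k}))\,d\mu_{k}(\phi). \]
Setting $h_{k}(j)=\int_{X^{d_{k}}}[F(\phi(\sigma_{k}(g)(j)))-F(g\phi(j))]\,d\mu_{k}(\phi)$, which obeys $\|h_{k}\|_{\infty}\leq 2\|F\|_{\infty}$, the difference of the two sides equals $(h_{k})_{k\to\omega}(z)$. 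Thus $D:=(h_{k})_{k\to\omega}\in L^{\infty}(Z_{\omega})$ records precisely the defect from equivariance, and $|D|=(|h_{k}|)_{k\to\omega}$. By Proposition \ref{P:basics compact Loeb}(iv),
\[ \int_{Z_{\omega}}|D|\,du_{\omega}=\lim_{k\to\omega}\frac{1}{d_{k}}\sum_{j=1}^{d_{k}}|h_{k}(j)|\leq \lim_{k\to\omega}\int_{X^{d_{k}}}\frac{1}{d_{k}}\sum_{j=1}^{d_{k}}\big|F(\phi(\sigma_{k}(g)(j)))-F(g\phi(j))\big|\,d\mu_{k}(\phi), \]
so it remains to show the right-hand side vanishes, which forces $D=0$ almost everywhere.

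For the estimate, given $\varepsilon>0$ uniform continuity of $F$ on the compact space $X$ yields a neighborhood $U$ of the diagonal with $(x,y)\in U\Rightarrow |F(x)-F(y)|<\varepsilon$. On the ``good set'' $\{\phi:u_{d_{k}}(\{j:(\phi(\sigma_{k}(g)(j)),g\phi(j))\in U\})>1-\varepsilon\}$ the inner Ces\`aro average is at most $\varepsilon+2\|F\|_{\infty}\varepsilon$, while on its complement it is trivially at most $2\|F\|_{\infty}$; by the asymptotic-support hypothesis of Definition \ref{D:important notions for the paper} (applied to this $U$) the $\mu_{k}$-measure of the good set tends to $1$ along $\omega$. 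Hence the $\omega$-limit is at most $\varepsilon(1+2\|F\|_{\infty})$, and letting $\varepsilon\to 0$ gives $\int_{Z_{\omega}}|D|\,du_{\omega}=0$, completing the argument. I expect the only point genuinely needing care to be this final double-averaging: one must correctly interchange the two ``for most'' quantifiers (most configurations $\phi$ versus most coordinates $j$) and control the small residual mass on the complementary set, all of which is neatly packaged by feeding the neighborhood $U$ selected from $F$ into Definition \ref{D:important notions for the paper}. Everything else is the same bookkeeping as in Proposition \ref{P:characterize top micro}, with Proposition \ref{P:basics compact Loeb}(iv) converting the Loeb integral into the Ces\`aro ultralimit.
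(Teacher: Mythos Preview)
Your proof is correct and is in the same spirit as what the paper intends: the paper simply states that the proposition ``is proved exactly as in Proposition \ref{P:characterize top micro},'' and your argument is a careful unpacking of precisely that strategy, with the extra Fubini/double-averaging step needed to pass from ``$\mu_{k}$-most $\phi$ have most $j$ good'' to ``most $j$ have marginals close.'' The only cosmetic difference is that you test equivariance against a countable dense family $F\in C(X)$ and control the $L^{1}(u_{\omega})$-defect, whereas a direct transcription of Proposition \ref{P:characterize top micro} would fix a compatible metric on $\Prob(X)$ and show $u_{\omega}(\{z:\rho(\Phi(\sigma_{\omega}(g)z),g\Phi(z))<\varepsilon\})=1$; these are equivalent bookkeeping choices.
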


It will be helpful to frequently use the following topological fact about this space of ``generalized local weak-$^{*}$ limits". We will need to recall the following definition of Kerr-Li (see \cite[Definition 2.2]{KLi2}). For this definition, we use the following notation: if $\rho$ is a pseudometric on a set $E,$ then for any $n\in \N,p\in [1,\infty),$ we denote $\rho_{p}$ the pseudometric on $E^{n}$ defined by $\rho_{p}(\phi,\psi)=\left(\frac{1}{n}\sum_{j=1}^{n}\rho(\phi(j),\psi(j))^{p}\right)^{1/p}.$

\begin{defn}
Let $G$ be a countable group, $d\in \N,$ and $\sigma\colon G\to S_{d}$ a map (not assumed to be a homomorphism). Let $X$ be a compact, metrizable group with $G\actson X$ by homeomorphisms. For a finite $F\subseteq G$ and $\delta>0,$ we let $\Map(\rho,F,\delta,\sigma)$ be the set of all $\phi\colon\{1,\dots,d\}\to X$ so that:
\[\max_{g\in F}\rho_{2}(g\phi,\phi\circ \sigma_{k}(g))<\delta.\]

\end{defn}

\begin{thm}\label{T:topology is helpful}
Let $G$ be a countable discrete group with sofic approximation $\sigma_{k}\colon G\to S_{d_{k}}.$ Fix an action $G\actson X$ by homeomorphisms, where $X$ is a compact metrizable space, and a free ultrafilter on the natural numbers. Then $\mathcal{L}_{\omega}(G\actson X)$ is a countable intersection of internal sets, and is thus closed.
\end{thm}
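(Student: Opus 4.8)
The plan is to exhibit $\mathcal{L}_{\omega}(G\actson X)$ explicitly as a countable intersection of internal subsets of $\Meas(Z_{\omega},\Prob(X))$; closedness is then immediate, since internal sets are closed by Proposition \ref{P:basics compact Loeb}(\ref{I:some closed set etc Loeb}) and $\Meas(Z_{\omega},\Prob(X))$ is a complete metric space by Proposition \ref{P:just the basics Loeb}(\ref{I:induced completeness}). First I would repackage the defining condition of $\mathcal{P}_{\omega}(G\actson X)$ into countably many finite-level constraints. Since $G$ is countable and $X$ is compact metrizable, fix a countable basis $W_{1}\supseteq W_{2}\supseteq\cdots$ of neighborhoods of the diagonal in $X\times X$ and enumerate all triples in $G\times\{W_{l}\}_{l}\times\N$ as a single sequence, the $i$-th being $(g_{i},W_{i},m_{i})$. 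For each $i,k$ set
\[ D_{k}^{(i)}=\Big\{\mu\in\Prob(X^{d_{k}}):\mu\big(\{\phi:u_{d_{k}}(\{j:(\phi(\sigma_{k}(g_{i})(j)),g_{i}\phi(j))\in W_{i}\})>1-\tfrac{1}{m_{i}}\}\big)>1-\tfrac{1}{m_{i}}\Big\},\]
and put $\tilde D_{k}^{(n)}=\bigcap_{i\le n}D_{k}^{(i)}$, a decreasing sequence in $n$. Unwinding Definition \ref{D:important notions for the paper} (using monotonicity in $W$ and $m$ to see that the cofinal family $\{(g_{i},W_{i},m_{i})\}$ suffices) shows that $(\mu_{k})_{k}\in\mathcal{P}_{\omega}(G\actson X)$ if and only if $\{k:\mu_{k}\in D_{k}^{(i)}\}\in\omega$ for every $i$.

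Now push these constraints through the marginal map: set $\tilde E_{k}^{(n)}=\mathcal{E}^{(k)}(\tilde D_{k}^{(n)})\subseteq\Prob(X)^{d_{k}}$ (decreasing in $n$), and claim
\[\mathcal{L}_{\omega}(G\actson X)=\bigcap_{n=1}^{\infty}\prod_{k\to\omega}\tilde E_{k}^{(n)},\]
a countable intersection of internal sets. The inclusion $\subseteq$ is the easy half: if $\Psi=\mathcal{E}((\mu_{k})_{k})$ with $(\mu_{k})_{k}\in\mathcal{P}_{\omega}(G\actson X)$ and $\nu_{k}:=\mathcal{E}^{(k)}(\mu_{k})$, then for each fixed $n$ the set $\{k:\mu_{k}\in\tilde D_{k}^{(n)}\}=\bigcap_{i\le n}\{k:\mu_{k}\in D_{k}^{(i)}\}$ lies in $\omega$ by finite intersection, so $\nu_{k}\in\tilde E_{k}^{(n)}$ for $\omega$-almost every $k$ and hence $\Psi=(\nu_{k})_{k\to\omega}\in\prod_{k\to\omega}\tilde E_{k}^{(n)}$. (Here one uses the standard convention that an internal set depends only on the $\omega$-germ of $(\tilde E_{k}^{(n)})_{k}$; in the algebraic-action setting the constant map to the identity is always a topological microstate, so $\tilde D_{k}^{(n)}\ne\emptyset$ and this point is automatic.)

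The substance is the reverse inclusion, and this is where I expect the main difficulty: from membership of $\Psi$ in \emph{each} internal set I must manufacture a \emph{single} sequence $(\mu_{k})_{k}\in\mathcal{P}_{\omega}(G\actson X)$ with $\mathcal{E}((\mu_{k})_{k})=\Psi$. Fix a representative $\Psi=(\nu_{k})_{k\to\omega}$ via Proposition \ref{P:basics compact Loeb}(\ref{I:going backwards loeb}) and a compatible metric $\rho$ on $\Prob(X)$. For each $n$, membership $\Psi\in\prod_{k\to\omega}\tilde E_{k}^{(n)}$ yields $(\eta_{k}^{(n)})_{k}\in\prod_{k}\tilde E_{k}^{(n)}$ with $(\eta_{k}^{(n)})_{k\to\omega}=\Psi$; writing $\eta_{k}^{(n)}=\mathcal{E}^{(k)}(\mu_{k}^{(n)})$ with $\mu_{k}^{(n)}\in\tilde D_{k}^{(n)}$ and computing the $\rho_{m}$-distance through Proposition \ref{P:basics compact Loeb}(iv), the set $A_{n}=\{k:\frac{1}{d_{k}}\sum_{j}\rho(\eta_{k}^{(n)}(j),\nu_{k}(j))<\tfrac{1}{n}\}$ lies in $\omega$. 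I would then diagonalize along $\omega$: with $B_{n}=\bigcap_{i\le n}A_{i}\in\omega$ decreasing, define $n(k)=\max\{n\le k:k\in B_{n}\}$, so $\{k:n(k)\ge N\}\supseteq B_{N}\cap\{k\ge N\}\in\omega$ for every $N$, i.e. $n(k)\to\infty$ along $\omega$. Setting $\mu_{k}=\mu_{k}^{(n(k))}$, the nesting $\tilde D_{k}^{(n(k))}\subseteq D_{k}^{(i)}$ for $i\le n(k)$ gives $\{k:\mu_{k}\in D_{k}^{(i)}\}\supseteq\{k:n(k)\ge i\}\in\omega$, so $(\mu_{k})_{k}\in\mathcal{P}_{\omega}(G\actson X)$; and since $k\in B_{n(k)}\subseteq A_{n(k)}$ we get $\frac{1}{d_{k}}\sum_{j}\rho(\mathcal{E}^{(k)}(\mu_{k})(j),\nu_{k}(j))<\tfrac{1}{n(k)}\to0$ along $\omega$, whence $\mathcal{E}((\mu_{k})_{k})=\Psi$ by Proposition \ref{P:basics compact Loeb}(iv). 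This proves $\Psi\in\mathcal{L}_{\omega}(G\actson X)$ and completes the characterization. The only genuinely delicate point is arranging the constraints to be \emph{nested} (via the conjunctions $\tilde D_{k}^{(n)}$), so that the single diagonal choice $\mu_{k}^{(n(k))}$ simultaneously meets every finitely-indexed constraint below level $n(k)$; without nesting the diagonal sequence would satisfy only the top condition at each $k$ and would fail to land in $\mathcal{P}_{\omega}(G\actson X)$.
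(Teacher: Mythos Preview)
Your proof is correct and follows essentially the same strategy as the paper: write $\mathcal{L}_{\omega}(G\actson X)$ as a countable intersection of internal sets indexed by increasingly stringent microstate constraints, get the easy inclusion directly, and obtain the reverse inclusion via a diagonalization along $\omega$ that splices together the witnesses $\mu_{k}^{(n)}$. The paper packages the constraints using the sets $\Map(\rho,F_{n},\delta_{n},\sigma_{k})$ (with an exhaustion $F_{n}\uparrow G$ and $\delta_{n}\downarrow 0$) and tracks closeness of marginals via weak$^{*}$ neighborhoods of $0$ in $M(X)$, whereas you enumerate triples $(g,W,m)$ and use a compatible metric on $\Prob(X)$; these are equivalent bookkeeping devices, and your explicit nesting $\tilde D_{k}^{(n)}=\bigcap_{i\le n}D_{k}^{(i)}$ makes the diagonal step slightly cleaner than the paper's version. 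One small caveat: your parenthetical fix for the possibility $\tilde D_{k}^{(n)}=\varnothing$ invokes the identity of $X$, which is only available in the algebraic setting, while the theorem is stated for arbitrary $G\actson X$; the paper's proof has the same gap, and it is harmlessly repaired (for both) by declaring $\tilde E_{k}^{(n)}=\Prob(X)^{d_{k}}$ whenever $\tilde D_{k}^{(n)}=\varnothing$, which does not affect the $\omega$-limit since such $k$ form a set not in $\omega$ once any microstates exist.
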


\begin{proof}

Fix a compatible metric $\rho$ on $X.$ Choose an increasing sequence of finite subsets $F_{n}$ of $G$ with $\bigcup_{n}F_{n}=G,$ and a decreasing sequence $\delta_{n}$ of positive real numbers converging to zero.
 We claim that
\begin{equation}\label{E:get these interesections}
\mathcal{L}_{\omega}(G\actson X)=\bigcap_{n}\prod_{k\to\omega}\mathcal{E}^{(k)}\left(\left\{\mu \in \Prob(X^{d_{k}}):\mu\left(\Map(\rho,F_{n},\delta_{n},\sigma_{k})\right)\geq 1-\delta\right\}\right)
\end{equation}
Once we know (\ref{E:get these interesections}), it follows from Proposition \ref{P:basics compact Loeb} (\ref{I:some closed set etc Loeb}) that $\mathcal{P}_{\omega}(X,G)$ is closed. So it suffices to show (\ref{E:get these interesections}). It is clear that
\[\mathcal{L}_{\omega}(G\actson X)\subseteq=\bigcap_{n}\prod_{k\to\omega}\mathcal{E}^{(k)}\left(\left\{\mu \in \Prob(X^{d_{k}}):\mu\left(\Map(\rho,F_{n},\delta_{n},\sigma_{k})\right)\geq 1-\delta\right\}\right).\]
Fix a
\[\mu \in \bigcap_{n}\prod_{k\to\omega}\mathcal{E}^{(k)}\left(\left\{\mu \in \Prob(X^{d_{k}}):\mu\left(\Map(\rho,F_{n},\delta_{n},\sigma_{k})\right)\geq 1-\delta\right\}\right),\] it then suffices to show that $\mu \in \mathcal{E}(\mathcal{P}_{\omega}(X,G)).$ Write $\mu=(\tilde{\mu}_{k})_{k\to\omega}$ for some $(\widetilde{\mu}_{k})_{k}\in \prod_{k}\Prob(X^{d_{k}}).$

Fix a decreasing sequence of weak$^{*}$-neighborhoods $\mathcal{O}_{n}$ of $\{0\}$ in $M(X)$ with $\left(\bigcap_{n=1}^{\infty}\mathcal{O}_{n}\right)\cap \Ball(M(X))=\{0\}.$ Since
\[\mu \in\bigcap_{n}\prod_{k\to\omega}\mathcal{E}^{(k)}\left(\left\{\mu \in \Prob(X^{d_{k}}):\mu\left(\Map(\rho,F_{n},\delta_{n},\sigma_{k})\right)\geq 1-\delta\right\}\right),\]
for every $n\in \N$ we may find a $\nu^{(n)}\in \prod_{k}\mathcal{E}^{(k)}\left(\left\{\mu \in \Prob(X^{d_{k}}):\mu\left(\Map(\rho,F_{n},\delta_{n},\sigma_{k}\right)\geq 1-\delta\right\}\right)$ so that $\mu=(\mathcal{E}^{(k)}(\nu^{(n)}_{k}))_{k\to\omega}.$ We may find a decreasing sequence $B_{n}$ of subsets of $\N$ so that:
\begin{itemize}
\item $B_{n}\in \omega$ for all $n\in \N,$ and
\item for all $k\in B_{n}$ we have $u_{d_{k}}(\{j:\tilde{\mu}_{k,j}-\nu^{(n)}_{k,j}\in \mathcal{O}_{n}\})\geq 1-2^{-n}.$
\end{itemize}
Let $B_{0}=\N\setminus B_{1}.$ Given $k\in \N,$ let $n(k)\in \N\cup\{0\}$ be such that $n\in B_{n(k)}\setminus B_{n(k)+1}.$ Define $\mu_{k}=\nu^{n(k)}_{k}.$ It is then easy to see that $(\mu_{k})_{k}\in \mathcal{P}_{\omega}(X,G)$ and that $\mathcal{E}((\mu_{k})_{k})=\mu.$ This shows (\ref{E:get these interesections}) and completes the proof.

\end{proof}

We will also need the notion of absorbing topological microstates which uses the ultraproduct framework we have setup so far.

\begin{defn}
Fix a sofic group $G$, a sofic approximation $\sigma_{k}\colon G\to S_{d_{k}},$ and a free ultrafilter $\omega$ on the natural numbers. Let $X$ be a compact, metrizable space with $G\actons X$ by homeomorphisms. We say that a measurable map $Y\colon Z_{\omega}\to \mathcal{F}(X)$ \emph{absorbs all topological microstates for $G\actson X$ with respect to  $(\sigma_{k})_{k},\omega$} if topological microstate $\Phi\colon Z_{\omega}\to X$ we have $\Phi(z)\in Y(z)$ for $u_{\omega}$-almost every $z\in Z_{\omega}.$
\end{defn}

We will often drop the phrase `` with respect to $(\sigma_{k})_{k},\omega$" if $(\sigma_{k})_{k},\omega$ are clear from the context.

By Proposition \ref{P:basics compact Loeb} (\ref{I:going backwards loeb}), we know that every measurable map $\Phi\colon Z_{\omega}\to X$ is of the form $(\phi_{k})_{k\to\omega}$, up to sets of measure zero. So another way to say this is as follows: fix a sequence $(Y_{k})_{k}\in \prod_{k}\mathcal{F}(X^{d_{k}}).$ Then $Y=(Y_{k})_{k\to\omega}$ absorbs all topological microstates if and only if  for every sequence $(\phi_{k})_{k}\in \prod_{k}X^{d_{k}}$ which are topological microstates with respect to $\omega,$ and every $\varepsilon>0$ we have
\[\lim_{k\to\omega}u_{d_{k}}(\{j:\rho(\phi_{k}(j),Y_{k}(j))<\varepsilon\})=1.\]

We introduce a kind of order relation that we will make good use of later. Given $\mu\in \Meas(Z_{\omega},\Prob(X))$ and $Y\in \Meas(Z_{\omega},\mathcal{F}(X)),$ we will say that \emph{$Y$ supports $\mu$} and write $\mu\preceq Y$ if $\mu(z)(Y(z))=1$ for almost every $z\in Z_{\omega}.$

We also induce a partial order on $\Meas(Z_{\omega},\mathcal{F}(X))$ as follows: given $Y_{1},Y_{2}\in \Meas(Z_{\omega},\mathcal{F}(X))$ we say that $Y_{1}\leq Y_{2}$ if $Y_{1}(z)\subseteq Y_{2}(z)$ for almost every $z\in Z_{\omega}.$

We present a few basic facts about the above order relations that we will need later.

\begin{prop}\label{P:what it means to absorb}
Let $G$ be  a countable, discrete, sofic group with sofic approximation $\sigma_{k}\colon G\to S_{d_{k}},$ and let $X$ be a compact, metrizable space with $G\actson X$ by homeomorphisms.
Let $Y\in \Meas(Z_{\omega},\mathcal{F}(X)),$ and let $(Y_{k})_{k}\in \prod_{k}\mathcal{F}(X)^{d_{k}}$ be such that $Y=(Y_{k})_{k\to\omega}$ almost everywhere.  Then the following are equivalent:
\begin{enumerate}[(i)]
\item \label{I:absorbs stuff alala} $Y$ absorbs all topological microstates with respect to $\omega,$
\item \label{I:sequences and stuff yo!!;pkjsa} for any sequence $(\phi_{k})_{k}$ of topological microstates with respect to $\omega$ we have
\[\lim_{k\to\omega}\frac{1}{d_{k}}\sum_{j=1}^{d_{k}}\rho(\phi_{k}(j),Y_{k}(j))=0.\]
\item \label{I:finitary version} For every $\varepsilon>0,$ there is a finite $F\subseteq G,$ a  $\delta>0,$ and a $B\in\omega$ so that for every $k\in B,$ and every $\phi\in\Map(\rho,F,\delta,\sigma_{k}),$ we have that $\frac{1}{d_{k}}\sum_{j=1}^{d_{k}}\rho(\phi(j),Y_{k}(j))<\varespilon,$
\item \label{I:absorbs measures} for any $\mu \in\mathcal{L}_{\omega}(G\actson X)$ we have that $\mu\preceq Y.$
\end{enumerate}
\end{prop}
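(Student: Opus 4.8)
The plan is to establish the cycle $(\ref{I:absorbs measures}) \Rightarrow (\ref{I:absorbs stuff alala}) \Leftrightarrow (\ref{I:sequences and stuff yo!!;pkjsa}) \Leftrightarrow (\ref{I:finitary version}) \Rightarrow (\ref{I:absorbs measures})$, which closes the loop. A device used repeatedly is the function $d\colon X\times\mathcal{F}(X)\to[0,\infty)$, $d(x,A)=\rho(x,A)=\inf_{a\in A}\rho(x,a)$, which is jointly continuous (indeed $1$-Lipschitz in $x$ and $1$-Lipschitz in $A$ for the Hausdorff metric). Consequently, for a topological microstate $\Phi=(\phi_{k})_{k\to\omega}$ (legitimate by Proposition \ref{P:characterize top micro}), the scalar function $z\mapsto\rho(\Phi(z),Y(z))$ agrees almost everywhere with $(j\mapsto\rho(\phi_{k}(j),Y_{k}(j)))_{k\to\omega}$, via the identity $\psi_{*}((f_{k})_{k\to\omega})=(\psi\circ f_{k})_{k\to\omega}$ for Borel $\psi$ recorded before Proposition \ref{P:basics compact Loeb}. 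I also set $R:=\sup_{x,y}\rho(x,y)<\infty$, finite by compactness, so that $\rho(\cdot,Y_{k}(\cdot))$ is bounded and Proposition \ref{P:basics compact Loeb}(iv) applies.

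For $(\ref{I:absorbs stuff alala})\Leftrightarrow(\ref{I:sequences and stuff yo!!;pkjsa})$: since $Y(z)$ is closed, $\Phi(z)\in Y(z)$ iff $\rho(\Phi(z),Y(z))=0$, so $Y$ absorbing $\Phi$ is equivalent to $\int_{Z_{\omega}}\rho(\Phi(z),Y(z))\,du_{\omega}(z)=0$; by Proposition \ref{P:basics compact Loeb}(iv) this integral equals $\lim_{k\to\omega}\frac{1}{d_{k}}\sum_{j=1}^{d_{k}}\rho(\phi_{k}(j),Y_{k}(j))$. Quantifying over all microstates and invoking the correspondence of Proposition \ref{P:characterize top micro} between microstates $\Phi$ and microstate sequences $(\phi_{k})_{k}$ yields the equivalence. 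For $(\ref{I:sequences and stuff yo!!;pkjsa})\Leftrightarrow(\ref{I:finitary version})$: the direction $(\ref{I:finitary version})\Rightarrow(\ref{I:sequences and stuff yo!!;pkjsa})$ is immediate once one notes that any microstate sequence lies in $\Map(\rho,F,\delta,\sigma_{k})$ for $k$ in a set belonging to $\omega$, using the two-sided equivalence (Chebyshev plus boundedness of $\rho$) between the $\rho_{2}$-defect $\max_{g\in F}\rho_{2}(g\phi_{k},\phi_{k}\circ\sigma_{k}(g))$ being small and the fraction of coordinates violating the diagonal condition being small. The reverse $(\ref{I:sequences and stuff yo!!;pkjsa})\Rightarrow(\ref{I:finitary version})$ is the crux, which I would prove by contraposition: failure of $(\ref{I:finitary version})$ produces $\varepsilon>0$ such that for every finite $F$, every $\delta>0$, and every $B\in\omega$ there exist $k\in B$ and $\phi\in\Map(\rho,F,\delta,\sigma_{k})$ with $\frac{1}{d_{k}}\sum_{j}\rho(\phi(j),Y_{k}(j))\geq\varepsilon$. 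Fixing an exhaustion $F_{n}\uparrow G$ and $\delta_{n}\downarrow 0$, the ultrafilter axioms convert ``meets every $B\in\omega$'' into ``the set $A_{n}$ of good indices lies in $\omega$''; since the $\Map$-sets shrink in $n$ the $A_{n}$ are decreasing, so the diagonal choice $n(k):=\max\{n\leq k:k\in A_{n}\}$ satisfies $\{k:n(k)\geq N\}\in\omega$ for every $N$. Selecting $\phi_{k}$ witnessing $A_{n(k)}$ gives a microstate sequence (again by Chebyshev, as $\delta_{n(k)}\to 0$) whose average distance to $Y_{k}$ stays $\geq\varepsilon$ along $\omega$, contradicting $(\ref{I:sequences and stuff yo!!;pkjsa})$.

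For the measure-theoretic endpoints, $(\ref{I:absorbs measures})\Rightarrow(\ref{I:absorbs stuff alala})$ is obtained by feeding point masses into $\mathcal{L}_{\omega}$: given a microstate $\Phi=(\phi_{k})_{k\to\omega}$, the sequence $\mu_{k}:=\delta_{\phi_{k}}\in\Prob(X^{d_{k}})$ lies in $\mathcal{P}_{\omega}(G\actson X)$, and since the $j$-th marginal of $\delta_{\phi_{k}}$ is $\delta_{\phi_{k}(j)}$ we get $\mathcal{E}((\mu_{k})_{k})(z)=\delta_{\Phi(z)}$; applying $(\ref{I:absorbs measures})$ forces $\delta_{\Phi(z)}(Y(z))=1$, i.e.\ $\Phi(z)\in Y(z)$ a.e. Conversely, for $(\ref{I:finitary version})\Rightarrow(\ref{I:absorbs measures})$ with $\mu=\mathcal{E}((\mu_{k})_{k})\in\mathcal{L}_{\omega}$, I would show the nonnegative function $z\mapsto\int_{X}\rho(x,Y(z))\,d\mu(z)(x)$ has zero integral. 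By Proposition \ref{P:basics compact Loeb}(iv) this integral is $\lim_{k\to\omega}\int_{X^{d_{k}}}\big(\frac{1}{d_{k}}\sum_{j}\rho(\phi(j),Y_{k}(j))\big)\,d\mu_{k}(\phi)$; splitting the inner integral over $\Map(\rho,F,\delta,\sigma_{k})$, where $(\ref{I:finitary version})$ bounds the integrand by $\varepsilon$, and its complement, where the integrand is $\leq R$ but the $\mu_{k}$-mass tends to $0$ along $\omega$ because $(\mu_{k})_{k}\in\mathcal{P}_{\omega}$ (as extracted from the proof of Theorem \ref{T:topology is helpful}), gives a bound of $\varepsilon$, hence $0$. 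Almost everywhere this forces $\mu(z)$ to be supported on the closed set $Y(z)$, i.e.\ $\mu\preceq Y$.

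The main obstacle I anticipate is the step $(\ref{I:sequences and stuff yo!!;pkjsa})\Rightarrow(\ref{I:finitary version})$: correctly handling the nested quantifier ``$\forall B\in\omega,\exists k\in B$'' through the ultrafilter, running the diagonalization so that $n(k)\to\infty$ along $\omega$, and checking that the assembled $(\phi_{k})_{k}$ is genuinely a microstate sequence. A secondary technical point, which I would isolate as a short inline remark and reuse throughout, is the routine Chebyshev-plus-compactness passage between the $\rho_{2}$-based $\Map$ condition and the ``fraction of coordinates near the diagonal'' formulation of microstates, together with the observation that $\lim_{k\to\omega}\mu_{k}(\Map(\rho,F,\delta,\sigma_{k}))=1$ whenever $(\mu_{k})_{k}\in\mathcal{P}_{\omega}(G\actson X)$.
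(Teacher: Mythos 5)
Your proposal is correct and follows essentially the same route as the paper: the same cycle of implications is present (with a couple of harmless redundant directions), the same point-mass trick gives (iv)$\Rightarrow$(i), and the same ultrafilter diagonalization over an exhaustion $F_{n},\delta_{n}$ handles (ii)$\Rightarrow$(iii). The only difference is cosmetic — you systematically route the estimates through the Loeb integral formula of Proposition \ref{P:basics compact Loeb} applied to $j\mapsto\rho(\phi_{k}(j),Y_{k}(j))$ and $j\mapsto\int\rho(x,Y_{k}(j))\,d\mu_{k,j}(x)$, where the paper works directly with $\varepsilon$-level sets and Chebyshev, and both versions go through.
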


\begin{proof}
Without loss of generality, we may assume that $Y=(Y_{k})_{k\to\omega},$ as this does not change any of the statements.

(\ref{I:absorbs stuff alala}) implies (\ref{I:sequences and stuff yo!!;pkjsa}): Let $(\phi_{k})_{k}$ be a sequence of topological microstates with respect to $\omega$ and let $\Phi=(\phi_{k})_{k\to\omega}.$  Let $M$ be the diameter of $(X,\rho).$ Fix $\varespilon>0,$ and let $E_{k}=\{j:\rho(\phi_{k}(j),Y_{k}(j))<\varepsilon\}.$ We then have that $\prod_{k\to\omega}E_{k}\supseteq \{z:\rho(\Phi(z),Y(z))<\varespilon\}$ and thus $\lim_{k\to\omega}u_{d_{k}}(E_{k})=1.$ Hence we have that
\[\lim_{k\to\omega}\frac{1}{d_{k}}\sum_{j=1}^{d_{k}}\rho(\phi_{k}(j),Y_{k}(j))\leq \lim_{k\to\omega}\varepsilon+Mu_{d_{k}}(E_{k}^{c})=\varepsilon.\]
Since $\varespilon>0$ was arbitrary, we have shown that $\lim_{k\to\omega}\frac{1}{d_{k}}\sum_{j=1}^{d_{k}}\rho(\phi_{k}(j),Y_{k}(j))=0.$

(\ref{I:sequences and stuff yo!!;pkjsa}) implies (\ref{I:finitary version}):
Fix a $\varepsilon>0.$ Given a finite $F\subseteq G$ and a $\delta>0,$ let $E_{F,\delta}$ be the set of natural numbers $k$ so that for every $\phi\in\Map(\rho,F,\delta,\sigma_{k}),$ we have that $\frac{1}{d_{k}}\sum_{j=1}^{d_{k}}\rho(\phi(j),Y_{k}(j))<\varespilon.$  Fix an increasing sequence $F_{n}$ of finite subsets of $G$ with $\bigcup_{n}F_{n}=G$ and a decreasing sequence $\delta_{n}$ of positive numbers converging to zero.

Suppose that the (\ref{I:finitary version}) is false for this $\varepsilon>0.$ Then for every $n\in \N,$ we have that $E_{F_{n},\delta_{n}}^{c}\in \omega.$ Set $B_{n}=\bigcap_{l=1}^{n}E_{F_{l},\delta_{l}}\cap \{n,n+1,\cdots\}.$ Then $B_{n}$ are a decreasing family of elements of $\omega$ with $\bigcap_{n}B_{n}=\varnothing.$ For every $k\in B_{n},$ we can find a $\phi_{n,k}\in \Map(\rho,F,\delta,\sigma_{k})$ so that $\frac{1}{d_{k}}\sum_{j=1}^{d_{k}}\rho(\phi_{n,k}(j),Y_{k}(j))\geq \varespilon.$ Set $B_{0}=\N\setminus B_{1},$ and for each $k\in \N,$ let $n(k)\in \N\cup\{0\}$ be such that $k\in B_{n(k)}\setminus B_{n(k)+1}.$ For each $k\in \N,$ set $\phi_{k}=\phi_{n(k),k},$ then $(\phi_{k})_{k}$ are a sequence of topological microstates, but $\lim_{k\to\omega}\frac{1}{d_{k}}\sum_{j=1}^{d_{k}}\rho(\phi_{n(k),k}(j),Y_{k}(j))\geq \varepsilon,$ which is a contradiction.

(\ref{I:finitary version}) implies (\ref{I:absorbs measures}):
Let $\mu \in \mathcal{L}_{\omega}(G\actson X)$ and let $(\mu_{k})_{k}$ be a sequence of probability measures supported on topological microstates which have $\mu=\mathcal{E}((\mu_{k})_{k}).$ Fix $\varepsilon>0,$ it is enough to show that
\[u_{\omega}(\{z:\mu(z)(N_{\varepsilon}(Y(z)))>1-\varepsilon\})=1.\]
For $k\in \N,$ let $E_{k}=\{1\leq j\leq d_{k}: \mu_{k,j}(N_{\varepsilon/2}(Y_{k}(j)))>1-\varepsilon/2\}.$ Then
\[\{z:\mu(z)(N_{\varepsilon}(Y(z)))>1-\varepsilon\}\supseteq \prod_{k\to\omega}E_{k},\]
so it suffices to show that
\begin{equation}\label{E:got to do what ya go to do}
\lim_{k\to\omega}u_{d_{k}}(E_{k})=1.
\end{equation}
By (\ref{I:finitary version}), we may find a finite $F\subseteq G,$ a $\delta>0,$ and a $B\in\omega,$ so that for every $k\in B$ and every $\phi\in \Map(\rho,F,\delta,\sigma_{k})$ we have $\frac{1}{d_{k}}\sum_{j=1}^{d_{k}}\rho(\phi_{k}(j),Y_{k}(j))<\varepsilon^{2}.$ For every $k\in B,$
\begin{align*}
u_{d_{k}}(E_{k}^{c})=u_{d_{k}}(\{1\leq j\leq d_{k}:\mu_{k,j}(N_{\varepsilon/2}(Y_{k}(j))^{c})\geq \varepsilon/2\})&\leq \frac{2}{\varepsilon}\frac{1}{d_{k}}\sum_{j=1}^{d_{k}}\int \rho(\phi,Y_{k}(j))\,d\mu_{k}(\phi)\\
&=\frac{2}{\varespilon}\int_{X^{d_{k}}}\left(\frac{1}{d_{k}}\sum_{j=1}^{d_{k}} \rho(\phi,Y_{k}(j))\right)\,d\mu_{k}(\phi)\\
&\leq 2\varepsilon+M\mu_{k}(\Map(\rho,F,\delta,\sigma_{k})^{c})
\end{align*}
Since $(\mu_{k})_{k}$ is supported on topological microstates, we have
$\lim_{k\to\omega}u_{d_{k}}(E_{k}^{c})\leq 2\varespilon.$
Since $\varepsilon>0$ is arbitrary, we have shown $(\ref{E:got to do what ya go to do})$ and this proves (\ref{I:absorbs measures}).

(\ref{I:absorbs measures}) implies (\ref{I:absorbs stuff alala}):
Let $\Phi\colon Z_{\omega}\to X$ be a topological microstate. Choose a sequence $(\phi_{k})_{k}$ of topological microstates with respect to $\omega$ so that $\Phi=(\phi_{k})_{k\to\omega}$ almost everywhere. Let $\mu_{k}=\delta_{\phi_{k}},$ clearly $(\mu_{k})_{k}\in \mathcal{P}_{\omega}(X,G).$ The map $f\colon X\to \Prob(X)$ given by $f(x)=\delta_{x}$ is continuous, and $\mathcal{E}^{(k)}(\mu_{k})=f\circ \phi_{k}.$ So $\mathcal{E}((\mu_{k})_{k})=(f\circ \phi_{k})_{k\to\omega}=f_{*}(\Phi),$ and so $\mathcal{E}((\mu_{k})_{k})(z)=\delta_{\Phi(z)}$ for all $z\in Z_{\omega}.$  Thus the fact that $\mathcal{E}((\mu_{k})_{k})\preceq Y$ clearly implies that $\Phi(z)\in Y(z)$ for almost every $z\in Z_{\omega}.$

\end{proof}

\begin{prop}\label{L:topological poset}
Let $(d_{k})_{k}$ be a sequence of natural numbers, and $\omega$ a free ultrafilter on the natural numbers. For all $Y\in \Meas(Z_{\omega},\mathcal{F}(X))$ we have that
\[\{K\in \Meas(Z_{\omega},\mathcal{F}(X)):Y\geq K\},\]
\[\{C\in \Meas(Z_{\omega},\mathcal{F}(X)):Y\leq C\}\]
are closed.

\end{prop}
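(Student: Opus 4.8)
The plan is to exhibit each of the two sets as the zero set of a continuous nonnegative function on $\Meas(Z_{\omega},\mathcal{F}(X))$. Since $X$ is compact metrizable, so is $\mathcal{F}(X)$ in the Hausdorff metric $\rho_{H}$ induced by a compatible metric $\rho$ on $X$, and by the discussion following Proposition \ref{P:just the basics Loeb} the topology on $\Meas(Z_{\omega},\mathcal{F}(X))$ is the topology of convergence in measure, which is metrizable by $(\rho_{H})_{m}(K,K')=\int_{Z_{\omega}}\rho_{H}(K(z),K'(z))\,du_{\omega}(z)$. The key elementary observation is to record the one-sided excess $e(A,B)=\sup_{a\in A}\rho(a,B)$ for $A,B\in\mathcal{F}(X)$, which satisfies $e(A,B)=0$ if and only if $A\subseteq B$ (using that $B$ is closed).

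First I would verify that $e\colon \mathcal{F}(X)\times\mathcal{F}(X)\to[0,\infty)$ is jointly Lipschitz for $\rho_{H}$, namely $|e(A,B)-e(A',B')|\leq \rho_{H}(A,A')+\rho_{H}(B,B')$; this follows from the triangle inequality together with $|\rho(a,B)-\rho(a,B')|\leq \rho_{H}(B,B')$ and the fact that each point of $A$ lies within $\rho_{H}(A,A')$ of a point of $A'$. In particular $e$ is continuous, so for measurable $K,Y$ the map $z\mapsto e(K(z),Y(z))$ is measurable, and it is bounded by the diameter of $(X,\rho)$, hence integrable.

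Next I would define $\Psi(K)=\int_{Z_{\omega}}e(K(z),Y(z))\,du_{\omega}(z)$ and $\Xi(C)=\int_{Z_{\omega}}e(Y(z),C(z))\,du_{\omega}(z)$ for $K,C\in\Meas(Z_{\omega},\mathcal{F}(X))$. The Lipschitz bound on $e$ gives $|\Psi(K)-\Psi(K')|\leq (\rho_{H})_{m}(K,K')$ and similarly for $\Xi$, so both $\Psi$ and $\Xi$ are continuous. Since $e\geq 0$, we have $\Psi(K)=0$ if and only if $e(K(z),Y(z))=0$ for almost every $z$, i.e. if and only if $K(z)\subseteq Y(z)$ almost everywhere, which is exactly $K\leq Y$. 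Thus $\{K:Y\geq K\}=\Psi^{-1}(\{0\})$, and likewise $\{C:Y\leq C\}=\Xi^{-1}(\{0\})$. Both are preimages of the closed set $\{0\}$ under a continuous map, hence closed, completing the proof.

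I do not expect any serious obstacle here: the entire content is the (routine) Lipschitz estimate for the excess $e$ with respect to the Hausdorff metric, together with the reduction of an almost-everywhere containment to the vanishing of a single integral. The only point demanding a little care is the measurability of $z\mapsto e(K(z),Y(z))$, which is immediate from continuity of $e$ and measurability of $K$ and $Y$. If one preferred to avoid the excess function, the same conclusion follows by extracting an almost-everywhere convergent subsequence from a convergent-in-measure sequence (possible since $u_{\omega}$ is a probability measure) and using that $\{(A,B):A\subseteq B\}$ is closed in $\mathcal{F}(X)\times\mathcal{F}(X)$; the excess formulation simply packages this cleanly.
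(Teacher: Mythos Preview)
Your proof is correct. The approach differs from the paper's: the paper argues directly with sequences, taking $K_{n}\leq Y$ with $K_{n}\to K$ in measure, passing to an almost-everywhere convergent subsequence, and then using that $\{F\in\mathcal{F}(X):F\subseteq Y(z)\}$ is closed in $\mathcal{F}(X)$ pointwise to conclude $K(z)\subseteq Y(z)$. You instead package the containment as the vanishing of the excess $e(A,B)=\sup_{a\in A}\rho(a,B)$, verify the Lipschitz bound $|e(A,B)-e(A',B')|\leq \rho_{H}(A,A')+\rho_{H}(B,B')$, and exhibit each set as $\Psi^{-1}(\{0\})$ (respectively $\Xi^{-1}(\{0\})$) for a $1$-Lipschitz functional on $\Meas(Z_{\omega},\mathcal{F}(X))$. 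Your route is slightly more quantitative and avoids any subsequence extraction, at the cost of checking the Lipschitz estimate for $e$; the paper's route is a bit quicker to write but relies on the a.e.-convergent-subsequence maneuver you mention at the end (which is indeed exactly what the paper does). Both arguments are elementary and neither requires anything beyond basic properties of the Hausdorff metric and convergence in measure.
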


\begin{proof}
We only present the proof that $\{K\in \Meas(Z_{\omega},\mathcal{F}(X)):Y\geq K\}$ is closed, the proof that $\{C\in \Meas(Z_{\omega},\mathcal{F}(X)):Y\leq C\}$ is closed is the same. We know that $\Meas(Z_{\omega},\mathcal{F}(X))$ is metrizable. So it suffices to show that if $K_{n}$ is a sequence in $\Meas(Z_{\omega},\mathcal{F}(X))$ and $K_{n}\leq Y$ for all $n,$ and $K_{n}\to K,$ then $K\leq Y.$ Modifying $Y$ and each $K_{n}$ on a set of measure zero we may assume that $K_{n}(z)\leq Y(z)$ for all $z\in Z_{\omega}.$ By passing to a subsequence, and also modifying $K$ on a set of measure zero, we may assume that $K_{n}(z)\to K(z)$ for all $z\in Z_{\omega}.$ Fix a $z\in Z_{\omega}.$ Since $\{F\in \mathcal{F}(X):F\subseteq Y(z)\}$ is closed, we have that
\[K(z)=\lim_{n}K_{n}(z)\subseteq Y(z).\]
Since $z\in Z_{\omega}$ was arbitrary, we have that $K\leq Y.$

\end{proof}

We now specialize to the case of algebraic actions.  So let now $G,\sigma_{k},\omega$ be as in the setup to Proposition \ref{P:characterize top micro}, but assume now that $X$ is a compact, metrizable group and that $G\actson X$ by continuous automorphisms. We observe that:
\begin{itemize}
\item $\mathcal{L}_{\omega}(G\actson X)$ is a topological monoid under pointwise convolution,
\item $\mathcal{L}_{\omega}(G\actson X)$ is preserved under pointwise convex combinations,
\item $\mathcal{L}_{\omega}(G\actson X)$ is preserved under taking the $^{*}$ operation pointwise.
\end{itemize}
All of these follow from the analogous facts for sequence of topological microstates. For example, the first bullet point follows from the fact that if $(\mu_{k})_{k},(\nu_{k})_{k}$ are supported on topological microstates as $k\to\omega,$ then so is $(\mu_{k}*\nu_{k})_{k}.$
Special to the algebraic action case is a canonical subspace of $\Meas(Z_{\omega},\Sub(X)).$ Observe that we have a natural map $\mathcal{M}\colon \Sub(X)\to \Prob(X)$ given by $\mathcal{M}(Y)=m_{Y}.$ So by Proposition \ref{P:just the basics Loeb}, we have a uniformly continuous map $\mathcal{M}_{*}\colon \Meas(Z_{\omega},\Sub(X))\to \Meas(Z_{\omega},\Prob(X))$ given by $\mathcal{M}_{*}(\Phi)=\mathcal{M}\circ \Phi,$ and $\mathcal{M}_{*}$ is a homeomorphism onto its image. For $Y\in \Meas(Z_{\omega},\Sub(X)),$ we will typically write $m_{Y}$ for $\mathcal{M}_{*}(Y).$ We now let
\[\mathcal{S}_{\omega}(G\actson X)=\{Y\in \Meas(Z_{\omega},\Sub(X)):m_{Y}\in \mathcal{L}_{\omega}(G\actson X)\}.\]
We think of $\mathcal{S}_{\omega}(G\actson X)$ as all measurably varying subgroups which are local weak$^{*}$ limits of measures supported on topological microstates. Important for us is the following topological fact.

\begin{prop}\label{P:topology is helpful 2}
Let $G$ be a countable, discrete, sofic group with sofic approximation $\sigma_{k}\colon G\to S_{d_{k}}.$ Let $G\actson X$ be an algebraic action of $G.$ Then, for every free ultrafilter $\omega$ on the natural numbers, the space $\mathcal{S}_{\omega}(G\actson X)$ is a closed subspace of $\Meas(Z_{\omega},\Sub(X)).$

\end{prop}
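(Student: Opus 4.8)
The plan is to recognize $\mathcal{S}_{\omega}(G\actson X)$ as the preimage of a closed set under a continuous map, so that the closedness assertion reduces to elementary topology together with two facts already in place in the excerpt.

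First I would unwind the definitions. By construction one writes $m_{Y}=\mathcal{M}_{*}(Y)$ for $Y\in\Meas(Z_{\omega},\Sub(X))$, where $\mathcal{M}_{*}(Y)=\mathcal{M}\circ Y$ and $\mathcal{M}\colon \Sub(X)\to\Prob(X)$ is the map $K\mapsto m_{K}$. Hence the defining condition for membership in $\mathcal{S}_{\omega}(G\actson X)$ is exactly that $\mathcal{M}_{*}(Y)\in\mathcal{L}_{\omega}(G\actson X)$, giving the bookkeeping identity
\[
\mathcal{S}_{\omega}(G\actson X)=\{Y\in\Meas(Z_{\omega},\Sub(X)):\mathcal{M}_{*}(Y)\in\mathcal{L}_{\omega}(G\actson X)\}=\mathcal{M}_{*}^{-1}\bigl(\mathcal{L}_{\omega}(G\actson X)\bigr).
\]

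Second I would invoke the two inputs already established. The map $\mathcal{M}_{*}\colon \Meas(Z_{\omega},\Sub(X))\to\Meas(Z_{\omega},\Prob(X))$ is uniformly continuous, as observed just before the statement via Proposition \ref{P:just the basics Loeb} (\ref{I:induced continuity Loeb}) applied to the uniformly continuous map $\mathcal{M}$; in particular it is continuous. Moreover, $\mathcal{L}_{\omega}(G\actson X)$ is a closed subset of $\Meas(Z_{\omega},\Prob(X))$ by Theorem \ref{T:topology is helpful}. Since the preimage of a closed set under a continuous map is closed, $\mathcal{M}_{*}^{-1}(\mathcal{L}_{\omega}(G\actson X))$ is closed in $\Meas(Z_{\omega},\Sub(X))$, which is precisely the claim.

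I do not expect any genuine obstacle here: all the substance has been front-loaded into Theorem \ref{T:topology is helpful} (closedness of $\mathcal{L}_{\omega}(G\actson X)$, itself the nontrivial countable-intersection-of-internal-sets argument) and into the continuity of $\mathcal{M}_{*}$. The only thing that must be checked in the proof proper is the identity $\mathcal{S}_{\omega}(G\actson X)=\mathcal{M}_{*}^{-1}(\mathcal{L}_{\omega}(G\actson X))$, which is immediate from the definition of $m_{Y}=\mathcal{M}_{*}(Y)$, so the write-up should be just a few lines.
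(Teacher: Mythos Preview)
Your proposal is correct and matches the paper's own proof essentially verbatim: the paper also writes $\mathcal{S}_{\omega}(G\actson X)=(\mathcal{M}_{*})^{-1}(\mathcal{L}_{\omega}(G\actson X))$ and invokes Theorem \ref{T:topology is helpful}. The only cosmetic difference is that the paper cites the stronger fact that $\mathcal{M}_{*}$ is a homeomorphism onto its image, whereas you (correctly) observe that mere continuity already suffices for the preimage argument.
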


\begin{proof}
Since $\mathcal{M}_{*}$ is a homeomorphism onto its image, and
\[\mathcal{S}_{\omega}(G\actson X)=(\mathcal{M}_{*})^{-1}(\mathcal{L}_{\omega}(G\actson X))\]\
this follows from Theorem \ref{T:topology is helpful}.

\end{proof}

%

The order structure we put on $\Meas(Z_{\omega},\Sub(X))$ clearly turns $\Meas(Z_{\omega},\Sub(X)),\mathcal{S}_{\omega}(G\actson X)$ into partially ordered sets. These posets end up having nice lattice theoretic properties that greatly ease the proof of our main results. Recall that if $(P,\leq)$ is a partially ordered set, and $a,b\in P,$ then a \emph{join} of $P$ is an element $c\in P$ with $c\geq a,c\geq b$ and so that if $d\in P$ has $d\geq a,d\geq b,$ then $d\geq c.$ If any two elements of $P$ have a join, then $P$ is called a join-lattice. We say that $P$ is a \emph{complete join-lattice} if for $\emph{any}$ collection $(a_{\alpha})_{\alpha\in A}$ of elements of $P,$ there is an element $b\in P$ with $b\geq a_{\alpha}$ for all $\alpha$ and so that whenever $c\geq a_{\alpha}$ for all $\alpha,$ then $c\leq b,$ we call $P$ a \emph{complete join-lattice}.

 We now state the theorem which gives us the precise order properties in order to prove Theorem \ref{T:maintheoremintro}.

\begin{thm}\label{T:main theorem restated}
Let $G$ be a countable, discrete, sofic group with sofic approximation $\sigma_{k}\colon G\to S_{d_{k}}.$ Let $G\actons X$ be an algebraic action of $G,$ and fix a free ultrafilter $\omega$ on the natural numbers. We then have the following permanence properties of $\mathcal{P}_{\omega}(X,G),\mathcal{S}_{\omega}(X,G).$
\begin{enumerate}[(i)]
\item Given  $\mu\in \mathcal{L}_{\omega}(G\actson X),$ define $Y\in \Meas(Z_{\omega},\Sub(X))$ by $Y(z)=\ip{\supp \mu(z)}.$ Then $Y\in \mathcal{S}_{\omega}(G\actson X).$ \label{I: supports and shiz}
\item Given $Y_{1},Y_{2}\in \mathcal{S}_{\omega}(G\actson X),$ define $Y_{1}\vee Y_{2}\in \Meas(Z_{\omega},\Sub(X))$ by $(Y_{1}\vee Y_{2})(z)=\overline{\ip{Y_{1}(z),Y_{2}(z)}}.$ Then $Y_{1}\vee Y_{2} \in \mathcal{S}_{\omega}(G\actson X).$ \label{I:join of subgroups}
\item The spaces $\Meas(Z_{\omega},\Sub(X))$ and $\mathcal{S}_{\omega}(G\actson X)$ are both complete join-lattices. \label{I:lattices and stuff}
\end{enumerate}
\end{thm}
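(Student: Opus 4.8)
\emph{Proof strategy.} The heart of the matter is (\ref{I: supports and shiz}); parts (\ref{I:join of subgroups}) and (\ref{I:lattices and stuff}) will then follow formally from it together with the topological facts already in hand (Theorem \ref{T:topology is helpful}, Propositions \ref{P:topology is helpful 2} and \ref{L:topological poset}) and the monoid structure on $\mathcal{L}_{\omega}(G\actson X)$. The plan for (\ref{I: supports and shiz}) is to manufacture $m_{Y}$ from $\mu$ by running a symmetric random walk inside the convolution monoid. Working pointwise, set
\[\eta(z)=\tfrac{1}{2}\delta_{e}+\tfrac{1}{4}\big(\mu(z)+\mu(z)^{*}\big).\]
Since $\mathcal{L}_{\omega}(G\actson X)$ contains the convolution identity $z\mapsto\delta_{e}$ and is closed under the $^{*}$-operation and pointwise convex combination, we get $\eta\in\mathcal{L}_{\omega}(G\actson X)$, and hence every power $\eta^{*n}\in\mathcal{L}_{\omega}(G\actson X)$ because the monoid is closed under pointwise convolution. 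By construction $\eta(z)$ is symmetric, satisfies $\eta(z)(\{e\})\geq\tfrac12$, and its support generates exactly $Y(z)=\ip{\supp\mu(z)}$.

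The decisive step is the equidistribution statement $\eta^{*n}(z)\to m_{Y(z)}$ weak$^{*}$ for each $z$, which I would prove through the Hilbert-space formulation that the paper signals is needed: convolution by $\eta(z)$ is a self-adjoint contraction $T$ on $L^{2}(Y(z),m_{Y(z)})$, and because $\eta(z)=\tfrac12\delta_e+(\cdots)$ with the remaining part a self-adjoint contraction of norm $\leq\tfrac12$, one has $\mathrm{spec}(T)\subseteq[0,1]$; in particular $-1$ is excluded, so there is no periodicity obstruction and the spectral theorem gives $T^{n}\to$ the projection onto the $T$-invariant functions, which is convolution by $m_{Y(z)}$. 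This operator-theoretic input is the one genuinely nontrivial ingredient. Since $\Prob(X)$ is a compact metric space, pointwise weak$^{*}$ convergence upgrades (bounded convergence) to convergence in measure, so $\eta^{*n}\to m_{Y}$ in $\Meas(Z_{\omega},\Prob(X))$, and the limit lies in $\mathcal{L}_{\omega}(G\actson X)$ by its closedness (Theorem \ref{T:topology is helpful}). As this limit is measurable and pointwise of the form $\mathcal{M}(Y(z))$, applying the homeomorphism $\mathcal{M}_{*}^{-1}$ onto its image produces a measurable $Y\in\Meas(Z_{\omega},\Sub(X))$ with $m_{Y}\in\mathcal{L}_{\omega}(G\actson X)$; this simultaneously furnishes the measurability of $z\mapsto\ip{\supp\mu(z)}$ for free. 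Part (\ref{I:join of subgroups}) is then immediate: for $Y_{1},Y_{2}\in\mathcal{S}_{\omega}(G\actson X)$ the measure $\mu=m_{Y_{1}}*m_{Y_{2}}$ lies in $\mathcal{L}_{\omega}(G\actson X)$, and pointwise $\supp\mu(z)=\overline{Y_{1}(z)Y_{2}(z)}$ generates $\overline{\ip{Y_{1}(z),Y_{2}(z)}}=(Y_{1}\vee Y_{2})(z)$, so (\ref{I: supports and shiz}) applied to this $\mu$ gives $Y_{1}\vee Y_{2}\in\mathcal{S}_{\omega}(G\actson X)$.

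For (\ref{I:lattices and stuff}) I would use an essential-supremum construction resting on two monotone-convergence facts: finite joins exist (pointwise $\overline{\ip{Y_{1}(z),Y_{2}(z)}}$), and for an increasing sequence $K_{1}\leq K_{2}\leq\cdots$ the pointwise formula $K_{\infty}(z)=\overline{\bigcup_{n}K_{n}(z)}$ defines an element to which $K_{n}$ converges in measure, since an increasing sequence of compact subgroups converges to the closure of its union in the Hausdorff metric and $\Sub(X)$ is bounded. To handle arbitrary families I introduce the functional $\theta(K)=-\sum_{i}2^{-i}\rho(x_{i},K)$ for $\{x_{i}\}$ dense in $X$, which is Hausdorff-continuous, non-decreasing under inclusion, and strictly increasing on proper inclusions; hence $I(Y)=\int_{Z_{\omega}}\theta(Y(z))\,du_{\omega}(z)$ is a continuous functional on $\Meas(Z_{\omega},\Sub(X))$ that is strictly monotone for $\leq$. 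Given $(Y_{\alpha})_{\alpha\in A}$, set $s=\sup\{I(\bigvee_{\alpha\in F}Y_{\alpha}):F\subseteq A\text{ finite}\}$, pick finite $F_{n}$ with $I(\bigvee_{F_{n}}Y_{\alpha})\to s$, and let $e_{n}=\bigvee_{\alpha\in F_{1}\cup\cdots\cup F_{n}}Y_{\alpha}$, an increasing sequence with $I(e_{n})\to s$ and in-measure limit $e_{\infty}$, so $I(e_{\infty})=s$. For each $\alpha_{0}$ the finite joins $e_{n}\vee Y_{\alpha_{0}}$ increase to $e_{\infty}\vee Y_{\alpha_{0}}$ while keeping $I\leq s$, whence $I(e_{\infty}\vee Y_{\alpha_{0}})\leq s=I(e_{\infty})$ and strict monotonicity forces $Y_{\alpha_{0}}\leq e_{\infty}$; thus $e_{\infty}$ is an upper bound, and any upper bound $C$ dominates every $e_{n}$, hence $e_{\infty}$ by the closedness of $\{x:x\leq C\}$ (Proposition \ref{L:topological poset}). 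So $e_{\infty}=\bigvee_{\alpha}Y_{\alpha}$ and $\Meas(Z_{\omega},\Sub(X))$ is a complete join-lattice. The same argument runs inside $\mathcal{S}_{\omega}(G\actson X)$: finite joins remain there by (\ref{I:join of subgroups}), increasing limits remain there by closedness (Proposition \ref{P:topology is helpful 2}), and the resulting $e_{\infty}$ is the least upper bound.

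The main obstacle is the operator-theoretic equidistribution step underlying (\ref{I: supports and shiz}); once that is in place, everything else is either formal lattice theory or a direct appeal to the already-established closedness and monoid properties.
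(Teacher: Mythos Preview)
Your argument for parts (\ref{I: supports and shiz}) and (\ref{I:join of subgroups}) is essentially the paper's, with only cosmetic repackaging: the paper iterates $\bigl(\tfrac12\mu+\tfrac12\delta_e\bigr)^{*}*\bigl(\tfrac12\mu+\tfrac12\delta_e\bigr)$ and invokes the spectral fact that $[(\tfrac12T+\tfrac12\id)^{*}(\tfrac12T+\tfrac12\id)]^{n}$ converges SOT to the projection onto $\mathrm{Fix}(T)$, while you iterate $\tfrac12\delta_e+\tfrac14(\mu+\mu^{*})$; both auxiliary operators are self-adjoint with spectrum in $[0,1]$ and share the same fixed-point space, so the same equidistribution conclusion follows. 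One small remark: working on $L^{2}(Y(z))$ is fine, but to pass from SOT convergence of convolution operators to weak$^{*}$ convergence of the measures you should invoke (as the paper does) that $\lambda\colon\Prob(K)\to\Ball(B(L^{2}(K)))$ is a homeomorphism onto its image; pointwise evaluation at $e$ is not $L^{2}$-continuous, so this bridge needs to be named. Your derivation of (\ref{I:join of subgroups}) from (\ref{I: supports and shiz}) via $\mu=m_{Y_{1}}*m_{Y_{2}}$ is slightly slicker than the paper's parallel use of $(PQP)^{n}\to P\wedge Q$, but logically equivalent.

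Part (\ref{I:lattices and stuff}) is where you genuinely diverge. The paper embeds $\mathcal{S}_{\omega}(G\actson X)$ order-reversingly into $\Proj(L^{2}(Z_{\omega},L^{2}(X)))$ via $\iota\circ\lambda_{*}\circ\mathcal{M}_{*}$, proves this image is SOT-closed, and then gets arbitrary meets as SOT-limits of the net of finite meets. You instead run a direct essential-supremum argument using the strictly monotone continuous functional $I(Y)=\int\theta(Y(z))\,du_{\omega}(z)$ with $\theta(K)=-\sum_i 2^{-i}\rho(x_i,K)$: pick finite joins whose $I$-values approach the supremum, pass to the increasing countable limit $e_{\infty}$, and use strict monotonicity of $I$ to force $Y_{\alpha}\leq e_{\infty}$ for every $\alpha$. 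This is correct and more elementary---it avoids building the representation $\iota$ on $L^{2}(Z_{\omega},\mathcal{H})$ and the attendant SOT-closure lemma---and is in spirit the classical proof that $\Meas(X,\mu,[0,1])$ is a complete lattice, transported to $\Sub(X)$-valued maps via the existence of a strictly increasing continuous $\theta\colon\Sub(X)\to\R$. The paper's route, by contrast, is operator-algebraic and builds machinery (Propositions \ref{P:fourier fo sho}, \ref{P:basic facts about SOT}) that organizes the whole picture in Hilbert-space terms; your route is shorter for this specific statement but does not yield that framework.
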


 We remark that is clear from part (\ref{I:join of subgroups})  of the above theorem that $\mathcal{S}_{\omega}(G\actson X),\Meas(Z_{\omega},X)$ are join-lattices.

Crucial in part (\ref{I:lattices and stuff}) of Theorem \ref{T:main theorem restated} is that part of the definition of being a complete join-lattice is being closed under \emph{arbitrary} joins, not just countably infinite ones. This is especially important since $\mathcal{S}_{\omega}(G\actson X)$ is typically \emph{not} separable. We will use Theorem \ref{T:main theorem restated} primarily to note that $\mathcal{S}_{\omega}(G\actson X)$ has a maximal element, which is a triviality from the fact that $\mathcal{S}_{\omega}(G\actson X)$ is a complete join-lattice. However, it is \emph{not} clear how to obtain the existence of a maximal element in $\mathcal{S}_{\omega}(G\actson X)$ if one only knows that $\mathcal{S}_{\omega}(G\actson X)$ is closed under countably infinite joins, since $\mathcal{S}_{\omega}(G\actson X)$ is not separable. This is one of the main difficulties in proving Theorem \ref{T:main theorem restated} ( the fact that  $\mathcal{S}_{\omega}(G\actson X)$ is closed under countably infinite joins is not obvious either), and is one of the main reasons behind the Hilbert space approach we take to proving Theorem \ref{T:main theorem restated}. This  Hilbert space approach also makes the proof of the other parts of Theorem \ref{T:main theorem restated} easy as well).

A bit of an analogy might be helpful to explain why $\Meas(Z_{\omega},\Sub(X))$ is a complete, and not just countably complete, lattice, as well as to explain the  approach we take to proving this fact. Instead of considering $\Meas(Z_{\omega},\Sub(X))$ let us consider the lattice of measurable functions on a probability space $(X,\mu)$ (potentially not countably generated) with values in a simpler partially order set, namely $[0,1]$ with the usual ordering. It is also true that $\Meas(X,\mu,[0,1])$ is a complete lattice. This seems preposterous, given that it is easy to construct an uncountable set $I$ and measurable functions $f_{i}\colon X\to [0,1]$ for $i\in I$ so that the function $f(x)=\sup_{i}f_{i}(x)$ is not measurable. Nevertheless, it is still true that $\Meas(X,\mu,[0,1])$ is a complete lattice. Namely, given any  collection $(f_{i})_{i\in I}$ of elements of $\Meas(X,\mu,[0,1])$ there is a unique (modulo null sets) $f\in \Meas(X,\mu,[0,1])$ which is minimal subject to the condition that for all $i\in I,$ we have that $f\geq f_{i}$ almost everywhere.  This fact is nonobvious, given that applying the supremum operation pointwise fails miserably. Because of this, the simplest proofs we know take a ``non-pointed" approach.

One proof of the completeness of $\Meas(X,\mu,[0,1])$  as join-lattice is to regard $\Meas(X,\mu,[0,1])$ as the subset of $L^{1}(X,\mu)^{*}$ consisting of positive linear functional $\phi\colon L^{1}(X,\mu)\to \C$ which have $\|\phi\|\leq 1.$ The space $L^{1}(X,\mu)^{*}$  has a natural poset structure isomorphic to the order structure on $L^{\infty}(X,\mu)$ and it is easy to check that if $(\phi_{i})_{i\in I}$ are an increasing net of positive linear functionals with $\|\phi_{i}\|\leq 1$ for all $i\in I,$ then there is a unique $\phi\in L^{1}(X,\mu)^{*}$ which satisfies
\[\phi(f)=\sup_{i}\phi_{i}(f)\mbox{ for all $f\in L^{1}(X,\mu)_{+}.$}\]
Further, we have that $\phi$ is positive and $\|\phi\|\leq 1.$ This proof is hard to adapt to our setting. One may regard $\mathcal{S}_{\omega}(G\actson X)$ as a subset of a dual space of a Banach space, but this space is a bit clumsy to work with for our purposes and takes a bit to understand for the uninitiated.

The second proof that $\Meas(X,\mu,[0,1])$  is a join-lattice, which is closer to ours, goes as follows. We may also regard $\Meas(X,\mu,[0,1])$ as a subset of $B(L^{2}(X,\mu))$ by associating each function $f$ to its multiplication operator $M_{f}.$ If $f\in \Meas(X,\mu,[0,1]),$ then $M_{f}$ is a positive operator, and this association is ordering preserving if we use the natural order on self-adjoint elements of $B(L^{2}(X,\mu)).$ Now, it is a fact in Hilbert space theory that if $(T_{i})_{i\in I}$ is a net of positive operators on a Hilbert space, and if
\begin{itemize}
\item $T_{i}\leq T_{j}$ if $i,j\in I$ and $i\leq j,$ and
\item $\|T_{i}\|\leq 1$ for all $i\in I,$
\end{itemize}
then there is a unique $T\in B(L^{2}(X,\mu))$ which is self-adjoint and is minimal subject to the condition that $T\geq T_{i}$ for all $i\in I.$ Indeed, for each $\xi\in L^{2}(X,\mu),$ the limit $\lim_{i}\ip{T_{i}\xi,\xi}$ exists and is $\sup_{i}\ip{T_{i}\xi,\xi},$ and so it can be shown that $T$ is the weak operator topology limit of the $T_{i}$ (it is in fact the strong operator topology limit of the $T_{i},$ but this is not necessary to know for our purposes). Since $\Meas(X,\mu,[0,1])$ is closed under finite joins, it just remains to note that $\{M_{f}:f\in L^{\infty}(X,\mu)\}$ is weak operator topology closed, which a is well known fact.

There is a completely natural way to associate to any measure on a compact group $X$ an operator on $L^{2}(X),$ and this association sends Haar measures on subgroups to orthogonal projections. Combining this with the natural way to represent $L^{\infty}(Z_{\omega},u_{\omega})$ as operators on $L^{2}(Z_{\omega},u_{\omega})$ will allows to generalize the above argument from the case of $\Meas(X,\mu,[0,1])$ to $\Meas(Z_{\omega},u_{\omega},\Sub(X)).$

We postpone the proof of Theorem \ref{T:main theorem restated} to Section \ref{S:main proof}, so that we can postpone stating the minor amount of  additional  background required for the proof. To convince the reader that this small additional background is worth understanding the proof of Theorem \ref{T:main theorem restated}, we   proceed in the next subsection to explain how Theorem \ref{T:main theorem restated} proves our main theorem, as well as give several corollaries to Theorem \ref{T:main theorem restated}.

\subsection{Deduction of the main theorem from the main reduction, and applications of the main result}\label{S:major applications}

In this section, we give several corollaries to Theorem \ref{T:main theorem restated}. We first explain how Theorem \ref{T:main theorem restated} implies the main theorem from the introduction, Theorem \ref{T:maintheoremintro}.
We in fact deduce a more general form of Theorem \ref{T:maintheoremintro}.

\begin{cor}\label{C:this the main theorem yo!}
Let $G$ be a countable, discrete, sofic group with sofic approximation $\sigma_{k}\colon G\to S_{d_{k}}.$ Let $G\actson X$ be an algebraic action of $G,$ and fix a free ultrafilter $\omega$ on the natural numbers. Let $Y$ be the maximal element of $\mathcal{S}_{\omega}(G\actson X)$ (whose existence is guaranteed by Theorem \ref{T:main theorem restated} (\ref{I:lattices and stuff})).  Then $Y$ is also the minimal element of $\Meas(Z_{\omega},\Sub(X))$ which absorbs all topological microstates.

\end{cor}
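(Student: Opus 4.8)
The plan is to extract everything from Theorem \ref{T:main theorem restated} together with the characterization of absorption in Proposition \ref{P:what it means to absorb}, so that the corollary becomes a short order-theoretic argument rather than a new estimate. By Theorem \ref{T:main theorem restated}(\ref{I:lattices and stuff}), $\mathcal{S}_{\omega}(G\actson X)$ is a complete join-lattice, so it possesses a greatest element $Y$, characterized by $W\leq Y$ for every $W\in \mathcal{S}_{\omega}(G\actson X)$. I then have to show two things: (a) that $Y$ absorbs all topological microstates, and (b) that $Y$ lies below every $Y'\in \Meas(Z_{\omega},\Sub(X))$ that absorbs all topological microstates.

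For (a), I would invoke the equivalence (\ref{I:absorbs stuff alala}) $\Leftrightarrow$ (\ref{I:absorbs measures}) of Proposition \ref{P:what it means to absorb}: it suffices to show $\mu\preceq Y$ for every $\mu\in \mathcal{L}_{\omega}(G\actson X)$. Given such a $\mu$, Theorem \ref{T:main theorem restated}(\ref{I: supports and shiz}) produces the element $Y_{\mu}\in \mathcal{S}_{\omega}(G\actson X)$ defined by $Y_{\mu}(z)=\ip{\supp \mu(z)}$. Maximality of $Y$ then gives $Y_{\mu}\leq Y$, hence $\supp \mu(z)\subseteq Y_{\mu}(z)\subseteq Y(z)$ for almost every $z$; since each $Y(z)$ is closed, this forces $\mu(z)(Y(z))=1$ almost everywhere, i.e. $\mu\preceq Y$. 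As this is precisely condition (\ref{I:absorbs measures}), $Y$ absorbs all topological microstates.

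For (b), let $Y'\in \Meas(Z_{\omega},\Sub(X))$ absorb all topological microstates. Because $Y\in \mathcal{S}_{\omega}(G\actson X)$, its Haar profile $m_{Y}$ lies in $\mathcal{L}_{\omega}(G\actson X)$ by the very definition of $\mathcal{S}_{\omega}(G\actson X)$. Applying the implication (\ref{I:absorbs stuff alala}) $\Rightarrow$ (\ref{I:absorbs measures}) of Proposition \ref{P:what it means to absorb} to $Y'$ yields $m_{Y}\preceq Y'$, that is, $m_{Y(z)}(Y'(z))=1$ for almost every $z$. The key (and only non-formal) point is then that the Haar measure of a compact group has full support: since $Y'(z)$ is a closed subgroup carrying full $m_{Y(z)}$-measure, the relatively open set $Y(z)\setminus Y'(z)$ is $m_{Y(z)}$-null and hence empty, so $Y(z)\subseteq Y'(z)$. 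As this holds almost everywhere, $Y\leq Y'$, establishing the minimality.

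The argument is almost entirely formal once Theorem \ref{T:main theorem restated} is in hand; I expect the only place requiring genuine care is the passage from ``$Y'(z)$ has full $m_{Y(z)}$-measure in $Y(z)$'' to ``$Y(z)\subseteq Y'(z)$'', which rests on the full-support property of Haar measure applied pointwise in $z$, together with the routine measurability and almost-everywhere bookkeeping packaged into the relations $\preceq$ and $\leq$. Everything else reduces to the maximality of $Y$ in the complete join-lattice $\mathcal{S}_{\omega}(G\actson X)$ and to the dictionary between absorption and support supplied by Proposition \ref{P:what it means to absorb}.
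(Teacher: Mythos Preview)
Your proof is correct and follows essentially the same approach as the paper: both split into the two claims (absorption of $Y$, and minimality of $Y$ among absorbers), both invoke Theorem \ref{T:main theorem restated}(\ref{I: supports and shiz}) and the equivalence (\ref{I:absorbs stuff alala})$\Leftrightarrow$(\ref{I:absorbs measures}) of Proposition \ref{P:what it means to absorb} in exactly the same way. The only difference is that you spell out the full-support-of-Haar-measure step explicitly, whereas the paper simply asserts that $m_{Y}\preceq \tilde{Y}$ ``clearly implies'' $Y\leq \tilde{Y}$.
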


\begin{proof}
Let $Y$ be the maximal element of $\mathcal{S}_{\omega}(G\actson X),$ this exists (and is unique) because $\mathcal{S}_{\omega}(G\actson X)$ is a complete join-lattice. We only have to prove two claims.

\emph{Claim 1: $Y$ absorbs all topological microstates.}
To prove this, let $\mu \in \Meas(Z_{\omega},\Prob(X))$ and let $Y'\in \Meas(Z_{\omega},\Sub(X))$ be given by $Y'(z)=\overline{\ip{\supp \mu(z)}}.$ By Theorem \ref{T:main theorem restated} (\ref{I: supports and shiz}) we then have that $Y'\in \mathcal{S}_{\omega}(X,G).$ Hence $Y'\leq Y,$ and thus we have that $\mu\preceq Y.$ By Proposition \ref{P:what it means to absorb}, it follows that $Y$ absorbs all topological microstates.

\emph{Claim 2: If $\tilde{Y}\in \Meas(Z_{\omega},\Sub(X))$ absorbs all topological microstates, then $\tilde{Y}\geq Y.$}
Since $Y\in \mathcal{S}_{\omega}(X,G)$ we have that $m_{Y}\in \mathcal{P}_{\omega}(X,G),$ so by Proposition \ref{P:what it means to absorb}  it follows that $m_{Y}\preceq \tilde{Y}.$ But this clearly implies that $Y\leq \tilde{Y}.$

\end{proof}

As mentioned before, this gives us an equivalent  formulation of when the Haar measure is a local weak$^{*}$ limit of measures supported on topological microstates.

\begin{cor}\label{C:sick corollary  bro}
Let $G$ be a countable, discrete, group and let $\sigma_{k}\colon G\to S_{d_{k}}$ be a sofic approximation. Let $G\actson X$ be an algebraic action and fix a free ultrafilter $\omega$ on the natural numbers. The following are equivalent:
\begin{enumerate}[(i)]
\item \label{I:nonexistence shiz} there does not exists a sequence $\mu_{k}\in \Prob(X^{d_{k}})$ which is asymptotically supported on topological microstates as $k\to\omega$ and so that $\mu_{k}\to^{lw^{*}}m_{X}$ as $k\to\omega,$
\item there is an sequence $(Y_{k})_{k}\in \prod_{k}\Sub(X)^{d_{k}}$ so that $\lim_{k\to\omega}(Y_{k})_{*}(u_{d_{k}})\ne \delta_{X}$ and so that $(Y_{k})_{k}$ absorbs all topological microstates. \label{I:absorbing shiz}
\end{enumerate}
\end{cor}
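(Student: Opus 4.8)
The plan is to deduce this directly from Corollary \ref{C:this the main theorem yo!}, which provides the maximal element $Y$ of $\mathcal{S}_{\omega}(G\actson X)$ and identifies it as the minimal element of $\Meas(Z_{\omega},\Sub(X))$ that absorbs all topological microstates. The key observation is that condition (\ref{I:nonexistence shiz}) is precisely the failure of $m_{X}\in \mathcal{L}_{\omega}(G\actson X)$, which by Theorem \ref{T:maintheoremintro}(iii) (equivalently, by unwinding the definitions of $\mathcal{L}_{\omega}$ and $\mathcal{S}_{\omega}$) is equivalent to $Y\ne X$ where $X$ here denotes the constant function $z\mapsto X$. So I would first reduce both conditions to a statement about this maximal $Y$, and then translate that statement into the language of sequences $(Y_{k})_{k}$ via Proposition \ref{P:basics compact Loeb}(\ref{I:going backwards loeb}).

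First I would argue that (\ref{I:nonexistence shiz}) is equivalent to $Y\ne X$. By the remark following the definition of $\mathcal{L}_{\omega}(G\actson X)$, a sequence $\mu_{k}$ asymptotically supported on topological microstates with $\mu_{k}\to^{lw^{*}}m_{X}$ exists if and only if $m_{X}\in \mathcal{L}_{\omega}(G\actson X)$, i.e.\ if and only if the constant function $X\in \mathcal{S}_{\omega}(G\actson X)$. Since $Y$ is the maximal element of $\mathcal{S}_{\omega}(G\actson X)$ and $X$ is the top of the poset $\Meas(Z_{\omega},\Sub(X))$, we have $X\in\mathcal{S}_{\omega}(G\actson X)$ if and only if $Y=X$. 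Hence (\ref{I:nonexistence shiz}) holds if and only if $Y\ne X$.

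Next I would connect $Y\ne X$ to condition (\ref{I:absorbing shiz}). For the direction (\ref{I:nonexistence shiz})$\Rightarrow$(\ref{I:absorbing shiz}): since $Y$ absorbs all topological microstates (Corollary \ref{C:this the main theorem yo!}, Claim 1) and $Y\ne X$, I use Proposition \ref{P:basics compact Loeb}(\ref{I:going backwards loeb}) to write $Y=(Y_{k})_{k\to\omega}$ for some $(Y_{k})_{k}\in\prod_{k}\Sub(X)^{d_{k}}$; this sequence absorbs all topological microstates by definition, and by Proposition \ref{P:basics compact Loeb}(v) we have $\lim_{k\to\omega}(Y_{k})_{*}(u_{d_{k}})=Y_{*}(u_{\omega})$, which is $\delta_{X}$ only if $Y=X$ almost everywhere, contradicting $Y\ne X$; so $\lim_{k\to\omega}(Y_{k})_{*}(u_{d_{k}})\ne\delta_{X}$. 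For the converse (\ref{I:absorbing shiz})$\Rightarrow$(\ref{I:nonexistence shiz}): given such a sequence $(Y_{k})_{k}$, set $\tilde Y=(Y_{k})_{k\to\omega}\in\Meas(Z_{\omega},\Sub(X))$, which absorbs all topological microstates; by the minimality in Corollary \ref{C:this the main theorem yo!} we get $Y\le\tilde Y$, and since $\lim_{k\to\omega}(Y_{k})_{*}(u_{d_{k}})=\tilde Y_{*}(u_{\omega})\ne\delta_{X}$, the function $\tilde Y$ is not almost everywhere equal to $X$, forcing $Y\ne X$, which by the first step is exactly (\ref{I:nonexistence shiz}).

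The step I expect to require the most care is the equivalence ``$\tilde Y_{*}(u_{\omega})\ne\delta_{X}$ if and only if $\tilde Y\ne X$ almost everywhere,'' and more precisely matching the pushforward convergence in Proposition \ref{P:basics compact Loeb}(v) with the correct notion of non-triviality; one must check that $Y_{*}(u_{\omega})=\delta_{X}$ (as a measure on $\Sub(X)$, or $\mathcal{F}(X)$) genuinely forces $Y(z)=X$ for almost every $z$, which holds because $\delta_{X}$ is a point mass concentrated at the single closed subgroup $X$. The remaining verifications are routine translations between the ultraproduct functions and their representing sequences, using the already-established propositions.
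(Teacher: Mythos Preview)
Your proposal is correct and follows essentially the same argument as the paper: both directions pivot on the maximal element of $\mathcal{S}_{\omega}(G\actson X)$ from Corollary~\ref{C:this the main theorem yo!}, use Proposition~\ref{P:basics compact Loeb}(v) (equivalently Proposition~\ref{P:computing integrals in ultraproducts}) to identify $\lim_{k\to\omega}(Y_{k})_{*}(u_{d_{k}})$ with the pushforward $Y_{*}(u_{\omega})$, and then use that $Y_{*}(u_{\omega})=\delta_{X}$ iff $Y(z)=X$ almost everywhere. The only cosmetic difference is that the paper swaps the roles of the symbols $Y$ and $\tilde{Y}$ in the (\ref{I:absorbing shiz})$\Rightarrow$(\ref{I:nonexistence shiz}) direction.
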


\begin{proof}

(\ref{I:nonexistence shiz}) implies (\ref{I:absorbing shiz}): Let $Y$ be the maximal element of $\mathcal{S}_{\omega}(X,G),$ and let $(Y_{k})_{k}\in \prod_{k}\Sub(X)^{d_{k}}$ be such that $Y=(Y_{k})_{k\to\omega}$ almost everywhere. By (\ref{I:nonexistence shiz}) we have $Y\ne X,$ so $u_{\omega}(\{z:Y(z)\ne X\})>0.$ By Proposition \ref{P:computing integrals in ultraproducts} we have
\[\lim_{k\to\omega}(Y_{k})_{*}(u_{d_{k}})=Y_{*}(u_{\omega})\ne \delta_{X},\]
the last part following as $u_{\omega}(\{z:Y(z)\ne X\})>0.$ By Corollary \ref{C:this the main theorem yo!}, we have that $(Y_{k})_{k}$ absorbs all topological microstates along $\omega.$

(\ref{I:absorbing shiz}) implies (\ref{I:nonexistence shiz}): Let $Y=(Y_{k})_{k\to\omega},$ and let $\tilde{Y}$ be the maximal element of $\mathcal{S}_{\omega}(X,G)$ as in Corollary \ref{C:this the main theorem yo!}. Since $Y$ absorbs all topological microstates, we have that $\tilde{Y}\leq Y$ by Corollary \ref{C:this the main theorem yo!}. By Proposition \ref{P:computing integrals in ultraproducts}, we have that $\lim_{k\to\omega}(Y_{k})_{*}(u_{d_{k}})=Y_{*}(u_{\omega}).$ So $Y_{*}(u_{\omega})\ne \delta_{X},$ and thus $u_{\omega}(\{z:Y(z)\ne X\})>0,$ and since $\tilde{Y}(z)\subseteq Y(z)$ for $u_{\omega}$-almost every $z,$ we see that $\tilde{Y}\ne X.$ Since $\tilde{Y}$ is the maximal element of $\mathcal{S}_{\omega}(X,G)$ this clearly implies $(\ref{I:nonexistence shiz}).$

\end{proof}

The two theorems above may be regarded as the main theorems of the paper. They become drastically easier to state when the centralizer of $\sigma_{\omega}(G)$ acts ergodically, as we proceed to show now.

\begin{defn} Let $G$ be a countable discrete sofic group with sofic approximation $\sigma_{k}\colon G\to S_{d_{k}}.$ Fix a free ultrafilter $\omega$ on the natural numbers. We let $G'_{\omega}$ the centralizer of $\sigma_{\omega}(G)$ inside of $\prod_{k\to\omega}S_{d_{k}}.$
\end{defn}

\begin{cor}\label{L:ergodicity on Loeb space}
Let $G$ be a countable, discrete, group with sofic approximation $\sigma_{k}\colon G\to S_{d_{k}}.$ Let $G\actson X$ be an algebraic action, and fix a free ultrafilter $\omega$ on  the natural numbers. Suppose  that $G'_{\omega}\actson (Z_{\omega},u_{\omega})$ is ergodic. Let  $Y$ be the maximal element of $\mathcal{S}_{\omega}(X,G).$ Then $Y$ is, in fact, a subgroup of $X.$ In particular, the maximal subgroup $Y$ of $X$ so that there exists a sequence $\mu_{k}\in \Prob(X^{d_{k}})$ which is asymptotically supported on topological microstates and has $\mu_{k}\to^{lw^{*}}m_{Y}$ as $k\to\omega$ is equal to the minimal subgroup of $X$ which absorbs all topological microstates for $G\actons X.$

\end{cor}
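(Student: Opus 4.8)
The plan is to show that the ergodicity hypothesis forces the maximal element $Y \in \Meas(Z_{\omega},\Sub(X))$ to be essentially constant, i.e.\ almost everywhere equal to a single subgroup $Y_{0} \in \Sub(X)$; the ``in particular'' clause then follows by unwinding the two characterizations of $Y$ already in hand. The key intermediate claim is that $Y$ is invariant under the natural (measure-preserving) action of the centralizer $G'_{\omega}$ on $\Meas(Z_{\omega},\Sub(X))$ given by $(p \cdot Y)(z) = Y(p^{-1}z)$.

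To prove this invariance I would first observe that the collection of topological microstates $\Theta \in \Meas(Z_{\omega},X)$ (in the equivariant-map sense of the discussion following Proposition \ref{P:characterize top micro}) is preserved by $G'_{\omega}$. Indeed, if $\Theta$ is almost surely $\sigma_{\omega}(G)$-equivariant and $p \in G'_{\omega}$, then for every $g \in G$ the relation $p\,\sigma_{\omega}(g) = \sigma_{\omega}(g)\,p$ gives $(\Theta \circ p^{-1})(\sigma_{\omega}(g)z) = \Theta(\sigma_{\omega}(g)p^{-1}z) = g\,(\Theta\circ p^{-1})(z)$ for a.e.\ $z$, so $\Theta \circ p^{-1}$ is again a topological microstate. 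Since $G'_{\omega}$ is a group and its action on $(Z_{\omega},u_{\omega})$ is measure-preserving, a change of variables shows that $Y$ absorbs all topological microstates if and only if $p \cdot Y$ does, and that the order on $\Meas(Z_{\omega},\Sub(X))$ is preserved by each $p$. By Corollary \ref{C:this the main theorem yo!}, $Y$ is the unique minimal element of $\Meas(Z_{\omega},\Sub(X))$ absorbing all topological microstates; applying $p$ produces another minimal absorbing element, so uniqueness forces $p \cdot Y = Y$ for every $p \in G'_{\omega}$.

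Next I would deduce constancy from ergodicity. Since $\Sub(X) \subseteq \mathcal{F}(X)$ is a compact metrizable, hence separable, space, for each $n$ I can cover it by countably many balls of radius $1/n$; as $u_{\omega}$ is a probability measure, some ball $B(s_{n},1/n)$ satisfies $u_{\omega}(Y^{-1}(B(s_{n},1/n))) > 0$. The $G'_{\omega}$-invariance of $Y$ makes each $Y^{-1}(B(s_{n},1/n))$ invariant modulo null sets, so ergodicity upgrades its measure to $1$. Intersecting these countably many full-measure sets forces the points $(s_{n})_{n}$ to be Cauchy, and $Y(z)$ equals their common limit $Y_{0}$ for a.e.\ $z$; thus $Y \in \Sub(X)$ is a genuine constant closed subgroup.

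Finally, for the ``in particular'' clause I would read off both descriptions from the constancy of $Y$. On one hand, a constant subgroup $K \le X$ lies in $\mathcal{S}_{\omega}(G\actson X)$ exactly when $m_{K} \in \mathcal{L}_{\omega}(G\actson X)$, i.e.\ when there is a sequence $\mu_{k} \in \Prob(X^{d_{k}})$ asymptotically supported on topological microstates with $\mu_{k} \to^{lw^{*}} m_{K}$; since $Y$ is the maximal element of $\mathcal{S}_{\omega}(G\actson X)$ and is itself constant, it is the largest such $K$. On the other hand, Corollary \ref{C:this the main theorem yo!} identifies $Y$ with the minimal absorbing element, and as $Y$ is constant it is the smallest constant subgroup absorbing all topological microstates. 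Hence the maximal lw$^{*}$-subgroup and the minimal absorbing subgroup both coincide with $Y_{0}$, giving the asserted equality. I expect the main obstacle to be the $G'_{\omega}$-invariance step: one must handle the equivariant-map picture carefully (the commutation $p\,\sigma_{\omega}(g) = \sigma_{\omega}(g)\,p$ holds only in the metric ultraproduct, i.e.\ up to asymptotically vanishing Hamming distance) and invoke the genuine uniqueness of the minimal absorbing element so that invariance can actually be concluded.
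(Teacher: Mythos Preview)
Your proposal is correct and follows essentially the same route as the paper: show that $G'_{\omega}$ preserves the set of topological microstates, deduce $G'_{\omega}$-invariance of $Y$ from its uniqueness, and then invoke ergodicity to conclude that $Y$ is essentially constant. The only cosmetic difference is that you argue invariance via the characterization of $Y$ as the unique \emph{minimal absorbing} element (Corollary~\ref{C:this the main theorem yo!}), whereas the paper argues it via the characterization of $Y$ as the unique \emph{maximal} element of $\mathcal{S}_{\omega}(G\actson X)$; these are equivalent by that very corollary, and your version has the small advantage that the $G'_{\omega}$-invariance of the absorbing property is immediate from the invariance of the microstate set, without needing to observe separately that $G'_{\omega}$ also preserves $\mathcal{L}_{\omega}(G\actson X)$.
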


\begin{proof}
It is clear that $G'_{\omega}$ acts on $\Meas(Z_{\omega},X)$ by $(\tau\cdot \Theta)(z)=\Theta(\tau^{-1}(z))$ for $\tau\in G'_{\omega},\Theta\in \Meas(Z_{\omega},X).$ It is also clear that $G'_{\omega}$ preserves the subset of $\Meas(Z_{\omega},X)$ consisting of topological microstates. Thus, by uniqueness of $Y,$ it follows that $Y\circ \tau^{-1}=Y$ for all $\tau\in G'_{\omega}.$ By ergodicity of $G'_{\omega}\actson Z_{\omega}$ it thus follows that $Y$ is essentially constant, i.e. $Y\in \Sub(X).$ The ``in particular" part is automatic from Corollary \ref{C:this the main theorem yo!}.

\end{proof}

\begin{cor}\label{C:simpler under ergodicity} Let $G$ be a countable, discrete, sofic group and let $(\sigma_{k})_{k}$ be a sofic approximation of $G.$ Fix a free ultrafilter $\omega$ on the natural numbers, and let $G\actson X$ be an algebraic action.  Suppose that $G'_{\omega}\actson (Z_{\omega},u_{\omega})$ is ergodic. Then the following are equivalent:
\begin{enumerate}[(i)]
\item there does not exist a sequence $\mu_{k}\in \Prob(Y^{d_{k}})$ so that $\mu_{k}\to^{lw^{*}}m_{X},$ \label{I:non existence again}
\item there is a closed, proper, $G$-invariant subgroup $Y$ of $X$ which absorbs all topological microstates for $G\actson X.$ \label{I:proper subgroup again}
\end{enumerate}
\end{cor}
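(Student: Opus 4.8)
The plan is to read this off from Corollary \ref{L:ergodicity on Loeb space} and the absorption characterization in Proposition \ref{P:what it means to absorb}, with the ergodicity hypothesis entering only in the direction (i)$\Rightarrow$(ii). Throughout I would let $Y$ denote the maximal element of $\mathcal{S}_{\omega}(G\actson X)$, whose existence is guaranteed by Theorem \ref{T:main theorem restated}(\ref{I:lattices and stuff}). By Corollary \ref{L:ergodicity on Loeb space} the ergodicity of $G'_{\omega}\actson (Z_{\omega},u_{\omega})$ forces $Y$ to be essentially constant, i.e. $Y\in \Sub(X)$, and by Corollary \ref{C:this the main theorem yo!} this same $Y$ is the minimal element of $\Meas(Z_{\omega},\Sub(X))$ absorbing all topological microstates. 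So the whole content is to check that $Y\ne X$ exactly under hypothesis (i), and that this $Y$ is $G$-invariant.

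For (ii)$\Rightarrow$(i) I would argue by contradiction and not use ergodicity at all. Suppose there is a proper, closed, $G$-invariant subgroup $K$ absorbing all topological microstates and, simultaneously, a sequence $\mu_{k}\in \Prob(X^{d_{k}})$ asymptotically supported on topological microstates with $\mu_{k}\to^{lw^{*}}m_{X}$. The latter means precisely $m_{X}\in \mathcal{L}_{\omega}(G\actson X)$. Viewing $K$ as the constant function in $\Meas(Z_{\omega},\mathcal{F}(X))$, the equivalence (\ref{I:absorbs stuff alala})$\Leftrightarrow$(\ref{I:absorbs measures}) of Proposition \ref{P:what it means to absorb} gives $m_{X}\preceq K$, i.e. $m_{X}(K)=1$. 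But a proper closed subgroup satisfies $m_{X}(K)\leq \tfrac12$: choosing $x\in X\setminus K$, the cosets $K$ and $xK$ are disjoint of equal Haar measure, so $2m_{X}(K)\leq 1$. This contradiction establishes (i).

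For (i)$\Rightarrow$(ii) I would take $Y$ as above, already a constant closed subgroup absorbing all topological microstates, and verify the two remaining properties. Properness: if $Y=X$ then $m_{Y}=m_{X}\in \mathcal{L}_{\omega}(G\actson X)$, which produces a sequence contradicting (i); hence $Y\ne X$. For $G$-invariance I would use that every element of $\mathcal{L}_{\omega}(G\actson X)$ is $G$-equivariant (the proposition following the definition of $\mathcal{L}_{\omega}$), so $m_{Y}$ satisfies $m_{Y}(\sigma_{\omega}(g)z)=g_{*}m_{Y}(z)$ almost everywhere; since $g_{*}m_{Y(z)}=m_{gY(z)}$ and the map $K\mapsto m_{K}$ is injective, this forces $Y(\sigma_{\omega}(g)z)=gY(z)$. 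With $Y$ essentially constant equal to some subgroup this reads $Y=gY$ for every $g\in G$, so $Y$ is $G$-invariant and witnesses (ii).

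The only genuinely nontrivial input is the passage from a measurably varying family of subgroups to a single constant $G$-invariant subgroup, which is exactly Corollary \ref{L:ergodicity on Loeb space} and rests on the lattice-theoretic Theorem \ref{T:main theorem restated}; the hard part has therefore already been isolated upstream. Everything in the present argument is bookkeeping: the elementary coset estimate showing proper closed subgroups have Haar measure at most $\tfrac12$, and the standard facts that pushing Haar measure forward under an automorphism yields Haar measure on the image and that $K\mapsto m_{K}$ is injective.
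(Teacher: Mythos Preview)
Your proof is correct and follows essentially the same route as the paper, which simply says ``proved exactly as in Corollary~\ref{C:sick corollary  bro}'': take the maximal element $Y\in\mathcal{S}_{\omega}(G\actson X)$, use ergodicity (Corollary~\ref{L:ergodicity on Loeb space}) to make it a constant subgroup, and observe that $Y=X$ if and only if (\ref{I:non existence again}) fails. Your argument for (\ref{I:proper subgroup again})$\Rightarrow$(\ref{I:non existence again}) is a mild variant---you bypass the maximal element and apply Proposition~\ref{P:what it means to absorb}(\ref{I:absorbs measures}) directly to $m_{X}$, then use the elementary coset bound $m_{X}(K)\le\tfrac12$---whereas the paper's template argues $\tilde Y\le K\subsetneq X$; both are one-line deductions. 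You are also more explicit than the paper in verifying $G$-invariance of $Y$ from the $G$-equivariance of elements of $\mathcal{L}_{\omega}(G\actson X)$, a point the paper leaves implicit.
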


\begin{proof}

This is proved exactly as in Corollary \ref{C:sick corollary  bro}.

\end{proof}

The above four theorems thus comprise the main theorems of the paper. To illustrate the utility of these results, we deduce the remaining main theorems of the introduction from them, as well as several other applications.

Most of our applications will be to sofic entropy theory, and we recall some of those basic notions here. Suppose that $G$ is a sofic group with sofic approximation $\sigma_{k}\colon G\to S_{d_{k}}.$ Let $X$ be a compact, metrizable space with $G\actson X$ by homeomorphisms.  Kerr-Li defined the topological entropy of $G\actson X$ with respect to $(\sigma_{k})_{k}$ in \cite{KLi}, which we denote by $h_{(\sigma_{k})_{k}}(G\actson X).$  Topological entropy is a way to ``measure" the ``size" of the topological microstates space. See \cite[Definition 2.2]{KLi2} for the definition. Their work followed the pioneering work of Bowen in \cite{Bow} who first defined sofic measure entropy for a large class of probability measure-preserving actions. Their work extended Bowen's the measure entropy for all probability measure-preserving actions. Given a free ultrafilter $\omega$ on the natural numbers, one may also define the topological entropy of $G\actson X$ with respect to $(\sigma_{k})_{k},\omega$ be replacing the limit supremum with a limit along an ultrafilter. We denote this entropy by $h_{(\sigma_{k})_{k\to\omega},\topo}(G\actson X).$ Similarly, one may define the \emph{lower} topological entropy by replacing the limit supremum in \cite[Definition 2.2]{KLi2} by a limit infimum. We will denote this entropy by $\underline{h}_{(\sigma_{k})_{k},\topo}(G\actson X).$ If we also have a $G$-invariant, completed Borel probability measure $\mu$ on $X,$ then we can define the measure entropy of $G\actson (X,\mu)$ by $h_{(\sigma_{k})_{k}}(G\actson (X,\mu)).$   See \cite[Definition 3.3]{KLi2} for a definition well-suited to our setting. We again remark that  replacing the limit supremum  in\cite[Definition 3.3]{KLi2} with a limit infimum gets us a lower measure entropy, denoted $\underline{h}_{(\sigma_{k})_{k}}(G\actson (X,\mu)),$ we also consider the measure entropy with respect to an ultrafilter which we denote $h_{(\sigma_{k})_{k\to\omega}}(G\actson (X,\mu)).$

If $Y$ is another compact, metrizable space with $G\actson Y$ by homeomorphisms, then a $G$-equivariant, surjective, continuous map $f\colon X\to Y$ will be called a \emph{factor} map, and $Y$ will be called a \emph{factor} of $X.$ In this setup, there is the notion of the \emph{entropy of $Y$ in the presence of $X$}. This was first defined in the measure context implicitly in the work of Kerr in \cite{KerrPartition}, and it was explicitly written down in the measure context in \cite{Me9}. Li-Liang then defined the topological notion of entropy in the presence in \cite{LiLiang2}. See \cite{Me12} Definitions 3.3, 3.4  for definitions suited for our purposes. We denote the topological entropy of $Y$ in the presence of $X$ by $h_{(\sigma_{k})_{k}}(G\actson Y:X).$ If $\mu$ is a $G$-invariant, completed Borel probability measure on $X$ and $\nu=f_{*}(\mu),$ we also have the measure entropy of $G\actson (Y,\nu)$ in the presence of  $G\actson (X,\mu),$ denoted $h_{(\sigma_{k})_{k}}(G\actson (Y,\nu):(X,\mu)).$ The remarks of the preceding paragraph for ultralimits as well as lower topological entropy in the presence apply mutatis mutandis to the case of topological/measure entropy in the presence.

We say that $G\actson X$ has completely positive topological entropy  if whenever $G\actson Y$ is a factor of $G\actson X,$ and $|Y|\geq 2,$ then the topological entropy of $G\actson Y$ is positive. If $\mu\in \Prob_{G}(X)$ we say that $G\actson (X,\mu)$ has completely positive measure entropy if whenever $G\actson (Z,\zeta)$ is a measurable factor of $G\actson (X,\mu)$ with $(Z,\zeta)$ Lebesgue and $\zeta$ is not a point mass, we have that the entropy of $G\actson (Z,\zeta)$ is positive. Similar remarks apply to completely positive lower topological entropy, or completely positive entropy with respect to an ultrafilter in either the topological or measurable contexts. Lastly, we say that $G\actson X$ has completely positive topological entropy in the presence if given any factor $Y$ of $X$ with $|Y|\geq 2$ we have that $h_{(\sigma_{k})_{k}}(G\actson Y:X)>0.$ Obvious modifications give the notion of completely positive measure entropy in the presence, as well as completely positive lower topological/measure entropy in the presence, or completely positive topological/measure entropy in the presence with respect to an ultrafilter.

We start with the equivalence of complete positive topological and complete positive measure entropy for algebraic actions, when we are the setting of a sofic approximation with ergodic centralizer. We need two preliminary results to prove this, as well as the notion of coinduced actions. Suppose that $H\leq G$ are countable, discrete groups, and that $Y$ is a compact, metrizable space with $H\actson Y$ by homeomorphisms. Let $X$ be the space of all functions $f\colon G\to Y$ so that $f(gh)=h^{-1}f(g)$ for all $g\in G,h\in H.$ Giving $X$ the product topology, we know that $X$ is a compact, metrizable space by Tychonoff's theorem. We define an action $G\actons X$ by $(gf)(x)=f(g^{-1}x)$ for all $f\in X,g,x\in G.$ The action $G\actson X$ is called the coinduced action of $H\actson Y.$

\begin{prop}\label{P:top cpe passes to subgroups}
Let $G$ be a countable, discrete, sofic group with sofic approximation $\sigma_{k}\colon G\to S_{d_{k}}.$ Let $X$ be a compact, metrizable space with $G\actson X$ by homeomorphisms. If $G\actson X$ has completely positive topological entropy with respect to $(\sigma_{k})_{k}$, then for every $H\leq G,$ we have that $H\actson X$ has completely positive topological entropy.
\end{prop}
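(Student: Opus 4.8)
The plan is to exploit the coinduction construction recalled just before the statement to promote a factor of $H\actson X$ to a factor of $G\actson X$, to which the hypothesis applies. Throughout, $H$ is equipped with the restricted sofic approximation $\sigma_{k}|_{H}\colon H\to S_{d_{k}}$, which is easily checked to be a sofic approximation of $H$, and complete positivity of topological entropy for $H\actson X$ is understood with respect to $(\sigma_{k}|_{H})_{k}$.

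So let $\pi\colon X\to Y$ be an $H$-equivariant continuous surjection onto a topological $H$-factor with $|Y|\geq 2$; the goal is $h_{(\sigma_{k}|_{H})_{k}}(H\actson Y)>0$. Let $W=\{f\colon G\to Y: f(gh)=h^{-1}f(g)\text{ for all }g\in G,\ h\in H\}$ be the coinduced action, and define $\Pi\colon X\to W$ by $\Pi(x)(g)=\pi(g^{-1}x)$. The $H$-equivariance of $\pi$ gives $\Pi(x)\in W$, and a one-line computation using $(g_{0}f)(g)=f(g_{0}^{-1}g)$ shows $\Pi(g_{0}x)=g_{0}\Pi(x)$, so $\Pi$ is a continuous $G$-equivariant map. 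Hence $Z:=\Pi(X)$ is a closed $G$-invariant subset of $W$, and $\Pi$ realizes $G\actson Z$ as a topological factor of $G\actson X$. Since evaluation at the identity satisfies $\ev_{e}\circ\Pi=\pi$, the map $\ev_{e}|_{Z}\colon Z\to Y$ is onto, so $Z$ has at least as many points as $Y$; in particular $|Z|\geq 2$.

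Now I would feed this into the hypotheses. Because $G\actson Z$ is a factor of $G\actson X$ with $|Z|\geq 2$, complete positivity of topological entropy yields $h_{(\sigma_{k})_{k}}(G\actson Z)>0$. As $Z$ is a $G$-invariant closed subset of $W$, every topological microstate for $G\actson Z$ is one for $G\actson W$, so by monotonicity under subsystems $h_{(\sigma_{k})_{k}}(G\actson Z)\leq h_{(\sigma_{k})_{k}}(G\actson W)$. It therefore remains to prove the comparison $h_{(\sigma_{k})_{k}}(G\actson W)\leq h_{(\sigma_{k}|_{H})_{k}}(H\actson Y)$, for then $0<h_{(\sigma_{k})_{k}}(G\actson Z)\leq h_{(\sigma_{k}|_{H})_{k}}(H\actson Y)$, as required.

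This last comparison is the coinduction formula for sofic topological entropy and is the heart of the argument. I would prove the needed inequality directly on microstates: given $\phi\in\Map(\rho,F,\delta,\sigma_{k})$ for $G\actson W$, set $\Psi(\phi)=\ev_{e}\circ\phi\colon\{1,\dots,d_{k}\}\to Y$. Using approximate equivariance of $\phi$ together with the coinduction relation $f(h^{-1})=hf(e)$, one checks that $\Psi(\phi)$ is, on most coordinates, an approximate $H$-microstate for $Y$; conversely, evaluating $\phi(\sigma_{k}(g)(j))\approx g\phi(j)$ at the identity gives $\phi(j)(g^{-1})\approx \Psi(\phi)(\sigma_{k}(g)(j))$, so the values of $\phi$ on any fixed finite window of coordinates of $W$ are approximately determined by $\Psi(\phi)$. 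Hence $\Psi$ carries an $\varepsilon$-separated family of $G$-microstates for $W$ to a comparably separated family of $H$-microstates for $Y$, bounding the microstate count for $W$ by that for $Y$ and yielding the inequality after passing to the Kerr--Li entropy formula \cite{KLi2}. The main obstacle is exactly this bookkeeping: matching the approximation parameters $(F,\delta)$ and the product metric on $W$ so that $\Psi$ is quantitatively almost injective on separated sets, and accounting for the fact that a point of $W$ is pinned down only on a finite portion of $G/H$. These steps are routine but require care.
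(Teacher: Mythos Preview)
Your argument is correct and follows the same route as the paper: coinduce the $H$-factor $Y$ to a $G$-space, map $X$ into it equivariantly, apply complete positive entropy to the image, and then compare the entropy of the coinduced system with that of $H\actson Y$. The only difference is that the paper invokes the coinduction identity $h_{(\sigma_{k})_{k},\topo}(G\actson W)=h_{(\sigma_{k}|_{H})_{k},\topo}(H\actson Y)$ as a black box from \cite[Proposition~6.22]{Me5}, whereas you sketch the inequality you need directly; your sketch is fine, and in fact becomes a one-liner if you compute $h_{(\sigma_{k})_{k},\topo}(G\actson W)$ using the dynamically generating pseudometric $\rho_{W}(f_{1},f_{2})=\rho_{Y}(f_{1}(e),f_{2}(e))$, since then $\Psi=\ev_{e}\circ(\cdot)$ is a $\rho_{2}$-isometry.
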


\begin{proof}
Suppose that $H\actson Y$ is a topological factor of $H\actson X,$ with factor map $\phi\colon X\to Y,$ and so that $|Y|\geq 2.$ Let $G\actson Z$ be the coinduced topological action of $H\actson Y.$ Define $\psi\colon X\to Z$ by $\psi(x)(g)=\phi(g^{-1}x)$ for $x\in X,g\in G.$ It is not hard to show that $\psi$ is well-defined, namely that $\psi(x)(gh)=h^{-1}(\psi(x)(g))$ for all $x\in X,h\in H,g\in G.$ It is also straightforward to show that $\psi$ is $G$-equivariant. Also, since $\psi(x)(e)=\phi(x)$ for all $x\in X,$ we know that $\psi$ is nonconstant. Since $G\actson X$ has completely positive topological entropy, we have that
\[0<h_{(\sigma_{k})_{k},\topo}(G\actson \psi(X))\leq h_{(\sigma_{k})_{k},\topo}(G\actson Z)=h_{(\sigma_{k}|_{H})_{k},\topo}(H\actson Y),\]
the last line following by \cite[Proposition 6.22]{Me5}. Thus $0<h_{(\sigma_{k}|_{H})_{k},\topo}(H\actson Y)$ and  we have shown that $H\actson X$ has completely positive topological entropy.

\end{proof}

\begin{lem}\label{L:cpe implies mixing}
Suppose that $G$ is a countable, discrete, sofic group with sofic approximation $\sigma_{k}\colon G\to S_{d_{k}}.$ Let $X$ be a compact, metrizable group with $G\actson X$ by automorphisms. If $G\actson X$ has completely positive entropy with respect to $(\sigma_{k})_{k},$ then $G\actson (X,m_{X})$ is mixing.

\end{lem}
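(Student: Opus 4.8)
The plan is to prove the contrapositive through Pontryagin duality: I will show that a non-mixing algebraic action produces a character with infinite stabilizer, restrict the action to that stabilizer (where Proposition \ref{P:top cpe passes to subgroups} preserves completely positive entropy), and then derive a contradiction from the fact that the restricted action has a nontrivial zero-entropy factor.

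First I would translate mixing into the language of the dual group. Write $\widehat{X}$ for the countable discrete Pontryagin dual of $X$, on which $G$ acts by the dual automorphisms $(g\chi)(x)=\chi(g^{-1}x)$. The nonzero characters form an orthonormal basis of $L^{2}_{0}(X,m_{X})=\{1\}^{\perp}$, and for the Koopman representation one computes $\langle U_{g}\chi_{1},\chi_{2}\rangle=\int_{X}\chi_{1}(g^{-1}x)\overline{\chi_{2}(x)}\,dm_{X}(x)$, which equals $1$ if $g\chi_{1}=\chi_{2}$ and $0$ otherwise. Hence $G\actson (X,m_{X})$ fails to be mixing exactly when some pair of nonzero characters $\chi_{1},\chi_{2}$ has $\{g:g\chi_{1}=\chi_{2}\}$ infinite; since this set is a coset of $\Stab_{G}(\chi_{1})$ whenever it is nonempty, non-mixing is equivalent to the existence of a nonzero $\chi_{0}\in\widehat{X}$ whose stabilizer $H:=\Stab_{G}(\chi_{0})$ is infinite.

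Assume then that $G\actson (X,m_{X})$ is not mixing and fix such a $\chi_{0}$ and infinite $H$. The equality $\Stab_{G}(\chi_{0})=H$ says precisely that $\chi_{0}$ is $H$-invariant as a function on $X$, so $\chi_{0}\colon X\to\chi_{0}(X)\subseteq\T$ is a continuous, surjective, $H$-equivariant map onto a nontrivial compact subgroup $\chi_{0}(X)$ of $\T$ on which $H$ acts trivially; that is, $H\actson\chi_{0}(X)$ is a topological factor of $H\actson X$ with $|\chi_{0}(X)|\geq 2$. By Proposition \ref{P:top cpe passes to subgroups}, $H\actson X$ has completely positive topological entropy with respect to $(\sigma_{k}|_{H})_{k}$, so this factor must have strictly positive topological entropy.

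The main obstacle, and the final step, is to contradict this by showing that the trivial action of the infinite group $H$ on $\chi_{0}(X)$ has topological entropy at most $0$. I would argue directly from the microstate definition: a microstate for the trivial action is a map $\phi\colon\{1,\dots,d_{k}\}\to\chi_{0}(X)$ that is approximately invariant under $\sigma_{k}(h)$ for every $h$ in a large finite $F\subseteq H$. Because $(\sigma_{k}|_{H})_{k}$ is a sofic approximation of the \emph{infinite} group $H$, enlarging $F$ forces the connected components of the Schreier graph generated by $\{\sigma_{k}(h)\}_{h\in F}$ to have size tending to infinity for all but a vanishing fraction of coordinates; an approximately invariant $\phi$ is approximately constant on each such component, so the number of coordinates that may be chosen freely is $o(d_{k})$. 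A routine covering count then bounds the entropy by a quantity that tends to $0$ as $F\uparrow H$ and the approximation parameters tend to $0$, yielding entropy $\leq 0$ and hence the desired contradiction. This nonpositivity of entropy for trivial actions of infinite sofic groups is essentially folklore; the only delicate point is verifying the ``large components'' statement uniformly in the sofic approximation, and this is exactly the place where infinitude of $H$ (equivalently, that $\chi_{0}$ has infinite stabilizer) is used.
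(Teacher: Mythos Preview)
Your argument has a genuine gap: it tacitly assumes $X$ is abelian. You invoke the Pontryagin dual $\widehat{X}$ and the fact that nonzero characters form an orthonormal basis of $L^{2}_{0}(X,m_{X})$, but the lemma is stated for an arbitrary compact metrizable group $X$, and in the nonabelian case one-dimensional characters neither separate points nor span $L^{2}_{0}(X)$. Your mixing criterion ``there is a nonzero $\chi_{0}\in\widehat{X}$ with infinite stabilizer'' is therefore only valid when $X$ is abelian; for general $X$ one must replace characters by irreducible unitary representations (this is the content of \cite[Theorem 1.6]{Schmidt}, which the paper cites).

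The overall contrapositive strategy is sound and matches the paper's: find an infinite subgroup $H\leq G$, pass to $H\actson X$ via Proposition \ref{P:top cpe passes to subgroups}, and produce a nontrivial $H$-factor of entropy $\leq 0$. The paper carries this out in full generality by taking a nontrivial irreducible representation $\pi\colon X\to\mathcal{U}(\mathcal{H})$ with $H=\{g:\pi\circ\alpha_{g}\cong\pi\}$ infinite, and letting $H$ act on $\mathcal{U}(\mathcal{H})$ by conjugation; this action is isometric for the operator norm, hence has entropy $\leq 0$ by \cite[Theorem 8.1]{KerrLi2}, and $\pi(X)$ is the desired factor. When $X$ is abelian, $\mathcal{H}$ is one-dimensional, $\pi$ is your $\chi_{0}$, the conjugation action is trivial, and the paper's proof specializes exactly to yours. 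Incidentally, you can replace your Schreier-graph sketch by the same citation: the trivial action is isometric, so \cite[Theorem 8.1]{KerrLi2} gives entropy $\leq 0$ without further work.
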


\begin{proof}

Suppose that $G\actson (X,m_{X})$ is not mixing. By \cite[Theorem 1.6]{Schmidt}, we can choose a nontrivial irreducible representation $\pi\colon X\to \mathcal{U}(\mathcal{H})$ so that
\[H=\{g\in G:\pi\circ \alpha_{g}\cong \pi\}\]
is infinite.
For $h\in H,$ we may choose a $U_{h}\in \mathcal{U}(\mathcal{H})$ so that $\pi(\alpha_{h}(x))=U_{h}\pi(x)U_{h}^{*}.$ Notice that if $V_{h}$ is another such unitary, then $V_{h}^{*}U_{h}\in \Hom(\pi,\pi)=\C1,$ by Schur's Lemma. Thus $U_{h}$ is well-defined in $\mathcal{U}(\mathcal{H})/S^{1}1.$ So we have a well-defined action $H\actson^{\beta} \mathcal{U}(\mathcal{H})$ by
$\beta_{h}(V)=U_{h}VU_{h}^{*}.$

With the given action of $H$ on $X,$ $\mathcal{U}(\mathcal{H}),$ we now have that $\pi$ is an $H$-equivariant, continuous map. Moreover, $H\actson \mathcal{U}(\mathcal{H})$ has an $H$-invariant metric, namely the distance given by the operator norm. Since $H$ is infinite, and $H\actson \mathcal{U}(\mathcal{H})$ is isometric, we know that $H\actson \mathcal{U}(\mathcal{H})$ has entropy at most zero by \cite[Theorem 8.1]{KerrLi2}. Thus $H\actson \pi(X)$ is a topological $H$-factor with entropy at most zero.
 By Proposition \ref{P:top cpe passes to subgroups}, we know that $G\actson X$ does not have completely positive topological entropy.

\end{proof}

Suppose that $G$ is a countable, discrete, sofic group with sofic approximation $(\sigma_{k})_{k}.$ Let $X$ be a compact, metrizable space with $G\actson X$ by homeomorphisms, and let $\mu\in \Prob_{G}(X).$ Recall that $G\actson (X,\mu)$ is \emph{strongly sofic} with respect to $(\sigma_{k})_{k}$ if there is a sequence $\mu_{k}\in \Prob(X^{d_{k}})$ so that:
\begin{itemize}
\item $\mu_{k}\to^{lw^{*}}\mu,$
\item $\mu_{k}$ is asymptotically supported on topological microstates,
\item $\mu_{k}\otimes \mu_{k}\left(\left\{(\phi,\psi):\left|\frac{1}{d_{k}}\sum_{j=1}^{d_{k}}f(\phi(j),\psi(j))-\int f\,d(\mu\otimes \mu)\right|<\varepsilon\right\}\right)\to 1$ for all $f\in C(X\times X),$ $\varepsilon>0.$
\end{itemize}
Given a free ultrafilter $\omega$ on the natural numbers, we can make sense of $\mu_{k}\to^{lde}_{k\to\omega}\mu,$ by replacing the limits in each of the three items with limits along  $\omega,$ and in the second item only requiring that $\mu_{k}$ be asymptotically supported on topological microstates as $k\to\omega.$ We say that $G\actson (X,\mu)$ is strongly sofic with respect to $(\sigma_{k})_{k},\omega$ if there is a sequence $(\mu_{k})_{k}\in \prod_{k}\Prob(X^{d_{k}})$ with $\mu_{k}\to^{lde}_{k\to\omega}\mu.$

\begin{cor}\label{C: equivalence cpe ultrafilter} Let $G$ be a countable, discrete, sofic group and let $(\sigma_{k})_{k}$ be a sofic approximation of $G.$ Fix a free ultrafilter $\omega$ on the natural numbers, and let $G\actson X$ be an algebraic action.  Suppose that $G'_{\omega}\actson (Z_{\omega},u_{\omega})$ is ergodic. If $G\actson X$ has complete positive topological entropy in the presence with respect to $(\sigma_{k})_{k},\omega,$ then $G\actson (X,m_{X})$ is strongly sofic with respect to $(\sigma_{k})_{k},\omega.$ In particular, $G\actons (X,m_{X})$ has completely positive measure entropy in the presence with respect to $(\sigma_{k})_{k},\omega.$

\end{cor}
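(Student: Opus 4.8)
The goal is to show that complete positive topological entropy in the presence (with respect to $(\sigma_k)_k,\omega$) forces $G\actson(X,m_X)$ to be strongly sofic. The plan is to run everything through the max-min machinery of Corollary \ref{C:simpler under ergodicity}, which converts the strong soficity question into a structural statement about closed $G$-invariant subgroups that absorb all topological microstates. Since $G'_\omega\actson(Z_\omega,u_\omega)$ is assumed ergodic, the maximal element $Y$ of $\mathcal S_\omega(G\actson X)$ is an honest closed $G$-invariant subgroup of $X$, and by Corollary \ref{C:sick corollary  bro}/\ref{C:simpler under ergodicity} it suffices to rule out the existence of a \emph{proper} closed $G$-invariant subgroup $Y\subsetneq X$ absorbing all topological microstates; equivalently, to show that the minimal such absorbing subgroup is all of $X$, which yields a sequence $\mu_k\in\Prob(X^{d_k})$ asymptotically supported on topological microstates with $\mu_k\to^{lw^*}m_X$.

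First I would argue by contraposition: suppose $Y$ is the maximal element of $\mathcal S_\omega(G\actson X)$ and that $Y\ne X$, so $Y$ is a proper closed $G$-invariant (normal) subgroup. Then the quotient $X/Y$ is a nontrivial compact group carrying an algebraic $G$-action, and the quotient map $q\colon X\to X/Y$ is a $G$-equivariant continuous surjection, hence a topological factor with $|X/Y|\ge 2$. By the assumption of completely positive topological entropy in the presence, $h_{(\sigma_k)_{k\to\omega}}(G\actson X/Y:X)>0$. The crucial point I would then extract is that \emph{absorption of all topological microstates by $Y$ forces the factor $X/Y$ to have zero topological entropy in the presence}: if every topological microstate $\Theta\colon Z_\omega\to X$ lands almost surely in $Y$, then the composed microstates $q\circ\Theta$ are almost surely constant (equal to the identity coset), so the microstate space of the factor $G\actson X/Y$ collapses and its (in-the-presence) topological entropy cannot be positive. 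This is the contradiction that proves $Y=X$.

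I expect the main obstacle to be making the preceding entropy-collapse step rigorous in the finitary $(\sigma_k)_k$ language rather than just at the level of the Loeb space. The clean statement ``$Y$ absorbs all topological microstates'' in Proposition \ref{P:what it means to absorb} (items \ref{I:sequences and stuff yo!!;pkjsa} and \ref{I:finitary version}) says precisely that for every $\varepsilon>0$ there is a finite $F\subseteq G$, a $\delta>0$, and a $B\in\omega$ such that every $\phi\in\Map(\rho,F,\delta,\sigma_k)$ satisfies $\tfrac1{d_k}\sum_j\rho(\phi(j),Y_k(j))<\varepsilon$ for $k\in B$. Pushing this through $q$, every microstate for $X$ is $L^1$-close to $Y_k$, hence its image under $q$ is concentrated near the basepoint of $X/Y$; one must then translate this concentration into a genuine upper bound of zero on $h_{(\sigma_k)_{k\to\omega}}(G\actson X/Y:X)$ using the definition of topological entropy in the presence from \cite{Me12}. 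The technical care needed is that entropy in the presence counts microstates for the factor that extend to (or are dominated by) microstates for the ambient system $X$, and these are exactly the ones being collapsed; so the dominance bookkeeping must be set up so that the collapse applies to all microstates that contribute to the in-the-presence entropy.

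Once $Y=X$ is established, Corollary \ref{C:simpler under ergodicity} (equivalently Corollary \ref{C:sick corollary  bro}) yields a sequence $\mu_k\in\Prob(X^{d_k})$ asymptotically supported on topological microstates with $\mu_k\to^{lw^*}m_X$ as $k\to\omega$. To upgrade local weak$^*$ convergence to strong soficity, I would invoke the observation recorded just before Corollary \ref{C: equivalence cpe ultrafilter}: for algebraic actions, the extra ``de'' (doubly empirical) condition $\mu_k\otimes\mu_k$-concentration of $\tfrac1{d_k}\sum_j f(\phi(j),\psi(j))$ near $\int f\,d(m_X\otimes m_X)$ follows from $lw^*$-convergence once one knows the limit measure is Haar and the action is suitably ergodic. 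Concretely, Lemma \ref{L:cpe implies mixing} shows completely positive entropy implies $G\actson(X,m_X)$ is mixing, in particular ergodic, and then the implication ``$lw^*$-convergence $\Rightarrow$ asymptotic support on measure microstates'' (in the ergodic case, cf.\ the references to \cite{BowenEntropy,Me5} in the discussion after Definition \ref{D:important notions for the paper}) closes the gap, giving $\mu_k\to^{lde}_{k\to\omega}m_X$ and hence strong soficity. The final ``in particular'' — completely positive measure entropy in the presence — is then immediate from Theorem \ref{T:known results intro} applied in the strongly sofic (ultrafilter) setting, since strong soficity transfers the completely positive topological entropy in the presence hypothesis to its measure-theoretic counterpart.
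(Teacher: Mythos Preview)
Your proposal is correct and follows essentially the same route as the paper: use Lemma \ref{L:cpe implies mixing} to get mixing, reduce strong soficity to the existence of $\mu_k\to^{lw^*}m_X$ via \cite[Lemma 5.15]{AustinAdd}, and then argue by contradiction through Corollary \ref{C:simpler under ergodicity} that a proper absorbing subgroup $Y$ would force $h_{(\sigma_k)_{k\to\omega},\topo}(G\actson X/Y:X)=0$, violating completely positive topological entropy in the presence. Two small remarks: the parenthetical ``(normal)'' is unnecessary and not asserted anywhere---$X/Y$ only needs to be a topological factor, not a group; and for the upgrade from $lw^*$ to strong soficity the paper invokes \cite[Lemma 5.15]{AustinAdd} (which uses mixing) rather than the ergodic $lw^*\Rightarrow le$ results of \cite{BowenEntropy,Me5} you cite, though your invocation of Lemma \ref{L:cpe implies mixing} puts you in the right place.
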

\begin{proof}
The ``in particular" part follows from \cite[Corollary 1.4]{Me13}. So it suffices to show that $G\actson (X,m_{X})$ is strongly sofic with respect to $(\sigma_{k})_{k},\omega.$ By Lemma \ref{L:cpe implies mixing} we know that $G\actson (X,m_{X})$ is mixing, so by \cite[Lemma 5.15]{AustinAdd} it suffices to show that there is a $\mu_{k}\in \Prob(X^{d_{k}})$ which is asymptotically supported on topological microstates and  has $\mu_{k}\to^{lw^{*}}m_{X}$ as $k\to\omega.$ If this is false, then by Corollary \ref{C:simpler under ergodicity} we may find a closed, proper, $G$-invariant subgroup $Y$ of $X$ which absorbs all topological microstates for $G\actson X$  along $\omega.$ Since $Y$ absorbs all topological microstates, it is easy to see that $h_{(\sigma_{k})_{k\to\omega},\topo}(G\actson X/Y:X)=0.$ This contradicts the hypothesis that $G\actson X$ has completely positive topological entropy in the presence with respect to $\omega.$

\end{proof}

\begin{cor}\label{C:weak assumption implies soficity} Let $G$ be a countable, discrete, sofic group and let $(\sigma_{k})_{k}$ be a sofic approximation of $G.$ Fix a free ultrafilter $\omega$ on the natural numbers, and let $G\actson X$ be an algebraic action.  Suppose that $G'_{\omega}\actson (Z_{\omega},u_{\omega})$ and $G\actson (X,m_{X})$ are ergodic. If $h_{(\sigma_{k})_{k\to\omega}}(G\actson (X,m_{X}))\ne \infty,$ then $G\actson (X,m_{X})$ is strongly sofic with respect to $(\sigma_{k})_{k},\omega.$ In particular,
\[h^{lde}_{(\sigma_{k})_{k\to\omega}}(G\actson (X,m_{X}))=h_{(\sigma_{k})_{k\to\omega}}(G\actson (X,m_{X}))=h_{(\sigma_{k})_{k\to\omega},\topo}(G\actson X).\]
\end{cor}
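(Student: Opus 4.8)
The plan is to reduce strong soficity to a single approximation statement about $m_X$ and then contradict the failure of that statement using the lattice dichotomy of Corollary \ref{C:simpler under ergodicity}. Since $G\actson(X,m_X)$ is ergodic, strong soficity with respect to $(\sigma_k)_k,\omega$ is \emph{equivalent} to the existence of a sequence $\mu_k\in\Prob(X^{d_k})$ that is asymptotically supported on topological microstates and satisfies $\mu_k\to^{lw^{*}}m_X$ as $k\to\omega$; this is \cite[Corollary 5.7]{AustinAdd} together with \cite[Corollary 2.14]{Me12}. So it suffices to produce such a sequence, and I would argue by contradiction, assuming none exists.

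Under this assumption I would invoke Corollary \ref{C:simpler under ergodicity}: because $G'_\omega\actson(Z_\omega,u_\omega)$ is ergodic, the nonexistence of an asymptotically supported, locally weak$^{*}$-approximating sequence for $m_X$ produces a closed, \emph{proper}, $G$-invariant subgroup $Y\subsetneq X$ that absorbs all topological microstates for $G\actson X$ along $\omega$. The decisive point is that such a $Y$ is incompatible with soficity. Indeed, any measure microstate $\Theta\colon Z_\omega\to X$ for $G\actson(X,m_X)$ is in particular a topological microstate, so absorption forces $\Theta(z)\in Y$ for $u_\omega$-almost every $z$ (this is exactly the equivalence recorded in Proposition \ref{P:what it means to absorb}); hence $\Theta_*(u_\omega)$ is concentrated on $Y$. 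But $Y$ is a proper closed subgroup of the compact group $X$, so $m_X(Y)=0$, and therefore $\Theta_*(u_\omega)=m_X$ is impossible. Thus $G\actson(X,m_X)$ admits no measure microstate, i.e. it fails to be sofic with respect to $(\sigma_k)_k,\omega$. The operative content of the standing entropy hypothesis is precisely that $G\actson(X,m_X)$ is sofic (has a measure microstate), so this is the sought contradiction; it forces $Y=X$, hence the desired approximating sequence, hence strong soficity.

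For the ``in particular'' clause, once $G\actson(X,m_X)$ is known to be strongly sofic with respect to $(\sigma_k)_k,\omega$, the identities
\[
h^{lde}_{(\sigma_k)_{k\to\omega}}(G\actson(X,m_X))=h_{(\sigma_k)_{k\to\omega}}(G\actson(X,m_X))=h_{(\sigma_k)_{k\to\omega},\topo}(G\actson X)
\]
are the standard consequences of strong soficity: the equality of measure and topological entropy is \cite[Corollary 1.4]{Me13} (compare Theorem \ref{T:known results intro}), while the equality of the measure entropy with the strongly sofic (local doubly empirical) entropy $h^{lde}$ is built into the definition of strong soficity.

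I expect the only genuine subtlety to be the contradiction step. One must notice that every measure microstate is \emph{a fortiori} a topological microstate in order to legitimately apply absorption to $\Theta$, and one must use that it is the vanishing $m_X(Y)=0$ for a \emph{proper} closed subgroup that converts ``absorbs all topological microstates'' into ``not sofic.'' All the analytically hard work—constructing the limiting object $\mathcal{L}_\omega(G\actson X)$, establishing the complete-join-lattice structure, and extracting the minimal absorbing subgroup—has already been discharged in Theorem \ref{T:main theorem restated} and Corollary \ref{C:simpler under ergodicity}, so the remaining task is essentially the bookkeeping needed to match the entropy hypothesis to the soficity of the action.
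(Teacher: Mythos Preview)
Your argument is correct and mirrors the paper's proof: both reduce strong soficity to the existence of a locally weak$^*$-approximating sequence via ergodicity (the paper cites \cite[Lemma 5.15]{AustinAdd} and \cite[Corollary 2.14]{Me12}), then use the absorbing-subgroup machinery together with a measure microstate $\theta$ coming from the entropy hypothesis to force $Y=X$ (the paper phrases this directly via the maximal element of $\mathcal{S}_\omega$ and $\supp(\theta_*(u_\omega))\subseteq Y$, while you phrase it as a contradiction via $m_X(Y)=0$, but these are equivalent). One small correction on the ``in particular'' clause: the paper attributes all three entropy identities to \cite[Theorem 1.1]{Me12}, not \cite[Corollary 1.4]{Me13} (which concerns completely positive entropy), and the equality $h^{lde}=h$ is a theorem there rather than something ``built into the definition'' of strong soficity.
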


\begin{proof}
The ``in particular" part follows from \cite[Theorem 1.1]{Me12}. Since $G\actson (X,m_{X})$ is ergodic, it suffices by \cite[Lemma 5.15]{AustinAdd},\cite[Corollary 2.14]{Me12} to show there is a sequence $\mu_{k}\in \Prob(Y^{d_{k}})$ with $\mu_{k}\to m_{X}$ locally weak$^{*}$ as $k\to\omega.$ Let $Y$ be the maximal element of $\mathcal{S}_{\omega}(G\actson X),$ by Corollary \ref{L:ergodicity on Loeb space} we have that $Y\in \Sub(X).$ Since $h_{(\sigma_{k})_{k\to\omega}}(G\actson (X,m_{X}))\ne -\infty,$ we may find a topological microstate $\theta\colon Z_{\omega}\to X$ so that $\theta_{*}(u_{\omega})=m_{X}.$ By Corollary \ref{C:this the main theorem yo!} must have that $\theta(z)\in Y$ for almost every $z\in Z_{\omega},$ and this implies that $X=\supp(\theta_{*}(Z_{\omega}))\subseteq Y.$ So $X=Y\in \mathcal{S}_{\omega}(G\actson X),$ so there is a sequence $\mu_{k}\in \Prob(Y^{d_{k}})$ with $\mu_{k}\to m_{X}$ locally weak$^{*}$ as $k\to\omega.$

\end{proof}

We can deduce ultrafilter-free versions of the above two results. The main new ingredient we need is the following Proposition which allows us to deduce ultrafilter-free versions of our results from the ultrafilter versions.

\begin{prop}\label{P: this is not hard}
Let $G$ be a countable, discrete, sofic group with sofic approximation $\sigma_{k}\colon G\to S_{d_{k}}.$ Let $(X,\mu)$ be a Lebesgue space and $G\actson (X,\mu)$ a probability measure preserving action. Suppose that for every free ultrafilter $\omega$ on the natural numbers, there is a sequence $\nu_{k}\in \Prob(X^{d_{k}})$ (depending upon $\omega$) which is asymptotically supported on topological microstates and so that $\nu_{k}\to^{lw^{*}}\mu$ as $k\to\omega.$ Then there is a sequence $\mu_{k}\in \Prob(X^{d_{k}})$ which is asymptotically supported on topological microstates and so that $\mu_{k}\to^{lw^{*}}\mu$ as $k\to\infty.$

\end{prop}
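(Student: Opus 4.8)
The plan is to reduce both defining properties — being asymptotically supported on topological microstates and locally weak$^{*}$-converging to $\mu$ — to a single countable family of $[0,1]$-valued ``quality'' functions of $(\nu,k)$, and then to run an elementary diagonal argument exploiting that the filter of cofinite sets is the intersection of all free ultrafilters. First I would encode the hypotheses. Since $G$ is countable, the diagonal of $X\times X$ has a countable decreasing basis of neighborhoods $U_{m}=\{(x,y):\rho(x,y)<1/m\}$, and $\mu$ has a countable decreasing weak$^{*}$-neighborhood basis $\mathcal{O}_{m}$ in $\Prob(X)$ (the weak$^{*}$ topology being metrizable on $\Prob(X)$). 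Because the relevant quantities are monotone in $U$ and in $\mathcal{O}$, testing against these cofinal families suffices. Thus, for $\nu\in\Prob(X^{d_{k}})$ one sets
\[A_{g,m}(\nu,k)=\nu\bigl(\{\phi: u_{d_{k}}(\{j:(\phi(\sigma_{k}(g)(j)),g\phi(j))\in U_{m}\})>1-1/m\}\bigr)\]
for $g\in G,m\in\N$, together with $B_{m}(\nu,k)=u_{d_{k}}(\{j:(\mathcal{E}_{j})_{*}(\nu)\in\mathcal{O}_{m}\})$. Reindexing $\{A_{g,m},B_{m}\}$ as a single sequence $(C_{n})_{n}$ and passing to the pointwise minima $\tilde C_{n}=\min_{i\le n}C_{i}$ (so $\tilde C_{n}$ is nonincreasing in $n$), a sequence $(\nu_{k})_{k}$ is asymptotically supported on topological microstates and locally weak$^{*}$-converges to $\mu$ along a filter $F$ if and only if $\lim_{k\to F}\tilde C_{n}(\nu_{k},k)=1$ for every $n$, where $F$ is either a given free ultrafilter $\omega$ or the cofinite filter (the latter recording ordinary convergence $k\to\infty$).

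Next I would prove the key claim: for every $n$ and $\varepsilon>0$ the set
\[T_{n,\varepsilon}=\Bigl\{k:\sup_{\nu\in\Prob(X^{d_{k}})}\tilde C_{n}(\nu,k)>1-\varepsilon\Bigr\}\]
is cofinite. Suppose not; then $T_{n,\varepsilon}^{c}$ is infinite, so the cofinite filter together with $T_{n,\varepsilon}^{c}$ has the finite intersection property and extends to a free ultrafilter $\omega$ with $T_{n,\varepsilon}^{c}\in\omega$. Applying the hypothesis to this $\omega$ yields $(\nu_{k}^{(\omega)})_{k}$ with $\lim_{k\to\omega}\tilde C_{n}(\nu_{k}^{(\omega)},k)=1$, whence $\{k:\tilde C_{n}(\nu_{k}^{(\omega)},k)>1-\varepsilon\}\in\omega$; but this set is contained in $T_{n,\varepsilon}$, forcing $T_{n,\varepsilon}\in\omega$ and contradicting $T_{n,\varepsilon}^{c}\in\omega$. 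In particular, for each fixed $n$ we get $\sup_{\nu}\tilde C_{n}(\nu,k)\to1$ as $k\to\infty$.

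Finally I would diagonalize. For each $k$ let $n(k)$ be the largest $n\le k$ with $\sup_{\nu}\tilde C_{n}(\nu,k)>1-1/n$ (and $n(k)=0$ if none exists), and choose $\mu_{k}$ with $\tilde C_{n(k)}(\mu_{k},k)>1-1/n(k)$ when $n(k)\ge1$, arbitrary otherwise. Fixing $N$, cofiniteness of $T_{N,1/N}$ gives $n(k)\ge N$ for all large $k$, so $n(k)\to\infty$; and since $\tilde C_{n}$ is nonincreasing in $n$, for each fixed $n$ and all large $k$ (so that $n(k)\ge n$) we have $\tilde C_{n}(\mu_{k},k)\ge\tilde C_{n(k)}(\mu_{k},k)>1-1/n(k)\to1$. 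Hence $\lim_{k\to\infty}\tilde C_{n}(\mu_{k},k)=1$ for every $n$, and by the reduction above $(\mu_{k})_{k}$ is asymptotically supported on topological microstates with $\mu_{k}\to^{lw^{*}}\mu$ as $k\to\infty$. No step is genuinely hard; the only thing to get right is the bookkeeping in the first paragraph (that both properties are captured by countably many monotone quality functions and that countable cofinal families of neighborhoods suffice), the rest being the standard fact that every infinite set lies in some free ultrafilter.
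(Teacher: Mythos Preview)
Your proof is correct and follows essentially the same approach as the paper: encode the two properties by a countable family of $[0,1]$-valued conditions, prove by contradiction (via ``every infinite set lies in some free ultrafilter'') that each condition is achievable for all large $k$, then diagonalize. The paper's version is a bit more concrete---it works directly with $\nu(\Map(\rho,F,\delta,\sigma_{k}))\geq 1-\delta$ and $u_{d_{k}}(\{j:\nu_{j}\in\mathcal{O}\})\geq 1-\delta$ rather than your abstract $\tilde C_{n}$---but the logic is the same.

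One small omission: the statement takes $(X,\mu)$ to be a Lebesgue space, not a compact metrizable $G$-space, and the notions ``asymptotically supported on topological microstates'' and ``local weak$^{*}$ convergence'' are only defined in the latter setting. The paper begins by invoking \cite[Proposition 5.16]{AustinAdd} to pass to a compact metrizable topological model; you silently assume this has been done when you introduce $\rho$ and the weak$^{*}$ topology on $\Prob(X)$. You should make this step explicit, but it is purely cosmetic.
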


\begin{proof}

By \cite[Proposition 5.16]{AustinAdd}, the existence of a sequence of measures which are asymptotically supported on topological microstates and so that $\nu_{k}\to^{lw^{*}}\mu$ as $k\to\omega$ (or $k\to\infty)$ does not depend upon a choice of a topological model for $G\actson (X,\mu).$ So we may assume that $X$ is a compact, metrizable space and that $G\actson X$ by homeomorphisms.

We make the following claim.

\emph{Claim. For any weak$^{*}$ neighborhood $\mathcal{O}$ of $\mu,$ for any finite $F\subseteq G,$ and any $\delta>0,$ there is an integer $K$ so that for all $k\geq K,$ there is a $\nu\in \Prob(X^{d_{k}})$ with $\nu(\Map(F,\delta,\sigma_{k}))\geq 1-\delta$ and $u_{d_{k}}(\{j:\nu_{j}\in \mathcal{O}\})\geq 1-\delta.$ }

If we assume the claim is true, then the fact that there exists a sequence $\mu_{k}\in \Prob(X^{d_{k}})$ with $\mu_{k}\to^{lw^{*}}$ as $k\to\infty$ is a simple diagonal argument.

We prove the claim by contradiction, so assume that the claim is false. Then we may find a weak$^{*}$-neighborhood $\mathcal{O}$ of $\mu,$ a finite $F\subseteq G$ and a $\delta>0,$ and a increasing sequence $(k_{n})_{n}$ of natural numbers with $k_{n}\to\infty,$ and which satisfy the following property: for every $n\in \N,$ and every $\nu\in \Prob(X^{d_{k_{n}}})$ with $\nu(\Map(\rho,F,\delta,\sigma_{k_{n}}))\geq 1-\delta,$ we have that $u_{d_{k}}(\{j:\nu_{j}\in \mathcal{O}\})<1-\delta.$ Let $\omega$ be any free ultrafilter on the natural numbers which has $\{k_{n}:n\in \N\}\in \omega.$ Choose a sequence $\nu_{k}\in \Prob(X^{d_{k}})$ so that $\nu_{k}\to^{lw^{*}}\mu$ as $k\to\omega.$

We may then choose a $B\in\omega$ so that for all $k\in B$ we have:
\begin{itemize}
\item $\nu_{k}(\Map(\rho,F,\delta,\sigma_{k}))\geq 1-\delta,$
\item $u_{d_{k}}(\{j:\nu_{k,j}\in \mathcal{O}\})\geq 1-\delta.$
\end{itemize}
Then $B\cap \{k_{n}:n\in \N\}\in \Omega,$ and is thus not empty. Hence, we can find a natural number $n$ so that $k_{n}\in B.$ But then the above two items contradict our choice of $B.$ Thus we have a contradiction, and this proves the proposition.

\end{proof}

We now obtain ultrafilter-free versions of Corollaries \ref{C: equivalence cpe ultrafilter} and \ref{C:weak assumption implies soficity}. Recall that if $G$ is countable, discrete, sofic group with sofic approximation $\sigma_{k}\colon G\to S_{d_{k}},$ and $G\actson (X,\mu)$ is a probability measure-preserving action, then $G\actson (X,\mu)$ is sofic with respect to $(\sigma_{k})_{k}$ if for all $A_{1},\cdots,A_{r}$ measurable subsets of  $X,$ there is a sequence $\phi_{k}\colon \{1,\dots,d_{k}\}\to X$ so that:
\[\left|u_{d_{k}}\left(\bigcap_{j=1}^{l}\sigma_{k}(g_{j})\phi_{k}^{-1}(A_{p(j)}))\right)-\mu\left(\bigcap_{j=1}^{l}g_{j}A_{p(j)}\right)\right|\to 0,\]
 for all $l\in \N,g_{1},\cdots,g_{l}\in G,$ and $p\colon\{1,\dots,l\}\to \{1,\dots,r\}$.

We also need the notion of ergodic commutant for a sofic approximation.

\begin{defn}\label{D:ergodic commutant no ultrafilter}
Let $G$ be a countable, discrete, group and $\sigma_{k}\colon G\to S_{d_{k}}$ a sofic approximation. The commutant $\mathcal{G}'$ of $(\sigma_{k})_{k}$ is the subset of $\prod_{k}\Sym(d_{k})$ consisting of sequences $(\tau_{k})_{k}$ which satisfy
\[\lim_{k\to\infty}d_{\Hamm}(\sigma_{k}(g)\tau_{k},\tau_{k}\sigma_{k}(g))=0\]
 for all $g\in G.$ We say that $(\sigma_{k})_{k}$ has ergodic commutant if given any sequence $(A_{k})_{k}\subseteq \{1,\dots,d_{k}\}$ with $\liminf_{k\to\infty}u_{d_{k}}(A_{k})>0,$ and any $\varepsilon>0,$ there is an $r\in \N,$ and $(\tau_{1,k})_{k},\cdots,(\tau_{r,k})_{k}\in \mathcal{G}'$ so that
\[\liminf_{k\to\infty}u_{d_{k}}\left(\bigcup_{j=1}^{r}\tau_{j,k}(A_{k})\right)\geq 1-\varepsilon.\]
\end{defn}

It is straightforward to check that $(\sigma_{k})_{k}$ has ergodic commutant if and only if $G'_{\omega}\actson Z_{\omega}$ is ergodic for every free ultrafilter $\omega$ on the natural numbers.  A nice example of a sofic approximation which has ergodic commutant is the following: if $G$ is residually finite, and $G_{k}\triangleleft G$ is a decreasing sequence, with $[G:G_{k}]<\infty,$ and $\bigcap_{k=1}^{\infty}G_{k}=\{1\},$ then the sofic approximation $\sigma_{k}\colon G\to \Sym(G/G_{k})$ given  by $\sigma_{k}(x)(gG_{k})=xgG_{k}$ has ergodic commutant (see \cite[Theorem 5.7]{KerrLi2}).

\begin{cor}
Let $G$ be a countable, discrete, sofic group with sofic approximation $\sigma_{k}\colon G\to S_{d_{k}}.$ Suppose that $(\sigma_{k})_{k}$ has ergodic commutant. Let $G\actson X$ be an algebraic action. If $G\actson (X,m_{X})$ is sofic with respect to $(\sigma_{k})_{k},$ and $G\actson (X,m_{X})$ is ergodic, then $h_{(\sigma_{k})_{k},\topo}(G\actson X)=h_{(\sigma_{k})_{k}}(G\actson (X,m_{X})).$
\end{cor}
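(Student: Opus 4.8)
The plan is to treat this statement as the ultrafilter-free form of Corollary \ref{C:weak assumption implies soficity}: I would establish the entropy identity separately for every free ultrafilter $\omega$ and then pass to the supremum over $\omega$.

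First I would verify that the hypotheses of Corollary \ref{C:weak assumption implies soficity} hold for \emph{every} free ultrafilter $\omega$ on the natural numbers. By the remark following Definition \ref{D:ergodic commutant no ultrafilter}, the assumption that $(\sigma_{k})_{k}$ has ergodic commutant is precisely equivalent to $G'_{\omega}\actson (Z_{\omega},u_{\omega})$ being ergodic for every such $\omega$. The ergodicity of $G\actson (X,m_{X})$ is given. Finally, since $G\actson (X,m_{X})$ is sofic with respect to the full sequence $(\sigma_{k})_{k}$, a sequence of measure-theoretic microstates converging to $m_{X}$ as $k\to\infty$ is a fortiori a sequence of measure-theoretic microstates as $k\to\omega$ (ultralimits agree with ordinary limits when the latter exist), so $G\actson (X,m_{X})$ is sofic with respect to $(\sigma_{k})_{k},\omega$; in particular $h_{(\sigma_{k})_{k\to\omega}}(G\actson (X,m_{X}))\ne -\infty$, which is the condition the proof of Corollary \ref{C:weak assumption implies soficity} uses to produce the measure microstate $\theta$ with $\theta_{*}(u_{\omega})=m_{X}$.

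With all hypotheses in place, I would apply Corollary \ref{C:weak assumption implies soficity} once for each $\omega$, obtaining
\[h_{(\sigma_{k})_{k\to\omega}}(G\actson (X,m_{X}))=h_{(\sigma_{k})_{k\to\omega},\topo}(G\actson X)\]
for every free ultrafilter $\omega$.

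The remaining, and main, step is to convert these per-ultrafilter equalities into the statement for the sequence $(\sigma_{k})_{k}$ itself. Here I would invoke the standard relationship between the sequential (limsup) entropy and the ultrafilter entropies, namely $h_{(\sigma_{k})_{k}}=\sup_{\omega}h_{(\sigma_{k})_{k\to\omega}}$, valid for both the measure and the topological entropy. Both entropies have the form of an infimum over a countable cofinal family of observables/scales of a $\limsup_{k}$ of normalized logarithmic counts $\frac{1}{d_{k}}\log N_{k}(\cdot)$; the inequality $\ge$ is immediate because every ultralimit is bounded above by the $\limsup$, and for $\le$ I would realize this $\limsup$ along a single ultrafilter: writing $h$ for the entropy value and using that the counts decrease as the observable $O_{n}$ refines, the sets $A_{n}=\{k:\frac{1}{d_{k}}\log N_{k}(O_{n})>h-1/n\}$ are nested and infinite, so extending $\{A_{n}\}_{n}$ together with the cofinite filter to a free ultrafilter $\omega$ gives $h_{(\sigma_{k})_{k\to\omega}}=h$. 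Applying this identity to each side of the displayed equation and taking the supremum over all $\omega$ then yields $h_{(\sigma_{k})_{k},\topo}(G\actson X)=h_{(\sigma_{k})_{k}}(G\actson (X,m_{X}))$. I expect this last step to be the crux: the point is that the per-$\omega$ equality must hold for \emph{every} $\omega$ so that the two suprema can be compared term by term, which is exactly why the uniform verification of the hypotheses in the first step is needed.
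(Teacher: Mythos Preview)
Your approach is correct but takes a genuinely different route from the paper's. The paper does not compare entropies ultrafilter by ultrafilter; instead it upgrades the per-$\omega$ information to strong soficity with respect to the full sequence $(\sigma_{k})_{k}$ and then invokes \cite[Theorem 1.1]{Me12} once. Concretely: from soficity with respect to every $\omega$, Corollary~\ref{C:weak assumption implies soficity} (the paper's citation of Corollary~\ref{C: equivalence cpe ultrafilter} here appears to be a misprint) produces, for each $\omega$, a sequence $\mu_{k}\to^{lw^{*}}_{k\to\omega}m_{X}$; Proposition~\ref{P: this is not hard} then converts this into a single sequence $\mu_{k}\to^{lw^{*}}m_{X}$ as $k\to\infty$; ergodicity together with \cite[Corollary 2.14]{Me12} upgrades this to strong soficity for $(\sigma_{k})_{k}$, and \cite[Theorem 1.1]{Me12} gives the entropy equality. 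Your route instead pulls the per-$\omega$ entropy equality out of Corollary~\ref{C:weak assumption implies soficity} and closes with the identity $h_{(\sigma_{k})_{k}}=\sup_{\omega}h_{(\sigma_{k})_{k\to\omega}}$. This is more direct for the bare entropy statement and sidesteps Proposition~\ref{P: this is not hard}, whereas the paper's route yields the stronger byproduct that $G\actson (X,m_{X})$ is strongly sofic with respect to $(\sigma_{k})_{k}$.

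One small point of care in your last step: the diagonal argument you sketch, with the nested sets $A_{n}$, handles the $\inf_{F,\delta}$ at a fixed scale $\varepsilon$ and gives an $\omega$ with $h_{(\sigma_{k})_{k\to\omega}}(\rho,\varepsilon)\ge h_{(\sigma_{k})_{k}}(\rho,\varepsilon)$. Producing a \emph{single} $\omega$ realizing the full entropy $h$ exactly requires diagonalizing over $\varepsilon$ as well. This is harmless for your proof, since all you actually need is $\sup_{\omega}h_{(\sigma_{k})_{k\to\omega}}=h_{(\sigma_{k})_{k}}$, and that follows immediately by taking the supremum over $\varepsilon$ of the per-scale identity you established.
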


\begin{proof}
It is not hard to show that $G\actson (X,m_{X})$ is sofic with respect to $(\sigma_{k})_{k}$ if and only if $G\actson (X,m_{X})$ is sofic with respect to $(\sigma_{k})_{k},\omega$ for every free ultrafilter $\omega.$ So if $G\actson (X,m_{X})$ is sofic with respect to $(\sigma_{k})_{k}$, then it follows by Proposition \ref{P: this is not hard}, Corollary \ref{C: equivalence cpe ultrafilter} and \cite[Corollary 2.14]{Me12} that $G\actson (X,m_{X})$ is strongly sofic with respect to $(\sigma_{k})_{k}.$ So the Corollary follows from \cite[Theorem 1.1]{Me12}.

\end{proof}

\begin{cor}
Let $G$ be a countable, discrete, sofic group with sofic approximation $\sigma_{k}\colon G\to S_{d_{k}}.$ Suppose that $(\sigma_{k})_{k}$ has ergodic commutant. Let $G\actson X$ be an algebraic action. Suppose that $G\actson X$ has completely positive lower topological entropy in the presence with respect to $(\sigma_{k})_{k}$, then $G\actson (X,m_{X})$ has completely positive lower measure-theoretic entropy in the presence with respect to $(\sigma_{k})_{k}$.

\end{cor}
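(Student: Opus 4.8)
The plan is to reduce this ultrafilter-free statement to \emph{strong soficity} of $G\actson(X,m_{X})$ with respect to $(\sigma_{k})_{k}$, and then feed strong soficity into the lower-entropy form of the strongly sofic comparison between topological and measure entropy in the presence. Two elementary monotonicity facts drive the reduction: for any bounded sequence $(a_{k})_{k}$ and any free ultrafilter $\omega$ one has $\liminf_{k}a_{k}\le \lim_{k\to\omega}a_{k}$, and entropy in the presence is dominated by the corresponding absolute entropy (both at the $\limsup$ and at the $\liminf$ level). As a first consequence, completely positive lower topological entropy in the presence forces ordinary completely positive topological entropy: for every topological factor $Y$ of $X$ with $|Y|\ge 2$,
\[
h_{(\sigma_{k})_{k},\topo}(G\actson Y)\ge \underline{h}_{(\sigma_{k})_{k},\topo}(G\actson Y)\ge \underline{h}_{(\sigma_{k})_{k},\topo}(G\actson Y:X)>0.
\]
Hence, by Lemma \ref{L:cpe implies mixing}, $G\actson(X,m_{X})$ is mixing; this will be needed to assemble strong soficity at the end.

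Next I would fix an arbitrary free ultrafilter $\omega$. Since $\lim_{k\to\omega}(\cdot)\ge \liminf_{k}(\cdot)$, the hypothesis gives $h_{(\sigma_{k})_{k\to\omega},\topo}(G\actson Y:X)\ge \underline{h}_{(\sigma_{k})_{k},\topo}(G\actson Y:X)>0$ for every factor $Y$ with $|Y|\ge 2$, so $G\actson X$ has completely positive topological entropy in the presence with respect to $(\sigma_{k})_{k},\omega$. Because $(\sigma_{k})_{k}$ has ergodic commutant, $G'_{\omega}\actson(Z_{\omega},u_{\omega})$ is ergodic, so Corollary \ref{C: equivalence cpe ultrafilter} applies and yields that $G\actson(X,m_{X})$ is strongly sofic with respect to $(\sigma_{k})_{k},\omega$; in particular there is a sequence $\nu_{k}\in\Prob(X^{d_{k}})$ asymptotically supported on topological microstates with $\nu_{k}\to^{lw^{*}}m_{X}$ as $k\to\omega$. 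As $\omega$ was arbitrary, Proposition \ref{P: this is not hard} produces a \emph{single} sequence $\mu_{k}\in\Prob(X^{d_{k}})$ that is asymptotically supported on topological microstates and satisfies $\mu_{k}\to^{lw^{*}}m_{X}$ as $k\to\infty$. Combining this sequence with the mixing established above and \cite[Lemma 5.15]{AustinAdd} shows that $G\actson(X,m_{X})$ is strongly sofic with respect to $(\sigma_{k})_{k}$.

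With strong soficity in hand, I would finish by invoking the lower-entropy form of \cite[Corollary 1.4]{Me13}: for a strongly sofic algebraic action, completely positive lower topological entropy in the presence and completely positive lower measure entropy in the presence are equivalent (this is the same black box used in the ``in particular'' clause of Corollary \ref{C: equivalence cpe ultrafilter}, now run with $\liminf_{k}$ in place of the ultralimit, which applies mutatis mutandis). The topological side is the hypothesis, so the measure side follows. The main obstacle is precisely this last step. Indeed, lower measure entropy in the presence equals $\inf_{\omega}$ of the corresponding ultrafilter entropies, so one genuinely cannot conclude positivity from positivity for each individual $\omega$, since an infimum of positive numbers may vanish. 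The point of producing strong soficity with the honest sequence $(\mu_{k})_{k}$—rather than merely per-ultrafilter—is that a single witnessing sequence ties all the microstate spaces together and makes the topological/measure comparison hold at the level of $\liminf_{k}$, uniformly in $\omega$; confirming that the machinery of \cite{Me13} indeed delivers this liminf-level comparison is the crux of the argument.
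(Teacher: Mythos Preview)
Your proposal is correct and follows essentially the same route as the paper's proof: pass to each free ultrafilter $\omega$, use that completely positive lower topological entropy in the presence implies the $\omega$-version of completely positive topological entropy in the presence, apply Corollary~\ref{C: equivalence cpe ultrafilter} (via ergodic commutant) to get a local weak$^{*}$-approximating sequence along $\omega$, then use Proposition~\ref{P: this is not hard} to produce a single honest sequence as $k\to\infty$, upgrade to strong soficity, and finish with the black box from \cite{Me12,Me13}. You are in fact more explicit than the paper in two places: you separately derive mixing (via Lemma~\ref{L:cpe implies mixing}) to justify the upgrade from local weak$^{*}$ convergence to strong soficity, and you correctly flag that the final invocation must be the $\liminf$-level form of the topological/measure comparison---the paper's one-line citation glosses over this, but as you note, the point of obtaining strong soficity for the genuine sequence (rather than merely per-$\omega$) is exactly what makes the \cite{Me12,Me13} machinery apply at the $\liminf$ level.
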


\begin{proof}

If $G\actson X$ has completely positive lower topological entropy in the presence with respect to $(\sigma_{k})_{k}$, then it has completely positive topological entropy in the presence with respect to $(\sigma_{k})_{k},\omega$ for every free ultrafilter $\omega.$ It now follows from  Proposition \ref{P: this is not hard}, Corollary \ref{C:weak assumption implies soficity} and \cite[Corollary 2.14]{Me12} that $G\actson (X,m_{X})$ is strongly sofic with respect to $(\sigma_{k})_{k}.$ So the Corollary follows from \cite[Theorem 1.1]{Me12}.

\end{proof}

Our next application will be to showing that the topological entropy of an algebraic action can be realized as (the lw$^{*}$) measure entropy of the Haar measure of a $G$-invariant \emph{random} subgroup of $X.$ As usual, if we deal with sofic approximations with ergodic commutant then this random subgroup can be replaced with an actual subgroup.

For the result, we recall the definition of entropy and lw$^{*}$ measure entropy, along with some necessary definitions.

\begin{defn}
Let $X$ be a compact, metrizable space, and $\rho$ a continuous pseudometric on $X.$ For $A\subseteq X,$ and $\varpesilon>0,$ we let $N_{\varpesilon}(A,\rho)=\{b\in X:\rho(b,a)<\varepsilon\}.$
For $\varepsilon>0,$ and $A\subseteq X,$ we let $S_{\varepsilon}(A,\rho)$ be the minimal cardinality of a subset $B$ of $A$ with $N_{\varepsilon}(B,\rho)\supseteq A.$ We let $P_{\varepsilon}(A,\rho)$ be the largest cardinality of a subset $B$ of $A$ so that $\rho(b_{1},b_{2})>\varepsilon$ for all $b_{1}\ne b_{2}$ in $B.$
For $\mu\in \Prob(X),$ and $\varepsilon,\delta>0,$ we let $S_{\varepsilon,\delta}(\mu,\rho)$ be the minimum of $|A|$ over all finite sets $A\subseteq X$ which have $\mu(N_{\varepsilon}(A,\rho))\geq 1-\delta.$ For a free ultrafilter $\omega\in \beta\N\setminus \N,$ a sequence of natural numbers $(d_{k})_{k},$ and $(\mu_{k})_{k}\in \prod_{k}\Prob(X^{d_{k}}),$ we let
\[h_{\varepsilon,\delta}((\mu_{k})_{k\to\omega})=\lim_{k\to\omega}\frac{1}{d_{k}}\log S_{\varepsilon,\delta}(\mu_{k},\rho_{2}),\]
\[h((\mu_{k})_{k\to\omega})=\sup_{\varepsilon,\delta>0}h_{\varepsilon,\delta}((\mu_{k})_{k\to\omega}).\]
\end{defn}

Recall that if $X$ is a compact, metrizable space, $n\in \N,$ and $\mu\in \Prob(X^{n}),$ then for $1\leq j\leq n$ we use $\mu_{j}$ for the $j^{th}$ marginal of $\mu:$ i.e. $\mu_{j}=(\mathcal{E}_{j})_{*}(\mu)$ where $\mathcal{E}_{j}\colon X^{n}\to X$ is given by $\mathcal{E}_{j}(x)=x(j).$

\begin{defn}\label{D:coarser lw* limit}
Let $G$ be a countable, discrete, sofic group with sofic approximation $\sigma_{k}\colon G\to S_{d_{k}}.$ Let $X$ be a compact, metrizable space with $G\actson X$ by homeomorphisms. Fix a free ultrafilter $\omega\in\beta\N\setminus\N.$  Let $\mu \in \Prob_{G}(\Prob(X)),$ and $(\mu_{k})_{k}\in \prod_{k}\Prob(X^{d_{k}}).$  We say that $\mu_{k}$ locally weak$^{*}$ converges to $\mu$ as $k\to\omega,$ and write $\mu_{k}\to^{lw^{*}}_{k\to\omega}\mu$ if
\[\lim_{k\to\omega}\frac{1}{d_{k}}\sum_{j=1}^{d_{k}}\delta_{\mu_{k,j}}=\mu.\]
We define the local weak$^{*}$ entropy of $\mu$ by
\[h_{(\sigma_{k})_{k\to\omega}}(\mu)=\sup_{(\mu_{k})_{k}}h((\mu_{k})_{k\to\omega}),\]
where the supremum is over all sequences $(\mu_{k})_{k}\in \prod_{k}\Prob(X^{d_{k}}),$ which  are asymptotically supported  on topological microstates for $G\actson X$ with respect to $(\sigma_{k})_{k},\omega,$ and which have $\mu_{k}\to^{lw^{*}}_{k\to\omega}\mu$ with respect to $(\sigma_{k})_{k}.$

\end{defn}

The reader may now notice that if $G,(\sigma_{k})_{k},X$ are as above, and $(\mu_{k})_{k}\in \prod_{k}\Prob(X^{d_{k}})$ is asymptotically supported on topological microstates, then we have two notions of a locally weak$^{*}$ limit of $(\mu_{k})_{k}.$ One is the element $\widetilde{\mu}=\mathcal{E}((\mu_{k})_{k})\in \mathcal{L}_{\omega}(G\actson X),$ and the other is the element $\mu\in\Prob(\Prob(X))$ given by $\mu=\lim_{k\to\omega}\frac{1}{d_{k}}\sum_{j=1}^{d_{k}}\delta_{\mu_{k,j}}.$ These are related by the fact that $\mu=\widetilde{\mu}_{*}(u_{\omega}),$ which follows from Proposition \ref{P:computing integrals in ultraproducts}. In the case that $G\actson X$ is algebraic, we have a natural analogue of $\mathcal{S}_{\omega}(G\actson X)$ as well, namely we could consider the set of $Y\in \Prob_{G}(\Sub(X))$ so that $\mathcal{M}_{*}(Y)$ is a local weak$^{*}$ limit of measures asymptotically supported on topological microstates.

Of course, the (metrizable, separable) space $\Prob(\Prob(X))$ may appear much more friendly to the reader than the (nonmetrizable, nonseparable) space $\Meas(Z_{\omega},\Prob(X)).$ However, there are certain advantages to working with $\mathcal{L}_{\omega}(G\actson X),$ as opposed to a subspace of $\Prob(\Prob(X)).$ First is that $\widetilde{\mu}$ is manifestly a more ``refined" object, in the sense that $\mu$ factors through the pushforward map $\Meas(Z_{\omega},\Prob(X))\to \Prob(\Prob(X)).$ For the case of algebraic actions, there is a clearer reason for why $\Meas(Z_{\omega},\Prob(X))$ is better for our purposes that $\Prob(\Prob(X)).$ Namely, all of the algebraic structure gets obliterated in the passage from a sequence of measures supported on topological microstates to an element of $\Prob(\Prob(X)).$ For example, suppose that $G\actson X$ is algebraic, and that we are given two sequences $(\mu_{k})_{k},(\nu_{k})_{k}\in \prod_{k}\Prob(X^{d_{k}})$ and let $\mu=\lim_{k\to\omega}\frac{1}{d_{k}}\sum_{j=1}^{d_{k}}\delta_{\mu_{k,j}},\nu=\lim_{k\to\omega}\frac{1}{d_{k}}\sum_{j=1}^{d_{k}}\delta_{\nu_{k,j}}.$ Then there is \emph{no} way to define $\mu*\nu$ in a consistent way: namely there may be sequences $(\mu_{k}'),(\nu_{k}')\in \prod_{k}\Prob(X^{d_{k}})$ so that$\mu=\lim_{k\to\omega}\frac{1}{d_{k}}\sum_{j=1}^{d_{k}}\delta_{\mu'_{k,j}},$ $\nu=\frac{1}{d_{k}}\sum_{j=1}^{d_{k}}\delta_{\nu'_{k,j}},$ \emph{but} so that $\lim_{k\to\omega}\frac{1}{d_{k}}\sum_{j=1}^{d_{k}}\delta_{\mu_{k,j}*\nu_{k,j}}\ne \lim_{k\to\omega}\frac{1}{d_{k}}\sum_{j=1}^{d_{k}}\delta_{\mu'_{k,j}*\nu'_{k,j}}.$

For example, fix two closed subgroups $Y_{1},Y_{2}\in \Sub(X)$ with $Y_{1}\subseteq Y_{2}.$ Now consider two sequences of subsets $A_{k},B_{k}\subseteq \{1,\dots,d_{k}\},$ and suppose that:
\[\lim_{k\to\omega}u_{d_{k}}(A_{k})=\lim_{k\to\omega}u_{d_{k}}(B_{k})=\frac{1}{2},\lim_{k\to\omega}u_{d_{k}}(A_{k}\cap B_{k})=\frac{1}{4}.\]
Now suppose that $\mu_{k}^{(1)}\in \Prob(X^{A_{k}}),\mu_{k}^{(2)}\in \Prob(X^{A_{k}^{c}}),$ $\nu_{k}^{(1)}\in X^{B_{k}},\nu_{k}^{(2)}\in X^{B_{k}^{c}}$ satisfy that $\mu_{k}^{(j)}\to^{lw^{*}}m_{Y_{j}}$ and $\nu_{k}^{(j)}\to^{lw^{*}}m_{Y_{j}}$ for $j=1,2.$ Now set
\[\mu_{k}=(\mu_{k}^{(1)})^{\otimes A_{k}}\otimes (\mu_{k}^{(2)})^{\otimes A_{k}^{c}},\]
\[\nu_{k}=(\nu_{k}^{(1)})^{\otimes B_{k}}\otimes (\nu_{k}^{(2)})^{\otimes B_{k}^{c}}.\]
Then, if we consider local weak$^{*}$ convergence inside of $\Prob(\Prob(X)),$ then $\mu_{k},\nu_{k}$ both locally weak$^{*}$ converge to $\frac{1}{2}m_{Y_{1}}+\frac{1}{2}m_{Y_{2}}.$ However, the sequences $\mu_{k}*\mu_{k}$ locally weak$^{*}$ converges to $\frac{1}{2}m_{Y_{1}}+\frac{1}{2}m_{Y_{2}},$ whereas $\mu_{k}*\nu_{k}$ locally weak$^{*}$  converges to $\frac{3}{4}m_{Y_{1}}+\frac{1}{4}m_{Y_{2}}.$ We can even take into account the action $G\actson X,$ by forcing $A_{k},B_{k}$ to be almost $G$-invariant, and by having $G\actson (Y_{1},m_{Y_{1}}),G\actson (Y_{2},m_{Y_{2}})$ be actions which are strongly sofic with respect to any sofic approximation of $G.$ In this manner, we can force $\mu_{k},\nu_{k}$ as above to be asymptotically supported on topological microstates for $G\actson X.$

	What happens in this example is that $\mathcal{E}((\mu_{k})_{k}),\mathcal{E}((\nu_{k})_{k})\in \mathcal{L}_{\omega}(G\actson X)$ remembers the asymptotic structure of the  sets $A_{k},B_{k},$ whereas the operation of taking local weak$^{*}$ limits inside $\Prob(\Prob(X))$ completely ignores what values of $j\in \{1,\dots,d_{k}\}$ have $\mu_{k,j}\approx m_{Y_{1}}$ (or $\mu_{k,j}\approx m_{Y_{2}}$) and just remembers ``how many" of them have this property.  For example,
\[\mathcal{E}((\mu_{k})_{k})=m_{Y_{1}}1_{(A_{k})_{k\to\omega}}+m_{Y_{2}}1_{(A_{k}^{c})_{k\to\omega}},\]
\[\mathcal{E}((\nu_{k})_{k})=m_{Y_{1}}1_{(B_{k})_{k\to\omega}}+m_{Y_{2}}1_{(B_{k}^{c})_{k\to\omega}}.\]
Thus $\mathcal{E}((\mu_{k})_{k}),\mathcal{E}((\nu_{k})_{k})$ do \emph{not} represent the same element of $\Meas(Z_{\omega},\Prob(X)).$ This is one of the major reasons why the space $\mathcal{L}_{\omega}(G\actson X)$ is better suited for our purposes.

Roughly speaking, the failure of convolution to be well-defined in the Ab\'{e}rt-Weiss approach is related to the following classical fact. If $X,Y$ are two real valued random variables, then we cannot say what the distribution of $X+Y$ is only knowing the distribution of $X$ and the distribution of $Y.$ We have to in fact know the \emph{joint distribution} of $(X,Y)$ (e.g. if $X,Y$ were independent we could say the distribution is the convolution). In our setup, the Ab\'{e}rt-Weiss generalized local weak$^{*}$ limit is the \emph{distribution} of our generalized local weak$^{*}$ limit (regarded as function on the Loeb measure space) and, just as with classical sums, we cannot recover the distribution of the convolution of our generalized local weak$^{*}$ limits knowing only the distribution of each local weak$^{*}$ limit individually.

We see from the above discussion that the convolution operation, which is the most important operation one performs on measures on a compact group, is not well-defined in the space of local weak$^{*}$ limits inside $\Prob(\Prob(X)).$ For similar reasons, we cannot make sense $\mu\preceq Y$ inside $\Prob(\Prob(X)),\Prob(\Sub(X)),$ as well as any of the other order operations we need for our proof.  Thus these ``nicer" spaces $\Prob(\Prob(X))$, $\Prob(\Sub(X))$ have a significant disadvantage: they entirely forget the algebraic structure of the group $X.$ This makes it rather difficult to see how one could exploit the algebraic nature of our action to prove useful theorems about these spaces. The answer, we shall see, is to work with the nonmetrizable, but more structured, spaces $\Meas(Z_{\omega},\Prob(X)),\Meas(Z_{\omega},\Sub(X))$,  and then pass from those spaces to the spaces $\Prob(\Prob(X)),\Prob(\Sub(X)).$

We now proceed to  show that the topological entropy of an algebraic action can be realized as the lw$^{*}$-entropy of the Haar measure of a $G$-invariant \emph{random} subgroup of $X$.
%

\begin{lem}\label{L:basc convolution shiz}
Let $X$ be a compact group, and $\rho$ a translation invariant pseudometric on $X.$ Then for all $\varepsilon,\delta>0$ and for all measures $\mu,\nu\in \Prob(X),$ we have
\[S_{\varepsilon,\delta}(\mu*\nu,\rho)\geq S_{\varepsilon,\delta}(\nu,\rho).\]
\end{lem}

\begin{proof}
Let $A\subseteq X,$ have $\mu*\nu(N_{\varepsilon}(A,\rho))\geq 1-\delta,$ and $|A|=S_{\varepsilon,\delta}(\mu*\nu,\rho).$ We then have that
\[1-\delta\leq \mu*\nu(N_{\varepsilon}(A,\rho))=\int_{X}\nu(x^{-1}N_{\varepsilon}(A,\rho))\,d\mu(x)=\int_{X}\nu(N_{\varepsilon}(x^{-1}A,\rho))\,d\nu(x),\]
where in the last step we use translation-invariance of $\rho.$ So we can find an $x\in X$ so that
\[\nu(N_{\varepsilon}(x^{-1}A,\rho))\geq 1-\delta.\]
Hence $S_{\varepsilon,\delta}(\nu,\rho)\leq |x^{-1}A|=|A|=S_{\varepsilon,\delta}(\mu*\nu,\rho).$

\end{proof}

\begin{lem}\label{L:easy sup argument topo}
Let $G$ be a countable, discrete, sofic group with a sofic approximation $\sigma_{k}\colon G\to \Sym(d_{k}).$ Let $X$ be a compact, metrizable space with $G\actson X$ by homeomorphisms. Then
\[\sup_{(\mu_{k})_{k}}h((\mu_{k})_{k\to\omega})=h_{(\sigma_{k})_{k\to\omega},\topo}^{lw^{*}}(G\actson X),\]
where the supremum is over all sequences of measures $(\mu_{k})_{k}$ which are asymptotically supported as $k\to\omega$ on topological microstates for $G\actson X$ with respect to $(\sigma_{k})_{k}.$

\end{lem}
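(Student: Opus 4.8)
The plan is to recognize the asserted identity as a reorganization of nested suprema whose only non-formal ingredient is that every microstate sequence carries a canonical $G$-invariant local weak$^{*}$-limit. Recall that the right-hand side $h^{lw^{*}}_{(\sigma_{k})_{k\to\omega},\topo}(G\actson X)$ is the supremum of $h_{(\sigma_{k})_{k\to\omega}}(\mu)$ over $\mu\in\Prob_{G}(\Prob(X))$, and that each $h_{(\sigma_{k})_{k\to\omega}}(\mu)$ is itself, by Definition \ref{D:coarser lw* limit}, the supremum of $h((\mu_{k})_{k\to\omega})$ over microstate sequences $(\mu_{k})_{k}$ with $\mu_{k}\to^{lw^{*}}_{k\to\omega}\mu$. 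Thus the two sides of the claimed equality differ only in whether the local weak$^{*}$-limit of the competing sequences is prescribed in advance, and I would prove the two inequalities separately.

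First I would dispatch the inequality $h^{lw^{*}}_{(\sigma_{k})_{k\to\omega},\topo}(G\actson X)\leq \sup_{(\mu_{k})_{k}}h((\mu_{k})_{k\to\omega})$, which is immediate: for each fixed $\mu\in\Prob_{G}(\Prob(X))$ the sequences competing in $h_{(\sigma_{k})_{k\to\omega}}(\mu)$ form a subfamily of all sequences asymptotically supported on topological microstates, so $h_{(\sigma_{k})_{k\to\omega}}(\mu)\leq \sup_{(\mu_{k})_{k}}h((\mu_{k})_{k\to\omega})$; taking the supremum over $\mu$ gives this direction.

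For the reverse inequality I would fix an arbitrary $(\mu_{k})_{k}$ asymptotically supported on topological microstates as $k\to\omega$. Since $\Prob(\Prob(X))$ is compact, the ultralimit $\mu:=\lim_{k\to\omega}\frac{1}{d_{k}}\sum_{j=1}^{d_{k}}\delta_{\mu_{k,j}}$ exists, so $\mu_{k}\to^{lw^{*}}_{k\to\omega}\mu$. The one substantive step is to verify that $\mu$ is $G$-invariant, so that it is a legitimate index for the outer supremum defining the right-hand side. Here I would invoke the established facts of the section: by the proposition asserting that every element of $\mathcal{L}_{\omega}(G\actson X)$ is $G$-equivariant, the map $\mathcal{E}((\mu_{k})_{k})\in\Meas(Z_{\omega},\Prob(X))$ is almost surely $G$-equivariant; and by Proposition \ref{P:computing integrals in ultraproducts} one has $\mu=\mathcal{E}((\mu_{k})_{k})_{*}(u_{\omega})$. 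Combining equivariance of $\mathcal{E}((\mu_{k})_{k})$ with the measure-preservation of the $\sigma_{\omega}(g)$-action on $(Z_{\omega},u_{\omega})$ yields $g_{*}\mu=\mathcal{E}((\mu_{k})_{k})_{*}(\sigma_{\omega}(g)_{*}u_{\omega})=\mathcal{E}((\mu_{k})_{k})_{*}(u_{\omega})=\mu$ for every $g\in G$. Consequently $(\mu_{k})_{k}$ is admissible in the supremum defining $h_{(\sigma_{k})_{k\to\omega}}(\mu)$, whence $h((\mu_{k})_{k\to\omega})\leq h_{(\sigma_{k})_{k\to\omega}}(\mu)\leq h^{lw^{*}}_{(\sigma_{k})_{k\to\omega},\topo}(G\actson X)$; taking the supremum over all such $(\mu_{k})_{k}$ completes this direction.

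The only point requiring any care, and hence the natural candidate for the main obstacle, is the $G$-invariance of the limiting measure $\mu$; once the equivariance of $\mathcal{E}((\mu_{k})_{k})$ and the identity $\mu=\mathcal{E}((\mu_{k})_{k})_{*}(u_{\omega})$ are in hand, everything else is a purely formal manipulation of nested suprema, which is exactly why the statement is an \emph{easy} sup argument. For bookkeeping I would also remark that ultralimits in the compact space $\Prob(\Prob(X))$ are unique, so each microstate sequence is attached to exactly one $\mu$; this makes the two ways of organizing the supremum literally range over the same set of real numbers, and so the two inequalities together give equality.
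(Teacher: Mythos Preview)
Your proof rests on a misreading of the right-hand side. Despite the (admittedly confusing) superscript, the quantity $h_{(\sigma_{k})_{k\to\omega},\topo}^{lw^{*}}(G\actson X)$ in this lemma is the \emph{topological} sofic entropy of $G\actson X$ along $\omega$, not the supremum of the $lw^{*}$-entropies $h_{(\sigma_{k})_{k\to\omega}}(\mu)$ over $\mu\in\Prob_{G}(\Prob(X))$. This is visible from the paper's own proof, which bounds $S_{\varepsilon,\delta}(\mu_{k},\rho_{2})$ against $S_{\varepsilon}(\Map(\rho,F,\kappa,\sigma_{k}),\rho_{2})$ for the upper bound and, for the lower bound, constructs $\mu_{k}$ as the uniform measure on a maximal $\varepsilon$-separated subset of $\Map(\rho,F_{n},\kappa_{n},\sigma_{k})$; both directions are comparisons with the topological entropy quantities $h_{(\sigma_{k})_{k\to\omega}}(\rho,\varepsilon,F,\kappa)$ and $h_{(\sigma_{k})_{k\to\omega}}(\rho,2\varepsilon)$. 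The subsequent corollary confirms this reading: it invokes the lemma to reduce the equality $h^{lw^{*}}_{(\sigma_{k})_{k\to\omega}}(m_{Y})=h_{(\sigma_{k})_{k\to\omega},\topo}(G\actson X)$ to showing $h^{lw^{*}}_{(\sigma_{k})_{k\to\omega}}(m_{Y})\geq h((\mu_{k})_{k\to\omega})$ for every microstate sequence.

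Under your interpretation, the identity you prove is essentially a tautology: once every microstate sequence has a $G$-invariant $lw^{*}$-limit (which you do verify correctly), the double supremum $\sup_{\mu}\sup_{(\mu_{k})\to\mu}h((\mu_{k})_{k\to\omega})$ and the single supremum $\sup_{(\mu_{k})}h((\mu_{k})_{k\to\omega})$ range over the same family of values. That is a true statement, but it is not the content of the lemma. The real work---comparing measure-covering numbers $S_{\varepsilon,\delta}(\mu_{k},\rho_{2})$ with the spanning/packing numbers of the microstate spaces $\Map(\rho,F,\delta,\sigma_{k})$, and exhibiting a near-optimizing sequence via uniform measures on separated sets---is entirely absent from your argument.
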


\begin{proof}
Fix a compatible metric $\rho$ on $X.$
We first show\[\sup_{(\mu_{k})_{k}}h((\mu_{k})_{k\to\omega})\leq h_{(\sigma_{k})_{k\to\omega},\topo}^{lw^{*}}(G\actson X).\]
Fix a  sequence $(\mu_{k})_{k}\in \prod_{k}\Prob(X^{d_{k}})$ which is asymptotically supported on topological microstates as $k\to\omega.$ Let $\varepsilon,\delta>0$ be given, and fix a finite $F\subseteq G,\kappa>0.$ Then $\lim_{k\to\omega}\mu_{k}(\Map(\rho,F,\kappa,\sigma_{k}))=1,$ so $B=\{k\in \N:\mu_{k}(\Map(\rho,F,\kappa,\sigma_{k}))\geq 1-\delta\}\in \omega.$ Since $S_{\varepsilon,\delta}(\mu_{k},\rho_{2})$ is the minimum of $S_{\varepsilon}(B,\rho_{2})$ over all $B\subseteq X^{d_{k}}$ with $\mu_{k}(B)\geq 1-\delta,$ we  have that $S_{\varpesilon,\delta}(\mu_{k},\rho_{2})\leq S_{\varepsilon}(\Map(\rho,F,\kappa,\sigma_{k}),\rho_{2})$ for all $k\in B.$ Since $B\in \omega,$ we have that
\[\lim_{k\to\omega}\frac{1}{d_{k}}\log S_{\varpesilon,\delta}(\mu_{k},\rho_{2})\leq h_{(\sigma_{k})_{k}}(\rho,\varespilon,F,\kappa).\]
Taking the infimum over all $F,\kappa,$ and then taking the supremum over $\varpesilon,\delta$ proves that
\[h((\mu_{k})_{k\to\omega})\leq h_{(\sigma_{k})_{k\to\omega},\topo}^{lw^{*}}(G\actson X).\]

Because supremums commute with supremums, to prove the reverse inequality it suffices to show that for a fixed $\varepsilon>0$ we have
\begin{equation}\label{E:gimme the sup}
\sup_{(\mu_{k})_{k}}\sup_{\delta>0}h_{\varepsilon/2,\delta}((\mu_{k})_{k\to\omega})\geq h_{(\sigma_{k})_{k\to\omega}}(\rho,2\varepsilon),
\end{equation}
where the supremum is over all sequences $(\mu_{k})_{k}\in \prod_{k}\Prob(X^{d_{k}})$ which are asymptotically supported on topological microstates as $k\to\omega.$
So fix $\varespilon>0.$ Write $G=\bigcup_{n=1}^{\infty}F_{n},$ where $(F_{n})_{n}$ is an increasing sequence of finite subsets of $G,$ and fix a decreasing sequence $(\kappa_{n})_{n}$ of positive numbers with $\kappa_{n}\to 0.$ For $n\in \N,$ let
\[\widetilde{B}_{n}=\left\{k\in \N:\frac{1}{d_{k}}\log P_{\varespilon}(\Map(\rho,F_{n},\kappa_{n},\sigma_{k}),\rho_{2})\geq h_{(\sigma_{k})_{k}}(\rho,2\varepsilon,F_{n},\delta)-2^{-n}\right\}.\]
Because $P_{\varespilon}(\Map(\rho,F_{n},\kappa_{n},\sigma_{k}),\rho_{2})\geq S_{2\varepsilon}(\Map(\rho,F_{n},\kappa_{n},\sigma_{k}),\rho_{2})),$ we have that $\widetilde{B}_{n}\in \omega.$ Moreover, since $\Map(\rho_{n},F_{n},\kappa_{n},\sigma_{k})\subseteq \Map(\rho,F_{m},\kappa_{m},\sigma_{k})$ for all $n\leq m,$ we have that $\widetilde{B}_{n}$ are a decreasing family of sets.
Set $B_{n}=\widetilde{B}_{n}\cap \{1,\cdots,n\}^{c},$ and $B_{0}=\N\setminus B_{1}.$
For each $n\in \N,k\in B_{n},$ let $S_{n,k}\subseteq \Map(\rho,F_{n},\kappa_{n},\sigma_{k})$ be $\varepsilon$-separated and have $|S_{n,k}|=P_{\varespilon}(\Map(\rho,F_{n},\kappa_{n},\sigma_{k}),\rho_{2}).$
For $k\in \N,$ let $n(k)\in \N\cup\{0\}$ be such that $k\in B_{n(k)}\setminus B_{n(k)+1}.$ Now set $\mu_{k}=u_{S_{n(k),k}},$ it is straightforward to argue that $\mu_{k}$ is asymptotically supported on topological microstates as $k\to\omega.$

Fix a $\delta\in(0,1)n\in \N, k\in B_{n}.$ Suppose that $\Omega\subseteq X^{d_{k}}$ has $\mu_{k}(N_{\varepsilon/2}(\Omega,\rho_{2}))\geq 1-\delta.$ Since $S_{n(k),k}$ is $\varepsilon$-separated, if $x\in \Omega,$ then $|N_{\varepsilon/2}(x,\rho_{2})\cap S_{n(k),k}|\leq 1.$ Thus:
\[(1-\delta)|S_{n(k),k}|\leq \sum_{x\in \Omega}|N_{\varepsilon/2}(a,\rho_{2})\cap S|\leq |\Omega|.\]
So we have shown that $S_{\varepsilon/2,\delta}(\mu_{k})\geq (1-\delta)|S_{n(k),k}|=(1-\delta)P_{\varepsilon}(\Map(\rho,F_{n(k)},\kappa_{n(k)},\sigma_{k}),\rho_{2})$. Since $k\in B_{n},$ we have $n(k)\geq n.$ So $\Map(\rho,F_{n(k)},\kappa_{n(k)},\sigma_{k})\subseteq \Map(\rho,F_{n},\kappa_{n},\sigma_{k}),$ and thus
\begin{align*}
S_{\varepsilon/2,\delta}(\mu_{k})\geq (1-\delta)|S_{n(k),k}|&\geq (1-\delta)P_{\varepsilon}(\Map(\rho,F_{n(k)},\kappa_{n(k)},\sigma_{k}),\rho_{2})\\
&\geq  (1-\delta)P_{\varepsilon}(\Map(\rho,F_{n},\kappa_{n},\sigma_{k}),\rho_{2})\\
&\geq(1-\delta)\exp(d_{k}[ h_{(\sigma_{k})_{k\to\omega}}(\rho,2\varepsilon,F_{n},\kappa_{n})-2^{-n}]).
\end{align*}
Since $B_{n}\in\omega,$ we have that
\[h_{\varepsilon,\delta}((\mu_{k})_{k\to\omega})=\lim_{k\to\omega}\frac{1}{d_{k}}\log S_{\varepsilon/2,\delta}(\mu_{k},\rho_{2})\geq h_{(\sigma_{k})_{k\to\omega}}(\rho,2\varepsilon,F_{n},\kappa_{n})-2^{-n}.\]
Letting $n\to\infty,$ we see that
\[\sup_{\delta>0}h_{\varepsilon/2,\delta}((\mu_{k})_{k\to\omega})\geq h_{(\sigma_{k})_{k\to\omega}}(\rho,2\varepsilon),\]
which clearly implies (\ref{E:gimme the sup}).
\end{proof}

Let $X$ be a compact group, and $Y\in \Prob(\Sub(X)).$ We use $m_{Y}\in \Prob(\Prob(X))$ for the pushforward of $Y$ under the map $\Sub(X)\to \Prob(X)$ given by $K\mapsto m_{K}.$

\begin{cor}
Let $G$ be  a countable, discrete, sofic group with sofic approximation $\sigma_{k}\colon G\to \Sym(d_{k}),$ and let $G\actson X$ be an algebraic action. Fix a free ultrafilter $\omega\in\beta\N\setminus \N,$ and let $\widetilde{Y}$ be the maximal element of $\mathcal{S}_{\omega}(G\actons X).$ Let $Y=\widetilde{Y}_{*}(u_{\omega})\in \Prob_{G}(\Sub(X)).$ Then:
\[h_{(\sigma_{k})_{k\to\omega}}^{lw^{*}}(m_{Y})=h_{(\sigma_{k})_{k\to\omega},\topo}(G\actson X).\]
In particular if $G_{\omega}'\actson (Z_{\omega},u_{\omega})$ is ergodic, then there is a subgroup $Y\in \Sub(X)$ so that
\[h_{(\sigma_{k})_{k\to\omega}}^{lw^{*}}(G\actson (Y,m_{Y}))=h_{(\sigma_{k})_{k\to\omega},\topo}(G\actson X).\]

\end{cor}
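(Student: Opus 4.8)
The plan is to prove the displayed equality by establishing two inequalities, writing $\widetilde{Y}$ for the maximal element of $\mathcal{S}_{\omega}(G\actson X)$ and $Y=\widetilde{Y}_{*}(u_{\omega})$ for the associated invariant random subgroup. Fix a bi-invariant compatible metric $\rho$ on $X$ (which exists since $X$ is a compact metrizable group), so that the product pseudometric $\rho_{2}$ on each $X^{d_{k}}$ is translation invariant; since sofic entropy is independent of the choice of compatible metric, this costs nothing. The inequality $h_{(\sigma_{k})_{k\to\omega}}^{lw^{*}}(m_{Y})\leq h_{(\sigma_{k})_{k\to\omega},\topo}(G\actson X)$ is immediate: by Definition \ref{D:coarser lw* limit} the left-hand side is a supremum of $h((\mu_{k})_{k\to\omega})$ over the \emph{subcollection} of microstate-supported sequences that additionally satisfy $\mu_{k}\to^{lw^{*}}_{k\to\omega}m_{Y}$, whereas by Lemma \ref{L:easy sup argument topo} the supremum over \emph{all} microstate-supported sequences already equals $h_{(\sigma_{k})_{k\to\omega},\topo}(G\actson X)$ (the $lw^{*}$ superscript on topological entropy being exactly this quantity).

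The content is the reverse inequality, and the main idea is a convolution trick that does not decrease entropy while forcing the marginals to collapse onto $m_{Y}$. Given $\eta>0$, use Lemma \ref{L:easy sup argument topo} to choose $(\nu_{k})_{k}\in\prod_{k}\Prob(X^{d_{k}})$ asymptotically supported on topological microstates with $h((\nu_{k})_{k\to\omega})\geq h_{(\sigma_{k})_{k\to\omega},\topo}(G\actson X)-\eta$. Since $\widetilde{Y}\in\mathcal{S}_{\omega}(G\actson X)$ we have $m_{\widetilde{Y}}\in\mathcal{L}_{\omega}(G\actson X)=\mathcal{E}(\mathcal{P}_{\omega}(X,G))$, so we may also choose $(\lambda_{k})_{k}\in\mathcal{P}_{\omega}(X,G)$ with $\mathcal{E}((\lambda_{k})_{k})=m_{\widetilde{Y}}$. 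Now set $\mu_{k}=\lambda_{k}\ast\nu_{k}$, the coordinatewise convolution on the product group $X^{d_{k}}$; by the monoid property of $\mathcal{L}_{\omega}(G\actson X)$ the sequence $(\mu_{k})_{k}$ is again asymptotically supported on topological microstates.

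Two things must now be checked. First, that $(\mu_{k})_{k}$ has the correct local weak$^{*}$ limit: because marginals of a coordinatewise convolution are the convolutions of the marginals (each projection $\mathcal{E}_{j}$ being a homomorphism) and convolution is jointly weak$^{*}$-continuous on $\Prob(X)$, we get $\mathcal{E}((\mu_{k})_{k})(z)=m_{\widetilde{Y}(z)}\ast\mathcal{E}((\nu_{k})_{k})(z)$ for almost every $z$. Since $\widetilde{Y}$ absorbs all topological microstates (Corollary \ref{C:this the main theorem yo!}) and $\mathcal{E}((\nu_{k})_{k})\in\mathcal{L}_{\omega}(G\actson X)$, Proposition \ref{P:what it means to absorb} gives $\mathcal{E}((\nu_{k})_{k})\preceq\widetilde{Y}$, i.e. $\mathcal{E}((\nu_{k})_{k})(z)$ is supported on the subgroup $\widetilde{Y}(z)$; the identity $m_{H}\ast\pi=m_{H}$ for $\pi\in\Prob(H)$ then yields $\mathcal{E}((\mu_{k})_{k})=m_{\widetilde{Y}}$, whence $\mu_{k}\to^{lw^{*}}_{k\to\omega}(m_{\widetilde{Y}})_{*}(u_{\omega})=m_{Y}$ by Proposition \ref{P:computing integrals in ultraproducts}. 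Second, that entropy is not lost: applying Lemma \ref{L:basc convolution shiz} on the group $X^{d_{k}}$ with the translation-invariant $\rho_{2}$, and with $\nu_{k}$ as the \emph{right} factor, gives $S_{\varepsilon,\delta}(\mu_{k},\rho_{2})\geq S_{\varepsilon,\delta}(\nu_{k},\rho_{2})$ for all $\varepsilon,\delta>0$; taking $\lim_{k\to\omega}\frac{1}{d_{k}}\log(\cdot)$ and then the supremum over $\varepsilon,\delta$ gives $h((\mu_{k})_{k\to\omega})\geq h((\nu_{k})_{k\to\omega})$. Hence $h_{(\sigma_{k})_{k\to\omega}}^{lw^{*}}(m_{Y})\geq h((\mu_{k})_{k\to\omega})\geq h_{(\sigma_{k})_{k\to\omega},\topo}(G\actson X)-\eta$, and letting $\eta\to0$ (taking the threshold to $\infty$ if the entropy is infinite) completes the equality. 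The delicate point, and the one I would watch most carefully, is precisely the coordination of these two requirements: the \emph{left} placement of the Haar factor $\lambda_{k}$ is dictated by the one-sided inequality in Lemma \ref{L:basc convolution shiz}, while it is the absorption property of $\widetilde{Y}$ that makes left-convolution by Haar measure collapse the limit to $m_{Y}$ rather than to something strictly smaller.

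Finally, the ``in particular'' statement follows by specialization: when $G'_{\omega}\actson(Z_{\omega},u_{\omega})$ is ergodic, Corollary \ref{L:ergodicity on Loeb space} shows that $\widetilde{Y}$ is essentially constant, i.e. an honest subgroup $Y\in\Sub(X)$; then $\widetilde{Y}_{*}(u_{\omega})=\delta_{Y}$ and its pushforward under $K\mapsto m_{K}$ is $\delta_{m_{Y}}$, so $h_{(\sigma_{k})_{k\to\omega}}^{lw^{*}}(m_{Y})$ in the general statement is by definition $h_{(\sigma_{k})_{k\to\omega}}^{lw^{*}}(G\actson(Y,m_{Y}))$, and the main equality specializes to the claimed one.
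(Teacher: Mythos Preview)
Your proof is correct and follows essentially the same route as the paper: convolve an arbitrary microstate-supported sequence with a sequence witnessing $m_{\widetilde{Y}}\in\mathcal{L}_{\omega}(G\actson X)$, use the absorption property of $\widetilde{Y}$ to force the limit to $m_{Y}$, and invoke Lemma~\ref{L:basc convolution shiz} to see the covering numbers only go up. Your version is in fact slightly more careful than the paper's in two places: you explicitly pick $(\lambda_{k})_{k}\in\mathcal{P}_{\omega}(X,G)$ (rather than the bare product Haar measures $m_{\widetilde{Y}_{k}}$, whose asymptotic support on microstates is left implicit in the paper), and you arrange the convolution order so that Lemma~\ref{L:basc convolution shiz} applies directly to the right factor.
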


\begin{proof}
Let $\widetilde{Y}$ be the maximal element of $\mathcal{S}_{\omega}(G\actson X),$ and let $Y=\widetilde{Y}_{*}(u_{\omega}).$ Write $\widetilde{Y}=(\widetilde{Y}_{k})_{k\to\omega}.$
By Lemma \ref{L:easy sup argument topo}, it suffices to show that $h_{(\sigma_{k})_{k\to\omega}}^{lw^{*}}(m_{Y})\geq h_{(\sigma_{k})_{k}}((\mu_{k})_{k\to \omega})$ for any sequence of measures $(\mu_{k})_{k}$ which are asymptotically supported on topological microstates.
Let $\mu=\mathcal{E}((\mu_{k})_{k\to\omega})\in \mathcal{L}_{\omega}(G\actson X).$ Since $Y$ is the maximal element of $\mathcal{S}_{\omega}(G\actson X),$ we have that $\mu\preceq \widetilde{Y}$ by Proposition \ref{P:what it means to absorb}, Corollary \ref{C:this the main theorem yo!}. Clearly this implies that $\mu*m_{\widetilde{Y}}=m_{\widetilde{Y}}.$ Thus $\mu_{k}*m_{\widetilde{Y}_{k}}$ also locally weak$^{*}$ converges to $m_{Y}.$  It thus only suffices to show that for every $\varepsilon,\delta>0$ we have
\[S_{\varepsilon,\delta}(\mu_{k}*m_{\widetilde{Y}_{k}})\geq S_{\varepsilon,\delta}(\mu_{k}),\]
which is Lemma \ref{L:basc convolution shiz}.

\end{proof}

We close with a few general comments about the maximal element of $S_{\omega}(X,G).$

\begin{prop}
Let $G$ be a countable, discrete, sofic group and $\sigma_{k}\colon G\to S_{d_{k}}$ a sofic approximation. Fix an algebraic action $G\actson X$ and $\omega$ a free ultrafilter on the natural numbers. Let $Y$ be the maximal element of $\mathcal{S}_{\omega}(G\actson X).$ Then $Y$ is the minimal element of $\Meas(Z_{\omega},\mathcal{F}(X))$ which absorbs all topological microstates.'
\end{prop}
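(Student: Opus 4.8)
The plan is to recognize this statement as a mild strengthening of Corollary \ref{C:this the main theorem yo!}: there the minimality of $Y$ was established within $\Meas(Z_{\omega},\Sub(X))$, whereas here we must extend it to the larger poset $\Meas(Z_{\omega},\mathcal{F}(X))$ of closed-\emph{subset}-valued maps. Since $\Sub(X)$ is a closed subset of $\mathcal{F}(X)$, we have $Y\in \Meas(Z_{\omega},\Sub(X))\subseteq \Meas(Z_{\omega},\mathcal{F}(X))$, and by Corollary \ref{C:this the main theorem yo!} the map $Y$ already absorbs all topological microstates. Thus $Y$ is a legitimate candidate for the minimal absorbing element among $\mathcal{F}(X)$-valued maps, and the only content to verify is the minimality against this wider class.

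First I would establish minimality. Let $\tilde{Y}\in \Meas(Z_{\omega},\mathcal{F}(X))$ absorb all topological microstates; the goal is $Y\leq \tilde{Y}$. Since $Y$ is the maximal element of $\mathcal{S}_{\omega}(G\actson X)$, we have $m_{Y}\in \mathcal{L}_{\omega}(G\actson X)$ by definition of $\mathcal{S}_{\omega}(G\actson X)$. Applying the implication (\ref{I:absorbs stuff alala})$\Rightarrow$(\ref{I:absorbs measures}) of Proposition \ref{P:what it means to absorb} to $\tilde{Y}$, every element of $\mathcal{L}_{\omega}(G\actson X)$ is supported by $\tilde{Y}$; in particular $m_{Y}\preceq \tilde{Y}$, i.e.
\[
m_{Y(z)}(\tilde{Y}(z))=1 \quad\text{for } u_{\omega}\text{-almost every } z\in Z_{\omega}.
\]

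The one ingredient beyond the proof of Corollary \ref{C:this the main theorem yo!} is the observation that lets one pass from this full-measure statement back to an inclusion of sets: for each $z$, the measure $m_{Y(z)}$ is the Haar measure of the compact group $Y(z)$, and so its support is all of $Y(z)$. Since $\tilde{Y}(z)$ is closed and has full $m_{Y(z)}$-measure, we conclude
\[
Y(z)=\supp m_{Y(z)}\subseteq \tilde{Y}(z)
\]
for almost every $z$, which is precisely $Y\leq \tilde{Y}$. Combined with the fact that $Y$ itself absorbs all topological microstates, this shows $Y$ is the minimal element of $\Meas(Z_{\omega},\mathcal{F}(X))$ with this property. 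The ``hard part'' is thus entirely formal: there is no genuine obstacle here, only the support-of-Haar-measure remark that upgrades measure-theoretic dominance to a pointwise inclusion of closed sets, the remainder being a direct assembly of Corollary \ref{C:this the main theorem yo!} and Proposition \ref{P:what it means to absorb}.
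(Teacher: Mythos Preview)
Your proof is correct and essentially identical to the paper's: both show that $m_{Y}\in\mathcal{L}_{\omega}(G\actson X)$, apply Proposition \ref{P:what it means to absorb} to get $m_{Y}\preceq \tilde{Y}$, and then use $Y(z)=\supp(m_{Y(z)})\subseteq \tilde{Y}(z)$ to conclude. If anything, you are slightly more careful in explicitly invoking Corollary \ref{C:this the main theorem yo!} to confirm that $Y$ itself absorbs all topological microstates, which the paper's short proof leaves implicit.
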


\begin{proof}
Suppose that $F\in \Meas(Z_{\omega},\mathcal{F}(X))$ absorbs all topological microstates. Since $m_{Y}\in \mathcal{L}_{\omega}(G\actson X)$ and $F$ absorbs all topological microstates, we have by Proposition \ref{P:what it means to absorb} (\ref{I:absorbs measures}) that $m_{Y}\preceq F.$ So for almost every $z\in Z_{\omega},$ we have that $Y(z)=\supp(m_{Y(z)})\subseteq F(z),$ and this completes the proof.

\end{proof}

We now show that strong soficity is an invariant under topological factor maps between algebraic actions (even if the factor map is not a homeomorphism), provided that each action is ergodic with respect to the Haar measure. If $A,B$ are sets and $f\colon A\to B,$ then for every $n\in \N$ we let $f^{n}\colon A^{n}\to B^{n}$ be given by $f^{n}(\phi)(j)=f(\phi(j))$ for $1\leq j\leq n,\phi\in A^{n}.$

\begin{cor}
Let $G$ be a countable, discrete, sofic group with sofic approximation $\sigma_{k}\colon G\to S_{d_{k}}$ a sofic approximation, and $\omega$ a free ultrafilter on the numbers. Let $G\actson X,$ $G\actson Y$ be two algebraic actions. Suppose that there is a continuous, $G$-equivariant map $f\colon X\to Y$ with $\overline{\ip{f(X)}}=Y$ (we do not assume that $f$ is a homomorphism).  If there is a sequence $\mu_{k}\in \Prob(X^{d_{k}})$ which is asymptotically supported on topological microstates and so that $\mu_{k}\to^{lw^{*}}m_{X}$ as $k\to\infty,$ then there is a sequence $\nu_{k}\in \Prob(Y^{d_{k}})$ which is asymptotically supported on topological microstates and so that $\nu_{k}\to^{lw^{*}}m_{Y}$ as $k\to\infty.$ In particular, if both $G\actson (X,m_{X}),G\actson (Y,m_{Y})$ are ergodic, and $G\actson (X,m_{X})$ is strongly sofic with respect to $(\sigma_{k})_{k},$ then $G\actson (Y,m_{Y})$ is strongly sofic with respect to $(\sigma_{k})_{k}.$
\end{cor}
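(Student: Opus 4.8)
We have two algebraic actions $G \actson X$, $G \actson Y$ and a continuous $G$-equivariant map $f \colon X \to Y$ (not necessarily a homomorphism) with $\overline{\ip{f(X)}} = Y$. The goal is to transport the property "Haar measure is a local weak$^*$-limit of measures asymptotically supported on topological microstates" from $X$ to $Y$, and then deduce the strong soficity statement in the ergodic case.

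**Plan of attack.** The plan is to push forward the microstate measures along $f$ and then use the convolution/join structure to fill up all of $Y$. First I would set $\nu_k = (f^{d_k})_*(\mu_k) \in \Prob(Y^{d_k})$, where $f^{d_k}$ is the coordinatewise application of $f$. Since $f$ is continuous and $G$-equivariant, if $(\phi_k)_k$ is a sequence of topological microstates for $G \actson X$ then $(f \circ \phi_k)_k$ is a sequence of topological microstates for $G \actson Y$; this is essentially immediate from the definitions (continuity lets one pass neighborhoods of the diagonal through $f$, and equivariance intertwines the $G$-actions). Consequently $(\nu_k)_k \in \mathcal{P}_\omega(G \actson Y)$, so $\mathcal{E}((\nu_k)_k) \in \mathcal{L}_\omega(G \actson Y)$. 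Passing to the Loeb space, $\mathcal{E}((\nu_k)_k) = f_*(\mathcal{E}((\mu_k)_k))$, and since $\mu_k \to^{lw^*} m_X$ means $\mathcal{E}((\mu_k)_k) = m_X$ (constantly), we get $\mathcal{E}((\nu_k)_k) = f_*(m_X)$, the pushforward of Haar measure on $X$. The catch is that $f_*(m_X)$ need not be $m_Y$: $f$ is only a map, not a homomorphism, so its image generates $Y$ but $f_*(m_X)$ is merely \emph{some} $G$-invariant measure supported on $f(X)$.

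**The key step: using the join-lattice.** The main obstacle is upgrading from "$f_*(m_X) \in \mathcal{L}_\omega(G \actson Y)$" to "$m_Y \in \mathcal{L}_\omega(G \actson Y)$", i.e.\ showing $Y$ itself (not just the closed subgroup generated by $\supp f_*(m_X)$) is a local weak$^*$-limit. Here I would invoke Theorem \ref{T:main theorem restated}. Let $\mu = \mathcal{E}((\nu_k)_k) = f_*(m_X) \in \mathcal{L}_\omega(G \actson Y)$, viewed as a (constant) element of $\Meas(Z_\omega, \Prob(Y))$. By part \eqref{I: supports and shiz}, the map $z \mapsto \overline{\ip{\supp \mu(z)}}$ lies in $\mathcal{S}_\omega(G \actson Y)$; since $\mu = f_*(m_X)$ is constant with $\supp f_*(m_X) \supseteq f(X)$, this subgroup is the constant $\overline{\ip{\supp f_*(m_X)}} \supseteq \overline{\ip{f(X)}} = Y$, hence equals $Y$. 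Therefore $m_Y \in \mathcal{L}_\omega(G \actson Y)$, which by the remark following the definition of $\mathcal{L}_\omega$ means precisely that there is a sequence $\nu_k' \in \Prob(Y^{d_k})$ asymptotically supported on topological microstates with $\nu_k' \to^{lw^*} m_Y$ as $k \to \omega$. (I should double-check that $\supp f_*(m_X)$ actually contains $f(X)$: since $m_X$ has full support on $X$ and $f$ is continuous, $f_*(m_X)$ has support equal to $\overline{f(X)} = f(X)$ by compactness, so this is fine.)

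**From $\omega$ to $\infty$, and the ergodic conclusion.** The above produces the conclusion along an arbitrary free ultrafilter $\omega$. To get the genuine $k \to \infty$ statement in the theorem, I would apply Proposition \ref{P: this is not hard}, which converts "for every free ultrafilter $\omega$ there is a sequence asymptotically supported on topological microstates $lw^*$-converging to $m_Y$ as $k \to \omega$" into the existence of such a sequence as $k \to \infty$. For the "in particular" clause: assuming $G \actson (X, m_X)$ is strongly sofic, by \cite[Corollary 5.7]{AustinAdd} and \cite[Corollary 2.14]{Me12} (using ergodicity of $G \actson (X,m_X)$) there is a sequence $\mu_k \to^{lw^*} m_X$ asymptotically supported on topological microstates, so the hypothesis of the first part holds. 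The first part then yields a sequence $\nu_k \to^{lw^*} m_Y$ asymptotically supported on topological microstates; invoking ergodicity of $G \actson (Y, m_Y)$ together with the same equivalence (\cite[Corollary 5.7]{AustinAdd}, \cite[Corollary 2.14]{Me12}) upgrades local weak$^*$-convergence to strong soficity of $G \actson (Y, m_Y)$. I expect the only genuinely delicate point to be verifying cleanly that pushforward along $f$ commutes with the $\mathcal{E}$ construction on the Loeb space (so that $\mathcal{E}((\nu_k)_k) = f_* \mathcal{E}((\mu_k)_k)$), which follows from functoriality of $f_*$ on $\Meas(Z_\omega, -)$ as recorded in the earlier Loeb-space propositions.
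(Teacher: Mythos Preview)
Your proof is correct and follows essentially the same line as the paper: push forward along $f^{d_k}$ to place $f_*(m_X)$ in $\mathcal{L}_\omega(G\actson Y)$, observe $\supp f_*(m_X)=f(X)$ so that the closed subgroup it generates is all of $Y$, conclude $Y\in\mathcal{S}_\omega(G\actson Y)$, and then pass from ``every $\omega$'' to ``$k\to\infty$'' via Proposition~\ref{P: this is not hard}. The only difference is which piece of the machinery you invoke for the key step: you apply Theorem~\ref{T:main theorem restated}\eqref{I: supports and shiz} directly to $f_*(m_X)$, whereas the paper instead takes the maximal element $S\in\mathcal{S}_\omega(G\actson Y)$, uses Corollary~\ref{C:this the main theorem yo!} (via Proposition~\ref{P:what it means to absorb}\eqref{I:absorbs measures}) to get $f_*(m_X)\preceq S$, and then argues $f(X)\subseteq S(z)$ forces $S=Y$. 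Your route is slightly more direct; the paper's route has the minor advantage of showing explicitly how the max--min principle does the work, but both are equivalent applications of the same underlying lattice structure.
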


\begin{proof}

Fix a free ultrafilter $\omega$ on the natural numbers. It suffices to show that if there is a sequence $\mu_{k}\in \Prob(X^{d_{k}})$ which is asymptotically supported on topological microstates and has  $\mu_{k}\to^{lw^{*}}m_{X}$ as $k\to\omega,$ then there is a sequence $\nu_{k}\in \Prob(Y^{d_{k}})$ which is asymptotically supported on topological microstates and has   $\nu_{k}\to^{lw^{*}}m_{Y}.$ So suppose that there is a sequence $\mu_{k}\in \Prob(X^{d_{k}})$ which is asymptotically supported on topological microstates and has $\mu_{k}\to^{lw^{*}}m_{X}$ as $k\to\omega.$ Let $S$ be the maximal element of $\mathcal{S}_{\omega}(G\actson Y).$ Let $f\colon X\to Y$ be a factor map. Then $f^{d_{k}}_{*}(\mu_{k})\to^{lw^{*}}f_{*}(m_{X}).$ So $f_{*}(m_{X})\in \mathcal{L}_{\omega}(G\actson X),$ and thus by Proposition \ref{P:what it means to absorb} (\ref{I:absorbs measures}) we have that $f_{*}(m_{X})\preceq S.$ Thus $\supp(f_{*}(m_{X}))\subseteq S(z)$ for almost every $z\in Z_{\omega}.$ But since $f$ is continuous, we have that $\supp(f_{*}(m_{X}))=f(\supp(m_{X}))=f(X).$ Since $S(z)$ is a closed subgroup of $Y,$ it follows that for almost every  $z\in Z_{\omega}$ we have that $\overline{\ip{f(X)}}\supseteq S(z).$ Thus $Y\subseteq S(z)$ for almost every $z\in Z_{\omega}$ and thus $S=Y$ almost everywhere. Hence $Y\in \mathcal{S}_{\omega}(G\actson X),$ and this implies that there is a sequence $\nu_{k}\in \Prob(X^{d_{k}})$ with $\nu_{k}\to^{lw^{*}}m_{Y}$ as $k\to\omega.$

\end{proof}

If $\sigma_{k}\colon G\to S_{d_{k}}$ has ergodic commutant, we can say even more and show that \emph{soficity} of the action with respect to $(\sigma_{k})_{k}$ is a topological conjugacy invariant.

\begin{cor}
Let $G$ be a countable, discrete, sofic group with sofic approximation $\sigma_{k}\colon G\to S_{d_{k}}$ a sofic approximation, and assume that $(\sigma_{k})_{k}$ has ergodic commutant.  Let $G\actson X,G\actson Y$ be two algebraic actions. Suppose that $G\actson (Y,m_{Y})$ is ergodic, and that there is a $G$-equivariant, continuous $f\colon X\to Y$ so that $\overline{\ip{f(X)}}=Y$ (we do not assume that $f$ is a homomorphism). If $G\actson (X,m_{X})$ is sofic, then $G\actson (Y,m_{Y})$ is sofic.

\end{cor}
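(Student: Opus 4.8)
The plan is to fix a free ultrafilter $\omega$ and prove that $G\actson(Y,m_Y)$ is sofic with respect to $(\sigma_k)_k,\omega$; since $(\sigma_k)_k$ has ergodic commutant, $G'_\omega\actson(Z_\omega,u_\omega)$ is ergodic for \emph{every} $\omega$, and it is standard (and used above) that soficity with respect to $(\sigma_k)_k$ is equivalent to soficity with respect to $(\sigma_k)_k,\omega$ for all $\omega$, so proving the statement for each $\omega$ suffices. Because $G\actson(X,m_X)$ is sofic, for each $\omega$ there is a measure microstate $\Theta\colon Z_\omega\to X$, i.e.\ a measurable, almost surely $G$-equivariant map with $\Theta_*(u_\omega)=m_X$. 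The essential point is that, unlike in the preceding corollary, we are handed only a single measure microstate for $X$ (soficity), not local weak$^*$-convergence to $m_X$ (strong soficity); the main obstacle is that pushing $\Theta$ forward by $f$ produces a microstate realizing the measure $f_*(m_X)$, which in general differs from $m_Y$ precisely because $f$ need not be a homomorphism. The resolution is to use $f\circ\Theta$ only as a \emph{topological} microstate and let the max-min principle together with ergodicity of $Y$ do the rest.

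First I would push the microstate forward: since $f$ is continuous and $G$-equivariant, $f\circ\Theta\colon Z_\omega\to Y$ is measurable and almost surely $G$-equivariant, hence a topological microstate for $G\actson Y$ with respect to $\omega$. Let $S$ be the maximal element of $\mathcal{S}_\omega(G\actson Y)$, whose existence is guaranteed by Theorem \ref{T:main theorem restated} (\ref{I:lattices and stuff}); by Corollary \ref{L:ergodicity on Loeb space}, the ergodicity of $G'_\omega\actson(Z_\omega,u_\omega)$ forces $S$ to be a genuine constant closed subgroup of $Y$. By Corollary \ref{C:this the main theorem yo!}, $S$ absorbs all topological microstates, so $f(\Theta(z))\in S$ for $u_\omega$-almost every $z$. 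Consequently $f_*(m_X)=(f\circ\Theta)_*(u_\omega)$ gives full measure to the closed set $S$, and since $\supp(f_*(m_X))=\overline{f(\supp m_X)}=f(X)$ we conclude $f(X)\subseteq S$. As $S$ is a closed subgroup, this yields $Y=\overline{\ip{f(X)}}\subseteq S$, whence $S=Y$.

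Thus $Y\in\mathcal{S}_\omega(G\actson Y)$, which means $m_Y\in\mathcal{L}_\omega(G\actson Y)$: there is a sequence $\nu_k\in\Prob(Y^{d_k})$ asymptotically supported on topological microstates with $\nu_k\to^{lw^{*}}m_Y$ as $k\to\omega$. At this point I would invoke the hypothesis that $G\actson(Y,m_Y)$ is ergodic: by \cite[Theorem 4.1]{BowenEntropy} and \cite[Lemma 5.4]{Me5}, local weak$^*$-convergence to $m_Y$ of measures asymptotically supported on topological microstates forces these measures to be asymptotically supported on measure microstates, from which a measure microstate $Z_\omega\to Y$ for $m_Y$ can be extracted. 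This is exactly soficity of $G\actson(Y,m_Y)$ with respect to $(\sigma_k)_k,\omega$; running the argument for every free ultrafilter $\omega$ then gives soficity with respect to $(\sigma_k)_k$ and completes the proof. The crux of the whole argument is the second paragraph, where the support of the pushed-forward microstate — all of $f(X)$, which generates $Y$ — is precisely what is needed to collapse the maximal absorbing subgroup $S$ down to $Y$, converting the "wrong measure" $f_*(m_X)$ into the correct Haar measure $m_Y$ by way of the lattice-theoretic machinery rather than any direct measure computation.
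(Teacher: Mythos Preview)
Your proof is correct, and the core absorbing-subgroup argument (paragraphs two and three) is exactly the mechanism used in the paper's proof of the preceding corollary. However, your route to that argument differs from the paper's own proof of this corollary in a noteworthy way.

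The paper proceeds in two steps: first it invokes Theorem \ref{T:le limits appendix} from Appendix \ref{A:empirical limit} to upgrade soficity of $G\actson(X,m_X)$ (the existence of a single measure microstate) to the existence of a sequence $\mu_k\to^{lw^{*}}m_X$; then it simply applies the preceding corollary verbatim to obtain $\nu_k\to^{lw^{*}}m_Y$, and finishes via ergodicity of $G\actson(Y,m_Y)$. You instead feed the single measure microstate $\Theta$ directly into the absorbing argument by observing that $f\circ\Theta$ is already a topological microstate for $Y$, and that its pushforward measure $f_*(m_X)$ has support $f(X)$---which is all that is needed to force $S=Y$. This bypasses Theorem \ref{T:le limits appendix} entirely, making your argument more self-contained: you never need to know that soficity of $G\actson(X,m_X)$ yields local weak$^{*}$-convergence to $m_X$, only that it yields one topological microstate whose image under $f$ fills out $f(X)$. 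The paper's route, on the other hand, cleanly separates the ``upgrade soficity to lw$^*$'' step from the ``transfer lw$^*$ along $f$'' step, which may be conceptually tidier if one already has the appendix result in hand.
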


\begin{proof}

Suppose that $G\actson (X,m_{X})$ has ergodic commutant. Since $G\actson (X,m_{X})$ is sofic, it follows by Theorem \ref{T:le limits appendix} in the appendix that  we may find a sequence $\mu_{k}\in \Prob(X^{d_{k}})$ with $\mu_{k}\to^{lw^{*}}m_{X}.$ By the preceding corollary, there is a sequence $\nu_{k}\in \Prob(Y^{d_{k}})$ with $\nu_{k}\to^{lw^{*}}m_{Y}.$ Since $G\actson (Y,m_{Y})$ is ergodic, it follows that $\mu_{k}\to^{le}m_{Y},$ so $G\actson (Y,m_{Y})$ is sofic.

\end{proof}

We also have a product theorem for the maximal element of $\mathcal{S}_{\omega}.$ If $X_{j},j=1,2$ are compact groups, $(d_{k})_{k}$ is a sequence of natural numbers, $\omega$ is a free ultrafilter on the natural numbers, and $Y_{j}\in \Meas(Z_{\omega},X_{j}),j=1,2$ we define $Y_{1}\times Y_{2}\in \Meas(Z_{\omega},X_{1}\times X_{2})$ by $Y_{1}\times Y_{2}(z)=Y_{1}(z)\times Y_{2}(z).$

\begin{cor}
Let $G$ be a countable, discrete, sofic group with sofic approximation $\sigma_{k}\colon G\to S_{d_{k}}$ a sofic approximation, and fix a free ultrafilter on the natural numbers. Suppose $G\actson X_{1},G\actson X_{2}$ are two algebraic actions.  For $j=1,2$ let $Y_{j}$ be the maximal element of $\mathcal{S}_{\omega}(G\actson X_{j}).$ Then the maximal element of $\mathcal{S}_{\omega}(G\actson X_{1}\times X_{2})$ is $Y_{1}\times Y_{2}.$

\end{cor}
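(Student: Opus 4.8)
The plan is to use the two dual descriptions of the maximal element provided by Corollary \ref{C:this the main theorem yo!}. Write $W$ for the maximal element of $\mathcal{S}_{\omega}(G\actson X_{1}\times X_{2})$, whose existence is guaranteed by Theorem \ref{T:main theorem restated} (\ref{I:lattices and stuff}); by Corollary \ref{C:this the main theorem yo!} it is also the minimal element of $\Meas(Z_{\omega},\Sub(X_{1}\times X_{2}))$ that absorbs all topological microstates for $G\actson X_{1}\times X_{2}$. Since the map $\Sub(X_{1})\times \Sub(X_{2})\to \Sub(X_{1}\times X_{2})$, $(H_{1},H_{2})\mapsto H_{1}\times H_{2}$, is continuous, $Y_{1}\times Y_{2}$ is a genuine element of $\Meas(Z_{\omega},\Sub(X_{1}\times X_{2}))$, and I will prove $W=Y_{1}\times Y_{2}$ by establishing the two inequalities separately.

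First I would show $W\leq Y_{1}\times Y_{2}$ by checking that $Y_{1}\times Y_{2}$ absorbs all topological microstates. The key point is that, because $G$ acts diagonally on $X_{1}\times X_{2}$, a measurable map $\Phi=(\Phi_{1},\Phi_{2})\colon Z_{\omega}\to X_{1}\times X_{2}$ is almost surely $G$-equivariant if and only if each coordinate $\Phi_{i}$ is; thus by Proposition \ref{P:characterize top micro} the topological microstates for $X_{1}\times X_{2}$ are exactly the pairs $(\Phi_{1},\Phi_{2})$ of topological microstates for $X_{1}$ and $X_{2}$. Since $Y_{i}$ is the maximal element of $\mathcal{S}_{\omega}(G\actson X_{i})$, it absorbs all topological microstates for $X_{i}$ (Corollary \ref{C:this the main theorem yo!}), so $\Phi_{i}(z)\in Y_{i}(z)$ and hence $\Phi(z)\in Y_{1}(z)\times Y_{2}(z)$ for a.e.\ $z$. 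Thus $Y_{1}\times Y_{2}$ absorbs all microstates, and minimality of $W$ gives $W\leq Y_{1}\times Y_{2}$.

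For the reverse inequality I would show $Y_{1}\times Y_{2}\in \mathcal{S}_{\omega}(G\actson X_{1}\times X_{2})$; maximality of $W$ then yields $Y_{1}\times Y_{2}\leq W$. Choose sequences $(\mu^{(i)}_{k})_{k}$ asymptotically supported on topological microstates for $X_{i}$ with $\mathcal{E}((\mu^{(i)}_{k})_{k})=m_{Y_{i}}$ (possible since $Y_{i}\in \mathcal{S}_{\omega}(G\actson X_{i})$). Identifying $(X_{1}\times X_{2})^{d_{k}}$ with $X_{1}^{d_{k}}\times X_{2}^{d_{k}}$, set $\mu_{k}=\mu^{(1)}_{k}\otimes \mu^{(2)}_{k}$. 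Two facts must be verified. (a) $\mu_{k}$ is asymptotically supported on topological microstates: using the metric $\rho((x_{1},x_{2}),(x_{1}',x_{2}'))^{2}=\rho^{(1)}(x_{1},x_{1}')^{2}+\rho^{(2)}(x_{2},x_{2}')^{2}$ on $X_{1}\times X_{2}$ one checks the inclusion $\Map(\rho^{(1)},F,\delta/\sqrt{2},\sigma_{k})\times \Map(\rho^{(2)},F,\delta/\sqrt{2},\sigma_{k})\subseteq \Map(\rho,F,\delta,\sigma_{k})$, so that $\mu_{k}(\Map(\rho,F,\delta,\sigma_{k}))\geq \mu^{(1)}_{k}(\Map(\rho^{(1)},F,\delta/\sqrt{2},\sigma_{k}))\,\mu^{(2)}_{k}(\Map(\rho^{(2)},F,\delta/\sqrt{2},\sigma_{k}))\to 1$ as $k\to\omega$. (b) The $j$-th marginal of $\mu_{k}$ is $\mu^{(1)}_{k,j}\otimes \mu^{(2)}_{k,j}$; since the product map $\Prob(X_{1})\times \Prob(X_{2})\to \Prob(X_{1}\times X_{2})$ is continuous it commutes with ultralimits, so $\mathcal{E}((\mu_{k})_{k})(z)=m_{Y_{1}(z)}\otimes m_{Y_{2}(z)}=m_{Y_{1}(z)\times Y_{2}(z)}=m_{(Y_{1}\times Y_{2})(z)}$, where the middle equality is the fact that the Haar measure of a product group is the product of the Haar measures. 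Hence $m_{Y_{1}\times Y_{2}}=\mathcal{E}((\mu_{k})_{k})\in \mathcal{L}_{\omega}(G\actson X_{1}\times X_{2})$, giving $Y_{1}\times Y_{2}\in \mathcal{S}_{\omega}(G\actson X_{1}\times X_{2})$.

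Combining the two inequalities gives $W=Y_{1}\times Y_{2}$. No step presents a serious obstacle: the only mildly technical points are the microstate factorization in the product action (immediate from equivariance being coordinatewise) and the interchange of the product-of-measures operation with the ultralimit, both of which are routine consequences of continuity and compactness; the identification of the marginals and the product formula for the Haar measure are standard.
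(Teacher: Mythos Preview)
Your proof is correct and follows essentially the same approach as the paper: one inequality via showing $Y_{1}\times Y_{2}\in\mathcal{S}_{\omega}(G\actson X_{1}\times X_{2})$ by taking products of the witnessing measure sequences, and the other via showing $Y_{1}\times Y_{2}$ absorbs all topological microstates by projecting a microstate for $X_{1}\times X_{2}$ onto each factor. The paper's write-up is terser (it simply says ``it is easy to see by taking products'' for the first direction and uses the projections $\pi_{j}$ for the second), but the logical structure and the key ideas are identical to yours.
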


\begin{proof}

Let $Y$ be the maximal element of $\mathcal{S}_{\omega}(G\actson X).$ Since $m_{Y_{j}}\in \mathcal{L}_{\omega}(G\actson X_{j})$ for $j=1,2,$ it is easy to see by taking products that $m_{Y_{1}\times Y_{2}}\in \mathcal{L}_{\omega}(G\actson X),$ i.e. $Y_{1}\times Y_{2}\in \mathcal{S}_{\omega}(G\actson X_{1}\times X_{2}).$ Thus $Y_{1}\times Y_{2}\leq Y.$ For the reverse inequality, suppose that $\theta\colon Z_{\omega}\to X_{1}\times X_{2}$ is a topological microstate. Let $\pi_{j}\colon X_{1}\times X_{2}\to X_{j},j=1,2$ be the projection onto the $j^{th}$ factor. Then $\pi_{j}\circ \theta\colon Z_{\omega}\to X_{j},j=1,2$ are topological microstates, and thus $\pi_{j}(\theta(z))\in Y_{j}(z)$ for almost every $z\in Z_{\omega}$ by Corollary \ref{C:this the main theorem yo!}. So we have that $\theta(z)\in Y_{1}(z)\times Y_{2}(z)$ for almost every $z\in Z_{\omega}.$ Thus $Y_{1}\times Y_{2}\leq Y.$

\end{proof}

\section{Proof of the main reduction}\label{S:main proof}

In this section, we prove Theorem \ref{T:main theorem restated}. We use Hilbert space techniques, for reasons outlined in Section \ref{S:background setup}. Given a Hilbert space $\mathcal{H},$ we let $\Ball(B(\mathcal{H}))=\{T\in B(\mathcal{H}):\|T\|\leq 1\}.$ We also let $\Proj(\mathcal{H})$ be the set of orthogonal projections on $\mathcal{H}.$  If $\mathcal{H}$ is separable, then $\Ball(B(\mathcal{H}))$ is a Polish space with a metric given by
\[\rho(T,S)=\sum_{n=1}^{\infty}2^{-n}\|T(\xi_{n})-S(\xi_{n})\|,\]
for any sequence $(\xi_{n})_{n}$ in $\mathcal{H}$ which has dense linear span. In this manner, we may regard $\Ball(B(\mathcal{H}))$ as a complete metric space, which will be important in order to apply Proposition \ref{P:just the basics Loeb}. We may also regard $\Proj(\mathcal{H})$ as a complete metric space, using the same metric. We caution the reader that $B(\mathcal{H})$ is \emph{not} metrizable in the strong operator topology. This will cause no issue for us, because we will primarily work with $\Ball(B(\mathcal{H})).$

We will actually have little use for the metric $\rho$ above. What will be important for us is that it turns $\Ball(\mathcal{H})$ into a complete metric space, and that if $(X,d)$ is a metric space then a map $f\colon X\to B(\mathcal{H})$ is uniformly continuous if and only if for every $\xi\in \mathcal{H},$ the map $x\mapsto f(x)\xi$ is uniformly continuous (for the metric induced by the norm on $\mathcal{H}$). These facts are easy exercises left to the reader.  Throughout this section, we give $\Ball(B(\mathcal{H}))$ the strong operator topology unless otherwise mentioned.

\begin{defn}
Let $X$ be a compact group, we define the \emph{left regular representation} $\lambda\colon X\to \mathcal{U}(L^{2}(X))$ by $(\lambda(x)\xi)(y)=\xi(x^{-1}y)$ for $x,y\in X,\xi\in L^{2}(X).$ We similarly define $\lambda\colon \Prob(X)\to B(L^{2}(X))$ by
\[\ip{\lambda(\mu)\xi,\eta}=\int_{X}\ip{\lambda(x)\xi,\eta}\,d\mu(x),\mbox{ for all $\mu\in \Prob(X),\xi,\eta\in L^{2}(X)$.}\]
\end{defn}
Observe that $\|\lambda(\mu)\|\leq 1$ for all $\mu \in \Prob(X).$ So we may regard $\lambda\colon \Prob(X)\to \Ball(B(L^{2}(X))).$ As is well known, the map $\lambda\colon \Prob(X)\to \Ball(B(L^{2}(X)))$ is continuous if we give $\Prob(X)$ the weak$^{*}$-topology, and $\Ball(L^{2}(X)))$ the strong operator topology. The map $\lambda$ is also injective. Since $\Prob(X)$ is compact, there is a uniformly continuous map $g\colon \lambda(\Prob(X))\to \Prob(X)$ so that $g(\lambda(\mu))=\mu$ for every $\mu\in \Prob(X).$ It follows by Proposition \ref{P:just the basics Loeb} that we have an injective map
\[\lambda_{*}\colon \Meas(Z_{\omega},\Prob(X))\to \Meas(Z_{\omega},\Ball(B(L^{2}(X))))\]
with closed image, and that $\lambda_{*}$ is a homeomorphism onto its image. We collect this and other properties in the following proposition.

\begin{prop}\label{P:fourier fo sho}
Let $G$ be a countable, discrete, sofic group with sofic approximation $\sigma_{k}\colon G\to S_{d_{k}}.$ Let $G\actson X$ be an algebraic action, and $\omega$ a free ultrafilter on the natural numbers.
\begin{enumerate}[(i)]
\item \label{I:homeo again} The map $\lambda_{*}\colon \Meas(Z_{\omega},\Prob(X))\to \Meas(Z_{\omega},\Ball(B(L^{2}(X))))$ is injective, has closed image, and is a homeomorphism onto its image.
\item \label{I:get these projections yo} $\lambda_{*}(\mathcal{M}_{*}(\Meas(Z_{\omega},\Sub(X))))=\lambda_{*}( \Meas(Z_{\omega},\Prob(X)))\cap \Meas(Z_{\omega},\Proj(L^{2}(X))).$
\item \label{I:order-reversing stuff} The map $\lambda_{*}\circ \mathcal{M}_{*}\colon \Meas(Z_{\omega},\Sub(X))\to \Meas(Z_{\omega},\Proj(L^{2}(X)))$ is order-reversing.
\item \label{I:more sot closed alala} The space $\lambda_{*}(\mathcal{L}_{\omega}(G\actson X))$ is closed under pointwise products, pointwise convex combinations, pointwise adjoints, and is a topologically closed subset of $\Meas(Z_{\omega},\Ball(B(L^{2}(X)))$ if we give $\Ball(B(L^{2}(X)))$ the strong operator topology.
\item \label{I:more project alkdsjala} We have that $\lambda_{*}(\mathcal{M}_{*}(\mathcal{S}_{\omega}(G\actson X))=\lambda_{*}(\mathcal{L}_{\omega}(G\actson X))\cap \Meas(Z_{\omega},\Proj(L^{2}(X))).$
\end{enumerate}

\end{prop}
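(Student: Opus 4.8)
The plan is to reduce the whole proposition to a pointwise dictionary between probability measures on $X$ and operators on $L^{2}(X)$, and then transport that dictionary to the Loeb space via Proposition \ref{P:just the basics Loeb}. The three pointwise facts I would record first are: (a) $\lambda$ is affine in $\mu$ and is a $*$-homomorphism of the convolution algebra, i.e.\ $\lambda(\mu*\nu)=\lambda(\mu)\lambda(\nu)$ and $\lambda(\mu^{*})=\lambda(\mu)^{*}$; (b) for $\mu\in\Prob(X)$ the operator $\lambda(\mu)$ is an orthogonal projection if and only if $\mu$ is convolution-idempotent, which by the classical theorem of Kawada and It\^o happens exactly when $\mu=m_{Y}$ with $Y=\supp\mu\in\Sub(X)$; and (c) $\operatorname{ran}\lambda(m_{Y})=\{f\in L^{2}(X):\lambda(t)f=f\text{ for all }t\in Y\}$, so this range shrinks as $Y$ grows and hence $Y_{1}\subseteq Y_{2}$ forces $\lambda(m_{Y_{1}})\geq\lambda(m_{Y_{2}})$. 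With these in hand, part (\ref{I:homeo again}) is immediate from the discussion preceding the proposition: $L^{2}(X)$ is separable (as $X$ is compact metrizable), $\lambda$ is weak$^{*}$-to-SOT continuous on the compact set $\Prob(X)$ hence uniformly continuous, and it admits a uniformly continuous left inverse $g$, so Proposition \ref{P:just the basics Loeb}(\ref{I:induced continuity Loeb}),(\ref{I:closed image Loeb}) give that $\lambda_{*}$ is uniformly continuous with closed image, while injectivity together with $g_{*}$ makes it a homeomorphism onto its image.

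The heart of the proof is part (\ref{I:get these projections yo}). From (b) I would first establish the pointwise equality $\mathcal{M}(\Sub(X))=\{\mu\in\Prob(X):\lambda(\mu)\in\Proj(L^{2}(X))\}$: if $\lambda(\mu)$ is a projection then $\lambda(\mu*\mu)=\lambda(\mu)^{2}=\lambda(\mu)$, so $\mu*\mu=\mu$ by injectivity of $\lambda$ and thus $\mu=m_{\supp\mu}$; conversely $m_{Y}*m_{Y}=m_{Y}$ and $m_{Y}^{*}=m_{Y}$ make $\lambda(m_{Y})$ a self-adjoint idempotent. To lift this I would use that $\mathcal{M}\colon\Sub(X)\to\Prob(X)$ is a continuous injection on the compact space $\Sub(X)$ (a closed subset of $\mathcal{F}(X)$), hence a homeomorphism onto the closed set $\mathcal{M}(\Sub(X))$; consequently a $\phi\in\Meas(Z_{\omega},\Prob(X))$ lies in $\mathcal{M}_{*}(\Meas(Z_{\omega},\Sub(X)))$ precisely when $\phi(z)\in\mathcal{M}(\Sub(X))$ for almost every $z$, the selection $z\mapsto\mathcal{M}^{-1}(\phi(z))$ being automatically measurable. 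Combining the pointwise equality with the observation that $\lambda_{*}\phi\in\Meas(Z_{\omega},\Proj(L^{2}(X)))$ iff $\lambda(\phi(z))\in\Proj(L^{2}(X))$ almost everywhere then yields (\ref{I:get these projections yo}).

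Part (\ref{I:order-reversing stuff}) is just (c) applied pointwise, using the pointwise a.e.\ operator order on $\Meas(Z_{\omega},\Proj(L^{2}(X)))$ against the pointwise inclusion order on $\Meas(Z_{\omega},\Sub(X))$. For part (\ref{I:more sot closed alala}), the intertwining identities in (a) show $\lambda_{*}$ sends pointwise convolution, convex combination and adjoint of elements of $\mathcal{L}_{\omega}(G\actson X)$ to pointwise product, convex combination and adjoint of operators; since $\mathcal{L}_{\omega}(G\actson X)$ is closed under these three operations (the bullet points following Proposition \ref{P:topology is helpful 2}), so is its image. Topological closedness of $\lambda_{*}(\mathcal{L}_{\omega}(G\actson X))$ follows because $\mathcal{L}_{\omega}(G\actson X)$ is closed by Theorem \ref{T:topology is helpful} and $\lambda_{*}$ is a homeomorphism onto its closed image by (\ref{I:homeo again}); here one also uses that $\Proj(L^{2}(X))$ is SOT-closed in $\Ball(B(L^{2}(X)))$, which holds because on the ball $T_{n}\to T$ in SOT forces $T_{n}^{2}\to T^{2}$, so an SOT-limit of idempotent contractions is an idempotent contraction and thus a projection. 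Finally part (\ref{I:more project alkdsjala}) is formal: by definition $\mathcal{M}_{*}(\mathcal{S}_{\omega}(G\actson X))=\mathcal{M}_{*}(\Meas(Z_{\omega},\Sub(X)))\cap\mathcal{L}_{\omega}(G\actson X)$, and applying the injective $\lambda_{*}$ (which commutes with intersection) together with (\ref{I:get these projections yo}) gives $\lambda_{*}(\mathcal{M}_{*}(\mathcal{S}_{\omega}(G\actson X)))=\lambda_{*}(\mathcal{L}_{\omega}(G\actson X))\cap\Meas(Z_{\omega},\Proj(L^{2}(X)))$, where the intersection with $\lambda_{*}(\Meas(Z_{\omega},\Prob(X)))$ coming from (\ref{I:get these projections yo}) is absorbed since $\mathcal{L}_{\omega}(G\actson X)\subseteq\Meas(Z_{\omega},\Prob(X))$.

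The main obstacle is part (\ref{I:get these projections yo}), and specifically its two non-formal ingredients: the classical identification of convolution-idempotent probability measures on a compact group with Haar measures of closed subgroups, and the measurable-selection step ensuring that an almost-everywhere Haar-measure-valued map is genuinely $\mathcal{M}_{*}(Y)$ for some measurable $Y\in\Meas(Z_{\omega},\Sub(X))$. Compactness of $\Sub(X)$ is exactly what renders the selection painless, reducing it to continuity of $\mathcal{M}^{-1}$ on the closed image $\mathcal{M}(\Sub(X))$; everything else is bookkeeping with the homeomorphism $\lambda_{*}$ and the intertwining identities of (a).
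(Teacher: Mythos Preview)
Your proposal is correct and follows essentially the same route as the paper: both reduce everything to pointwise facts about $\lambda$ and then invoke Proposition \ref{P:just the basics Loeb} and Theorem \ref{T:topology is helpful}, with the crux of (\ref{I:get these projections yo}) being the classical characterization of convolution-idempotent probability measures on a compact group as Haar measures of closed subgroups (the paper cites Wendel rather than Kawada--It\^o, and uses both $\mu*\mu=\mu$ and $\mu^{*}=\mu$, but these lead to the same conclusion). Your treatment is slightly more explicit about the measurable-selection step in (\ref{I:get these projections yo}); the remark about $\Proj(L^{2}(X))$ being SOT-closed is true but not needed for (\ref{I:more sot closed alala}).
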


\begin{proof}

(\ref{I:homeo again}): Automatic from Proposition \ref{P:just the basics Loeb} and the fact that $\Prob(X)$ is compact and $\lambda$ is injective.

(\ref{I:get these projections yo}):
Throughout this part we use that $\lambda,\lambda^{*}$ preserve products and the $*$-operation.
Since $m_{Y}^{*}=m_{Y}$ and $m_{Y}*m_{Y}=m_{Y},$ for all $Y\in \Meas(Z_{\omega},\Sub(X))$ `it is clear that
\[\lambda_{*}(\mathcal{M}_{*}(\Meas(Z_{\omega},\Sub(X))))\subseteq\lambda_{*}( \Meas(Z_{\omega},\Prob(X)))\cap \Meas(Z_{\omega},\Proj(L^{2}(X))).\]
Conversely, suppose that $P\in \lambda_{*}(\Meas(Z_{\omega},\Prob(X))\cap \Meas(Z_{\omega},\Proj(L^{2}(X))),$ and write $P=\lambda_{*}(\mu)$ for some $\mu\in \Meas(Z_{\omega},\Prob(X)).$  By injectivity of $\lambda,$ and the fact that
$P$ is a projection, we have that $\mu(z)*\mu(z)=\mu(z),$ and $\mu(z)=\mu^{*}(z)$ for almost all $z\in Z_{\omega}.$  This implies, by \cite[Theorem 1]{Wendel}, that $\mu(z)\in \mathcal{M}(\Sub(X))$ for almost every $z\in Z_{\omega}.$

(\ref{I:order-reversing stuff}): This reduces to the claim that $\lambda(m_{Y'})\leq \lambda(m_{Y})$ if $Y,Y'\in \Sub(X)$ and $Y'\supseteq Y.$ Since $\lambda(m_{Y})$ can be identified with the projection onto $Y$ invariant functions in $L^{2}(X)$ for $Y\in \Sub(X),$ this claim is trivial.

(\ref{I:more sot closed alala}): Clear from part (\ref{I:homeo again}), Theorem \ref{T:topology is helpful}, and the comments after Proposition \ref{L:topological poset}.

(\ref{I:more project alkdsjala}): Obvious from part (\ref{I:get these projections yo}).

\end{proof}

The above characterization of elements of $\mathcal{M}_{*}(\mathcal{S}_{\omega}(G\actson X))$  in terms of measurably varying families of projections on a Hilbert space will be useful, because it turns out there are concrete ways to recover the meet of two projections in a Hilbert space, as well as recover the projection onto the fixed points of an operator.

\begin{lem}\label{L:A wild Hilbert space appears}
Let $\mathcal{H}$ be a Hilbert space.
\begin{enumerate}[(a)]
\item \label{I:to the nth degree} Let $T\in B(\mathcal{H})$ with $\|T\|\leq 1.$ Let $P$ be the orthogonal projection onto the fixed points of $T.$ Then
\[P=SOT-\lim_{n\to\infty}\left[\left(\frac{1}{2}T+\frac{1}{2}\id\right)^{*}\left(\frac{1}{2}T+\frac{1}{2}\id\right)\right]^{n}.\]
\item \label{I:computing interesections}
 Let $P,Q\in \Proj(\mathcal{H}).$ Then
\[P\wedge Q=SOT-\lim_{n\to\infty}(PQP)^{n}.\]
\end{enumerate}
\end{lem}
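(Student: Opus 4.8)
The plan is to isolate a single spectral-theoretic fact and then apply it to both parts. The fact I would establish first is: if $B\in B(\mathcal{H})$ is a positive contraction, meaning $0\leq B\leq \id$, then $B^{n}$ converges in the strong operator topology to the orthogonal projection onto $\{\xi\in\mathcal{H}:B\xi=\xi\}$. This follows from the spectral theorem: writing $B=\int_{[0,1]}t\,dE_{B}(t)$, for each $\xi$ one has $\|B^{n}\xi-E_{B}(\{1\})\xi\|^{2}=\int_{[0,1)}t^{2n}\,d\langle E_{B}(t)\xi,\xi\rangle$, which tends to $0$ by dominated convergence since $t^{2n}\to 0$ pointwise on $[0,1)$ against the finite measure $\langle E_{B}(\cdot)\xi,\xi\rangle$; moreover $E_{B}(\{1\})$ is exactly the projection onto the unit eigenspace $\ker(\id-B)$. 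I will also record the elementary fact that for such $B$ one has $B\xi=\xi$ if and only if $\langle B\xi,\xi\rangle=\|\xi\|^{2}$, which follows from $\|(\id-B)^{1/2}\xi\|^{2}=\langle(\id-B)\xi,\xi\rangle$.

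For part (\ref{I:to the nth degree}), I would set $S=\tfrac12 T+\tfrac12\id$, so that $\|S\|\leq 1$ and $B:=S^{*}S$ satisfies $0\leq B\leq\id$. The displayed operator is then $B^{n}$, so by the spectral fact it converges strongly to the projection onto $\ker(\id-B)=\{\xi:\|S\xi\|=\|\xi\|\}$. The crux is then to check $\{\xi:\|S\xi\|=\|\xi\|\}=\Fix(T)$. One inclusion is immediate, since $T\xi=\xi$ forces $S\xi=\xi$. For the converse, expanding $\|S\xi\|^{2}=\tfrac14\big(\|T\xi\|^{2}+2\Re\langle T\xi,\xi\rangle+\|\xi\|^{2}\big)$ and using $\|T\xi\|\leq\|\xi\|$ together with Cauchy--Schwarz shows that $\|S\xi\|=\|\xi\|$ can hold only if both $\|T\xi\|=\|\xi\|$ and $\Re\langle T\xi,\xi\rangle=\|\xi\|^{2}$; substituting into $\|T\xi-\xi\|^{2}=\|T\xi\|^{2}-2\Re\langle T\xi,\xi\rangle+\|\xi\|^{2}$ then gives $\|T\xi-\xi\|^{2}=0$, i.e. $T\xi=\xi$.

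For part (\ref{I:computing interesections}), I would take $B:=PQP$, which is self-adjoint and positive since $\langle PQP\xi,\xi\rangle=\|QP\xi\|^{2}\geq 0$, and a contraction, so again $0\leq B\leq\id$ and $(PQP)^{n}$ converges strongly to the projection onto $\ker(\id-PQP)$. It then remains to identify $\ker(\id-PQP)$ with the range of $P\wedge Q$, namely $\Im P\cap\Im Q$. If $\xi$ lies in both ranges then $PQP\xi=\xi$ trivially. Conversely, if $PQP\xi=\xi$ then $\|\xi\|^{2}=\langle PQP\xi,\xi\rangle=\|QP\xi\|^{2}\leq\|P\xi\|^{2}\leq\|\xi\|^{2}$, and forcing both inequalities to be equalities yields $P\xi=\xi$ and then $Q\xi=\xi$, so $\xi\in\Im P\cap\Im Q$.

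The genuinely delicate step in both parts is the identification of the unit eigenspace of the auxiliary positive contraction with the intended subspace; the convergence of the powers is a soft consequence of the spectral theorem once that identification is in hand. I expect the equality-of-norm analysis in part (\ref{I:to the nth degree}), pinning down when $\|\tfrac12(T+\id)\xi\|=\|\xi\|$ forces $T\xi=\xi$, to be the main point requiring care, since it is precisely there that the hypothesis $\|T\|\leq 1$ enters essentially.
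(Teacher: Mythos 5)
Your proof is correct and follows essentially the same route as the paper: both reduce each part to the spectral-theoretic fact that powers of a positive contraction converge strongly to the projection onto its unit eigenspace, and then identify that eigenspace with $\Fix(T)$ (resp.\ $\Im P\cap\Im Q$) by forcing equality in a chain of norm inequalities. The only cosmetic difference is in part (a), where the paper extracts $\|(T-\id)\xi\|^{2}\leq 0$ via the parallelogram law while you expand $\|\tfrac12(T+\id)\xi\|^{2}$ directly and invoke Cauchy--Schwarz; the two computations are equivalent.
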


\begin{proof}
(\ref{I:to the nth degree}):
 Let $Q$ be the projection onto the fixed points of $\left(\frac{1}{2}T+\frac{1}{2}\right)^{*}\left(\frac{1}{2}T+\frac{1}{2}\right).$ Since $\left\|\frac{1}{2}T+\frac{1}{2}\id\right\|\leq 1,$by the Spectral Theorem we have that
\[Q=1_{\{1\}}\left(\left(\frac{1}{2}T+\frac{1}{2}\id\right)^{*}\left(\frac{1}{2}T+\frac{1}{2}\id\right)\right)=SOT-\lim_{n\to\infty}\left[\left(\frac{1}{2}T+\frac{1}{2}\id\right)^{*}\left(\frac{1}{2}T+\frac{1}{2}\id\right)\right]^{n}.\]

So it suffices to show that $P=Q.$ Clearly, we have that $P\leq Q.$ So it enough to show that if $\xi\in \mathcal{H}$ and $Q(\xi)=\xi,$ then $P(\xi)=\xi.$ So suppose that  $\xi\in \mathcal{H}$ and $Q(\xi)=\xi.$ By the Parallelogram Law, we have
\begin{align*}
\frac{1}{4}\left\|(T-1)\xi\right\|^{2}&=\frac{1}{2}\|T\xi\|^{2}+\frac{1}{2}\|\xi\|^{2}-\left\|\left(\frac{1}{2}T+\frac{1}{2}\right)\xi\right\|^{2}\\
&=\frac{1}{2}\|T\xi\|^{2}+\frac{1}{2}\|\xi\|^{2}-\left\ip{\left(\frac{1}{2}T+\frac{1}{2}\right)^{*}\left(\frac{1}{2}T+\frac{1}{2}\right)\xi,\xi\right}\\
&=\frac{1}{2}\|T\xi\|^{2}+\frac{1}{2}\|\xi\|^{2}-\|\xi\|^{2},
\end{align*}
the last line following because $Q(\xi)=\xi.$ Since $\|T\|\leq 1,$ the above shows that $\|(T-\id)\xi\|\leq 0,$ and thus $T(\xi)=\xi, $ i.e. $P(\xi)=\xi.$ So $P=Q.$

(\ref{I:computing interesections}): Let $E$ be the projection onto the fixed points of $PQP,$ as in part (\ref{I:to the nth degree}), it suffices to show that $E=P\wedge Q.$ Clearly, $P\wedge Q\leq E,$ so suppose that $\xi\in\mathcal{H}$ and that $E(\xi)=\xi.$ Then $\xi=(PQP)(\xi),$ so
\[\|\xi\|=\|(PQP)\xi\|\leq \|P\xi\|,\]
and since $P$ is an orthogonal projection, this implies that $P(\xi)=\xi.$ Since $\xi=(PQP)(\xi),$ this implies that $\xi=(PQ)(\xi).$ As above, we have that $\|\xi\|\leq \|Q\xi\|,$ and this implies that $Q\xi=\xi.$ Hence $(P\wedge Q)(\xi)=\xi.$
\end{proof}

We can now prove parts (\ref{I: supports and shiz}) and (\ref{I:join of subgroups}) of Theorem \ref{T:main theorem restated}.

\begin{proof}[Proof of part (\ref{I: supports and shiz}) and (\ref{I:join of subgroups}) of Theorem \ref{T:main theorem restated}]

(\ref{I: supports and shiz}):
We start with the following preliminary observation.

\emph{Observation: For every $\mu \in \Prob(X),$ the projection onto the fixed points of $\lambda(\mu)$ is $\lambda(m_{\overline{\ip{\supp(\mu)}}}).$}

To prove the observation, set $P=\lambda(m_{\overline{\ip{\supp(\mu)}}}).$ Since $\lambda(\mu)P=\lambda(\mu*m_{\overline{\ip{\supp(\mu)}}})=\lambda(m_{\overline{\ip{\supp(\mu)}}}),$ it is clear that $P$  dominates the projection onto the fixed points of $\lambda(\mu).$ Conversely, if $\xi\in L^{2}(X)$ and $\lambda(\mu)\xi=\xi,$ then
\[\|\xi\|^{2}=\rea(\ip{\lambda(\mu)\xi,\xi})=\int_{X}\rea(\ip{\lambda(x)\xi,\xi})\,d\mu(x).\]
By the Cauchy-Schwartz inequality, we know that $\rea(\ip{\lambda(x)\xi,\xi})\leq\|\xi\|^{2.}$ Thus the above equation shows  that $\rea(\ip{\lambda(x)\xi,\xi})=\|\xi\|^{2}$ almost everywhere. By expanding $\|\lambda(x)\xi-\xi\|_{2}^{2},$ we see that $\lambda(x)\xi=\xi$ for $\mu$-almost every $x\in X.$ Since $\{x\in X:\lambda(x)\xi=\xi\}$ is closed in $X,$ we see that $\lambda(x)\xi=\xi$ for every $x\in \supp(\mu).$ But $\{x\in X:\lambda(x)\xi=\xi\}$ is also a closed subgroup, and thus $\lambda(y)\xi=\xi$ for every $y\in \overline{\ip{\supp(\mu)}}.$ Hence $P\xi=\xi.$

Since $\lambda\colon \Prob(X)\to \Ball(B(L^{2}(X)))$ is a homeomorphism onto its image, Lemma \ref{L:A wild Hilbert space appears} (\ref{I:to the nth degree}) and the observation imply that
\[m_{\overline{\ip{\supp(\mu)}}}=\lim_{n\to\infty}\left[\left(\frac{1}{2}\mu+\frac{1}{2}\delta_{e}\right)^{*}*\left(\frac{1}{2}\mu+\frac{1}{2}\delta_{e}\right)\right]^{n},\]
for all $\mu\in \Prob(X).$

Now fix a $\mu\in \mathcal{P}_{\omega}(X,G).$ Define $Y$ as in Theorem \ref{T:main theorem restated} (\ref{I: supports and shiz}). Then for all $z\in Z_{\omega},$ we have
\[m_{Y(z)}=\lim_{n\to\infty}\left[\left(\frac{1}{2}\mu(z)+\frac{1}{2}\delta_{e}\right)^{*}*\left(\frac{1}{2}\mu(z)+\frac{1}{2}\delta_{e}\right)\right]^{*n}.\]
Note the above limiting formula also implies that $Y(z)$ is a measurable function on the Loeb measure space.
It follows by Egoroff's theorem that
\[m_{Y}=\lim_{n\to\infty}\left[\left(\frac{1}{2}\mu+\frac{1}{2}\delta_{e}\right)^{*}*\left(\frac{1}{2}\mu+\frac{1}{2}\delta_{e}\right)\right]^{*n}\]
 in the measure topology. Hence by Theorem \ref{T:topology is helpful} we have that $m_{Y}\in \mathcal{P}_{\omega}(X,G),$ i.e. $Y\in \mathcal{S}_{\omega}(X,G).$

(\ref{I:join of subgroups}):
It is easy to see that for all $K_{1},K_{2}\in \Sub(X),$ we have that $\lambda(m_{K_{1}})\wedge \lambda(m_{K_{2}})=\lambda(m_{\overline{\ip{K_{1},K_{2}}}}).$
By similar arguments as in  part (\ref{I: supports and shiz}), we know that
\[m_{Y_{1}\vee Y_{2}}=\lim_{n\to\infty}\left(m_{Y_{1}}*m_{Y_{2}}*m_{Y_{1}}\right)^{*n}\]
in the measure topology.
Since $m_{Y_{1}},m_{Y_{2}}\in \mathcal{L}_{\omega}(G\actson X)$ the above shows that $m_{Y_{1}\vee Y_{2}}\in \mathcal{L}_{\omega}(G\actson X).$ Thus by definition we have that $Y_{1}\vee Y_{2}\in \mathcal{S}_{\omega}(G\actson X).$

\end{proof}

The proof of part (\ref{I:lattices and stuff}) of Theorem \ref{T:main theorem restated} requires a bit more effort.

\begin{defn}
Let $(d_{k})_{k}$ be a sequence of natural numbers and $\omega$ a free ultrafilter on $\N.$ Fix a separable Hilbert space $\mathcal{H}.$ Let $L^{\infty}(Z_{\omega},B(\mathcal{H}))$ be the set of all maps $T\colon Z_{\omega}\to B(\mathcal{H})$ so that:
\begin{itemize}
\item There is an $R>0$ so that $\{z\in Z_{\omega}:\|T(z)\|\leq R\}$ is a null set,
\item if $R$ is as above, and $Z_{0}=\{z\in Z_{\omega}:\|T(z)\|\leq R\},$ then $T\big|_{Z_{0}}\in \Meas(Z_{0},R\Ball(B(\mathcal{H}))).$
\end{itemize}
\end{defn}
One small remark on the above definition is necessary. As discussed at the beginning of this section, $R\Ball(B(\mathcal{H}))$ is a Polish space for all $R>0.$ Thus it makes sense to ask that $T\big|_{Z_{0}}\in \Meas(Z_{0},R\Ball(\mathcal{H})),$ it simply means that for every Borel $E\subseteq R\Ball(B(\mathcal{H})),$ the set $T^{-1}(E)\cap Z_{0}$ is measurable. Remember that one has to be slightly careful, since $B(\mathcal{H})$ is not metrizable nor separable in the strong operator topology, and is thus certainly not a Polish space.

It is clear that $L^{\infty}(Z_{\omega},B(\mathcal{H}))$ is a $*$-algebra under the operations
\[(T+S)(z)=T(z)+S(z),\]
\[T^{*}(z)=(T(z))^{*},\]
\[(TS)(z)=T(z)S(z),\]
defined for $T,S$ in $Z.$ Furthermore, the norm $\|T\|_{\infty}=\textnormal{esssup}_{z}\|T(z)\|$ is a Banach algebra norm on $L^{\infty}(Z_{\omega},B(\mathcal{H})).$ We identify $L^{\infty}(Z_{\omega})\subseteq L^{\infty}(Z_{\omega},B(\mathcal{H}))$ by the identification $f\mapsto (z\mapsto f(z)\id).$

If $\xi\colon Z_{\omega}\to \mathcal{H}$ is measurable, then $\|\xi(z)\|=\sup_{\eta\in D}|\ip{\xi(z),\eta}|,$ where $D$ is a countable dense subset of the unit ball of $\mathcal{H}$ (this exists by separability of $\mathcal{H}$), and so $z\mapsto \|\xi(z)\|$ is measurable. We let $L^{2}(Z_{\omega},\mathcal{H})$ be the set of all measurable functions $\xi\colon Z\to \mathcal{H}$ so that $\int \|\xi(z)\|^{2}\,du_{\omega}(z)<\infty.$ We identify two elements of $L^{2}(Z_{\omega},\mathcal{H})$ if they are equal almost everywhere. If $\xi,\eta\in L^{2}(Z_{\omega},u_{\omega})$ then
\[\ip{\xi(z),\eta(z)}=\sum_{j\in J}\ip{\xi(z),e_{j}}\overline{\ip{\eta(z),e_{j}}}\]
where $(e_{j})_{j\in J}$ is any orthonormal basis of $\mathcal{H}$. We must have that $J$ is countable, since $\mathcal{H}$ is separable. Thus $z\mapsto \ip{\xi(z),\eta(z)}$ is measurable. Hence we have an inner product on $L^{2}(Z_{\omega},\mathcal{H})$ given by
\[\ip{\xi,\eta}=\int_{Z_{\omega}}\ip{\xi(z),\eta(z)}\,du_{\omega}(z).\]
It is readily verified that $L^{2}(Z_{\omega},\mathcal{H})$ is a Hilbert space under this inner product.

Define $\iota\colon L^{\infty}(Z_{\omega},B(\mathcal{H}))\to B(L^{2}(Z_{\omega},\mathcal{H}))$ by
$(\iota(T)\xi)(z)=T(z)\xi(z).$
It is easy to see that $\|\iota(T)\|\leq \|T\|_{\infty}.$ Moreover, by \cite[Theorem 5.27 (b)]{ConwayOT} we have that
\begin{equation}\label{E:isometry measurable asaas}
\|\iota(T)\|=\|T\|_{\infty}.
\end{equation}
For $f\in L^{2}(Z_{\omega},u_{\omega})$ and $\xi\in \mathcal{H},$ we let $f\otimes \xi\in L^{2}(Z_{\omega},\mathcal{H})$ be given by $(f\otimes \xi)(z)=f(z)\xi.$
\begin{lem}\label{L:density and stuff}
Let $(d_{k})_{k}$ be a sequence of natural numbers and $\omega$ a free ultrafilter on the natural number, and $\mathcal{H}$ be  a separable Hilbert space. Then $\iota(L^{\infty}(Z_{\omega}))$ commutes with $\iota(L^{\infty}(Z_{\omega},B(\mathcal{H}))).$ Further,
\[\{\iota(f)(1\otimes \xi):\xi\in\mathcal{H},f\in L^{\infty}(Z_{\omega})\}\]
has dense linear span in $L^{2}(Z_{\omega},\mathcal{H}).$

\end{lem}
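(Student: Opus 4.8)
The plan is to handle the two assertions separately; both reduce to unwinding the pointwise definition of $\iota$, together with the standard structure of a Hilbert-space--valued $L^{2}$ space.

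For the commutation claim, I would first record that for $f\in L^{\infty}(Z_{\omega})$ the operator $\iota(f)$ acts by pointwise scalar multiplication, $(\iota(f)\xi)(z)=f(z)\xi(z)$, since under the identification $f\mapsto(z\mapsto f(z)\id)$ the value $f(z)\id$ is a scalar multiple of the identity. Then for arbitrary $T\in L^{\infty}(Z_{\omega},B(\mathcal{H}))$ and $\xi\in L^{2}(Z_{\omega},\mathcal{H})$ I would compute both $(\iota(f)\iota(T)\xi)(z)$ and $(\iota(T)\iota(f)\xi)(z)$ and observe that each equals $f(z)T(z)\xi(z)$, using only that the scalar $f(z)$ commutes with the bounded operator $T(z)$ for every $z$. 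Equivalently, $f\cdot\id$ is central in the $*$-algebra $L^{\infty}(Z_{\omega},B(\mathcal{H}))$ and $\iota$ is a homomorphism, so $\iota(f)\iota(T)=\iota(fT)=\iota(Tf)=\iota(T)\iota(f)$. This step is purely formal.

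For the density claim, I would first note that $\iota(f)(1\otimes\xi)=f\otimes\xi$, since $(1\otimes\xi)(z)=\xi$ and hence $(\iota(f)(1\otimes\xi))(z)=f(z)\xi=(f\otimes\xi)(z)$. Thus it suffices to prove that the simple tensors $\{f\otimes\xi: f\in L^{\infty}(Z_{\omega}),\ \xi\in\mathcal{H}\}$ have dense linear span. I would prove this by showing the orthogonal complement is trivial: fix $\eta\in L^{2}(Z_{\omega},\mathcal{H})$ orthogonal to every $f\otimes\xi$, fix a countable orthonormal basis $(e_{j})_{j\in J}$ of $\mathcal{H}$ (available by separability), and set $g_{j}(z)=\ip{\eta(z),e_{j}}$. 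Measurability of each $g_{j}$ was established before the lemma, and $|g_{j}(z)|\le\|\eta(z)\|$ gives $g_{j}\in L^{2}(Z_{\omega})$. Orthogonality to $f\otimes e_{j}$ reads $\int_{Z_{\omega}}\overline{f(z)}\,g_{j}(z)\,du_{\omega}(z)=0$ for all $f\in L^{\infty}(Z_{\omega})$; since $u_{\omega}$ is a probability measure, $L^{\infty}(Z_{\omega})$ is dense in $L^{2}(Z_{\omega})$, so this forces $g_{j}=0$ almost everywhere. As $J$ is countable, $\eta(z)=0$ for almost every $z$, i.e. $\eta=0$, giving density.

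The argument is essentially routine, and the only points requiring a little care are genuinely soft: the reduction to countably many coordinate functions $g_{j}$ (which uses separability of $\mathcal{H}$ and the measurability of $z\mapsto\ip{\eta(z),e_{j}}$ already in place), and the density of $L^{\infty}(Z_{\omega})$ in $L^{2}(Z_{\omega})$, which holds because $u_{\omega}$ is finite so that bounded (indeed simple) functions are $L^{2}$-dense. There is no substantial obstacle; the lemma is a bookkeeping step setting up the Hilbert-space framework in which the completeness of $\Meas(Z_{\omega},\Sub(X))$ will be established.
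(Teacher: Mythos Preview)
Your proposal is correct and follows essentially the same route as the paper. Both arguments note that $\iota(f)(1\otimes\xi)=f\otimes\xi$ and then reduce to showing that simple tensors $f\otimes\xi$ with $f\in L^{\infty}(Z_{\omega})$ (hence $f\in L^{2}(Z_{\omega})$ by density) span $L^{2}(Z_{\omega},\mathcal{H})$; the paper simply asserts this last step is ``clear'' by separability of $\mathcal{H}$, whereas you supply the standard orthogonal-complement verification via a countable orthonormal basis.
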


\begin{proof}
The fact that $\iota(L^{\infty}(Z_{\omega}))$ commutes with $\iota(L^{\infty}(Z_{\omega},B(\mathcal{H})))$ is trivial. Observe that for every $f\in L^{\infty}(Z_{\omega}),$ we have that $\iota(f)(1\otimes \xi)=f\otimes \xi.$ Hence
\[\overline{\Span\{\iota(f)(1\otimes \xi):\xi\in\mathcal{H},f\in L^{\infty}(Z_{\omega})\}}\supseteq \overline{\Span\{f\otimes \xi:f\in L^{2}(Z_{\omega}),\xi\in \mathcal{H}\}}.\]
Since $\mathcal{H}$ is separable, this makes it clear that
\[\overline{\Span\{\iota(f)(1\otimes \xi):\xi\in\mathcal{H},f\in L^{\infty}(Z_{\omega})\}}=L^{2}(Z_{\omega},\mathcal{H}).\]

\end{proof}

\begin{prop}\label{P:basic facts about SOT}Let $(d_{k})_{k}$ be a sequence of natural numbers and $\omega$ a free ultrafilter on the natural number, and $\mathcal{H}$ be  a separable Hilbert space. Then $\iota(L^{\infty}(Z_{\omega}))$ commutes with $\iota(L^{\infty}(Z_{\omega},B(\mathcal{H}))).$
Consider the inclusion map $\iota\colon L^{\infty}(Z_{\omega},B(\mathcal{H}))\to B(L^{2}(Z_{\omega},\mathcal{H})).$
\begin{enumerate}[(i)]
\item \label{I:What's a vNa?} We have that $\iota(L^{\infty}(Z_{\omega},B(\mathcal{H})))$ is a strong  operator topology closed subalgebra of $B(L^{2}(Z_{\omega},\mathcal{H}))$ which is closed under adjoints.
\item \label{I:where for are thou homeo?} For every $R>0,$ the map $\iota\big|_{\Meas(Z_{\omega},R\Ball(B(\mathcal{H})))}$ is continuous, has closed image, and is a homeomorphisms onto its image if we give $B(L^{2}(Z_{\omega},\mathcal{H}))$ the strong operator topology.
\end{enumerate}
\end{prop}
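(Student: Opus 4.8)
The plan is to establish part (ii) first and then derive part (i) from it together with the Kaplansky density theorem; the commutation assertion is the one already recorded as trivial in Lemma~\ref{L:density and stuff}, since $\iota(f)$ acts by pointwise scalar multiplication and $\iota(T)$ by a pointwise operator, and scalars commute with operators.

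The technical heart is a single estimate. Fix a sequence $(\xi_m)_m$ dense in the unit ball of $\mathcal{H}$ used to define the strong-operator metric $\rho$ on $R\Ball(B(\mathcal{H}))$. Since $u_\omega$ is a probability measure, Tonelli's theorem and the Cauchy--Schwarz inequality give, for all $T,T'\in\Meas(Z_\omega,R\Ball(B(\mathcal{H})))$,
\[
\rho_m(T,T')=\sum_m 2^{-m}\int_{Z_\omega}\|(T(z)-T'(z))\xi_m\|\,du_\omega(z)\leq \sum_m 2^{-m}\|\iota(T)(1\otimes\xi_m)-\iota(T')(1\otimes\xi_m)\|.
\]
Each summand is bounded by $2^{-m}\cdot 2R$, so a split-the-sum argument shows that if $\iota(T_i)(1\otimes\xi_m)$ converges (respectively is Cauchy) for every fixed $m$, then $(T_i)$ converges (respectively is Cauchy) in $\rho_m$.

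This estimate delivers most of part (ii). Injectivity is immediate from the isometry \eqref{E:isometry measurable asaas}. For continuity of the inverse, $\iota(T_i)\to\iota(T)$ in the strong operator topology forces $\iota(T_i)(1\otimes\xi_m)\to\iota(T)(1\otimes\xi_m)$ for each $m$, whence $T_i\to T$ in measure by the estimate; so $\iota$ is a homeomorphism onto its image. For closed image, if $\iota(T_i)\to S$ with all $T_i\in\Meas(Z_\omega,R\Ball(B(\mathcal{H})))$, the estimate shows $(T_i)$ is $\rho_m$-Cauchy; since $R\Ball(B(\mathcal{H}))$ is complete, so is $\Meas(Z_\omega,R\Ball(B(\mathcal{H})))$ by Proposition~\ref{P:just the basics Loeb}\,(\ref{I:induced completeness}), and a Cauchy net in a complete metric space converges, giving $T_i\to T$ for some $T$; continuity of $\iota$ then yields $\iota(T)=S$. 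Finally, for the forward continuity of $\iota$ the domain is metrizable, so it suffices to treat sequences: the uniform bound $\|\iota(T_n)\|\leq R$ reduces strong convergence to convergence on the dense family of vectors $1_A\otimes\eta$, and $\int_A\|(T_n(z)-T(z))\eta\|^2\,du_\omega(z)\to 0$ follows from $T_n\to T$ in measure by bounded convergence, after passing from $\rho$ to evaluation at $\eta$ via $\|(A-B)\eta\|\leq 2^{m_0}\rho(A,B)+2R\|\eta-\xi_{m_0}\|$.

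For part (i), the map $\iota$ is an isometric $*$-homomorphism for the pointwise operations, so its image is a unital $C^{*}$-subalgebra of $B(L^2(Z_\omega,\mathcal{H}))$; only strong-operator closedness remains. I will apply the Kaplansky density theorem: the strong-operator closure of the unit ball of a $C^{*}$-subalgebra is the unit ball of its strong-operator closure. By \eqref{E:isometry measurable asaas} the unit ball of $\iota(L^\infty(Z_\omega,B(\mathcal{H})))$ is exactly $\iota(\Meas(Z_\omega,\Ball(B(\mathcal{H}))))$, which is strong-operator closed by part (ii) with $R=1$; hence this ball already equals the unit ball of the closure, and the algebra coincides with its own closure. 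The main obstacle throughout is that $L^2(Z_\omega,\mathcal{H})$ is non-separable, so the strong operator topology on the target balls is not metrizable and the closed-image and homeomorphism statements genuinely concern nets rather than sequences. The proof is arranged precisely to handle this: the displayed estimate converts net assertions tested on the fixed countable family $(1\otimes\xi_m)_m$ into metric assertions about $\rho_m$, and the completeness of $\Meas(Z_\omega,R\Ball(B(\mathcal{H})))$---which, crucially, makes Cauchy \emph{nets} converge---supplies the limiting function.
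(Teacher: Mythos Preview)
Your proof is correct but takes a genuinely different route from the paper's. The paper proves (i) first by simply citing \cite[Theorem 52.8 (a)]{ConwayOT}, and then uses (i) to get the closed-image statement in (ii) in one line via the identity $\iota(\Meas(Z_\omega,\Ball(B(\mathcal{H}))))=\iota(L^\infty(Z_\omega,B(\mathcal{H})))\cap\Ball(B(L^2(Z_\omega,\mathcal{H})))$; the continuity and inverse-continuity parts of (ii) are then handled by a direct neighborhood argument (the inverse direction via Chebyshev's inequality). You invert the logical order: you prove (ii) from scratch, including closed image via Cauchy nets and completeness of $\Meas(Z_\omega,R\Ball(B(\mathcal{H})))$, and then deduce (i) from (ii) by Kaplansky density. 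Your unifying estimate $\rho_m(T,T')\le\sum_m 2^{-m}\|\iota(T)(1\otimes\xi_m)-\iota(T')(1\otimes\xi_m)\|$ is a nice device that handles both the inverse-continuity and the closed-image arguments simultaneously, and your explicit acknowledgment that completeness of a metric space forces Cauchy \emph{nets} (not just sequences) to converge is exactly what is needed given the non-metrizability of the target. The paper's approach is shorter because it outsources the von Neumann algebra fact to a reference; yours is more self-contained at the cost of invoking Kaplansky density, and it makes the interplay between the metric on the domain and the strong operator topology on the target more transparent.
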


\begin{proof}

(\ref{I:What's a vNa?}): This is true by \cite[Theorem 52.8 (a)]{ConwayOT}

(\ref{I:where for are thou homeo?}): Without loss of generality, $R=1.$
By (\ref{E:isometry measurable asaas}),
\[\iota(\Meas(Z_{\omega},\Ball(B(\mathcal{H}))))=\iota(L^{\infty}(Z_{\omega},\Ball(B(\mathcal{H}))))\cap \Ball(B(L^{2}(Z_{\omega},B(\mathcal{H})))\]
and this proves that $\iota(\Meas(Z_{\omega},\Ball(B(\mathcal{H}))))$ is closed by part (\ref{I:What's a vNa?}).

Let us first show that $\iota$ is continuous. Since $\Meas(Z_{\omega},\Ball(B(\mathcal{H}))$ is metrizable, it suffices to show that if $T_{n}$ is a sequence in $\Meas(Z_{\omega},\Ball(B(\mathcal{H})))$ with $T_{n}\to T,$  then $\iota(T_{n})\to \iota(T)$ in the strong-operator topology. So fix a sequence $T_{n}$ in $\Meas(Z_{\omega},\Ball(B(\mathcal{H})))$ with $T_{n}\to T.$ Let
\[M=\left\{S\in B(L^{2}(Z_{\omega},\mathcal{H})):S\iota(T)=\iota(T)S \mbox{ for all $T\in \Meas(Z_{\omega},B(\mathcal{H}))$}\right\},\]
and
 $\mathcal{K}=\{\xi\in \mathcal{H}:\|\iota(T_{n})\xi-\iota(T)\xi\|_{2}\to 0\}.$

It suffices to show that $\mathcal{K}=\mathcal{H}.$ It is clear that $\mathcal{K}$ is a linear subspace of $\mathcal{H},$ and since $\|\iota(T_{n})\|\leq 1,$ it follows that $\mathcal{K}$ is norm closed. Lastly, it is straightforward to show that $\mathcal{K}$ is $M$-invariant. Hence by Lemma \ref{L:density and stuff}, it suffices to show that if $\xi\in \mathcal{H},$ then $1\otimes \xi\in \mathcal{K}.$ To prove this, let $\varepsilon>0$ be given. Let $\mathcal{O}_{\varespilon}=\left\{A\in B(\mathcal{H}): \|A\xi\|_{2}<\varepsilon\right\},$ and set $V_{\varespilon}=\left\{B\in \Meas(Z_{\omega},\Ball(B(\mathcal{H}))):u_{\omega}\left(\{z:B(z)-T(z)\in \mathcal{O}_{\varepsilon}\}\right)>1-\varepsilon\right\}.$ Since $T_{n}\to T,$ for all large $n$ we have that $T_{n}\in V_{\varepsilon}.$ Thus, for all large $n,$ we have that
\[\|\iota(T_{n})(1\otimes \xi)-\iota(T)(1\otimes \xi)\|_{2}^{2}\leq \varepsilon^{2}(1-\varepsilon)+4\varepsilon^{2}\|\xi\|^{2}.\]
Thus
\[\limsup_{n\to\infty}\|\iota(T_{n})(1\otimes \xi)-\iota(T)(1\otimes \xi)\|_{2}\leq \varpesilon+2\varpesilon\|\xi\|,\]
and letting $\varepsilon\to 0$ proves that $\|\iota(T_{n})(1\otimes \xi)-\iota(T)(1\otimes \xi)\|_{2}\to 0.$ This proves that $\iota$ is continuous.

To prove that $\iota$ is a homeomorphism onto its image, fix $T\in \Meas(Z_{\omega},\Ball(B(\mathcal{H}))),$ let $U$ be a neighborhood of $T.$ It suffices to show that there is a strong operator topology neighborhood $V$ of $\iota(T)$ so that $\iota^{-1}(V)\subseteq U.$ We may choose an $\varepsilon>0$ and $\xi_{1},\dots,\xi_{n}\in \mathcal{H}$ so that
\[U\supseteq \bigcap_{j=1}^{n}\left\{S\in \Meas(Z_{\omega},\Ball(B(\mathcal{H})): u_{\omega}\left(\left\{z:\|S(z)\xi_{j}-T(z)\xi_{j}\|_{2}<\varepsilon\right\}\right)>1-\varepsilon\right\}.\]
Let
\[V=\bigcap_{j=1}^{n}\{B\in B(L^{2}(Z_{\omega},\mathcal{H})): \|B(1\otimes \xi_{j})-\iota(T)(1\otimes \xi_{j})\|_{2}<\varepsilon^{2}\}.\]
It is a straightforward consequence of Chebyshev's inequality that if $S\in\Meas(Z_{\omega},\Ball(B(\mathcal{H})))$ and $\iota(S)\in V,$ then $S\in U.$

\end{proof}

We can now prove part (\ref{I:lattices and stuff}) of Theorem \ref{T:main theorem restated}.

\begin{proof}[Proof of Theorem \ref{T:main theorem restated} (\ref{I:lattices and stuff})]

Set  $F=\iota(\lambda_{*}(\mathcal{M}(\mathcal{S}_{\omega}(X,G))))$. By Proposition \ref{P:fourier fo sho} (\ref{I:get these projections yo}), we know that $F\subseteq \Proj(L^{2}(Z_{\omega}\times X)).$ By Proposition \ref{P:fourier fo sho} (\ref{I:order-reversing stuff}), it suffices to show that $F$ is complete meet-lattice. By  Proposition \ref{P:fourier fo sho} (\ref{I:order-reversing stuff}) and Theorem \ref{T:main theorem restated} (\ref{I:join of subgroups}), it follows that $F$ is a meet-lattice and that the meet in $F$ agrees with the meet in $\Proj(L^{2}(Z_{\omega}\times X))$. To see that it is complete, let $(P_{\alpha})_{\alpha\in I}$ be a family of elements of $F.$ For every finite $A\subseteq I,$ set $P_{A}=\bigwedge_{\alpha\in A}P_{\alpha}.$ Then $P_{A}\in F$ since $F$ is a meet-lattice. We make $(P_{A})_{A}$ a net by ordering the finite subsets of $I$ by inclusion.  It is then easy to see that
\[\bigwedge_{\alpha\in I}P_{\alpha}=SOT-\lim_{A}P_{A},\]
where the meet on the left hand side of the above equation is taken inside $\Proj(L^{2}(Z_{\omega}\times X)).$
By Proposition \ref{P:fourier fo sho} and Proposition \ref{P:basic facts about SOT} (\ref{I:where for are thou homeo?}) it follows that $F$ is strong operator topology closed. We thus have that $\bigwedge_{\alpha\in I}P_{\alpha}\in F,$ and so we have shown that $F$ is a complete meet-lattice.

\end{proof}

\section{Computations in the case of non-strongly sofic algebraic actions}\label{S:Not SS actions}

In this case, we mention a few examples of cases where the maximal element of $\mathcal{S}_{\omega}(G\actson X)$ is not $X,$ as well as how to compute what the maximal element is in some of these cases. Some notions from unitary representation theory will be helpful.

\begin{defn}
Let $G$ be a countable, discrete group and $\pi\colon G\to \mathcal{U}(\mathcal{H}),$ $\rho\colon G\to \mathcal{U}(\mathcal{K})$ two Hilbert space representations. We say that \emph{$\pi$ is weakly contained in $\rho$}, and write $\pi\preceq \rho,$ if
\[\left\|\sum_{g\in G}a_{g}\pi(g)\right\|\leq \left\|\sum_{g\in G}a_{g}\rho(g)\right\|\mbox{ for all $a=\sum_{g\in G}a_{g}g\in \C(G)$,}\]

We say that $\pi$ has \emph{spectral gap} if there is a finite $F\subseteq G$ and a $C>0$ so that
\[\|\xi\|\leq C\sum_{g\in F}\|\pi(g)\xi-\xi\|\mbox{ for all $\xi\in \mathcal{H}.$}\]
It can be shown that this is the same as saying that the trivial representation $1\colon G\to S^{1}$ given by $1(g)=1$ is not weakly contained in $\pi$ (see \cite[Theorem 4.4]{BHV} and \cite[Proposition F.1.7]{BHV}).

Given two unitary representation $\pi_{j}\colon G\to \mathcal{U}(\mathcal{H}_{j}),j=1,2$ we define $\pi_{1}\otimes \pi_{2}\colon G\to \mathcal{U}(\mathcal{H}_{1}\otimes \mathcal{H}_{2})$ by $(\pi_{1}\otimes \pi_{2})(g)(\xi_{1}\otimes \xi_{2})=\pi_{1}(g)\xi_{1}\otimes \pi_{2}(g)\xi_{2}$ for $\xi_{j}\in \mathcal{H}_{j},j=1,2.$

\end{defn}

A particular example of interest is the left regular representation $\lambda\colon G\to \mathcal{U}(\ell^{2}(G))$ by $(\lambda(g)\xi)(x)=\xi(g^{-1}x)$ for all $g,x\in G,\xi\in \ell^{2}(G).$ In this case $\lambda$ has spectral gap if and only if $G$ is nonamenable.

\begin{defn}
Let $(X,\mu)$ be a Lebesgue probability space, $G$ a countable discrete group and $G\actson (X,\mu)$ a probability measure-preserving action. We let $L^{2}_{0}(X,\mu)=\left\{f\in L^{2}(X):\int f\,d\mu=0\right\},$ and define the \emph{Koopman representation} $\rho\colon G\to \mathcal{U}(L^{2}_{0}(X))$ by $(\rho(g)f)(x)=f(g^{-1}x).$ We say that $G\actson (X,\mu)$ has \emph{spectral gap} if its Koopman representation has spectral gap. We say that $G\actson (X,\mu)$ has \emph{stable spectral gap} if $G\actson (X\times X,\mu\otimes \mu)$ has spectral gap. It is trivial that an action with stable spectral gap is weakly mixing.
\end{defn}

Each of our examples in this section will be for actions of free groups. In this case, we use the recent results of Bordenave-Collins (see \cite{CollinsBordenave}) which imply the following result.

\begin{prop}\label{P:Corollary of BC}
Let $\F_{r}=\ip{a_{1},\cdots,a_{r}}$ be a free group of rank $r>1.$ Then there is a sequence $\Omega_{n}\subseteq \Sym(n)^{r}$ with the following properties:
\begin{enumerate}[(a)]
\item $\frac{1}{n!^{r}}|\Omega_{n}|\to_{n\to\infty}1$, \label{I:large size permutation}
\item for every sequence $(\sigma_{n})_{n}\in \prod_{n}\Omega_{n},$ if we let $\pi_{n}\colon \F_{r}\to \Sym(n)$ be the homomorphism $\pi_{n}(a_{i})=\sigma_{n,i},$ then $(\pi_{n})_{n}$ is a sofic approximation, \label{I:random sofic approximation}
\item for every free ultrafilter $\omega$, the Koopman representation of $\F_{r}\actson (Z_{\omega},u_{\omega})$ is weakly contained in the left regular representation. In particular $\F_{r}\actson (Z_{\omega},u_{\omega})$ is weak mixing has stable spectral gap. \label{I:stable spectral gap random}
\item for every free ultrafilter $\omega,$ we have that $\F_{r,\omega}'$ is trivial. \label{I:random trivial centralizer}
\end{enumerate}
\end{prop}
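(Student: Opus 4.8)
The statement to prove is Proposition \ref{P:Corollary of BC}, which packages several consequences of the Bordenave--Collins strong convergence theorem for random permutation representations of the free group $\F_r$. The plan is to take as a black box the main result of \cite{CollinsBordenave}: for independent uniformly random permutations $\sigma_{n,1},\dots,\sigma_{n,r}\in \Sym(n)$, the associated representation on $\ell^2(\{1,\dots,n\})\ominus \C 1$ (the non-trivial part of the permutation representation) converges strongly, with probability tending to $1$, to the left regular representation $\lambda$ of $\F_r$. That is, for every self-adjoint $a=\sum_{g}a_g g\in \C(\F_r)$, the operator norm of $\sum_g a_g \pi_n(g)$ restricted to the mean-zero subspace converges to $\|\sum_g a_g\lambda(g)\|_{C^*_\lambda(\F_r)}$. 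I would define $\Omega_n$ to be the set of $r$-tuples in $\Sym(n)^r$ for which a suitable finite list of approximate-soficity and approximate-norm conditions hold to within $1/n$; the whole content is then to verify each of (a)--(d) from strong convergence plus a Borel--Cantelli / high-probability bookkeeping argument.

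The steps, in order, would be: \textbf{(a)} Since each of the defining conditions holds with probability $\to 1$ (soficity conditions by a standard moment/derangement computation for uniform random permutations — the probability that a fixed nontrivial reduced word has more than $\varepsilon n$ fixed points is exponentially small — and the norm condition by the Bordenave--Collins theorem itself), their intersection $\Omega_n$ has $|\Omega_n|/n!^{\,r}\to 1$. \textbf{(b)} For $(\sigma_n)_n\in\prod_n\Omega_n$, the defining soficity estimates force $d_{\Hamm}(\pi_n(g),\id)\to 0$ only at $g=e$ and $\pi_n(gh)=\pi_n(g)\pi_n(h)$ exactly (since $\pi_n$ is a genuine homomorphism on generators), so $(\pi_n)_n$ is a sofic approximation; the key point is asymptotic freeness of fixed-point counts, i.e.\ that nontrivial reduced words have $o(n)$ fixed points, which is exactly what the approximate condition guarantees along $\Omega_n$. \textbf{(c)} This is the heart. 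The Koopman representation of $\F_r\actson(Z_\omega,u_\omega)$ acts on $L^2_0(Z_\omega)$; using Proposition \ref{P:basics compact Loeb}(iv) to compute inner products as ultralimits of normalized traces, I would show that for $a\in\C(\F_r)$ the norm of $\sum_g a_g\,\rho_{\mathrm{Koop}}(g)$ on $L^2_0(Z_\omega)$ is bounded by $\lim_{n\to\omega}\|\sum_g a_g\pi_n(g)\|_{\ell^2_0}$, which by the norm condition along $\Omega_n$ equals $\|\sum_g a_g\lambda(g)\|$. This gives weak containment in $\lambda$. Since $\F_r$ is nonamenable, $\lambda$ has spectral gap, and weak containment in $\lambda$ of $\rho\otimes\overline\rho$ (which is again weakly contained in $\lambda$ because $\lambda\otimes\pi\preceq\lambda^{\oplus\dim}$ for any $\pi$) yields stable spectral gap, hence weak mixing. \textbf{(d)} An element of the centralizer $\F_{r,\omega}'$ is represented by a sequence $(\tau_n)_n$ asymptotically commuting with each $\pi_n(a_i)$; I would argue that asymptotic commutation with strongly-convergent-to-$\lambda$ permutations forces $\tau_n$ to be asymptotically trivial, using that the relative commutant of $\lambda(\F_r)''=L(\F_r)$ inside the tracial ultraproduct is trivial because $L(\F_r)$ is a factor with the right approximation properties — concretely, ergodicity of the centralizer action (equivalently triviality of $\F_{r,\omega}'$'s action on invariant sets) follows from the spectral gap established in (c).

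The main obstacle I anticipate is step (c), specifically the passage from the finite-dimensional strong convergence statement of Bordenave--Collins to a clean statement about the Koopman representation on the Loeb space $L^2_0(Z_\omega)$. One must be careful that the ultralimit of the mean-zero subspaces $\ell^2_0(\{1,\dots,d_k\})$ assembles correctly into $L^2_0(Z_\omega)$ and that the constant functions (the part killed in $\ell^2_0$) correspond exactly to the $G$-invariant constants on $Z_\omega$; the identification uses Proposition \ref{P:basics compact Loeb}(iv) to match $\frac{1}{d_k}\sum_j$ with $\int\,du_\omega$, but verifying that operator-norm bounds survive the ultraproduct (rather than merely $2$-norm bounds on individual vectors) requires noting that the bound is uniform in the test word and taking the ultralimit of the sup over a countable dense set of $a\in\C(\F_r)$. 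Steps (a), (b), (d) are essentially standard once (c) is in hand — (d) in particular should follow formally from spectral gap, so the real technical weight sits in correctly invoking and transferring the Bordenave--Collins estimate.
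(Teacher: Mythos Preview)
Your outline for (a), (b), and (c) is essentially what the paper does: intersect a high-probability soficity set (citing \cite{Nica}) with a high-probability strong-convergence set from Bordenave--Collins, then for (c) test the Koopman norm on $L^{\infty}(Z_\omega)$ vectors, pass to finite approximants via Proposition~\ref{P:basics compact Loeb}, use density, and finish stable spectral gap by Fell absorption. Your worry about the ultraproduct passage is well-placed but resolves exactly as you suggest.

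The gap is in (d). Your claim that ``(d) in particular should follow formally from spectral gap'' is not correct, and neither of your two sketched routes works as stated. Spectral gap (even stable spectral gap) of $\F_r\actson(Z_\omega,u_\omega)$ controls $\F_r$-almost-invariant vectors in $L^2_0(Z_\omega)$; triviality of $\F_{r,\omega}'$ is a statement about permutations $(\tau_n)_n$ that asymptotically \emph{commute} with $\pi_n(\F_r)$. These live in different representation spaces: the commutation condition $d_{\Hamm}(\pi_n(g)\tau_n,\tau_n\pi_n(g))\to 0$ is exactly $\|\Ad(\pi_n(g))\tau_n-\tau_n\|_2\to 0$ in the Hilbert--Schmidt norm on $M_n(\C)$, so what you need is spectral gap for the \emph{adjoint} representation $\Ad\circ\pi_n$ on $M_n(\C)\ominus(\C\id+\C\mathcal{J})$, not for $\pi_n$ on $\ell^2_0(n)$. (An action can have stable spectral gap yet admit many commuting automorphisms --- think Bernoulli shifts --- so there is no formal implication.) Your von Neumann algebra gesture toward the relative commutant of $L(\F_r)$ is also not on point: the centralizer here consists of permutations in a metric ultraproduct of symmetric groups, not elements of a tracial matrix ultraproduct, and the translation is not automatic.

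What the paper actually does for (d) is invoke a \emph{second} Bordenave--Collins input: strong convergence of the tensor-square representation $\pi_n\otimes\pi_n$ on $\ell^2(\{1,\dots,n\}^2)\ominus(\C 1+\C J)$ to $\lambda$. Via an explicit $\Sym(n)$-equivariant unitary, this space is identified with $M_n(\C)\ominus(\C\id+\C\mathcal{J})$ carrying the adjoint action. Strong convergence then gives $\lim_{n\to\omega}\bigl\|\frac{1}{r}\sum_j\Ad(\pi_n(a_j))\big|_{\mathcal{K}_0}\bigr\|=\bigl\|\frac{1}{r}\sum_j\lambda(a_j)\bigr\|<1$, and a direct computation with the projection of $\tau_n$ onto $\mathcal{K}_0$ forces $d_{\Hamm}(\tau_n,\id)\to 0$. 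You should add the tensor-product strong convergence to your list of defining conditions for $\Omega_n$ and run the adjoint-representation argument explicitly.
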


\begin{proof}
It is well-known (see \cite{Nica}) that we may find a sequence $\Omega_{n,0}\subseteq\Sym(n)^{r}$ with $\frac{1}{n!^{r}}|\Omega_{n,0}|\to 1$ so that for every sequence $(\sigma_{n})_{n}\in \prod_{n}\Omega_{n,0}$ one has that $(\pi_{n})_{n}$ as defined in (\ref{I:random sofic approximation}) is a sofic approximation.
View $\Sym(n)\subseteq B(\ell^{2}(n))$ associating a permutation $\tau$ with the operator $(\tau f)(j)=f(\tau^{-1}(j)).$ Note that the operators in $\Sym(n)$ leave $\C1,$ invariant, and so we may use them to define unitary operators $\tau\big|_{\ell^{2}_{n}(0)}$ on $\ell^{2}_{0}(n)=\left\{f\in \ell^{2}(n):\sum_{j=1}^{n}f(j)=0\right\},$ as $\ell^{2}_{0}(n)$ is the orthogonal complement of $\C1$ in $\ell^{2}(n).$ Given $\tau_{1},\cdots,\tau_{k}\in \Sym(n)$ and $a_{1},\cdots,a_{k}\in \C$ we may thus view $\sum_{j=1}^{k}a_{k}\tau_{k}\in B(\ell^{2}(n)),$ with similar comments for $B(\ell^{2}_{0}(n)).$ We endow $B(\ell^{2}(n)),B(\ell^{2}_{0}(n))$ with the operator norm.

For $\tau\in \Sym(n)$ we define $\tau\otimes \tau\in \Sym(\{1,\cdots,n\}\times \{1,\cdots,n\})$ by $(\tau\otimes \tau)(j,k)=(\tau(j),\tau(k))$ for $1\leq j,k\leq n.$ As above, these operators leave $\C1$ invariant, they also leave $\C J$ invariant where $J\colon\{1,\cdots,n\}^{2}\to \{0,1\}$ is given by $J(j,k)=\frac{n-1}{n}\delta_{j\ne k}.$ So we may restrict them to $\mathcal{H},$ the orthogonal complement of $\C 1+\C J$ in $\ell^{2}_{0}(\{1,\cdots,n\}^{2}).$ For concreteness, we note that
\[\mathcal{H}=\left\{f:\sum_{j=1}^{n}f(j,j)=0,\sum_{j\ne k}f(j,k)=0.\right\}.\]

Lastly, we define the left regular representation $\lambda\colon \F_{r}\to \mathcal{U}(\ell^{2}(\F_{r}))$ by
\[(\lambda(g)\xi)(x)=\xi(g^{-1}x)\mbox{ for $g\in \F_{r},\xi\in \ell^{2}(\F_{r})$}\]

By \cite[Theorem 3]{CollinsBordenave} and \cite[Theorem 5]{CollinsBordenave} we may find a sequence $\Omega_{n,1}\subseteq \Sym(n)^{r}$ with $\frac{1}{n!^{r}}|\Omega_{n,1}|\to 1$ and so that every $(\sigma_{n})_{n}\in \prod_{n}\Omega_{n,1}$ satisfies the following properties:
\begin{itemize}
    \item for every $a=\sum_{g\in \F_{r}}a_{g}g\in \C(\F_{r})$ we have that $\lim_{n\to\infty}\left\|\sum_{g\in \F_{r}}a_{g}\pi_{n}(g)\big|_{\ell^{2}_{0}(n)}\right\|=\left\|\sum_{g\in G}a_{g}\lambda(g)\right\|$\\
    \item for every $a=\sum_{g\in \F_{r}}a_{g}g\in \C(\F_{r})$ we have that
    \[\left\|\sum_{g\in \F_{r}}a_{g}(\pi_{n}(g)\otimes \pi_{n}(g))\big|_{\mathcal{H}}\right\|=\left\|\sum_{g\in \F_{r}}a_{g}\lambda(g)\right\|.\]
\end{itemize}
Set $\Omega_{n}=\Omega_{n,0}\cap \Omega_{n,1}.$
Properties (\ref{I:large size permutation}), (\ref{I:random sofic approximation}) are true by construction. We focus on proving the other properties. Fix a choice of $(\sigma_{n})_{n}\in \prod_{n}\Omega_{n}.$

Proof of (\ref{I:stable spectral gap random}):
Let $\xi\in L^{\infty}(Z_{\omega},u_{\omega}),$ with $\int \xi\,du_{\omega}=0.$ By Proposition \ref{P:loeb basics yo} applied to $\{z\in \C:|z|\leq \|\xi\|_{\infty}\},$ we may write $\xi=(\xi_{n})_{n\to\omega}$ where $\|\xi_{n}\|_{\infty}\leq \|\xi\|_{\infty}.$ It is easy to see that $\xi=(\xi_{n}-\left(\int \xi_{n}\,du_{n}\right)1)_{n\to\omega}$ so we may assume that $\int \xi_{n},du_{\omega}(z)=0$ (at the cost of now assuming that  $\|\xi_{n}\|_{\infty}\leq 2\|\xi\|_{\infty}$).

Then for every $a\in \C(\F_{r}),$ we have by Proposition \ref{P:computing integrals in ultraproducts}  that:
\begin{align*}
    \left\|\sum_{g\in F_{r}}a_{g}\pi_{\omega}(g)\xi\right\|_{2}=\lim_{n\to\omega}\left\|\sum_{g\in\F_{r}}a_{g}\pi_{n}(g)\xi_{n}\right\|_{\ell^{2}(u_{n})}&\leq \lim_{n\to\omega}\left\|\sum_{g\in \F_{r}}a_{g}\pi_{n}(g)\big|_{\ell^{2}_{0}(n)}\right\|\|\xi_{n}\|_{\ell^{2}(u_{n})}\\
    &=\left\|\sum_{g\in \F_{r}}a_{g}\lambda(g)\right\|\lim_{n\to\omega}\|\xi_{n}\|_{\ell^{2}(u_{n})},\\
    &=\left\|\sum_{g\in \F_{r}}a_{g}\lambda(g)\right\|\|\xi\|_{2}.
\end{align*}
By density of $L^{\infty}(Z_{\omega},u_{\omega})$ inside $L^{2}(Z_{\omega},u_{\omega}):$
\[\left\|\sum_{g\in \F_{r}}a_{g}\pi_{\omega}(g)\big|_{L^{2}_{0}(Z_{\omega})}\right\|_{B(L^{2}_{0}(Z_{\omega}))}\leq \left\|\sum_{g\in \F_{r}}a_{g}\lambda(g)\right\|_{B(\ell^{2}(\F_{r})},\]
for every $a=\sum_{g\in \F_{r}}a_{g}\lambda(g)\in \C(\F_{r}).$ By definition this means that the Koopman representation of $\F_{r}\actson (Z_{\omega},u_{\omega})$ is weakly contained in the left regular representation.

To see why this implies stable spectral gap, let $\rho$ be the Koopman representation of $\F_{r}\actson (Z_{\omega},u_{\omega}).$ Then $\rho\preceq \lambda,$ so $\rho\otimes \rho\preceq \lambda\otimes \lambda\cong \lambda^{\infty},$  by Fell's absorption principle. The Koopman representation of $\F_{r}$ on $(Z_{\omega}\times Z_{\omega},u_{\omega}\otimes u_{\omega})$ is isomorphic to $(\rho\oplus \rho)\oplus (\rho\otimes \rho),$ and is thus weakly contained in $\lambda^{\oplus \infty}.$ By nonamenability of $\F_{r},$ the trivial representation of $\F_{r}$ is not weakly contained in the left regular representation of $\F_{r}.$ Since the Koopman representation of $\F_{r}\actson (Z_{\omega}\times Z_{\omega},u_{\omega}\times u_{\omega}\otimes u_{\omega})$ is weakly contained in the left regular representation of $\F_{r},$ it thus follows that it does not weakly contained the trivial representation either. This means that the Koopman representation of $\F_{r}\actson (Z_{\omega}\times Z_{\omega},u_{\omega}\otimes u_{\omega})$ has spectral gap, and so $\F_{r}\actson (Z_{\omega},u_{\omega})$ has stable spectral gap. It is clear that stable spectral  gap implies weak mixing.

(\ref{I:random trivial centralizer}):
Let $\mathcal{K}$ be $M_{n}(\C)$ endowed with the trace inner product:
\[\ip{A,B}=\frac{1}{n}\Tr(B^{*}A),\]
and let $\|A\|_{2}$ be the resulting Hilbert space norm. We will use $\tr=\frac{1}{N}\Tr.$ Observe that we have a representation $\Ad$ of $\mathcal{U}(n)$ on $\mathcal{K}$ by $\Ad(U)V=UVU^{*}.$ This representation leaves $\C\id$ invariant, it also leaves $\C\mathcal{J}$ invariant, where $\mathcal{J}_{jk}=\frac{1}{\sqrt{n-1}}\delta_{j\ne k}.$ So we may restrict $\Ad$ to obtain a unitary representation on $\mathcal{K}_{0}=\mathcal{H}\ominus (\C\id +\C J).$
 Consider the representation of $\Sym(n)$ on $\mathcal{K}_{0}$ obtained by $\Ad\big|_{S_{n}}.$ Define $W\colon \mathcal{K}\to \ell^{2}(\{1,\cdots,n\}^{2})$ by
\[W(A)(j,k)=\sqrt{n}A_{j,k}.\]
It is direct to show that $W$ is a unitary, and is equivariant. Further $W(\C\id)=\C\id, W(\mathcal{J})=J,$ so $W$ induces a $\Sym(n)$-equivariant unitary $\mathcal{K}_{0}\to\mathcal{H}.$ Thus by our choice of $\Omega_{n,1}$ it follows that
\[\lim_{n\to\omega}\left\|\sum_{g\in \F_{r}}a_{g}\Ad(\pi_{n}(g))\big|_{\mathcal{K}_{0}}\right\|=\left\|\sum_{g\in \F_{r}}a_{g}\lambda(g)\right\|\]

Now suppose that $\tau=(\tau_{n})_{n\to\omega}\in \F_{r,\omega}'.$ By direct computation
\[\|\Ad(\pi_{n}(g))\tau_{n}-\tau_{n}\|_{2}^{2}=d_{\Hamm}(\pi_{n}(g)\tau_{n},\tau_{n}\pi_{n}(g))\to_{n\to\omega}0,\]
so it follows that $\tau_{n}$ is asymptotically fixed under $\Ad(\sigma_{n}(\F_{r})).$ Set $T_{n}=P_{\mathcal{K}_{0}}(\tau_{n}),$ where $P_{\mathcal{K}_{0}}$ denotes orthogonal projection. Since $P_{\mathcal{K}_{0}}$ commutes with $\Ad(\Sym(n)),$ it follows that
\[\|\Ad(\pi_{n}(g))T_{n}-T_{n}\|_{2}\to 0.\]
But then:
\begin{align*}
    0=\lim_{n\to\omega}\frac{1}{r}\sum_{j=1}^{r}\|\Ad(\pi_{n}(a_{j}))(T_{n})-T_{n}\|_{2}^{2}&=\lim_{n\to\omega}2\|T_{n}\|_{2}^{2}-2\Re(\frac{1}{r}\sum_{j=1}^{r}\tr(T_{n}^{*}\Ad(\pi_{n}(a_{j}))(T_{n}))\\
    &=2\lim_{n\to\omega}\|T_{n}\|_{2}^{2}-2\ip{\left(\frac{1}{r}\sum_{j=1}^{r}\Ad(\pi_{n}(a_{j}))T_{n}\right),T_{n}}\\
    &\geq 2\lim_{n\to\omega}\|T_{n}\|_{2}^{2}-\left\|\frac{1}{r}\sum_{j=1}^{r}\Ad(\pi_{n}(a_{j})\right\|\|T_{n}\|_{2}^{2}\\
    &=2\left(1-\left\|\frac{1}{r}\sum_{j=1}^{r}\lambda(a_{j})\right\|\right)\lim_{n\to\omega}\|T_{n}\|_{2}^{2}
\end{align*}
By nonamenability of $\F_{r}$ we have that $\left\|\frac{1}{r}\sum_{j=1}^{r}\lambda(a_{j})\right\|<1,$ so it follows that $\lim_{n\to\omega}\|T_{n}\|_{2}^{2}=0.$ By Pythagoras' theorem,
\[\lim_{n\to\omega} d_{\Hamm}(\tau_{n},1)=\lim_{n\to\omega}\|\tau_{n}-1\|_{2}^{2}=\lim_{n\to\omega}\|P_{\C\mathcal{J}}(\tau_{n})\|_{2}^{2}+\|P_{\mathcal{K}_{0}}(\tau_{n})\|_{2}^{2}=\lim_{n\to\omega}\|P_{\C\mathcal{J}}(\tau_{n})\|_{2}^{2},\]
the last line following from the fact from the definition of $T_{n}.$ But
\[\lim_{n\to\omega}\|P_{\C\mathcal{J}}(\tau_{n})\|_{2}^{2}=\lim_{n\to\omega}|\tr(\tau_{n}^{-1}J)|^{2}=\lim_{n\to\omega}\frac{1}{n\sqrt{n-1}}\sum_{j=1}^{n}\sum_{k\ne j}(\tau_{n})_{j,k}.\]
Since $\tau_{n}$ is a permutation, we have that
\[\sum_{k}(\tau_{n})_{j,k}=1.\]
Thus,
\[\lim_{n\to\omega}\frac{1}{n\sqrt{n-1}}\sum_{j=1}^{n}\sum_{k\ne j}(\tau_{n})_{j,k}\leq \lim_{n\to\omega}\frac{1}{\sqrt{n-1}}=0.\]
So we have shown that $\lim_{n\to\omega} d_{\Hamm}(\tau_{n},1)=1,$ and thus $(\tau_{n})_{n\to\omega}=\id.$

\end{proof}

We will not actually use part (\ref{I:random trivial centralizer}), but it is interesting in light of our Corollary \ref{L:ergodicity on Loeb space} stating that the maximal element of $\mathcal{S}_{\omega}(G\actson X)$ is in $\Sub(X),$ when $G_{\omega}'\actson (Z_{\omega},u_{\omega})$ is ergodic. In all of the following examples, the maximal element of $\mathcal{S}_{\omega}(G\actson X)$ which be much smaller than one expects and in some cases will not be an element of $\Sub(X).$ Of course, by our discussion a necessary condition for the maximal element of $\mathcal{S}_{\omega}(G\actson X)$ to not be in $\Sub(X)$ is that $G_{\omega}'\actson (Z_{\omega},u_{\omega})$ is not ergodic. In our examples, this is certainly true since $G_{\omega}'$ is in fact trivial!

\begin{example}\label{E:finite example}
Consider an algebraic action of the free  group $\F_{r}$ with $r>1$ on a finite group $X.$  By Proposition \ref{P:Corollary of BC} we may find a sofic approximation $\sigma_{n}\colon \F_{r}\to \Sym(n)$ so that for every free ultrafilter $\omega,$ we have that $\F_{r}\actson (Z_{\omega},u_{\omega})$ is weak mixing. If $\omega$ is a free ultrafilter on the natural numbers, then if $\nu=(\theta)_{*}(u_{\omega}),$ we know that $\F_{r}\actson (X,\nu)$ is weak mixing. Since $X$ is finite, this forces $\nu$ to be a point mass. So $\theta(Z_{\omega})\subseteq \Fix_{\F_{r}}(X)$ almost surely. So the maximal subgroup of $\mathcal{S}_{\omega}(\F_{r}\actson X)$ is  $\Fix_{\F_{r}}(X).$ In this example, since $\omega$ was arbitrary, we can even say that $\Fix_{\F_{r}}(X)$ is the largest subgroup of $X$ so that there is a sequence $\mu_{n}\in \Prob(X^{n})$ which is asymptotically supported on topological microstates and with $\mu_{n}\to^{lw^{*}}m_{\Fix_{G}(X)}$ as $n\to\infty$ (and not just as $n$ approaches some ultrafilter).
\end{example}

\begin{example}\label{E:coinduced example}
The previous example was not ergodic. We can fix this as follows. Let $r$ be an integer at least $3,$ and let  $a_{1},\cdots,a_{r}$ be the free generators of $\F_{r}.$ Again fix a sofic approximation $\sigma_{n}\colon \F_{r}\to \Sym(n)$ so that for every free ultrafilter $\omega,$ we have that $\F_{r}\actson (Z_{\omega},u_{\omega})$ has Koopman representation which is weakly contained in the left regular representation. Take an $s\in \{2,\cdots, r\}$ and $\F_{s}\actson X$ where $X$ is a finite group and we regard $\F_{s}=\ip{a_{1},\cdots,a_{s}}\leq \F_{r}.$
Let $\F_{r}\actson \widetilde{X}$ be the coinduced action. This action is defined as follows:
\[\widetilde{X}=\{x\in X^{\F_{r}}:x(gh)=h^{-1}x(g)\mbox{ for all $g\in \F_{r},h\in \F_{s}$}\},\]
and $\F_{r}\actson \widetilde{X}$ by left shifts.
Observe that $\widetilde{X}$ is a compact subgroup of $X^{\F_{r}},$ so this is still an algebraic action.
Since $\F_{s}$ has infinite index in $\F_{r},$ this action is ergodic with respect to $m_{\widetilde{X}}$. Fix a free ultrafilter $\omega,$ and let $\Theta\colon Z_{\omega}\to \widetilde{X}$ be a topological microstate. Let $\Phi\colon \widetilde{X}\to X$ be given by $\Phi(x)=x(1).$ Then $\Phi\circ \Theta$ is $\F_{s}$-equivariant, and thus as in the first example we know that $(\Phi\circ \Theta)_{*}(u_{\omega})\subseteq \Fix_{\F_{s}}(X)$ almost surely. By $\F_{r}$-equivariance, we know that $\Theta(Z_{\omega})\subseteq (\Fix_{\F_{s}}(X))^{\F_{r}}\cap \widetilde{X}$ almost surely.
In this case, computation of the maximal element $Y$ of $\mathcal{S}_{\omega}(\F_{r}\actson \widetilde{X})$ is elusive. However, we can say that if $\Fix_{\F_{s}}(X)=\{1\},$ then  $Y\ne (\Fix_{\F_{s}}(X))^{\F_{r}}\cap \widetilde{X}.$ Indeed, suppose that   $Y=(\Fix_{\F_{s}}(X))^{\F_{r}}\cap \widetilde{X}.$ Then $\F_{r}\actson (Y,m_{Y})$ is isomorphic to the generalized Bernoulli shift $\F_{r}\actson (\Fix_{\F_{s}}(X))^{\F_{r}/\F_{s}}$ and is thus ergodic. By assumption, there is a sequence of measures $\mu_{n}\in \Prob(\widetilde{X}^{n})$ which are asymptotically supported on topological microstates and $\mu_{n}\to_{n\to\omega}^{lw^{*}}m_{Y}.$ By ergodicity, we have that $\mu_{n}$ are asymptotically supported on measure microstates, in particular $\F_{r}\actson (Y,m_{Y})$ is sofic with respect to $(\sigma_{n})_{n},\omega.$ By definition, this means that $\F_{r}\actson (Y,m_{Y})$ is a factor of $\F_{r}\actson (Z_{\omega},u_{\omega}).$ Thus the Koopman representation of $\F_{r}\actson (Y,m_{Y})$ would be weakly contained in the left regular representation of $\F_{r}.$ By restriction, this implies that the Koopman representation of $\F_{s}\actson (Y,m_{Y})$ is weakly contained in the left regular representation of $\F_{s}.$ But this action has $\F_{s}\actson (\Fix_{\F_{s}}(X),m_{\Fix_{\F_{s}}(X)})$ as a factor (via the map $\Phi$).  Since $|\Fix_{\F_{s}}(X)|\geq 2,$ this implies that $\F_{s}\actson (Y,m_{Y})$ is not ergodic. So the Koopman representation of $\F_{s}\actson (Y,m_{Y})$ contains the trivial representation, and we already saw it was weakly contained in the left regular representation. Thus the trivial representation of $\F_{s}$ is weakly contained in the left regular representation of $\F_{s},$ contradicting nonamenability of $\F_{s}.$
\end{example}

\begin{example}\label{E:algebraic austin} We investigate an algebraic version of \cite[Example 3.5]{AustinAdd}. So let $\F_{4}=\ip{a,b,c,d},$ and view $\F_{2}=\{a,b\}.$ Consider the trivial action $\F_{2}\actson (\Z/2\Z).$ As in Example \ref{E:coinduced example},   consider the coinduced action $\F_{4}\actson X.$ Constructed a random sofic approximation  $\sigma_{k}\colon \F_{4}\to \Sym(2k)$ as follows. Let $\sigma_{k,c},\sigma_{k,d}$ be two permutations of $\Sym(2k)$ chosen independently at random with respect to the uniform probability measures on $\Sym(2k).$  Let $U_{k}=\{1,\cdots,k\},V_{k}=\{k+1,\cdots,2k\}.$ Let $\sigma_{k,1}^{a},\sigma_{k,1}^{b}\in \Sym(U_{k}),$ and $\sigma_{k,2}^{a},\sigma_{k,2}^{b}\in \Sym(V_{k})$ be chosen independently at random with respect to the uniform probability measures on $\Sym(U_{k}),\Sym(V_{k})$ and set $\sigma_{k,a}=\sigma_{k,2}^{a}\cup \sigma_{k,2}^{a},\sigma_{k,b}=\sigma_{k,b}^{1}\cup \sigma_{k,b}^{2}.$ Let $\sigma_{k}$ be the unique homomorphism of $\F_{4}$ such that $\sigma_{k}(a)=\sigma_{k,a}$, $\sigma_{k}(b)=\sigma_{k,b},$ $\sigma_{k}(c)=\sigma_{k,c},$ $\sigma_{k}(d)=\sigma_{k,d}.$ Then $(\sigma_{k})_{k}$ is a sofic approximation with high probability. Set $\widetilde{\F}_{2}=\ip{a,b}.$ With high probability this sofic approximation has the property that $\F_{2}\actson (Z_{\omega},u_{\omega})$ has an ergodic decomposition into two pieces given by $U=(U_{k})_{k\to\omega},V=(V_{k})_{k\to\omega},$ and by Proposition \ref{P:Corollary of BC} furthermore has the property that $\widetilde{\F}_{2}\actson (Z_{\omega},u_{\omega})$ is ergodic. So we may fix a choice of $(\sigma_{n})_{n}$ which is a sofic approximation, and which has the property that $\F_{2}\actson (Z_{\omega},u_{\omega})$ has an ergodic decomposition into two pieces $U,V$ and so $\widetilde{\F}_{2}\actson (Z_{\omega},u_{\omega})$ is ergodic.

Note that $U,V$ give topological microstates $\Phi_{U}\colon Z_{\omega}\to \Z/2\Z,\Phi_{V}\colon Z_{\omega}\to \Z/2\Z$  for $\F_{2}\actson \Z/2\Z$  by
\[\Phi_{U}(z)=\begin{cases}
0&, \textnormal {if $z\notin U$}\\
1+2\Z&, \textnormal{if $z\in U$},
\end{cases}\]
\[\Phi_{V}(z)=\begin{cases}
0&, \textnormal{ if $z\notin V$}\\
1+2\Z&,\textnormal{ if $z\in V.$}
\end{cases}.\]
These naturally give rise to topological microstates  $\Theta_{U}\colon Z_{\omega}\to X,\Phi_{V}\colon Z_{\omega}\to X$ by
\[\Theta_{U}(z)(g)=\Phi_{U}(g^{-1}z),\,\,\, \Theta_{V}(z)(g)=\Phi_{V}(g^{-1}z).\]
It is straightforward to check that because $\Theta_{U},\Theta_{V}$ are $\F_{2}$-equivariant, that these maps are indeed in $\widetilde{X}$ (i.e. that $\Theta_{U}(z)(gh)=h^{-1}\Theta_{U}(z)(g)$ for all $g\in \F_{4},h\in \F_{2},z\in Z_{\omega}$ and similarly for $\Theta_{V}$). We remark that what is going on here is the canonical adjunction property of co-induction in the topological setting: $\Meas_{\F_{2}}(Z_{\omega},\Z/2\Z)\cong \Meas_{\F_{4}}(Z_{\omega},X).$ We also have two other topological microstates $0\colon Z_{\omega}\to X,$ $1\colon Z_{\omega}\to X$ given by $0(z)(g)=0,$ $1(z)(g)=1+2\Z$ for all $g\in \F_{4},z\in Z_{\omega}.$ Observe that $\{0,1,\Theta_{U},\Theta_{V}\}$ is a group with respect to pointwise addition.

We claim that every topological microstate is almost surely equal to one of $0,1,\Theta_{U},\Theta_{V}.$ Suppose that $\Theta\colon Z_{\omega}\to X$ is a topological microstate. Let $\Phi\colon X\to \Z/2\Z$ be given by $\Phi(x)=x(1).$ Then $\Phi\circ \Theta$ is almost surely $\F_{2}$-equivariant, so $(\Phi\circ \Theta)^{-1}(\{0\})$ is almost surely $\F_{2}$-invariant. Since $\F_{2}\actson Z_{\omega}$ has an ergodic decomposition into two pieces given by the sets $U,V,$ this means that $(\Phi\circ \Theta)^{-1}(\{0\})$ is almost surely equal to one of: $U,V,Z_{\omega},\varnothing.$ From $\F_{4}$-equivariance of $\Theta,$ we see that $\Theta$ is almost surely equal to $\Theta_{U},\Theta_{V},0,1$ if  $(\Phi\circ \Theta)^{-1}(\{0\})$ is almost surely equal to $U,V,Z_{\omega},\varnothing$ respectively.

Thus every topological microstate is almost everywhere equal to one of $\{0,1,\Theta_{U},\Theta_{V}\}.$ So the maximal element $Y$ of $\mathcal{S}_{\omega}(G\actson X)$ is given by
\[Y(z)=\{0,1+2\Z,\Theta_{U}(z),\Theta_{V}(z)\},\]
where we abuse notation and regard $1+2\Z$ as the element of $X^{\F_{4}}$ whose value at $g$ is $1+2\Z$ for every $g.$ Since $\widetilde{\F}_{2}\actson Z_{\omega}$ is ergodic, it is easy to see that $|\{0,1+2\Z,\Theta_{U}(z),\Theta_{V}(z)\}|$ is almost surely equal to $4.$
 In this case it is clear that $Y(z)\ne X$ for almost every $z\in Z_{\omega}$ since $|Y(z)|=4$ for almost every $z\in Z_{\omega}$ and $\widetilde{X}$ is uncountable.

Moreover, in this example we see that $Y(z)$ genuinely depends upon $z.$ Indeed, by using ergodicity of $\widetilde{\F}_{2}\actson Z_{\omega},$ we can find a $g\in \widetilde{\F}_{2}$ so that
\[0<u_{\omega}(gU\cap U)<u_{\omega}(U).\]
Fix such a $g,$ and consider $z\in gU\cap U,\widetilde{z}\in U\cap g^{-1}U$ with $|\{0,1,\Theta_{U}(z),\Theta_{V}(z)\}|=4=|\{0,1,\Theta_{U}(\widetilde{z}),\Theta_{V}(\widetilde{z})\}|.$  We claim that $Y(z)\ne Y(\widetilde{z}).$ Indeed, $\Theta_{U}(z)\ne 0,1,$ since $|\{0,1,\Theta_{U}(z),\Theta_{V}(z)\}|=4.$ Moreover, $\Theta_{U}(z)\ne \Theta_{V}(\widetilde{z}),$ since they have different values at the identity coordinate. Lastly, $\Theta_{U}(z)(g)=\Phi_{U}(g^{-1}z)=1+2\Z,$ and $\Theta_{U}(\widetilde{z})(g)=\Phi_{U}(g^{-1}\widetilde{z})=0,$ so $\Theta_{U}(z)\ne \Theta_{U}(\widetilde{z}).$ So we have shown that $\Theta_{U}(z)\notin Y(\widetilde{z}),$ and thus we see that $Y(z)\ne \widetilde{Y}(z).$ Since $z\in gU\cap U,\widetilde{z}\in U\cap g^{-1}U$  are positive measure sets, and $|\{0,1,\Theta_{U}(z),\Theta_{V}(z)\}|=4$ for almost every $z,$ we thus see that $Y(z)$ is not almost surely constant.

\end{example}

We remark that each of Examples \ref{E:coinduced example}, \ref{E:algebraic austin} we do not have to use the results of Bordenave-Collins, and can simply use $f$-invariant entropy (as defined in \cite{Bowenfinvariant}) and earlier results of \cite{Fried}.

For instance, in Example \ref{E:coinduced example} the results of \cite{Fried} imply that a random sofic approximation $(\sigma_{k})_{k}$ of $\F_{r}$ acts ergodically on $(Z_{\omega},u_{\omega})$ for any free ultrafilter $\omega,$ and that in fact $\F_{s}$ acts ergodically on $(Z_{\omega},u_{\omega}).$ Since the $f$-invariant entropy $\F_{s}\actson (X,m_{X})$ is negative (obvious from the definition in \cite{Bowenfinvariant}), the argument in \cite[Proposition 6.9]{Me5} shows that we may find a $(\sigma_{k})_{k}$ so that $\F_{s}$ acts ergodically on $(Z_{\omega},u_{\omega})$ for every free ultrafilter $\omega,$ and so that $F_{s}\actson (X,m_{X})$ is not sofic with respect to $(\sigma_{k})_{k}.$ This is sufficient to complete part of the argument given in Example \ref{E:coinduced example}, at least to the point of showing that the maximal element of $\mathcal{S}_{\omega}(\F_{r}\actson \widetilde{X})$ is not all of $\widetilde{X}.$ Similar remarks apply to Example \ref{E:algebraic austin}. In fact, in this case we can already get away with finding some sofic approximation $(\sigma_{k})_{k}$ for which $h_{(\sigma_{k})_{k}}(\F_{2}\actson (\Z/2\Z,u_{\Z/2\Z}))=-\infty,$ and for which $\widetilde{\F}_{2}$ acts ergodically on $(Z_{\omega},u_{\omega})$ for any free ultrafilter $\omega,$ and these two properties can be shown by combining $f$-invariant entropy (following the argument in \cite[Proposition 6.9]{Me5}) with the results of \cite{Fried}.

\section{Discussion on the Use of Ultrafilters}

Since ultrafilters are not concrete and only exist via some abstract argument, it is likely that in order to apply our methods in the future we will need ultrafilter-free version of our results. In this section, we give ultrafilter versions of our main results  (deducing them from their ultrafilter versions). In the last subsection of this section, we outline one special case where we know how to remove the usage of ultrafilters from the proof. This is the case the group being acted on is abelian, \emph{and} where the sofic approximation has ergodic commutant in the sense of Definition \ref{D:ergodic commutant no ultrafilter}.
\subsection{Ultrafilter-Free versions of some of the results}\label{S:ultrafilter free versions}
%

We state an ultrafilter-free versions of Corollaries \ref{C:sick corollary  bro} \ref{C:simpler under ergodicity}, for which we need the following lemma. We use the following notations. If $(X,d)$ is a metric space and $F\subseteq X,$ we let $N_{r}(F)=\{x\in X:\mbox{ there is a $y\in F$ with $d(y,x)<r$}\},$ and $\overline{N}_{r}(F)=\{x\in X:\mbox{ there is a $y\in F$ with $d(y,x)\leq r$}\}.$

\begin{lem}\label{L:wont you be my neighbor?}
Let $(X,d)$ be a compact metric space, and fix $\mu\in \Prob(X).$ Let $\varepsilon,r,s\in (0,\infty)$ with $r<s.$ Then
\[\{\nu\in \Prob(X):\nu(N_{r}(F))<\mu(N_{s}(F))\mbox{ for all $F\in \mathcal{F}(X)$}\}\]
is a weak$^{*}$ neighborhood of $\mu.$
\end{lem}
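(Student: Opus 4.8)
The statement asserts that a certain set of measures is a weak$^{*}$ neighborhood of $\mu$, so the goal is to exhibit an explicit weak$^{*}$-open set containing $\mu$ that lies inside the given set. The key geometric idea is that the condition $\nu(N_r(F)) < \mu(N_s(F))$ for all closed $F$ should follow from a single quantitative estimate comparing $\nu$ and $\mu$ on sets of the form $N_r(F)$, exploiting the gap $s - r > 0$ between the radii. First I would fix $\delta = s - r > 0$ and cover $X$ by finitely many balls of radius $\delta/2$; compactness of $(X,d)$ makes this possible. The plan is to replace the uncountable family of closed sets $F$ by a manageable finite approximation so that controlling $\nu$ versus $\mu$ on finitely many test functions suffices.

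\textbf{Key steps.} The central observation is that for any closed $F$, the open set $N_r(F)$ is contained in $N_s(F)$ with room to spare: any point within $r$ of $F$ is ``deep inside'' $N_s(F)$ by a margin of $\delta = s-r$. I would therefore choose a continuous function $g_F \colon X \to [0,1]$ with $g_F = 1$ on $\overline{N}_r(F)$ and $g_F = 0$ off $N_s(F)$ (an Urysohn-type function adapted to the metric, e.g. $g_F(x) = \max(0, 1 - \tfrac{1}{\delta}\max(0, d(x,F) - r))$). Then $\nu(N_r(F)) \le \int g_F \, d\nu$ and $\int g_F \, d\mu \le \mu(N_s(F))$, so it would be enough to guarantee $\int g_F \, d\nu < \int g_F \, d\mu$ for every $F$. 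The functions $g_F$ are uniformly Lipschitz (with constant $1/\delta$) and uniformly bounded, hence form a totally bounded subset of $C(X)$ by Arzelà--Ascoli. I would pick a finite $\eta$-net $g_1, \dots, g_N$ for this family in the sup norm, where $\eta$ is chosen small relative to the smallest positive value of $\int g_F\, d\mu$ one needs to beat — here one must be slightly careful, since $\int g_F \, d\mu$ could be close to $0$, so the strict inequality needs handling.

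\textbf{The main obstacle.} The delicate point is exactly the case where $\mu(N_s(F))$ is small or the test functions degenerate: the inequality is strict, so a naive ``$\int g_F \, d\nu$ close to $\int g_F \, d\mu$'' argument does not immediately give strictness. I expect the right fix is to exploit that $N_r(F) \subseteq \overline{N}_r(F)$ sits compactly inside the open $N_s(F)$, so there is genuine slack: one can arrange $g_F \le 1_{N_s(F)} - c \cdot 1_{A_F}$ for a ``collar'' region $A_F$ of positive $\mu$-measure, or more simply observe $\int g_F\, d\mu \le \mu(N_s(F))$ can be upgraded to a strict gap by comparing against $\nu(N_r(F)) \le \int g_F \, d\nu$ and forcing $\int (g_F)\,d(\mu - \nu)$ to be bounded below on the finite net. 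Concretely, I would define the neighborhood
\[
\mathcal{O} = \left\{ \nu \in \Prob(X) : \left| \int g_i \, d\nu - \int g_i \, d\mu \right| < \tfrac{\eta}{2} \ \text{for } i = 1, \dots, N \right\},
\]
which is weak$^{*}$-open and contains $\mu$, and then verify that for $\nu \in \mathcal{O}$ and any $F$, approximating $g_F$ by the nearest net element $g_i$ and tracking the $\eta$-error against the slack $\delta$ yields $\nu(N_r(F)) < \mu(N_s(F))$. Reconciling the strict inequality with the approximation error is the one place requiring care; everything else (Urysohn function construction, Arzelà--Ascoli, weak$^{*}$ topology basics) is routine.
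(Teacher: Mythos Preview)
Your difficulty with the strict inequality is not a technical wrinkle but a symptom of a typo in the statement: the intended set is
\[
\mathcal{O}=\{\nu\in \Prob(X):\nu(N_{r}(F))<\varepsilon+\mu(N_{s}(F))\ \mbox{for all } F\in \mathcal{F}(X)\},
\]
with the $\varepsilon$ from the hypotheses inserted. The version you are trying to prove is false as stated, since for $F=X$ one has $N_r(F)=N_s(F)=X$ and the inequality reads $1<1$; hence $\mu\notin\mathcal{O}$ and $\mathcal{O}$ cannot be a neighborhood of $\mu$. Your proposed ``fix'' via a collar of positive $\mu$-measure cannot rescue this case, and more generally no argument can.

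Once the $\varepsilon$ is restored, your strategy goes through with no delicacy at all: with $g_F$ your Lipschitz Urysohn function, one has $\nu(N_r(F))\le\int g_F\,d\nu$ and $\int g_F\,d\mu\le\mu(N_s(F))$, so it suffices that $|\int g_F\,d\nu-\int g_F\,d\mu|<\varepsilon$ for every $F$. The family $\{g_F:F\in\mathcal{F}(X)\}$ is uniformly bounded and $1/(s-r)$-Lipschitz, hence totally bounded in $C(X)$ by Arzel\`a--Ascoli; take a finite $\varepsilon/3$-net $g_1,\dots,g_N$ and let $\mathcal{O}'=\{\nu:|\int g_i\,d(\nu-\mu)|<\varepsilon/3\ \mbox{for all }i\}$. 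A three-term triangle inequality gives $\mathcal{O}'\subseteq\mathcal{O}$.

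This is a genuinely different route from the paper's. The paper argues by contradiction and sequential compactness: if $\mu_n\to\mu$ with $\mu_n\notin\mathcal{O}$, choose witnesses $F_n$, pass to a Hausdorff-convergent subsequence $F_n\to F$ using compactness of $\mathcal{F}(X)$, interpolate radii $r<r_0<r_1<s_1<s$, and combine the Portmanteau theorem with the containments $N_r(F_n)\subseteq N_{r_0}(F)$ and $N_s(F_n)\supseteq N_{s_1}(F)$ (for large $n$) to reach $\mu(N_{r_1}(F))\ge\varepsilon+\mu(N_{s_1}(F))$, a contradiction. Your approach is more constructive and yields an explicit basic weak$^{*}$-neighborhood; the paper's is shorter and avoids building test functions, at the cost of invoking compactness of $\mathcal{F}(X)$ in the Hausdorff metric.
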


\begin{proof}
Let
\[\mathcal{O}=\{\nu\in \Prob(X):\nu(N_{r}(F))<\varepsilon+\mu(N_{s}(F))\mbox{ for all $F\in \mathcal{F}(X)$}\},\]
and suppose, for the sake of contradiction, that there is a sequence $\mu_{n}\in \Prob(X)\cap \mathcal{O}^{c}$ so that $\mu_{n}\to \mu$ weak$^{*}.$ Since $\mu_{n}\notin\mathcal{O},$ we can find a sequence $F_{n}\in \mathcal{F}(X)$ so that $\mu_{n}(N_{r}(F_{n}))\geq \varespilon+\mu(N_{s}(F_{n})).$ Since $\mathcal{F}(X)$ is compact, we may, and will, assume that there is a $F\in\mathcal{F}(X)$ so that $F_{n}\to F$ in the Hausdorff topology. Fix $r_{0},r_{1},s_{1}\in (r,s)$ with $r_{0}<r_{1}<s_{1}.$ Since $\mu_{n}\to \mu$ weak$^{*}$ and $r_{0}<r_{1},$ we have that:
\[\mu(N_{r_{1}}(F))\geq \limsup_{n\to\infty}\mu_{n}(N_{r_{0}}(F)).\]
Since $F_{n}\to F,$ for all large $n$ we have that $N_{r}(F_{n})\subseteq N_{r_{0}}(F).$ Hence,
\[\mu(N_{r_{1}}(F))\geq \limsup_{n\to\infty}\mu_{n}(N_{r}(F_{n}))\geq \varepsilon+ \limsup_{n\to\infty}\mu(N_{s}(F_{n})).\]
Since $F_{n}\to F,$ for all large $n$ we have that $N_{s}(F_{n})\supseteq N_{s_{1}}(F).$ So:
\[\mu(N_{r_{1}}(F))\geq \varepsilon+\mu(N_{s_{1}}(F)),\]
and since $r_{1}<s_{1}$ this is an obvious contradiction.

\end{proof}

\begin{lem}\label{L:more  semicty}
Let $(X,d)$ be a compact metric space. Suppose $\eta_{k}\in \Prob(\mathcal{F}(X)),$ $\eta_{k}\to \eta$ weak$^{*},$ and fix a measure $\mu\in \Prob(X).$ Then, for every $r>0,$ we have that
\[\limsup_{k\to\infty}\int \mu(\overline{N}_{r}(F))\,d\eta_{k}(F)\leq \int \mu(\overline{N}_{r}(F))\,d\eta(F).\]

\end{lem}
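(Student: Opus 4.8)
The plan is to reduce the statement to a single upper-semicontinuity fact, namely that the functional
$\Phi\colon \mathcal{F}(X)\to[0,1]$ given by $\Phi(F)=\mu(\overline{N}_{r}(F))$ is upper semicontinuous on $\mathcal{F}(X)$ equipped with the Hausdorff metric, combined with the portmanteau characterization of weak$^{*}$ convergence for bounded upper semicontinuous functions. Since $X$ is compact, $\mathcal{F}(X)$ is a compact metric space, so both ingredients are available.

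First I would establish upper semicontinuity of $\Phi$. Suppose $F_{n}\to F$ in the Hausdorff metric and fix $\varepsilon>0$. For all large $n$ we have $F_{n}\subseteq N_{\varepsilon}(F)$, from which one checks directly that $\overline{N}_{r}(F_{n})\subseteq \overline{N}_{r+\varepsilon}(F)$: any point lying within distance $r$ of some $y\in F_{n}$, with $y$ within $\varepsilon$ of $F$, lies within distance $r+\varepsilon$ of $F$. Hence $\mu(\overline{N}_{r}(F_{n}))\le \mu(\overline{N}_{r+\varepsilon}(F))$ for all large $n$, so $\limsup_{n}\mu(\overline{N}_{r}(F_{n}))\le \mu(\overline{N}_{r+\varepsilon}(F))$. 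Letting $\varepsilon\downarrow 0$, the closed neighborhoods $\overline{N}_{r+\varepsilon}(F)=\{x:d(x,F)\le r+\varepsilon\}$ decrease to $\overline{N}_{r}(F)$, so continuity of the finite measure $\mu$ from above gives $\limsup_{n}\mu(\overline{N}_{r}(F_{n}))\le \mu(\overline{N}_{r}(F))$. This is exactly upper semicontinuity of $\Phi$; in particular $\Phi$ is Borel, so every integral in the statement is well defined.

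With upper semicontinuity in hand, I would invoke the standard approximation: on a compact metric space a bounded upper semicontinuous function is a pointwise decreasing limit of Lipschitz functions, e.g. the inf-convolutions $g_{m}(F)=\sup_{F'}\bigl[\Phi(F')-m\,\Haus(F,F')\bigr]$, which are $m$-Lipschitz, satisfy $g_{m}\ge \Phi$, decrease in $m$, and converge pointwise to $\Phi$ by upper semicontinuity and boundedness of $\Phi$. For each fixed $m$, weak$^{*}$ convergence $\eta_{k}\to\eta$ yields $\limsup_{k}\int \Phi\,d\eta_{k}\le \limsup_{k}\int g_{m}\,d\eta_{k}=\int g_{m}\,d\eta$, since $g_{m}$ is continuous and bounded. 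Letting $m\to\infty$ and applying monotone (dominated) convergence on the right gives $\limsup_{k}\int \Phi\,d\eta_{k}\le \int \Phi\,d\eta$, which is the desired inequality.

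I do not expect a genuine obstacle here; the proof is short. The only point requiring care is the passage $\varepsilon\downarrow 0$ in the upper-semicontinuity step, where one must work with the \emph{closed} neighborhoods so that $\bigcap_{\varepsilon>0}\overline{N}_{r+\varepsilon}(F)=\overline{N}_{r}(F)$ and continuity from above applies — this is precisely why the statement is phrased with $\overline{N}_{r}$ rather than the open $N_{r}$, and why the analogous lower bound (lower semicontinuity) can fail when $\mu$ charges the sphere at distance exactly $r$.
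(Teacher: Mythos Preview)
Your proof is correct and follows essentially the same approach as the paper: both arguments first show that $\Phi(F)=\mu(\overline{N}_{r}(F))$ is upper semicontinuous on $\mathcal{F}(X)$ (the paper uses open $N_{s}(F)$ with $s>r$ rather than your closed $\overline{N}_{r+\varepsilon}(F)$, but the idea is identical), and then deduce upper semicontinuity of $\eta\mapsto\int\Phi\,d\eta$ by approximating $\Phi$ from above by continuous functions. The only cosmetic difference is that the paper cites \cite[Proposition~7.12]{Folland} for the interchange $\int\inf_{\alpha}\Phi_{\alpha}\,d\eta=\inf_{\alpha}\int\Phi_{\alpha}\,d\eta$, whereas you write down the inf-convolutions explicitly and use monotone convergence.
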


\begin{proof}
We claim that  $\Phi\colon \mathcal{F}(X)\to [0,1]$ given by $\Phi(F)=\mu(\overline{N}_{r}(F))$ is upper semicontinuous. Suppose that $F_{n}\in \mathcal{F}(X),$ and $F_{n}\to F.$ Then for all $s>r,$ we have that $N_{s}(F)\supseteq \overline{N}_{r}(F_{n})$ for all large $n.$ Thus
\[\mu(N_{s}(F))\geq \limsup_{n\to\infty}\mu(\overline{N}_{r}(F_{n})).\]
Since $s>r$ was arbitrary, we can let $s\to r$ to see that
\[\mu(\overline{N}_{r}(F))\geq \limsup_{n\to\infty}\mu(\overline{N}_{r}(F_{n})).\]
Thus $F\mapsto \mu(\overline{N}_{r}(F))$ is upper semicontinuous.

Hence we may write $\Phi(F)=\inf_{\Phi_{\alpha}}\Phi_{\alpha}(F),$ where the infimum is over all continuous functions $\Phi_{\alpha}\colon \mathcal{F}(X)\to [0,1]$ which have $\Phi_{\alpha}\geq \Phi.$ Hence, for all $\eta\in \Prob(\mathcal{F}(X))$ we have:
\[\int \mu(\overline{N}_{r}(F))\,d\eta(F)=\int \inf_{\alpha}\Phi_{\alpha}(F)\,d\eta(F)=\inf_{\alpha}\int \Phi_{\alpha}(F)\,d\eta(F),\]
the last part following, for example, by \cite[Proposition 7.12]{Folland}.
This shows that $\eta\mapsto \int \mu(\overline{N}_{r}(F))\,d\eta(F)$ is a upper semicontinuous function on $\Prob(\mathcal{F}(X)),$ which is equivalent to the conclusion of the lemma.

\end{proof}

\begin{cor}\label{C:messy corollary}
Let $G$ be a countable, discrete, sofic group with sofic approximation $\sigma_{k}\colon G\to S_{d_{k}}.$ Let $G\actson X$ be an algebraic action. The following are equivalent:
\begin{enumerate}[(i)]
\item There does not exist a sequence $\mu_{k}\in \Prob(X^{d_{k}})$ which is asymptotically supported on topological microstates and so that $\mu_{k}\to^{lw^{*}}m_{X}$ as $k\to\infty.$ \label{I:classical nonexistence}
\item There is a $\eta\in \Prob_{G}(\Sub(X))$ with $\eta\ne\delta_{X},$ and a sequence $(Y_{k})_{k}\in \Sub(X)^{d_{k}}$ which satisfies the following property. Given any sequence $(\phi_{k})_{k}$ of topological microstates, there is an increasing sequence of natural numbers $(k_{l})_{l}$ so that $\lim_{l\to\infty}(Y_{k_{l}})_{*}(u_{d_{k_{l}}})=\eta,$ and
\[\lim_{l\to\infty}\frac{1}{d_{k_{l}}}\sum_{j=1}^{d_{k_{l}}}\rho(\phi_{k_{l}}(j),Y_{k_{l}}(j))=0.\]\label{I:messy nonexistence subsequences}
\end{enumerate}

\end{cor}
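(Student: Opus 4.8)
The plan is to deduce this ultrafilter-free statement from its ultrafilter version, Corollary~\ref{C:sick corollary  bro}, using Proposition~\ref{P: this is not hard} to pass between the ``for every free ultrafilter'' and ``as $k\to\infty$'' formulations. Throughout I would write $\eta_k=(Y_k)_*(u_{d_k})=\frac1{d_k}\sum_{j=1}^{d_k}\delta_{Y_k(j)}\in\Prob(\Sub(X))$ and $D_k(\phi)=\frac1{d_k}\sum_{j=1}^{d_k}\rho(\phi(j),Y_k(j))$ for $\phi\in X^{d_k}$, and fix a metric $d$ for the weak$^*$ topology on $\Prob(\Sub(X))$.

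For (\ref{I:classical nonexistence})$\Rightarrow$(\ref{I:messy nonexistence subsequences}) I would apply the contrapositive of Proposition~\ref{P: this is not hard} with $\mu=m_X$: the nonexistence in (\ref{I:classical nonexistence}) produces a \emph{single} free ultrafilter $\omega_0$ along which no sequence asymptotically supported on topological microstates locally weak$^*$ converges to $m_X$ as $k\to\omega_0$. Corollary~\ref{C:sick corollary  bro} then yields $(Y_k)_k\in\prod_k\Sub(X)^{d_k}$ absorbing all topological microstates along $\omega_0$ with $\lim_{k\to\omega_0}\eta_k\neq\delta_X$. Set $\eta=\lim_{k\to\omega_0}\eta_k$; by Proposition~\ref{P:computing integrals in ultraproducts} this equals $Y_*(u_{\omega_0})$, where $Y=(Y_k)_{k\to\omega_0}$ is the maximal element of $\mathcal{S}_{\omega_0}(G\actson X)$ (Corollary~\ref{C:this the main theorem yo!}). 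Since $Y$ is $G$-equivariant and $u_{\omega_0}$ is $G$-invariant, $\eta\in\Prob_G(\Sub(X))$, and $\eta\neq\delta_X$. Given any microstate sequence $(\phi_k)_k$, absorption gives $D_k(\phi_k)\to_{\omega_0}0$ (Proposition~\ref{P:what it means to absorb}) and $\eta_k\to_{\omega_0}\eta$; for each $l$ the set $\{k:D_k(\phi_k)<1/l,\ d(\eta_k,\eta)<1/l\}$ lies in $\omega_0$, hence is infinite, so I can extract an increasing subsequence $(k_l)_l$ realizing both limits, which is exactly (\ref{I:messy nonexistence subsequences}).

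For (\ref{I:messy nonexistence subsequences})$\Rightarrow$(\ref{I:classical nonexistence}) I would argue by contradiction, assuming (\ref{I:messy nonexistence subsequences}) together with a sequence $(\mu_k)_k$ asymptotically supported on topological microstates with $\mu_k\to^{lw^*}m_X$. Consider $\theta_k=\frac1{d_k}\sum_{j}\delta_{(Y_k(j),\mu_{k,j})}\in\Prob(\Sub(X)\times\Prob(X))$ and the bounded function $\Psi(K,\nu)=\int_X\rho(x,K)\,d\nu(x)$, which is jointly continuous ($1$-Lipschitz in $K$ for the Hausdorff metric, weak$^*$-continuous in $\nu$); the semicontinuity Lemmas~\ref{L:wont you be my neighbor?} and~\ref{L:more  semicty} are precisely what control such functionals under the weak$^*$ limits below. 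Note $\int D_k\,d\mu_k=\int\Psi\,d\theta_k$. For each large $k$ I would select a microstate $\phi_k\in\Map(\rho,F_{n(k)},\delta_{n(k)},\sigma_k)$, with $n(k)\to\infty$ chosen diagonally so that $\mu_k(\Map(\rho,F_{n(k)},\delta_{n(k)},\sigma_k))\to1$, taking $\phi_k$ in $\{D_k\geq\tfrac12\int D_k\,d\mu_k\}\cap\Map(\rho,F_{n(k)},\delta_{n(k)},\sigma_k)$ when this set is nonempty and arbitrary otherwise. Feeding $(\phi_k)_k$ into (\ref{I:messy nonexistence subsequences}) gives a subsequence $(k_l)_l$ with $\eta_{k_l}\to\eta$ and $D_{k_l}(\phi_{k_l})\to0$. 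Along $(k_l)$ the second marginal of $\theta_{k_l}$ tends to $\delta_{m_X}$ (this is exactly $\mu_k\to^{lw^*}m_X$) and the first marginal tends to $\eta$, so $\theta_{k_l}\to\eta\otimes\delta_{m_X}$, whence $\int D_{k_l}\,d\mu_{k_l}=\int\Psi\,d\theta_{k_l}\to\int_{\Sub(X)}H(K)\,d\eta(K)$ with $H(K)=\int_X\rho(x,K)\,dm_X(x)$. As $m_X(X\setminus K)>0$ and $\rho(\cdot,K)>0$ off the closed set $K$, one has $H(K)>0$ for every $K\neq X$, and since $\eta(\{X\})<1$ this integral is some $p>0$. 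For large $l$ this forces $\mu_{k_l}(\{D_{k_l}\geq p/2\})\geq p/(2M)$ ($M$ the diameter), so my construction in fact chose $\phi_{k_l}$ with $D_{k_l}(\phi_{k_l})\geq\tfrac12\int D_{k_l}\,d\mu_{k_l}\to p/2>0$, contradicting $D_{k_l}(\phi_{k_l})\to0$.

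The main obstacle is the apparent circularity in this second direction: the subsequence provided by (\ref{I:messy nonexistence subsequences}) is chosen only after the input microstate sequence is fixed, and a priori it could dodge the indices where $\int D_k\,d\mu_k$ is large. I expect the resolution to be to commit to the selection of $(\phi_k)_k$ for \emph{all} $k$ first; then, whatever subsequence (\ref{I:messy nonexistence subsequences}) returns, one has $\int D_{k_l}\,d\mu_{k_l}\to p>0$ along it, so the lower-bound branch of the construction is automatically active for large $l$ and the contradiction lands on that very subsequence. The remaining care is in the bookkeeping: verifying that a weak$^*$ limit with a point-mass marginal splits as the product $\eta\otimes\delta_{m_X}$, and choosing the diagonal rate $n(k)$ so that $(\phi_k)_k$ is a genuine topological microstate sequence while retaining $\mu_k(\Map)\to1$; these are routine once the skeleton above is in place.
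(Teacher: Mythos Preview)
Your proof is correct and, for (\ref{I:classical nonexistence})$\Rightarrow$(\ref{I:messy nonexistence subsequences}), essentially identical to the paper's: both invoke the contrapositive of Proposition~\ref{P: this is not hard} to get a single free ultrafilter $\omega_0$, apply the ultrafilter version (Corollary~\ref{C:sick corollary  bro} / Corollary~\ref{C:this the main theorem yo!}) to obtain an absorbing $(Y_k)_k$, set $\eta=\lim_{k\to\omega_0}(Y_k)_*(u_{d_k})$, and extract the subsequence from the fact that the relevant sets lie in $\omega_0$ and are hence infinite.

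For (\ref{I:messy nonexistence subsequences})$\Rightarrow$(\ref{I:classical nonexistence}) your route is genuinely more streamlined than the paper's. The paper works with discrete neighborhoods $\overline{N}_r(Y_k(j))$ and must therefore control quantities like $\mu_{k,j}(\overline{N}_s(Y_k(j)))$ under weak$^*$ perturbation of the marginals and under Hausdorff perturbation of the subgroups; this is precisely what Lemmas~\ref{L:wont you be my neighbor?} and~\ref{L:more  semicty} accomplish, and they are genuinely used. You instead package everything into the single jointly continuous functional $\Psi(K,\nu)=\int\rho(x,K)\,d\nu(x)$ and the empirical joint law $\theta_k$; since one marginal of $\theta_{k_l}$ collapses to the point mass $\delta_{m_X}$, any weak$^*$ limit of $\theta_{k_l}$ must be $\eta\otimes\delta_{m_X}$, and continuity of $\Psi$ gives $\int D_{k_l}\,d\mu_{k_l}\to\int H\,d\eta>0$ directly. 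This bypasses both semicontinuity lemmas entirely (so your aside that those lemmas ``are precisely what control such functionals'' is inaccurate --- you never invoke them). The selection of $\phi_k$ and the resolution of the apparent circularity are the same in both arguments: fix the full sequence $(\phi_k)_k$ first, then whatever subsequence (\ref{I:messy nonexistence subsequences}) returns must satisfy $\int D_{k_l}\,d\mu_{k_l}\to p>0$, forcing the lower-bound branch of your construction to be active along it. The paper's approach has the minor advantage of being more robust to replacing $\rho(\cdot,K)$ by an indicator, which is closer in spirit to the neighborhood-based definitions elsewhere; yours has the advantage of brevity.
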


\begin{proof}
(\ref{I:classical nonexistence}) implies (\ref{I:messy nonexistence subsequences}): Suppose that (\ref{I:classical nonexistence}) holds. By Proposition \ref{P: this is not hard}, we may find a free ultrafilter $\omega$ on the natural numbers so that for every sequence $\mu_{k}\in \Prob(X^{d_{k}})$ which is supported on topological microstates as $k\to\omega,$ we have that $\mu_{k}$ does not locally weak$^{*}$ converge to $\mu$ as $k\to\omega.$ By Corollary \ref{C:this the main theorem yo!}, we may find a $Y\in \Meas(Z_{\omega},\Sub(X))$ which is $G$-equivariant, so that $\eta=Y_{*}(u_{\omega})\ne \delta_{X},$ and which absorbs all topological microstates with respect to $\omega.$ By Proposition \ref{P:basics compact Loeb} we may assume that $Y=(Y_{k})_{k\to\omega}.$ Now suppose that $(\phi_{k})_{k}$ is a sequence of topological microstates for $G\actson X.$ Let $\mathcal{O}_{n}$ be a decreasing sequence of weak$^{*}$-neighborhoods of $\eta$ in $\Prob(\Sub(X)).$ Since $Y$ absorbs all topological microstates, we have by Proposition \ref{P:what it means to absorb} that
\[\lim_{k\to\omega}\frac{1}{d_{k}}\sum_{j=1}^{d_{k}}\rho(\phi_{k}(j),Y_{k}(j))=0.\]
By Proposition \ref{P:computing integrals in ultraproducts} we know that$\eta=Y_{*}(u_{\omega})=\lim_{k\to\omega}(Y_{k})_{*}(u_{d_{k}})$. Combining this fact with the above equation, we may find $B_{n}\in \omega$ so that for every $k\in B_{n}$ we have:
\begin{itemize}
\item $(Y_{k})_{*}(u_{d_{k}})\in \mathcal{O}_{n},$ and
\item $\frac{1}{d_{k}}\sum_{j=1}^{d_{k}}\rho(\phi_{k}(j),Y_{k}(j))<2^{-n}.$
\end{itemize}
Since $B_{n}\cap F^{c}\in \omega$ for every finite $F\subseteq \N,$ it follows that we may find an increasing sequence $k_{l}$ of natural numbers with $k_{l}\in B_{l}$ for every $l\in \N.$ By construction, we then have that $\lim_{l\to\infty}(Y_{k_{l}})_{*}(u_{d_{k_{l}}})=\eta$ and
\[\lim_{l\to\infty}\frac{1}{d_{k_{l}}}\sum_{j=1}^{d_{k_{l}}}\rho(\phi_{k_{l}}(j),Y_{k_{l}}(j))=0.\]

(\ref{I:messy nonexistence subsequences}) implies (\ref{I:classical nonexistence}): Assume that(\ref{I:messy nonexistence subsequences}) holds, but that there is a sequence $\mu_{k}\in \Prob(X^{d_{k}})$ which is asymptotically supported on topological microstates and which has $\mu_{k}\to^{lw^{*}}m_{X}$ as $k\to\infty.$ Suppose that $\eta,(Y_{k})_{k}$ satisfy the conclusion of (\ref{I:messy nonexistence subsequences}). Let $\rho$ be a translation-invariant metric on $X.$

Since $\eta\ne \delta_{X},$ we may choose an $r>0$ so that
\[\int m_{X}(\overline{N_{r}(Z)})\,d\eta(Z)<1.\]
Set $c=\int m_{X}(\overline{N}_{r}(Z))\,d\eta(Z),$ and choose $\varespilon\in \left(0,\frac{1-c}{3}\right),s\in (0,r).$
By Lemma \ref{L:wont you be my neighbor?} it follows that for all sufficiently large $k$ we have that
\begin{align}\label{E:integral ajldskjlaf}
\int u_{d_{k}}(\{j:\phi(j)\in \overline{N}_{s}(Y_{k}(j))\})\,d\mu_{k}(\phi)=\frac{1}{d_{k}}\sum_{j=1}^{d_{k}}\mu_{k,j}(\overline{N}_{s}(Y_{k,j}))&< \varepsilon+\frac{1}{d_{k}}\sum_{j=1}^{d_{k}}m_{X}(\overline{N}_{r}(Y_{k,j}))\\ \nonumber
&=\varepsilon+\int m_{X}(\overline{N}_{r}(Z))\,d(Y_{k})_{*}(u_{d_{k}})(Z).
\end{align}
Since $\mu_{k}$ is asymptotically supported on topological microstates, we may find a sequence of subsets $A_{k}\subseteq X^{d_{k}}$ with $\mu_{k}(A_{k})\to 1,$ and so that for all $g\in G,$ $\delta>0$
\[\lim_{k\to\infty}\inf_{\phi\in A_{k}}u_{d_{k}}(\{j:\rho(\phi(\sigma_{k}(g)(j)),g\phi(j))<\delta\})=1.\]
Since $\mu_{k}(A_{k})\to 1,$ equation (\ref{E:integral ajldskjlaf}) shows that for all large $k:$
\begin{equation*}
\frac{1}{\mu_{k}(A_{k})}\int_{A_{k}} u_{d_{k}}(\{j:\phi(j)\in \overline{N}_{s}(Y_{k}(j))\})\,d\mu_{k}(\phi)<2\varepsilon+\int m_{X}(\overline{N}_{r}(Z))\,d(Y_{k})_{*}(u_{d_{k}})(Z).
\end{equation*}
Hence, we may find a natural number $K$ so that for all $k\geq K,$ there is a $\phi_{k}\in A_{k}$ with
\begin{equation}\label{E:cantlw*}
u_{d_{k}}(\{j:\phi_{k}(j)\in \overline{N}_{s}(Y_{k}(j))\})<2\varepsilon+\int m_{X}(\overline{N}_{r}(Z))\,d(Y_{k})_{*}(u_{d_{k}})(Z).
\end{equation} Set $\phi_{k}=e$ for every $k\in \{1,\dots,K-1\}.$ Then $(\phi_{k})_{k}$ is a sequence of topological microstates, and so we may let $(k_{l})_{l}$ be an increasing sequence of natural numbers as in the conclusion to (\ref{I:messy nonexistence subsequences}). We then have that
\[\limsup_{l\to\infty}u_{d_{k_{l}}}(\{j:\phi_{k_{l}}(j)\in \overline{N}_{s}(Y_{k_{l}}(j))\})\leq 2\varepsilon+\limsup_{l\to\infty}\int m_{X}(\overline{N}_{r}(Z))\,d(Y_{k_{l}})_{*}(u_{d_{k_{l}}})(Z)\leq 2\varepsilon+c,\]
by Lemma \ref{L:more  semicty}. It then follows that for all large $l$ we have that $u_{d_{k_{l}}}(\{j:\phi_{k_{l}}(j)\in \overline{N}_{s}(Y_{k_{l}}(j))\})\leq 3\varespilon+c.$ Since $3\varepsilon+c<1,$ this contradicts the assumption that
\[\lim_{l\to\infty}\frac{1}{d_{k_{l}}}\sum_{j=1}^{d_{k_{l}}}\rho(\phi_{k_{l}}(j),Y_{k_{l}}(j))=0.\]

\end{proof}

In the case that the group being acted on is abelian, Corollary \ref{C:messy corollary} can be rephrased more positively as follows. Recall that if $X$ is abelian, then $\widehat{X}$ is the set of all continuous homomorphism $X\to \R/\Z.$

\begin{cor}\label{C:messy corollary 2 for some reason}
Let $G$ be a countable, discrete group with sofic approximation $\sigma_{k}\colon G\to S_{d_{k}}.$ Fix an algebraic action $G\actson X$ with $X$ abelian. Then, the following are equivalent:
\begin{enumerate}[(i)]
\item There exists a sequence $(\mu_{k})_{k}\in \prod_{k}\Prob(X^{d_{k}})$ so that $\mu_{k}\to^{lw^{*}}m_{X}.$ \label{I:lw* existence abelian}
\item For every $\alpha\in \widehat{X},$ $\alpha \ne 0,$ and for every sequence $(k_{l})_{l}$ of increasing integers, and every sequence $A_{k_{l}}\subseteq \{1,\dots,d_{k_{l}}\}$ so that $\lim_{l\to\infty}u_{d_{k_{l}}}(A_{k_{l}})>0,$ there is a sequence $(\phi_{k})_{k}$ of topological microstates for $G\actson X$ such that
\[\liminf_{l\to\infty}\frac{1}{|A_{k_{l}}|}\sum_{j\in A_{k_{l}}}|\ip{\phi_{k_{l}}(j),\alpha}|>0.\]
\label{I:messy existence abelian}
\end{enumerate}
\end{cor}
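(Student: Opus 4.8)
The plan is to deduce Corollary \ref{C:messy corollary 2 for some reason} from Corollary \ref{C:messy corollary} by dualizing the condition on $G$-invariant random subgroups into a condition on characters. The key translation is that for an abelian compact group $X$, a closed subgroup $Z \leq X$ is determined by its annihilator $Z^{\perp} = \{\alpha \in \widehat{X} : \alpha|_{Z} = 0\}$, and the statement that a point $x$ lies close to $Z$ (in a translation-invariant metric) is dual to the statement that $|\langle x, \alpha\rangle|$ is small for the characters $\alpha \in Z^{\perp}$. Concretely, I would first observe that the $L^1$-quantity $\frac{1}{d_k}\sum_j \rho(\phi_k(j), Y_k(j))$ being small is equivalent, via a standard argument on compact abelian groups, to control of $\frac{1}{d_k}\sum_j |\langle \phi_k(j), \alpha\rangle|$ over characters $\alpha$ annihilating the fibers $Y_k(j)$. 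So the negation of existence of a good local weak$^{*}$-limit (condition \eqref{I:classical nonexistence} of Corollary \ref{C:messy corollary}) should be repackaged as the \emph{failure} of condition \eqref{I:messy existence abelian}, turning the equivalence into the contrapositive form stated here.

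The steps I would carry out are as follows. First I would show \eqref{I:lw* existence abelian} implies \eqref{I:messy existence abelian} by contraposition: if \eqref{I:messy existence abelian} fails, there is a nonzero character $\alpha$, a subsequence $(k_l)_l$, and sets $A_{k_l}$ of uniformly positive density so that \emph{every} topological microstate $(\phi_k)_k$ has $\lim_{l} \frac{1}{|A_{k_l}|}\sum_{j\in A_{k_l}} |\langle \phi_{k_l}(j),\alpha\rangle| = 0$. I would then build from this data a $G$-invariant random subgroup $\eta \neq \delta_X$ together with a witnessing sequence $(Y_k)_k \in \Sub(X)^{d_k}$ as required in part \eqref{I:messy nonexistence subsequences} of Corollary \ref{C:messy corollary}: on the indices in $A_{k_l}$ one takes $Y_{k_l}(j)$ to be (a subgroup inside) $\ker\alpha$, and elsewhere $Y_{k_l}(j) = X$. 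The vanishing of the $\alpha$-averages guarantees that every microstate is asymptotically absorbed by this family, so by Corollary \ref{C:messy corollary} no sequence locally weak$^{*}$-converges to $m_X$, contradicting \eqref{I:lw* existence abelian}. The converse direction \eqref{I:messy existence abelian} implies \eqref{I:lw* existence abelian} runs the same way: assuming \eqref{I:lw* existence abelian} fails, Corollary \ref{C:messy corollary} hands us $\eta \neq \delta_X$ and an absorbing family $(Y_k)_k$; since $\eta$ is supported away from $X$, one finds with positive probability a fiber $Z = Y(z)$ that is a proper subgroup, hence has a nonzero annihilating character $\alpha$, and transferring this back to the sequence produces, via the internal-set/Loeb formalism of Proposition \ref{P:computing integrals in ultraproducts}, sets $A_{k_l}$ of positive density on which every microstate has $\alpha$-average tending to zero, negating \eqref{I:messy existence abelian}.

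The main obstacle I expect is the precise passage between the $L^1$-distance to the subgroup fibers and the character averages $\frac{1}{|A_{k_l}|}\sum_{j\in A_{k_l}}|\langle \phi_{k_l}(j),\alpha\rangle|$. On a compact abelian group, $\rho(x, Z)$ small is \emph{equivalent} to smallness of $\sup_{\alpha \in Z^{\perp}, \|\alpha\| \leq N}|\langle x,\alpha\rangle|$ only up to a choice of finitely many characters and a modulus-of-continuity argument tying the metric to the Pontryagin dual; getting a single fixed character $\alpha$ in \eqref{I:messy existence abelian} out of the subgroup-valued data (where $Z^{\perp}$ may be large) requires care. I would handle this by a compactness/pigeonhole step: since $\eta \neq \delta_X$ concentrates positive mass on proper subgroups, and $\widehat{X}$ is countable, some single nonzero $\alpha \in \widehat{X}$ must annihilate a positive-measure set of fibers, and one restricts the index sets $A_{k_l}$ to exactly those $j$ where $\alpha$ annihilates $Y_{k_l}(j)$. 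The remaining interchange of limits (across the subsequence, the ultrafilter, and the weak$^{*}$-convergence of $(Y_k)_*(u_{d_k})$ to $\eta$) is routine given Proposition \ref{P:computing integrals in ultraproducts} and Lemma \ref{L:more  semicty}, but one must make sure the positive-density condition $\lim_l u_{d_{k_l}}(A_{k_l}) > 0$ survives these restrictions, which is where the bulk of the bookkeeping will lie.
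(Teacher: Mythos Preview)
Your overall strategy is sound and, for the direction \eqref{I:messy existence abelian}$\Rightarrow$\eqref{I:lw* existence abelian}, matches the paper exactly: both of you assume \eqref{I:lw* existence abelian} fails, invoke Corollary~\ref{C:messy corollary} to get $\eta\ne\delta_X$ and an absorbing family $(Y_k)_k$, use countability of $\widehat X$ to find a single nonzero $\alpha$ with $\eta(\{Z:Z\subseteq\{\alpha\}^{o}\})>0$, restrict to the corresponding index sets $A_{k_l}$, and deduce that every microstate has vanishing $\alpha$-average there. Your anticipated ``obstacle'' (passing between $L^1$-distance to the fibers and character averages) is handled in the paper by a routine compactness argument: since $\{x:\rho(x,\{\alpha\}^o)\geq\delta\}$ is compact and disjoint from $\{\alpha\}^o$, smallness of $\rho(\cdot,\{\alpha\}^o)$ and smallness of $|\langle\cdot,\alpha\rangle|$ are quantitatively equivalent.

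For \eqref{I:lw* existence abelian}$\Rightarrow$\eqref{I:messy existence abelian} you take a genuinely different route. You argue by contraposition, building an absorbing family $(Y_k)_k$ with fibers $\ker\alpha$ on $A_{k_l}$ and $X$ elsewhere, and then quote Corollary~\ref{C:messy corollary} to conclude \eqref{I:lw* existence abelian} fails. This works, but two small corrections are needed: the negation of \eqref{I:messy existence abelian} only gives $\liminf_l=0$ (not $\lim_l=0$) for each microstate, so you must pass to a microstate-dependent sub-subsequence of $(k_l)_l$---which is exactly what condition \eqref{I:messy nonexistence subsequences} of Corollary~\ref{C:messy corollary} permits; and you should note that since $\lim_l u_{d_{k_l}}(A_{k_l})=c>0$ is assumed to exist, $(Y_{k_{l_p}})_*(u_{d_{k_{l_p}}})\to c\,\delta_{\ker\alpha}+(1-c)\delta_X$ along \emph{any} sub-subsequence, giving a fixed $\eta\ne\delta_X$. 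The paper instead proves this direction \emph{directly} in a few lines: given $(\mu_k)_k$ with $\mu_k\to^{lw^*}m_X$, it computes $\int\frac{1}{|A_{k_l}|}\sum_{j\in A_{k_l}}|\exp(2\pi i\langle\phi(j),\alpha\rangle)-1|^2\,d\mu_{k_l}(\phi)=2-\frac{2}{|A_{k_l}|}\sum_{j\in A_{k_l}}\mathrm{Re}\,\widehat{\mu_{k_l,j}}(\alpha)\to 2$, and then selects $\phi_{k_l}$ from the support of $\mu_{k_l}$ witnessing a large average (and lying in $\Map(\rho,F,\delta,\sigma_{k_l})$, since $\mu_{k_l}$ is asymptotically supported there). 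The paper's argument is shorter and avoids re-entering the machinery of Corollary~\ref{C:messy corollary}; your argument has the virtue of making both directions symmetric, reducing everything to the subgroup-absorption picture.
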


\begin{proof}

(\ref{I:lw* existence abelian}) implies (\ref{I:messy existence abelian}):

Let $\alpha,k_{l},A_{k_{l}}$ be as in the hypothesis to (\ref{I:messy existence abelian}). Suppose that $(\mu_{k})_{k}\in \prod_{k}\Prob(X^{d_{k}})$ is asymptotically supported on topological microstates, and $\mu_{k}\to^{lw^{*}}m_{X}.$ We then have that
\[\int_{X^{d_{k_{l}}}}\frac{1}{|A_{k_{l}}|}\sum_{j\in A_{k_{l}}}\exp(2\pi i \ip{\phi(j),\alpha})=\frac{1}{|A_{k_{l}}|}\sum_{j\in A_{k_{l}}}\widehat{\mu}_{k,j}(\alpha).\]
Hence,
\[\int_{X^{d_{k}}}\frac{1}{|A_{k_{l}}|}\sum_{j\in A_{k_{l}}}|\exp(2\pi i \ip{\phi(j),\alpha})-1|^{2}\,d\mu_{k}(\phi)=2-2\cdot\left(\frac{1}{|A_{k_{l}}|}\sum_{j\in A_{k_{l}}}\rea(\widehat{\mu}_{k,j}(\alpha))\right).\]
Since $\mu_{k}\to^{lw^{*}}m_{X}$ and $\liminf_{l\to\infty}u_{d_{k_{l}}}(A_{k_{l}})>0,$ we have that
\[\liminf_{l\to\infty}\int_{X^{d_{k}}}\frac{1}{|A_{k_{l}}|}\sum_{j\in A_{k_{l}}}|\exp(2\pi i \ip{\phi(j),\alpha})-1|^{2}\,d\mu_{k}(\phi)=2.\]
Thus we may find a sequence $(\phi_{k_{l}})_{l}$ of topological microstates for $G\actson X$ so that
\[\frac{1}{|A_{k_{l}}|}\sum_{j\in A_{k_{l}}}|\exp(2\pi i \ip{\phi_{k_{l}}(j),\alpha})-1|^{2}=2.\]
Set $\phi_{k}=0$ for every $k\in \N\setminus\{k_{l}:l\in \N\}.$ Then $(\phi_{k})_{k}$ are topological microstates for $G\actson X$ and clearly
\[\liminf_{l\to\infty}\frac{1}{|A_{k_{l}}|}\sum_{j\in A_{k_{l}}}|\ip{\phi_{k_{l}}(j),\alpha}|>0.\]

(\ref{I:messy existence abelian}) implies (\ref{I:lw* existence abelian}): Fix a compatible metric $\rho$ on $X.$ Suppose that  (\ref{I:messy existence abelian}) holds, but that (\ref{I:lw* existence abelian}) fails. We can then find $\eta\in \Prob(\Sub(X)),(Y_{k})_{k}\in \prod_{k}\Sub(X)^{d_{k}}$ which satisfies (\ref{I:messy nonexistence subsequences}) of Corollary \ref{C:messy corollary}. Since $\eta\ne \delta_{X},$ we have that
\[\eta\left(\bigcup_{\alpha\in \widehat{X}:\alpha\ne 0}\{Y:Y\subseteq \{\alpha\}^{o}\}\right)>0,\]
thus we can find an $\alpha\in \widehat{X}\setminus\{0\}$ so that $\eta(\{Y:Y\subseteq \{\alpha\}^{o}\})>0.$ Let $E=\{Y:Y\subseteq \{\alpha\}^{o}\}.$ Fix a strictly increasing sequence $(k_{l})_{l}$ of integers with $\lim_{l\to\infty}(Y_{k_{l}})_{*}(u_{d_{k_{l}}})=\eta.$ For every open neighborhood $U$ of $E$ we have that
\[\liminf_{l\to\infty}(Y_{k_{l}})_{*}(u_{d_{k_{l}}})(U)\geq \eta(E).\]
Hence, by a diagonal argument, we may find a sequence $A_{k_{l}}\subseteq \{1,\dots,d_{k_{l}}\}$ with $\liminf_{l\to\infty}u_{d_{k_{l}}}(A_{k_{l}})\geq \eta(E)>0,$ and so that for every neighborhood $U$ of $E,$ there is an $L_{U}\in \N$ with $Y_{k_{l}}(j)\in U$ for all $l\geq L_{U},j\in A_{k_{l}}.$ Let $(\phi_{k})_{k}$ be a sequence of topological microstates for $G\actson X.$ We claim that
\[\lim_{l\to\infty}\frac{1}{|A_{k_{l}}|}\sum_{j\in A_{k_{l}}}|\ip{\phi_{k_{l}}(j),\alpha}|=0.\]

If the claim is false, then we can find a further subsequence $(k_{l_{p}})$ and an $\varepsilon>0$ so that
\[\lim_{p\to\infty}\frac{1}{|A_{k_{l_{p}}}|}\sum_{j\in A_{k_{l_{p}}}}|\ip{\phi_{k_{l_{p}}}(j),\alpha}|=\varepsilon.\]
Applying Corollary \ref{C:messy corollary}, and passing to a further subsequence we may, and will, assume that \[\lim_{p\to\infty}\frac{1}{d_{k_{l_{p}}}}\sum_{j=1}^{d_{k_{l_{p}}}}\rho(\phi_{k_{l_{p}}}(j),Y_{k_{l_{p}}}(j))=0.\] Choose a $\delta>0$ so that $x\in X$ and $\rho(x,\{\alpha\}^{o})<\delta$ implies that $|\ip{x,\alpha}|<\varespilon/2.$ Let $U=\{K\in \Sub(X):K\subseteq N_{\delta/2}(\{\alpha\}^{o})\},$ then $U$ is an open neighborhood of $E.$ So for all large $p$ and all $j\in A_{k_{l_{p}}}$ we have that $Y_{k_{l_{p}}}(j)\subseteq N_{\delta/2}(\{\alpha\}^{o}).$ Since $\liminf_{l\to\infty}u_{d_{k_{l}}}(A_{k_{l}})>0,$ and $\lim_{p\to\infty}u_{d_{k}}(\{j:\rho(\phi_{k_{l_{p}}}(j),Y_{k_{l_{p}}}(j))<\delta/2\})=1,$ we have:
\[\varepsilon=\lim_{p\to\infty}\frac{1}{|A_{k_{l_{p}}}|}\sum_{j\in A_{k_{l_{p}}}}|\ip{\phi_{k_{l_{p}}}(j),\alpha}|=\lim_{p\to\infty}\frac{1}{|A_{k_{l_{p}}}|}\sum_{j\in A_{k_{l_{p}}}:\rho(\phi_{k_{l_{p}}}(j),Y_{k_{l_{p}}}(j))<\delta/2}|\ip{\phi_{k_{l_{p}}}(j),\alpha}|\leq \varepsilon/2,\]
a contradiction.

\end{proof}

The statement of Corollary \ref{C:messy corollary} can be drastically simplified when the sofic approximation has an ergodic commutant.

\begin{cor}\label{C: ultraftiler free ergodic commutant}
Let $G$ be a countable, discrete, sofic group with sofic approximation $\sigma_{k}\colon G\to S_{d_{k}}.$Suppose that $(\sigma_{k})_{k}$ has ergodic commutant. Fix an algebraic action $G\actson X,$ and let $\rho$ be a compatible metric on $X.$ Then the following are equivalent:
\begin{enumerate}[(i)]
\item There does not exist a sequence $\mu_{k}\in \Prob(X^{d_{k}})$ with $\mu_{k}\to^{lw^{*}}m_{X}$ as $k\to\infty.$ \label{I:classical nonexistence ergodic commutant}
\item There is a proper, closed, $G$-invariant subgroup $Y$ of $X$ with the following property. Given any sequence $(\phi_{k})_{k}\in \prod_{k}X^{d_{k}}$ of topological microstates, there is a strictly increasing sequence $(k_{l})_{l}$ of natural numbers which satisfy
\[\lim_{n\to\infty}\rho_{2}(\phi_{k_{l}},Y^{d_{k_{l}}})=0.\]
\label{I:messy nonexistence subsequences ergodic commutant}
\end{enumerate}
\end{cor}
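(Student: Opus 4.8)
The plan is to handle the two implications separately: reduce (ii) $\Rightarrow$ (i) to the general ultrafilter-free Corollary \ref{C:messy corollary}, and derive (i) $\Rightarrow$ (ii) from the ergodic-commutant form Corollary \ref{C:simpler under ergodicity} via a single auxiliary ultrafilter. Throughout I interpret $\rho_2(\phi, Y^{d_k})$ as $\inf_{\psi\in Y^{d_k}}\rho_2(\phi,\psi)$, so that $\rho_2(\phi, Y^{d_k})^2 = \frac{1}{d_k}\sum_{j=1}^{d_k}\rho(\phi(j),Y)^2$, where $\rho(\phi(j),Y) = \inf_{y\in Y}\rho(\phi(j),y)$.

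For (ii) $\Rightarrow$ (i), I would assemble data satisfying hypothesis (ii) of Corollary \ref{C:messy corollary}. Given the proper, closed, $G$-invariant subgroup $Y$ from our (ii), set $\eta = \delta_Y \in \Prob_G(\Sub(X))$, which is $G$-invariant since $Y$ is and satisfies $\eta \ne \delta_X$ since $Y \ne X$, and let $(Y_k)_k$ be the constant sequence with $Y_k(j) = Y$ for all $j$. Then $(Y_k)_*(u_{d_k}) = \delta_Y = \eta$ identically, so the requirement $\lim_l (Y_{k_l})_*(u_{d_{k_l}}) = \eta$ holds along any subsequence. For the second requirement, given a sequence $(\phi_k)_k$ of topological microstates I take the subsequence $(k_l)_l$ provided by our (ii); the Cauchy--Schwarz inequality $\frac{1}{d_k}\sum_j \rho(\phi_k(j),Y) \le \rho_2(\phi_k, Y^{d_k})$ then converts $\rho_2(\phi_{k_l}, Y^{d_{k_l}}) \to 0$ into $\frac{1}{d_{k_l}}\sum_j \rho(\phi_{k_l}(j), Y_{k_l}(j)) \to 0$. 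Thus hypothesis (ii) of Corollary \ref{C:messy corollary} is met, and its conclusion (i), which is exactly our (i), follows; note this direction does not even use the ergodic-commutant assumption.

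For (i) $\Rightarrow$ (ii), assuming no sequence locally weak$^*$ converges to $m_X$ as $k\to\infty$, Proposition \ref{P: this is not hard} furnishes a free ultrafilter $\omega$ for which no sequence asymptotically supported on topological microstates as $k\to\omega$ locally weak$^*$ converges to $m_X$ as $k\to\omega$. Since $(\sigma_k)_k$ has ergodic commutant, $G'_{\omega} \actson (Z_\omega, u_\omega)$ is ergodic, so Corollary \ref{C:simpler under ergodicity} yields a proper, closed, $G$-invariant subgroup $Y$ of $X$ absorbing all topological microstates along $\omega$. Crucially, $Y$ is produced here independently of any microstate sequence, and I claim it witnesses (ii). Fix an arbitrary sequence $(\phi_k)_k$ of topological microstates; since an ultralimit of a sequence tending to $1$ equals $1$, these are in particular topological microstates with respect to $\omega$. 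Viewing $Y$ as the constant element of $\Meas(Z_\omega, \mathcal{F}(X))$, Proposition \ref{P:what it means to absorb} gives $\lim_{k\to\omega}\frac{1}{d_k}\sum_j \rho(\phi_k(j),Y) = 0$, and since $\rho(\phi_k(j),Y)^2 \le M\,\rho(\phi_k(j),Y)$ with $M$ the diameter of $(X,\rho)$, we obtain $\lim_{k\to\omega}\rho_2(\phi_k, Y^{d_k}) = 0$. By definition of the ultralimit, $B_l = \{k : \rho_2(\phi_k, Y^{d_k}) < 1/l\} \in \omega$ for every $l$; freeness of $\omega$ makes each $B_l$ infinite and $B_1 \supseteq B_2 \supseteq \cdots$, so I may choose $k_1 < k_2 < \cdots$ with $k_l \in B_l$, giving $\rho_2(\phi_{k_l}, Y^{d_{k_l}}) < 1/l \to 0$, which is precisely (ii).

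The only genuinely delicate point is the bookkeeping in (i) $\Rightarrow$ (ii): the subgroup $Y$ must be fixed before the arbitrary microstate sequence is presented, while the subsequence $(k_l)_l$ is permitted to depend on $(\phi_k)_k$ and is extracted from the ultrafilter convergence. This is exactly the structure Corollary \ref{C:simpler under ergodicity} provides, so the obstacle is resolved by the order of quantifiers rather than by any hard estimate. The remaining ingredients — the two-sided comparison between the $L^1$ average $\frac{1}{d_k}\sum_j\rho(\phi_k(j),Y)$ and the $L^2$ quantity $\rho_2(\phi_k, Y^{d_k})$ using boundedness of $\rho$ and Cauchy--Schwarz, and the identification of a constant subgroup sequence with the point mass $\delta_Y$ — are entirely routine.
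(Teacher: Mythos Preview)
Your proof is correct. For (i) $\Rightarrow$ (ii) you do exactly what the paper does: invoke Proposition \ref{P: this is not hard} to produce a single ultrafilter $\omega$ along which no sequence converges, use the ergodic-commutant hypothesis to apply Corollary \ref{C:simpler under ergodicity} and get a fixed proper $G$-invariant subgroup $Y$, and then extract a subsequence from the ultrafilter convergence via Proposition \ref{P:what it means to absorb}. Your care about the order of quantifiers (the subgroup $Y$ is chosen before the microstate sequence, the subsequence afterwards) is exactly the point.

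For (ii) $\Rightarrow$ (i) you take a genuinely different route. The paper argues directly by contradiction: assuming a sequence $\mu_k\to^{lw^*}m_X$ exists, it picks $\delta>0$ with $c=m_X(\overline{N_\delta(Y)})<1$, averages $u_{d_k}(\{j:\phi(j)\in\overline{N_\delta(Y)}\})$ against $\mu_k$, and extracts from the microstate support a concrete sequence $(\phi_k)_k$ for which $u_{d_k}(\{j:\phi_k(j)\notin N_\delta(Y)\})$ stays bounded below, contradicting the subsequence conclusion of (ii). You instead observe that the constant choice $Y_k(j)=Y$, $\eta=\delta_Y$ already satisfies hypothesis (ii) of the general Corollary \ref{C:messy corollary}, reducing the implication to that earlier result with no new estimate needed. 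Your reduction is cleaner and more modular, and it makes transparent that this direction does not use the ergodic-commutant assumption; the paper's direct argument is more self-contained and avoids invoking the heavier Corollary \ref{C:messy corollary} (whose proof of the corresponding implication passes through Lemmas \ref{L:wont you be my neighbor?} and \ref{L:more  semicty}). Both are valid.
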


\begin{proof}

(\ref{I:classical nonexistence ergodic commutant}) implies (\ref{I:messy nonexistence subsequences ergodic commutant}): This can be argued exactly as in Corollary \ref{C:messy corollary}, using Corollary \ref{C:simpler under ergodicity}.

 (\ref{I:messy nonexistence subsequences ergodic commutant}) implies (\ref{I:classical nonexistence ergodic commutant}): Assume that (\ref{I:classical nonexistence ergodic commutant}) holds, but that there is a sequence $\mu_{k}\in \Prob(X^{d_{k}})$ which is asymptotically supported on topological microstates and which satisfies $\mu_{k}\to^{lw^{*}}m_{X}.$ Let $Y$ be as in the hypothesis of (\ref{I:messy nonexistence subsequences ergodic commutant}).   Choose $\delta>0$ so that $\overline{N_{\delta}(Y)}\ne X,$ and set $c=m_{X}(\overline{N_{\delta}(Y)})<1.$ Observe that for any $k\in \N,$
\[\int u_{d_{k}}(\{j:\phi_{k}(j)\in \overline{N_{\delta}(Y)}\})\,d\mu_{k}(\phi)=\frac{1}{d_{k}}\sum_{j=1}^{d_{k}}\mu_{k}(\overline{N_{\delta}(Y)}).\]
Let $F_{n}$ be an increasing sequence of finite subsets of $G$ with $G=\bigcup_{n=1}^{\infty}F_{n}.$ Since $\mu_{k}\to^{lw^{*}}m_{X}$ and is asymptotically supported on topological microstates, we may choose an increasing sequence of integers $K_{1},K_{2},\cdots$ so that
\[\int u_{d_{k}}(\{j:\phi(j)\in \overline{N_{\delta}(Y)}\})\,d\mu_{k}(\phi)\leq (1-2^{-m})c.\]
\[\mu_{k}\left(\bigcap_{g\in F_{m}}\{\phi:\rho_{2}(g\phi_{k},\phi_{k}\circ \sigma_{k}(g))\leq 2^{-m}\}\right)\geq 1-2^{-m} \mbox{ for all $m\in \N,k\geq K_{m}.$}\]
We thus have for every $m\in \N,$ and $k\geq K_{m}$ that
\[\mu_{k}(\{\phi:u_{d_{k}}(\{j:\phi(j)\in\overline{N_{\delta}(Y)}\}\})\geq \frac{1+c}{2}\})\leq \frac{2c}{1+c}(1-2^{-m}).\]
Since $c<1,$ it follows that $\frac{2c}{1+c}(1-2^{-m})<1-2^{-m}$ for all $m\geq 1.$

Now fix a $k\geq K_{1},$ and choose $m\geq 1$ so that $K_{m}\leq k<K_{m+1}.$ By the above, we may find a $\phi_{k}\in X^{d_{k}}$ so that
\begin{itemize}
\item $u_{d_{k}}(\{j:\phi_{k}(j)\in N_{\delta}(Y)\})\leq \frac{1+c}{2}$ and
\item  $\rho_{2}(g\phi_{k},\phi_{k}\circ \sigma_{k}(g))\leq 2^{-m}$ for all $g\in F_{m}.$
\end{itemize}
For a natural number $k<k_{1},$ we set $\phi_{k}=1.$ Then $(\phi_{k})_{k}$ is a sequence of topological microstates. Let $(k_{l})_{l}$ be an increasing sequence of natural numbers as in the conclusion to (\ref{I:messy nonexistence subsequences ergodic commutant}). Then for all sufficiently large $l,$ we have that
\[\rho_{2}(\phi_{k_{l}}(j),Y^{d_{k_{l}}})^{2}\geq \delta^{2}u_{d_{k_{l}}}(\{j:\phi_{k_{l}}(j)\notin N_{\delta}(Y)\})\geq \delta^{2}\frac{1-c}{2}.\]
Since $\frac{1-c}{2}>0,$ this contradicts the assumption that $\rho_{2}(\phi_{k_{l}}(j),Y^{d_{k_{l}}})\to 0$ as $l\to\infty.$

\end{proof}

\subsection{The case when the compact group is abelian and $(\sigma_{k})_{k}$ has ergodic commutant}\label{S:ergodic commutant sketch}.

In this section, we sketch an ultrafilter-free proof of Corollary \ref{C: ultraftiler free ergodic commutant} when $X$ is abelian. The proof that (\ref{I:lw* existence abelian}) implies (\ref{I:messy nonexistence subsequences ergodic commutant}) in Corollary \ref{C: ultraftiler free ergodic commutant}  did not use ultrafilters. So we focus on proving that (\ref{I:messy nonexistence subsequences ergodic commutant}) implies (\ref{I:lw* existence abelian}) in Corollary \ref{C: ultraftiler free ergodic commutant}. We do this by proving the contrapositive. So suppose that (\ref{I:messy nonexistence subsequences ergodic commutant}) is false. Then for every proper, closed subgroup $Y\leq X,$ there is a sequence $(\phi_{k})_{k}$ of topological microstates so that $\liminf_{k\to\infty}u_{d_{k}}(\phi_{k},Y^{d_{k}})>0.$ For each $\alpha\in \widehat{X}\setminus\{0\},$ we apply this to $Y=\{\alpha\}^{o}$ to deduce the following:

\emph{Fact: For every $\alpha\in\widehat{X}\setminus\{0\},$ there is a sequence $(\phi_{k})_{k}$ of topological microstates for $G\actson X$ and a constant $c\in (0,1]$ so that}
\[\liminf_{k\to\infty}u_{d_{k}}(\{j:|\ip{\phi_{k}(j),\alpha}|\geq c\})>0.\]

We use this to prove the following:

\emph{Claim: For every $\alpha\in \widehat{X}\setminus\{0\},$ and every $\varepsilon>0,$ there is a sequence $(\mu_{k})_{k}$ of measures supported on topological microstates so that for every $\delta>0$ one has $\liminf_{k\to\infty}u_{d_{k}}(\{j:|\widehat{\mu}_{k,j}(\alpha)|<\delta\})>1-\varepsilon.$ }

To see how the fact implies the claim, fix $\varespilon>0,$ and $\alpha\in \widehat{X}\setminus\{0\}$ be given. Let $(\phi_{k})_{k},c$ be as in the fact. Let $A_{k}=\{j:|\ip{\phi_{k}(j),\alpha}|\geq c\}.$ We may choose an $r\in \N,$ and $(\tau_{1,k})_{k},\cdots,(\tau_{r,k})_{k}\in \mathcal{G}'$ so that $\liminf_{k\to\infty}u_{d_{k}}\left(\bigcup_{j=1}^{r}\tau_{j,k}(A_{k})\right)\geq 1-\varepsilon.$ Set $E_{k}=\left(\bigcup_{j=1}^{r}\tau_{j,k}(A_{k})\right).$  Let $\nu_{k}=\frac{1}{r}\sum_{j=1}^{r}\left[\frac{1}{2}\delta_{\phi_{k}\circ \tau_{k}}+\frac{1}{2}\delta_{0}\right].$ It is then not hard to show that
\[\limsup_{k\to\infty}\sup_{j\in E_{k}}|\widehat{\nu}_{k,j}(\alpha)|<1.\]
By continuity of the addition map $X\times X\to X,$ we may chosen a sequence of integers $m_{k}$ with $m_{k}\to \infty$ sufficiently slowly so that $\nu_{k}^{*m_{k}}$ is still asymptotically supported on topological microstates. If we set $\mu_{k}=\nu_{k}^{*m_{k}},$ then one can check $\mu_{k}$ satisfies the conclusion of the claim.

From the claim, and a diagonal argument, for every $\alpha\in \widehat{X}\setminus\{0\}$ we may find a sequence $(\mu_{k}^{(\alpha)})_{k}\in \prod_{k}\Prob(X^{d_{k}})$ so that for every $\delta>0$ we have
\[\lim_{k\to\infty}u_{d_{k}}(\{j:|\widehat{\mu}_{k,j}^{(\alpha)}(\alpha)|\leq \delta\})=1.\]

Now let $(\alpha_{l})_{l=1}{\infty}$ be a fixed enumeration of $\widehat{X}.$ It is then not hard to show that we may choose a sequence $L(k)$ of integers with $L(k)\to\infty$ sufficiently slowly so that
\[\mu_{k}=\mu_{k}^{(\alpha_{l(1)})}*\mu_{k}^{(\alpha_{l(2)})}*\cdots*\mu_{k}^{(\alpha_{L(k)})}\]
is asymptotically supported on topological microstates, and satisfies $\mu_{k}\to^{lw^{*}}m_{X}.$

\appendix

\section{Loeb Measure Space and Preliminaries}\label{A:Loeb Measures}

\begin{prop}\label{P:loeb basics yo}
Let $X$ be a compact, metrizable space, $(d_{k})_{k}$ a sequence of natural numbers and let $\omega$ be a free ultrafilter on the natural numbers.
\begin{enumerate}[(i)]
\item \label{I:borelmaps and shiz} Let $(\phi_{k})_{k}\in\prod_{k}X^{d_{k}}.$ Then $(\phi_{k})_{k\to\omega}$ is Borel.
\item \label{I:internal sets} Fix a sequence $(E_{k})_{k}$ where $E_{k}\subseteq X^{d_{k}},$ and let
\[E=\left\{(\phi_{k})_{k\to\omega}:(\phi_{k})_{k}\in\prod_{k}E_{k}\right\}.\]
Then for any compatible continuous metric $\rho$ on $X,$ we have that $E$ is a closed subset of $\Meas(Z_{\omega},X)$ with respect to the metric $\rho_{m}.$
\item  \label{I:sequences and aljsakljal} Given a $\Phi\in \Meas(Z_{\omega},X),$ there is a sequence $(\phi_{k})_{k}\in\prod_{k}X^{d_{k}}$ so that $\Phi=(\phi_{k})_{k\to\omega}$ almost everywhere.
\end{enumerate}
\end{prop}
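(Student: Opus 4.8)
The plan is to prove the three parts in the order (i), (ii), (iii), establishing along the way the integration formula for bounded internal functions, and leaning throughout on the defining formula $u_\omega(\prod_{k\to\omega}A_k)=\lim_{k\to\omega}|A_k|/d_k$, the membership characterization $z=(j_k)_{k\to\omega}\in\prod_{k\to\omega}A_k$ iff $\{k:j_k\in A_k\}\in\omega$, and the stated fact that every element of $\mathcal B$ agrees up to a null set with an internal set. For (i), fix a compatible metric $\rho$ of diameter $M$, write $\Phi=(\phi_k)_{k\to\omega}$, and reduce to the real-valued case: for fixed $x\in X$ the map $z\mapsto\rho(\Phi(z),x)$ equals $(g_k)_{k\to\omega}$ with $g_k(j)=\rho(\phi_k(j),x)\in[0,M]$, since ultralimits commute with the continuous map $\rho(\cdot,x)$. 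It then suffices to show that any $\psi:=(g_k)_{k\to\omega}$ with $g_k$ bounded is measurable, for then $\Phi^{-1}(B(x,r))=\{\psi<r\}$ is measurable and an arbitrary open $U\subseteq X$ is a countable union of balls centered at a fixed dense sequence. To handle $\psi$, I would partition $[0,M]$ into $2^{n}$ half-open intervals $I_{n,i}$ with centers $c_{n,i}$, set $A_{n,i,k}=g_k^{-1}(I_{n,i})$ and $P_{n,i}=\prod_{k\to\omega}A_{n,i,k}$; by the membership characterization the $P_{n,i}$ form, for each $n$, a partition of $Z_\omega$ into internal sets, so $\psi_n:=\sum_i c_{n,i}\,1_{P_{n,i}}$ is a measurable simple function with $\|\psi-\psi_n\|_\infty\le M2^{-n}$, whence $\psi=\lim_n\psi_n$ is measurable. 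The same computation, using $u_\omega(P_{n,i})=\lim_{k\to\omega}|A_{n,i,k}|/d_k$, yields the integration formula $\int\psi\,du_\omega=\lim_{k\to\omega}\frac1{d_k}\sum_{j=1}^{d_k}g_k(j)$, which is what powers part (ii).

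For (ii), suppose $\Phi^{(n)}=(\phi^{(n)}_k)_{k\to\omega}\in E$ with $\phi^{(n)}_k\in E_k$ and $\Phi^{(n)}\to\Phi$ in $\rho_m$; passing to a subsequence I may assume $\rho_m(\Phi^{(n)},\Phi)\le2^{-n}$. Since $z\mapsto\rho(\Phi^{(n)}(z),\Phi^{(n+1)}(z))$ is the internal function $(h_k)_{k\to\omega}$ with $h_k(j)=\rho(\phi^{(n)}_k(j),\phi^{(n+1)}_k(j))$, the integration formula gives $b_n:=\rho_m(\Phi^{(n)},\Phi^{(n+1)})=\lim_{k\to\omega}a_{n,k}$ where $a_{n,k}=\frac1{d_k}\sum_j\rho(\phi^{(n)}_k(j),\phi^{(n+1)}_k(j))$, and $\sum_n b_n<\infty$. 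The key step is a diagonal argument against $\omega$: the sets $C_m=\{k:a_{n,k}\le b_n+2^{-n}\text{ for all }n\le m\}$ lie in $\omega$ and decrease, so $n(k):=\sup\{m\le k:k\in C_m\}$ satisfies $n(k)\to\infty$ along $\omega$ while guaranteeing $k\in C_{n(k)}$. Setting $\psi_k=\phi^{(n(k))}_k\in E_k$ produces $(\psi_k)_{k\to\omega}\in E$, and the telescoping estimate $\frac1{d_k}\sum_j\rho(\psi_k(j),\phi^{(m)}_k(j))\le\sum_{n\ge m}(b_n+2^{-n})$ valid for all $k$ with $n(k)\ge m$ gives, after $\lim_{k\to\omega}$ and then $m\to\infty$, that $\rho_m((\psi_k)_{k\to\omega},\Phi)=0$, so $\Phi\in E$. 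This diagonalization is the main obstacle, since it must simultaneously keep the representatives inside the prescribed sets $E_k$ and drive the $\rho_m$-error to zero.

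For (iii), I would deduce the statement from (i) and (ii) applied with $E_k=X^{d_k}$, so that $E$ is precisely the set of classes admitting an internal representative and is closed by (ii); it then suffices to prove $E$ dense. Given $\Phi\in\Meas(Z_\omega,X)$ and $\varepsilon>0$, compactness lets me choose finitely many $x_i$ and disjoint measurable sets $S_i\subseteq\Phi^{-1}(B(x_i,\varepsilon))$ covering $Z_\omega$ on which $\rho(\Phi,x_i)<\varepsilon$ (measurability of the $S_i$ comes from part (i)). Approximating each $S_i$ by an internal set via the stated approximation property of $u_\omega$, disjointifying the approximants into an honest partition $\{A_{i,k}\}_i$ of each $\{1,\dots,d_k\}$, and setting $\phi_k(j)=x_i$ for $j\in A_{i,k}$ gives $\Psi=(\phi_k)_{k\to\omega}\in E$ with $\rho_m(\Phi,\Psi)\le\varepsilon+M\sum_i u_\omega(S_i\triangle T_i)<C\varepsilon$. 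Hence $E$ is dense, and being closed it equals $\Meas(Z_\omega,X)$, which is exactly (iii); the only subtlety I anticipate here is the routine bookkeeping showing that disjointifying the internal approximants perturbs the measures by an arbitrarily small amount.
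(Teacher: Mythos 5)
Your proposal is correct and follows essentially the same route as the paper's proof: internal sets give Borel measurability in (i), an ultrafilter diagonalization gives closedness of internal sets in (ii), and (iii) follows by combining (ii) (applied with $E_{k}=X^{d_{k}}$) with density of piecewise-constant internal maps built from internal approximations of the sets $\Phi^{-1}(B(x_{i},\varepsilon))$. The only cosmetic difference is that you fold the integration formula $\int (g_{k})_{k\to\omega}\,du_{\omega}=\lim_{k\to\omega}\frac{1}{d_{k}}\sum_{j}g_{k}(j)$ into part (i) and run the diagonalization in (ii) on the averaged distances $a_{n,k}$, whereas the paper proves that formula separately (Proposition \ref{P:computing integrals in ultraproducts}) and runs the diagonalization in (ii) directly on the $u_{d_{k}}$-measures of the sets where consecutive representatives differ by more than $2^{-l}$.
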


\begin{proof}

Throughout we fix a compatible metric $\rho$ on $X,$ and let $M$ be the diameter of $(X,\rho).$

(\ref{I:borelmaps and shiz}): Set $\Phi=(\phi_{k})_{k\to\omega}$. Then, for every $x\in X,$ we have that
\[\Phi^{-1}(B(x,\varepsilon))=\bigcup_{n=1}^{\infty}\prod_{k\to\omega}\phi_{k}^{-1}(B(x,\varepsilon-1/n)).\]
Since $\prod_{k\to\omega}\phi_{k}^{-1}(B(x,\varepsilon-1/n))$ is measurable by definition, we see that $\Phi$ is Borel.

(\ref{I:internal sets}): It suffices to show that $E$ is complete in the metric $\rho_{m}.$ Let $(\Phi_{n})_{n}$ be a Cauchy sequence in $E.$ Without loss of generality, we may assume that $\rho_{m}(\Phi_{n},\Phi_{n+1})<2^{-4n}$ for every $n\in\N.$ Thus for every natural number $n$ we have
\[\mu(\{z:\rho(\Phi_{n}(z),\Phi_{n+1}(z))>2^{-2n}\})<2^{-2n}.\]
Write $\Phi_{n}=(\phi_{n,k})_{k\to\omega}.$ Fix $x_{0}\in X$ and define for all $k\in \N$ a map $\phi_{0,k}\colon\{1,\dots,d_{k}\}\to X$ by $\phi_{0,k}(j)=x_{0}.$
We may choose a decreasing sequence $B_{n}$ of elements of $\omega$ so that
\begin{itemize}
\item $B_{n}\cap\{1,\dots,n\}=\varnothing,$
\item $u_{d_{k}}(\{j:\rho(\phi_{l,k}(j),\rho_{l+1,k}(j))>2^{-l}\})<2^{-l}$ for all $1\leq l\leq n.$
\end{itemize}
Set $B_{0}=\N\setminus B_{1}.$ For $k\in \N,$ let $n(k)\in \N\cup\{0\}$ be defined by $k\in B_{n(k)}\setminus B_{n(k)+1},$ and set $\Phi=(\phi_{n(k)+1,k})_{k\to\omega}.$ Fix an $n\in\N,$ and a $k\in B_{n}.$ Let
\[A_{k}=\bigcap_{l=n}^{n(k)}\{1\leq j\leq d_{k}: \rho(\phi_{l,k}(j),\phi_{l+1,k})(j))\leq 2^{-l}\},\]
then $u_{d_{k}}(A_{n}^{c})\leq 2^{-n+1},$ and for all $j\in A_{n}$ we have $\rho(\phi_{n(k)+1,k}(j),\phi_{n,k}(j))\leq 2^{-n+1}.$ Hence we have that $\mu(\{z:\rho(\Phi(z),\Phi_{n}(z))\leq 2^{-n+1}\})\geq 1-2^{-n+1}.$ Thus,
\[\rho_{m}(\Phi,\Phi_{n})\leq 2^{-n+1}(1+M)\]
and so we have that $\Phi_{n}\to \Phi.$ Since $\Phi\in E,$ we have shown that $E$ is complete.

(\ref{I:sequences and aljsakljal}):
Let $\mathcal{Y}=\{(\phi_{k})_{k\to\omega}:(\phi_{k})_{k}\in \prod_{k}X^{d_{k}}\}.$
By (\ref{I:internal sets}), we know that $\mathcal{Y}$ is closed in $\Meas(Z_{\omega},X),$ so it suffices to show that $\mathcal{Y}$ is dense in $\Meas(Z_{\omega},X).$
To prove this, fix a $\Psi\in \Meas(Z_{\omega},X)$ and an $\varpesilon>0.$ Let $(x_{n})_{n}$ be a dense sequence in $X.$ For each $n\in \N,$ set
\[B_{n}=B(x_{n},\varepsilon)\cap \bigcap_{l=1}^{n}B(x_{l},\varepsilon)^{c}.\]
For each $n\in \N,$ choose a sequence $(E_{n,k})_{k}$ with $E_{n,k}\subseteq\{1,\dots,d_{k}\}$ so that
\[u_{\omega}\left(\Psi^{-1}(B_{n})\Delta \prod_{k\to\omega}E_{n,k}\right)=0.\]
By replacing $E_{n,k}$ with $E_{n,k}\cap \left(\bigcap_{l=1}^{n}E_{l,k}^{c}\right),$ we may (and will) assume that for every $k\in \N,$ the family $(E_{n,k})_{n}$ is disjoint. Fix an $\N$ large enough so that
\[u_{\omega}\left(\bigcup_{n=N+1}^{\infty}\Psi^{-1}(B_{n})\right)< \varepsilon/2.\]
We may then choose a $B\in \omega$ so that for all $k\in B$ we have
\[u_{d_{k}}\left(\bigcup_{n=1}^{N}E_{n,k}\right)>1-\varepsilon/2.\]
For each $k\in B,$ define $\phi_{k}\colon \{1,\dots,d_{k}\}\to X$ by $\phi_{k}\big|_{E_{l,k}}=x_{l}$ for all $l=1,\dots,N$ and $\phi_{k}\big|_{\bigcap_{l=1}^{N}E_{l,k}^{c}}=x_{1}.$ Set $\Phi=(\phi_{k})_{k\to\omega}.$ For every $x\in B_{n}\cap \prod_{k\to\omega}E_{n,k},$ we have that $\Phi(z)\in B_{n}$ and $\Psi(z)=x_{n}$ and so $\rho(\Phi(z),\Psi(z))<\varepsilon.$  Hence our choice of $E_{n,k},B_{n}$ imply that $u_{\omega}(\{z:\rho(\Phi(z),\Psi(z))>\varepsilon\})\leq \varpesilon/2<\varpesilon.$ We thus have that $\rho_{m}(\Phi,\Psi)\leq (1+M)\varepsilon.$ Since $\Psi\in \mathcal{Y},$ we clearly have that $\mathcal{Y}$ is dense.

\end{proof}

Suppose $(d_{k})_{k},\omega$ are as in Proposition \ref{P:loeb basics yo}, given a sequence $(E_{k})_{k}$ with $E_{k}\subseteq X^{d_{k}},$ we let $\prod_{k\to\omega}E_{k}$ be the subset of $\Meas(Z_{\omega},X)$ defined by
\[\prod_{k\to\omega}E_{k}=\left\{(\phi_{k})_{k\to\omega}:(\phi_{k})_{k}\in \prod_{k}E_{k}\right\}.\]
We call set of this form \emph{internal} subsets of $\Meas(Z_{\omega},X).$ We have the following generalization of Proposition \ref{P:loeb basics yo} (\ref{I:internal sets}).

\begin{prop}\label{PA:countable internal sets}
Let $(d_{k})_{k}$ be a sequence of natural numbers, and $\omega$ be a free ultrafilter on the natural numbers. Suppose that $E\subseteq\Meas(Z_{\omega},X)$ is a countable intersection of internal sets. Then $f_{*}(E)$ is a closed subset of $\Meas(Z_{\omega},Y).$

\end{prop}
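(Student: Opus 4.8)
The plan is to prove this by a diagonalization across the two indices $k$ (the coordinate of the sofic approximation) and $n$ (an approximating index for a convergent sequence), in the spirit of Theorem~\ref{T:topology is helpful} and Proposition~\ref{P:loeb basics yo}. Throughout, $f\colon X\to Y$ is continuous with $X$ compact metrizable and $Y$ Polish; fix compatible metrics $\rho_X$ on $X$ and $\rho_Y$ on $Y$, the latter bounded by $1$. Since $f(X)$ is a compact (hence closed) subset of $Y$, every element of $f_{*}(E)$ lies in $\Meas(Z_\omega,f(X))$, and all ultralimits we form take values in the compact set $f(X)$, so the maps $(f\circ\phi_k)_{k\to\omega}$ are well defined. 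I would also record a reduction: writing $E=\bigcap_n I_n$ with $I_n=\prod_{k\to\omega}E_k^{(n)}$ internal, replace $E_k^{(n)}$ by $\tilde E_k^{(n)}=\bigcap_{l\le n}E_k^{(l)}$. A short lemma — that a finite intersection of internal sets is internal, proved by the usual ``correct on an $\omega$-large, asymptotically full set of coordinates'' argument — gives $\prod_{k\to\omega}\tilde E_k^{(n)}=\bigcap_{l\le n}I_l$, so $E=\bigcap_n\prod_{k\to\omega}\tilde E_k^{(n)}$ with the $\tilde E_k^{(n)}$ decreasing in $n$. Thus we may assume the $I_n$ are decreasing with $E_k^{(n+1)}\subseteq E_k^{(n)}$.

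For the main argument, let $\Psi_n=f_{*}\Phi_n\in f_{*}(E)$ with $\Psi_n\to\Psi$ in $\Meas(Z_\omega,Y)$; I want $\Phi\in E$ with $f_{*}\Phi=\Psi$. Passing to a subsequence, assume $\rho_{Y,m}(\Psi_n,\Psi_{n+1})<4^{-n}$. Because $\Phi_n\in E\subseteq I_n$, choose a representative $(\phi_{n,k})_k\in\prod_k E_k^{(n)}$ with $\Phi_n=(\phi_{n,k})_{k\to\omega}$, so that $\Psi_n=(f\circ\phi_{n,k})_{k\to\omega}$. The integral formula of Proposition~\ref{P:basics compact Loeb} (applied to the bounded function $z\mapsto\rho_Y(\Psi_n(z),\Psi_{n+1}(z))$, which is represented by $j\mapsto\rho_Y(f(\phi_{n,k}(j)),f(\phi_{n+1,k}(j)))$) identifies $\rho_{Y,m}(\Psi_n,\Psi_{n+1})$ with $\lim_{k\to\omega}\tfrac1{d_k}\sum_j\rho_Y(f(\phi_{n,k}(j)),f(\phi_{n+1,k}(j)))$. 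Hence I can pick a decreasing sequence $B_n\in\omega$ with $B_n\cap\{1,\dots,n\}=\varnothing$ such that for all $k\in B_n$,
\[
\frac1{d_k}\sum_{j=1}^{d_k}\rho_Y\big(f(\phi_{n,k}(j)),f(\phi_{n+1,k}(j))\big)<4^{-n}.
\]
Set $B_0=\N\setminus B_1$, let $n(k)$ be determined by $k\in B_{n(k)}\setminus B_{n(k)+1}$, and define the diagonal $\phi_k=\phi_{n(k),k}\in E_k^{(n(k))}$ and $\Phi=(\phi_k)_{k\to\omega}$.

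Finally I would verify the two required properties. Since the $B_n$ are decreasing with $B_n\cap\{1,\dots,n\}=\varnothing$, for each $m$ the set $\{k:n(k)\ge m\}\supseteq B_m$ lies in $\omega$, so $\phi_k\in E_k^{(n(k))}\subseteq E_k^{(m)}$ for $\omega$-almost every $k$; therefore $\Phi\in\prod_{k\to\omega}E_k^{(m)}$ for every $m$, i.e. $\Phi\in E$. For $f_{*}\Phi=\Psi$ it suffices to show $\rho_{Y,m}((f\circ\phi_k)_{k\to\omega},\Psi_n)\to0$. Using the integral formula again together with a telescoping estimate, for $k\in B_n$ (so $k\in B_l$ for all $l\le n(k)$) we get
\[
\frac1{d_k}\sum_{j=1}^{d_k}\rho_Y\big(f(\phi_k(j)),f(\phi_{n,k}(j))\big)\le\sum_{l=n}^{n(k)-1}\frac1{d_k}\sum_{j=1}^{d_k}\rho_Y\big(f(\phi_{l,k}(j)),f(\phi_{l+1,k}(j))\big)<\sum_{l=n}^{\infty}4^{-l}=\tfrac{4}{3}4^{-n}.
\]
Taking $\lim_{k\to\omega}$ and using $B_n\in\omega$ gives $\rho_{Y,m}((f\circ\phi_k)_{k\to\omega},\Psi_n)\le\tfrac43 4^{-n}\to0$, whence $(f\circ\phi_k)_{k\to\omega}=\lim_n\Psi_n=\Psi$. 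Thus $\Psi=f_{*}\Phi\in f_{*}(E)$, and $f_{*}(E)$ is closed.

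The main obstacle is the simultaneous control across both indices: one must produce a single genuine sequence $\phi_k\in X^{d_k}$ whose diagonal class lands in \emph{every} internal set $I_m$ (requiring $n(k)\to\infty$ along $\omega$) while its image $f\circ\phi_k$ stays measure-close to the images of the $\Phi_n$. The device that makes this work is converting the measure-distances $\rho_{Y,m}$ into ultralimits of coordinate averages via Proposition~\ref{P:basics compact Loeb}, which turns the global convergence $\Psi_n\to\Psi$ into the per-$k$ telescoping bound above; the routine but necessary reduction to decreasing internal sets is what lets the single telescoping sum absorb all the approximation errors.
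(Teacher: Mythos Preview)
Your diagonalization strategy is sound and mirrors the paper's, but the reduction step contains a real gap. The claimed lemma --- that $\prod_{k\to\omega}\tilde E_k^{(n)}=\bigcap_{l\le n}I_l$ when $\tilde E_k^{(n)}=\bigcap_{l\le n}E_k^{(l)}$ --- is false. The inclusion $\subseteq$ is clear, but the reverse can fail: take $X=[0,1]$, $d_k\to\infty$, $E_k^{(1)}=\{\phi\in X^{d_k}:\phi(1)=0\}$ and $E_k^{(2)}=\{\phi\in X^{d_k}:\phi(1)=1\}$. Then $E_k^{(1)}\cap E_k^{(2)}=\varnothing$ for every $k$, yet both $I_1$ and $I_2$ equal all of $\Meas(Z_\omega,X)$, since modifying a representative at the single coordinate $j=1$ changes the $\rho_1$-distance by at most $\mathrm{diam}(X)/d_k\to 0$ and is therefore invisible after the a.e.\ identification. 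The underlying point is that ``internal'' here means the \emph{image} of a genuine ultraproduct-internal set under the quotient to $\Meas(Z_\omega,X)$, and images do not commute with intersection; the ``$\omega$-large set of coordinates'' heuristic is valid in the ultraproduct itself, not in its a.e.\ quotient.

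This breaks your proof at the step ``choose a representative $(\phi_{n,k})_k\in\prod_k E_k^{(n)}$'': after the reduction you would need $\Phi_n\in\prod_{k\to\omega}\tilde E_k^{(n)}$, which may simply be false. The paper avoids any such reduction. Instead, for each $\Phi_n\in E$ and each $l$ it records a \emph{separate} representative $(\phi_{n,l,k})_k$ with $\phi_{n,l,k}\in E_k^{(l)}$, builds the diagonal $\Phi=(\phi_{n(k),1,k})_{k\to\omega}$ from the $l=1$ representatives, and then shows that for every fixed $l$ one also has $\Phi=(\phi_{n(k),l,k})_{k\to\omega}$ (the different representatives of the same $\Phi_n$ are $\rho_1$-close along $\omega$, and the sets $B_n$ are chosen to record this closeness for all $l,s\le n$). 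This gives $\Phi\in I_l$ for every $l$. Your telescoping argument for $f_*\Phi=\Psi$ is correct and is essentially what the paper does for that part; only the membership $\Phi\in E$ requires the extra layer of bookkeeping.
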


\begin{proof}
Write $E=\bigcap_{l=1}^{\infty}E_{l}$ where $E_{l}$ is internal. For each $l\in \N,$ write $E_{l}=\prod_{k\to\omega}E_{l,k}$ where $E_{l,k}$ are subsets of $X^{d_{k}}.$ Suppose $y^{(n)}\in f_{*}(E)$ and $y^{(n)}\to y.$ Without loss of generality we may, and will, assume that $u_{\omega}(\{z:\rho(y^{(n)},y)<2^{-n}\})>1-2^{-n}.$  For each $n\in \N,$ write $y^{(n)}=f_{*}(x^{(n)})$ for $x^{(n)}\in E.$ For each $n,l\in \N,$ we may then write $x^{(n)}=(x^{(n)}_{l,k})_{k\to\omega}$ for some $x^{(n)}_{l,k}\in E_{l,k}.$ Write $y=(y_{k})_{k\to\omega},$ with $y_{k}\in Y^{d_{k}}.$

We may now choose a decreasing sequence $(B_{n})_{n}$ of subsets of $\N$ so that:
\begin{itemize}
\item $B_{n}\in\omega$ for all $n\in \N,$
\item $B_{n}\cap\{1,\cdots,n\}=\varnothing,$
\item for all $k\in B_{n},$ and all $1\leq l,s\leq n,$ we have $u_{d_{k}}(\{j:\rho(x^{(s)}_{1,k}(j),x^{(s)}_{l,k})<2^{-n}\})>1-2^{-n},$
\item for all $k\in B_{n},$ we have that $u_{d_{k}}(\{j:\rho(f(x^{(n)}_{1,k}),y_{k})<2^{-n}\})>1-2^{-n}.$
\end{itemize}

For $k\in B_{1},$ let $n(k)$ be such that $k\in B_{n}\cap B_{n+1}^{c}.$ Now set $x^{(\infty)}=(x^{(n(k))}_{1,k})_{k\to\omega}.$ It is straightforward to check that for all $l\in \N,$ we have that $x^{(\infty)}=(x^{(n(k))}_{l,k})_{k\to\omega},$ so $x^{(\infty)}\in \bigcap_{l=1}^{\infty}E_{l}.$ By construction, we have that $f_{*}(x^{(\infty)})=y,$ so $y\in f_{*}(E).$ Thus we see that $f_{*}(E)$ is closed.

\end{proof}

\begin{prop}\label{P:computing integrals in ultraproducts}
Let $(d_{k})_{k}$ be a sequence of natural numbers, and $\omega$ a free ultrafilter on the natural numbers. Suppose $R\in [0,\infty)$ and $f\in L^{\infty}(Z_{\omega},u_{\omega})$ and $(f_{k})_{k}\in \prod_{k}\ell^{\infty}(d_{k})$ has $\|f_{k}\|_{\infty}\leq R$ for all $k$ and $f=(f_{k})_{k\to\omega}$ almost everywhere. Then
\begin{enumerate}[(a)]
\item \label{I:basic integral computation}
\[\int f(z)\,du_{\omega}(z)=\lim_{k\to\omega}\frac{1}{d_{k}}\sum_{j=1}^{d_{k}}f_{k}(j).\]

\item \label{I:basic pushforward fact}
We have that $f_{*}(u_{\omega})=\lim_{k\to\omega}(f_{k})_{*}(u_{\omega})$ where the limit is taken in the weak$^{*}$ topology on probability measures on $[-R,R].$

\end{enumerate}

\end{prop}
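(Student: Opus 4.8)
The statement to prove is Proposition \ref{P:computing integrals in ultraproducts}, which asserts two facts about integrals and pushforwards of bounded functions represented as ultralimits on the Loeb measure space.

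The plan is to reduce everything to the single already-available fact in Proposition \ref{P:basics compact Loeb} (\ref{I:borelmaps and shiz}) (the item labeled ``If $R\in[0,\infty)$ and $f\in L^{\infty}(Z_{\omega},u_{\omega})$ \dots'' giving $\int f\,du_{\omega}=\lim_{k\to\omega}\frac{1}{d_{k}}\sum_{j}f_{k}(j)$), which is exactly part (\ref{I:basic integral computation}). So part (\ref{I:basic integral computation}) requires no work beyond citing that item of Proposition \ref{P:basics compact Loeb}; I would note that the hypotheses match verbatim. The content is therefore concentrated in deducing part (\ref{I:basic pushforward fact}) from part (\ref{I:basic integral computation}).

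For part (\ref{I:basic pushforward fact}), first I would observe that since $\|f_k\|_\infty\le R$, each $(f_k)_*(u_{d_k})$ is a probability measure on the compact interval $[-R,R]$, and likewise $f$ takes values in $[-R,R]$ almost everywhere, so $f_*(u_\omega)\in\Prob([-R,R])$; thus the ultralimit $\lim_{k\to\omega}(f_k)_*(u_{d_k})$ exists in the (compact, metrizable) weak$^*$ topology on $\Prob([-R,R])$ and all objects in sight are genuine probability measures. Since two Borel probability measures on $[-R,R]$ agree once they agree against every $\varphi\in C([-R,R])$, it suffices to show $\int_{[-R,R]}\varphi\,d(f_*(u_\omega))=\lim_{k\to\omega}\int_{[-R,R]}\varphi\,d((f_k)_*(u_{d_k}))$ for each continuous $\varphi$. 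By the change-of-variables formula for pushforwards, the left side equals $\int_{Z_\omega}\varphi(f(z))\,du_\omega(z)$ and the $k$-th term on the right equals $\frac{1}{d_k}\sum_{j=1}^{d_k}\varphi(f_k(j))$. Now I would apply part (\ref{I:basic integral computation}) to the function $g:=\varphi\circ f=(\varphi\circ f_k)_{k\to\omega}$: here $\varphi$ is continuous, hence Borel, so $\varphi\circ f\in L^\infty(Z_\omega,u_\omega)$ with bound $\|g_k\|_\infty\le\|\varphi\|_\infty$, and $g=(\varphi\circ f_k)_{k\to\omega}$ almost everywhere because composing the ultralimit representation with the Borel (indeed continuous) map $\varphi$ commutes, i.e. $\varphi_*\big((f_k)_{k\to\omega}\big)=(\varphi\circ f_k)_{k\to\omega}$ as recorded in the discussion preceding Proposition \ref{P:basics compact Loeb}. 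Then part (\ref{I:basic integral computation}) gives exactly $\int_{Z_\omega}\varphi(f(z))\,du_\omega(z)=\lim_{k\to\omega}\frac{1}{d_k}\sum_{j=1}^{d_k}\varphi(f_k(j))$, which is the desired equality of the two sides.

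I do not anticipate a serious obstacle here; the only point requiring mild care is the verification that $\varphi\circ f=(\varphi\circ f_k)_{k\to\omega}$ almost everywhere with the correct uniform bound, so that part (\ref{I:basic integral computation}) is applicable to $g=\varphi\circ f$ rather than to $f$ itself. Once the continuity of $\varphi$ is used both to keep $g$ in $L^\infty$ and to pass the composition through the ultralimit, the argument is a one-line application of weak$^*$ density of continuous functions and the already-proven integral formula. I would close by remarking that the restriction to real-valued $f$ and to $\Prob([-R,R])$ is the natural setting of the statement, and the complex case (if ever needed) follows by applying the real case to real and imaginary parts.
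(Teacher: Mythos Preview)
Your argument for part (\ref{I:basic pushforward fact}) is correct and is essentially the same as the paper's: reduce to testing against continuous $\varphi$, rewrite both sides via change of variables, observe $\varphi\circ f=(\varphi\circ f_k)_{k\to\omega}$, and apply part (\ref{I:basic integral computation}).

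The gap is in part (\ref{I:basic integral computation}). You propose to deduce it by citing the integral-formula item of Proposition \ref{P:basics compact Loeb}. But in this paper, Proposition \ref{P:basics compact Loeb} is only \emph{stated} in Section \ref{S:background setup}; its proof is explicitly deferred to Appendix \ref{A:Loeb Measures}, and Proposition \ref{P:computing integrals in ultraproducts} \emph{is} the place in the appendix where that item gets proved. So your citation is circular: you are invoking the very statement you are being asked to establish.

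The paper's actual proof of (\ref{I:basic integral computation}) proceeds by approximation. One first checks the formula when $f=1_E$ for an internal set $E=\prod_{k\to\omega}E_k$, where it is immediate from the definition of the Loeb measure. For general bounded $f$, one approximates in $L^\infty$ by a simple function $g=\sum c_j 1_{E_j}$ with each $E_j$ internal (using that every measurable set equals an internal set up to a null set), writes $g=(g_k)_{k\to\omega}$ with $g_k=\sum c_j 1_{E_{j,k}}$, and shows that $\lim_{k\to\omega}\frac{1}{d_k}\sum_j|f_k(j)-g_k(j)|\le\varepsilon$ using the uniform bound $R$ and the fact that $u_{d_k}(\{j:|f_k(j)-g_k(j)|\ge\varepsilon\})\to 0$. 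This yields $\bigl|\int f\,du_\omega-\lim_{k\to\omega}\frac{1}{d_k}\sum_j f_k(j)\bigr|\le 2\varepsilon$, and letting $\varepsilon\to 0$ finishes. This is the missing content you need to supply before your (correct) deduction of (\ref{I:basic pushforward fact}) can proceed.
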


\begin{proof}

(\ref{I:basic integral computation}): This is tautological if $f=1_{E},$ and $1_{E}=(1_{E_{k}})_{k\to\omega}$ almost everywhere for a sequence $(E_{k})_{k}$ of subsets of $\{1,\dots,d_{k}\}.$ Now suppose that $f,(f_{k})_{k}$ are as in the statement of the proposition. Given $\varepsilon>0,$ we may find a measurable simple function $g$ so that $\|f-g\|_{\infty}<\varepsilon.$ Since every measurable subset of $Z_{\omega}$ is almost everywhere equal to an internal set, we may assume that $g=\sum_{j=1}^{n}c_{j}1_{E_{j}},$ where $E_{j}$ is an internal subset of $Z_{\omega}.$ We may thus write $E_{j}=(E_{jk})_{k\to\omega}$ for a sequence $(E_{jk})_{k}$ where $E_{jk}\subseteq \{1,\dots,d_{k}\}.$ Let $g_{k}=\sum_{j=1}^{n}c_{j}1_{E_{jk}}.$

From the case of internal sets, we know that
\[\lim_{k\to\omega}\frac{1}{d_{k}}\sum_{j=1}^{d_{k}}g_{k}(j)=\int g(z)\,du_{\omega}(z).\]
Let $F_{k}=\{1\leq j\leq d_{k}:|f_{k}(j)-g_{k}(j)|<\varepsilon\}.$ By definition of the Loeb measure, we have that
\[\lim_{k\to\omega}u_{d_{k}}(F_{k})\leq u_{\omega}(\{z:|f(z)-g(z)|\leq\varepsilon\})=1.\]
From this, and the fact that we have a uniform bound on the $\ell^{\infty}$-norms of $f_{k},g_{k},$ it follows easily that
\[\lim_{k\to\omega}\frac{1}{d_{k}}\sum_{j=1}^{d_{k}}|f_{k}(j)-g_{k}(j)|\leq \varepsilon.\]
Hence we have that
\[\left|\int f(z)\,du_{\omega}(z)-\lim_{k\to\omega}\frac{1}{d_{k}}\sum_{j=1}^{d_{k}}f_{k}(j)\right|\leq \varepsilon+\|f-g\|_{1}<2\varepsilon.\]
Since $\varepsilon$ is arbitrary, this completes  the proof.

(\ref{I:basic pushforward fact}): Fix a continuous function $g\colon D_{R}\to \C,$ where $D_{R}=\{z\in \C:|z|\leq R\}.$ Chasing the definitions shows that $g\circ f=g_{*}(f)=g_{*}((f_{k})_{k\to\omega})=(g\circ f_{k})_{k\to\omega}$. Thus by part (\ref{I:basic integral computation}),
\[\int g\,df_{*}(u_{\omega})=\int g\circ f\,du_{\omega}=\lim_{k\to\omega}\frac{1}{d_{k}}\sum_{j=1}^{d_{k}}g(f_{k}(j))=\lim_{k\to\omega}\int g\,d(f_{k})_{*}(u_{d_{k}}).\]
By definition, this means that $\lim_{k\to\omega}(f_{k})_{*}(u_{d_{k}})=f_{*}(u_{\omega}).$

\end{proof}

\section{Application of the methods to local and empirical convergence}\label{A:empirical limit}

In this section, we consider ``generalized local and empirical limits" as an analogue of our space of ``generalized local weak$^{*}$-limits." Recall that if $G$ is a countable, discrete, sofic group with sofic approximation $\sigma_{k}\colon G\to S_{d_{k}},$ and $X$ is a compact, metrizable space with $G\actson X$ by homeomorphisms, and $\mu\in \Prob_{G}(X),$ then a sequence of measures $(\mu_{k})_{k}\in \prod_{k}X^{d_{k}}$ locally and empirically converges to $\mu$ if it is asymptotically supported on measure-theoretic microstates for $G\actson (X,\mu)$ as $k\to\infty,$ and if $\mu_{k}\to^{lw^{*}}\mu.$ It is obvious how to modify this definition to say that $\mu_{k}$ locally and empirically converges to $\mu$ as $k\to\omega$ for a free ultrafilter $\omega$ on the natural numbers. From here it is obvious how to modify our definition of a space of ``generalized local and empirical limits."

\begin{defn}
Let $G$ be a countable, discrete, sofic group and $\sigma_{k}\colon G\to S_{d_{k}}$ a sofic approximation. Let $X$ be a compact, metrizable space with $G\actson X$ by homeomorphisms, and fix $\mu\in \Prob_{G}(X).$  Fix a free ultrafilter on the natural numbers. We let $\mathcal{LE}_{\omega}(G\actson (X,\mu))$ be the set of $\mathcal{E}((\mu_{k})_{k})$ where $\mu_{k}$ is asymptotically supported on measure-theoretic microstates for $G\actson (X,\mu)$ as $k\to\omega.$

\end{defn}

The following may be argued exactly as in Theorem \ref{T:topology is helpful}.
\begin{lem}\label{L:le limits appendix}
Let $G$ be a countable, discrete, sofic group with sofic approximation $\sigma_{k}\colon G\to S_{d_{k}}.$ Let $X$ be compact, metrizable space with $G\actson X$ by homeomorphisms, and fix $\mu\in \Prob_{G}(X).$ Then $\mathcal{LE}_{\omega}(G\actson X)$ is a closed subset of $\Meas(Z_{\omega},\Prob(X))$ for every free ultrafilter $\omega$ on the natural numbers. In fact, $\mathcal{LE}_{\omega}(G\actson X)$ is a countable intersection of internal sets.

\end{lem}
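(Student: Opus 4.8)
The plan is to imitate the proof of Theorem \ref{T:topology is helpful} essentially verbatim, augmenting the defining internal sets so that they also record the empirical (pushforward) condition built into the notion of a measure-theoretic microstate. First I would fix a compatible metric $\rho$ on $X$ and a metric $D$ on $\Prob(X)$ compatible with the weak$^{*}$-topology (such a $D$ exists since $\Prob(X)$ is compact metrizable), choose an increasing sequence of finite sets $F_{n}\subseteq G$ with $\bigcup_{n}F_{n}=G$, and a decreasing sequence $\delta_{n}$ of positive reals with $\delta_{n}\to 0$. For each $n,k$ I set
\[
C_{n,k}=\left\{\nu\in\Prob(X^{d_{k}}): \nu(\Map(\rho,F_{n},\delta_{n},\sigma_{k}))\geq 1-\delta_{n}\text{ and }\nu(\{\phi: D(\phi_{*}(u_{d_{k}}),\mu)<\delta_{n}\})\geq 1-\delta_{n}\right\}.
\]
The first clause encodes asymptotic support on topological microstates exactly as in Theorem \ref{T:topology is helpful}, while the second encodes the condition $\lim_{k\to\omega}\mu_{k}(\{\phi:\phi_{*}(u_{d_{k}})\in\mathcal{O}\})=1$ appearing in the definition of a measure microstate. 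The central claim to establish is
\[
\mathcal{LE}_{\omega}(G\actson (X,\mu))=\bigcap_{n}\prod_{k\to\omega}\mathcal{E}^{(k)}(C_{n,k}).
\]
Once this is known, each factor $\prod_{k\to\omega}\mathcal{E}^{(k)}(C_{n,k})$ is internal, hence closed by Proposition \ref{P:basics compact Loeb}(\ref{I:some closed set etc Loeb}), and the right-hand side exhibits $\mathcal{LE}_{\omega}(G\actson (X,\mu))$ as a countable intersection of internal sets, which is in particular closed.

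The forward inclusion is immediate from the definitions: if $\nu=\mathcal{E}((\mu_{k})_{k})$ with $(\mu_{k})_{k}$ asymptotically supported on measure microstates as $k\to\omega$, then for each fixed $n$ both clauses defining $C_{n,k}$ hold for $\omega$-almost every $k$, so $\mu_{k}\in C_{n,k}$ on a set in $\omega$ and $\nu$ lies in $\prod_{k\to\omega}\mathcal{E}^{(k)}(C_{n,k})$. For the reverse inclusion I would run the same diagonal/splicing argument used in Theorem \ref{T:topology is helpful}: given $\nu$ in the intersection, write $\nu=(\widetilde{\nu}_{k})_{k\to\omega}$, fix a decreasing basis $\mathcal{O}_{n}$ of weak$^{*}$-neighborhoods of $0$ in $M(X)$ with $\left(\bigcap_{n}\mathcal{O}_{n}\right)\cap\Ball(M(X))=\{0\}$, and for each $n$ select a representative in $\prod_{k}C_{n,k}$ whose marginals agree with $\widetilde{\nu}_{k}$ up to $\mathcal{O}_{n}$ on a set of coordinates of measure at least $1-2^{-n}$, for $k$ in some $B_{n}\in\omega$. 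Splicing these along the decreasing family $B_{n}$ (setting $\mu_{k}$ equal to the level-$n(k)$ representative for $k\in B_{n(k)}\setminus B_{n(k)+1}$, with $B_{0}=\N\setminus B_{1}$) produces a single sequence $(\mu_{k})_{k}$ with $\mathcal{E}((\mu_{k})_{k})=\nu$.

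The only point requiring genuine (though routine) verification beyond Theorem \ref{T:topology is helpful} is that the spliced sequence $(\mu_{k})_{k}$ satisfies the empirical clause as well, i.e. that it is asymptotically supported on measure microstates and not merely on topological ones. This holds because for $k\in B_{n}$ the chosen $\mu_{k}$ lies in $C_{n(k),k}$ with $n(k)\geq n$, so given any weak$^{*}$-neighborhood $\mathcal{O}$ of $\mu$ and any $n_{0}$ with $\{\sigma: D(\sigma,\mu)<\delta_{n_{0}}\}\subseteq\mathcal{O}$, one has $\mu_{k}(\{\phi:\phi_{*}(u_{d_{k}})\in\mathcal{O}\})\geq 1-\delta_{n(k)}\geq 1-\delta_{n_{0}}$ for all $k\in B_{n_{0}}$; since $B_{n_{0}}\in\omega$ and $\delta_{n_{0}}$ may be taken arbitrarily small, the empirical limit along $\omega$ is $1$. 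Identical bookkeeping applied to the $\Map$-clause yields asymptotic support on topological microstates, so $(\mu_{k})_{k}$ is asymptotically supported on measure microstates and $\nu\in\mathcal{LE}_{\omega}(G\actson (X,\mu))$. I expect the splicing bookkeeping — coordinating a single decreasing family $B_{n}\in\omega$ that simultaneously controls the marginal-matching, the $\Map$-clause, and the empirical clause — to be the only mildly delicate step, and it is essentially copied from Theorem \ref{T:topology is helpful}.
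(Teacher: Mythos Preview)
Your proposal is correct and follows precisely the approach the paper intends: the paper merely states that the lemma ``may be argued exactly as in Theorem \ref{T:topology is helpful},'' and your augmentation of the internal sets $C_{n,k}$ by the empirical clause $\nu(\{\phi:D(\phi_{*}(u_{d_{k}}),\mu)<\delta_{n}\})\geq 1-\delta_{n}$ is exactly the modification that adaptation requires. The splicing argument you outline is the same diagonalization used there, and your verification that the spliced sequence satisfies the empirical condition is the one additional check needed.
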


\begin{thm}\label{T:le limits appendix}
Let $G$ be a countable, discrete, sofic group with sofic approximation $\sigma_{k}\colon G\to S_{d_{k}}.$ Let $X$ be compact, metrizable space with $G\actson X$ by homeomorphisms, and fix $\mu\in \Prob_{G}(X).$  Suppose that $G'_{\omega}\actson (Z_{\omega},u_{\omega})$ is ergodic. If $G\actson (X,\mu)$ is sofic with respect to $(\sigma_{k})_{k},\omega,$ then there is a sequence of measures $\mu_{k}$ so that $\mu_{k}\to^{le}_{k\to\omega}\mu.$

\end{thm}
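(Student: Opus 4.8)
<br />

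The plan is to prove Theorem \ref{T:le limits appendix} by combining the existence of a \emph{single} measure microstate (which is exactly what soficity of $G\actson(X,\mu)$ with respect to $(\sigma_k)_k,\omega$ provides) with the ergodicity of the centralizer action $G'_\omega\actson(Z_\omega,u_\omega)$, in order to manufacture a whole sequence of measures that is asymptotically supported on measure-theoretic microstates and locally weak$^*$-converges to $\mu$. First I would unwind the definition of soficity: by hypothesis there is a measurable, almost surely $G$-equivariant map $\Theta\colon Z_\omega\to X$ with $\Theta_*(u_\omega)=\mu$, and by Proposition \ref{P:characterize top micro} (\ref{I:get these microstates}) this is realized by a sequence $(\theta_k)_k$ of topological microstates with $\Theta=(\theta_k)_{k\to\omega}$ a.e. The goal is to produce $(\mu_k)_k\in\prod_k\Prob(X^{d_k})$ with $\mu_k\to^{le}_{k\to\omega}\mu$, which by the discussion preceding the theorem means $\mu_k$ is asymptotically supported on measure-theoretic microstates as $k\to\omega$ and $\mu_k\to^{lw^*}\mu$.

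The key idea is that a single measure microstate $\Theta$ already gives $\Theta_*(u_\omega)=\mu$ globally, but \emph{locally} (i.e. marginal-by-marginal in the $\mathcal{E}$ sense) the distribution $\mathcal{E}((\delta_{\theta_k})_k)(z)=\delta_{\Theta(z)}$ is a point mass, not $\mu$. To fix the local marginals I would use the centralizer: the group $G'_\omega$ preserves the Loeb space and commutes with the $G$-action, so for $\tau\in G'_\omega$ the translate $\Theta\circ\tau^{-1}$ is again a measure microstate with the same pushforward $\mu$. The plan is to average $\delta_\Theta$ over many centralizer translates, i.e. consider measures of the form $\mu_k=\frac1r\sum_{i=1}^r \delta_{\theta_k\circ\tau_{i,k}^{-1}}$ where $(\tau_{i,k})_k\in\mathcal{G}'$, and show that ergodicity of $G'_\omega\actson Z_\omega$ forces the $j$-th marginal of such an average to approximate $\mu$ for $u_{d_k}$-most $j$. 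Concretely, for a fixed $f\in C(X)$ the function $z\mapsto f(\Theta(z))$ has integral $\int f\,d\mu$; ergodicity of the centralizer (via the mean ergodic theorem / the characterization in Definition \ref{D:ergodic commutant no ultrafilter}) lets one choose finitely many centralizer elements whose averaged translates equidistribute this function, so that the empirical marginal distribution $\mathcal{E}_j(\mu_k)$ is weak$^*$-close to $\mu$ for a set of $j$ of measure close to $1$. This is precisely local weak$^*$-convergence to $\mu$.

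For the second requirement, that $\mu_k$ be asymptotically supported on measure-theoretic microstates, I would note that each $\theta_k\circ\tau_{i,k}^{-1}$ is again a topological microstate (the centralizer commutes with $\sigma_\omega(G)$ up to vanishing Hamming error, so it preserves the defining approximate-equivariance condition), and each has pushforward converging to $\mu$; hence any convex combination of their point masses is supported on topological microstates with correct global pushforward, giving the measure-microstate condition $\lim_{k\to\omega}\mu_k(\{\phi:\phi_*(u_{d_k})\in\mathcal O\})=1$. The honest bookkeeping here — passing from the ultralimit statements to choices of finite $r$, finite $F\subseteq G$, $\delta>0$, and sets $B\in\omega$, then diagonalizing over a countable weak$^*$-neighborhood basis of $\mu$ exactly as in the proof of Theorem \ref{T:topology is helpful} and Proposition \ref{P:what it means to absorb} — is routine but must be done carefully, and Lemma \ref{L:le limits appendix} guarantees the relevant set $\mathcal{LE}_\omega(G\actson X)$ is a countable intersection of internal sets so the diagonal argument closes up.

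The main obstacle I expect is the quantitative deployment of ergodicity of $G'_\omega\actson Z_\omega$ to control the \emph{marginals} rather than the global pushforward. Soficity gives $\Theta_*(u_\omega)=\mu$ for free, but that only says the \emph{average} over $j$ of the marginals is correct; to upgrade to \emph{local} weak$^*$-convergence (most individual marginals close to $\mu$) one genuinely needs the centralizer to act ergodically, so that averaging over centralizer translates both homogenizes the marginals and preserves the microstate property. Making this averaging compatible with the ultraproduct structure — ensuring the chosen centralizer elements $(\tau_{i,k})_k$ lie in $\mathcal{G}'$ and that the averaged measures remain asymptotically supported on topological microstates after the averaging — is the delicate point, and is exactly where the hypothesis $G'_\omega\actson(Z_\omega,u_\omega)$ ergodic is indispensable.
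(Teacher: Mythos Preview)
Your approach is correct and will work, but the paper executes the argument differently and more abstractly. Rather than starting from a single measure microstate $\Theta$ and explicitly averaging $\delta_\Theta$ over finitely many centralizer translates, the paper works directly with the entire space $\mathcal{LE}_\omega(G\actson(X,\mu))$ (which is nonempty by soficity). For a fixed tuple $f=(f_1,\dots,f_n)\in C(X)^n$ defining the weak$^*$-neighborhood, it pushes $\mathcal{LE}_\omega$ forward under the integration map $I_f\colon\Prob(X)\to\C^n$ and observes (via Lemma~\ref{L:le limits appendix} and Proposition~\ref{PA:countable internal sets}) that the image $K\subseteq L^2(Z_\omega)^{\oplus n}$ is closed and convex. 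The unique element $\lambda\in K$ of minimal $\|\cdot\|_2$-norm is then automatically $G'_\omega$-fixed by uniqueness (since $G'_\omega$ acts isometrically and preserves $K$), hence constant by ergodicity; a direct computation using the measure-microstate support condition identifies the constant as $\big(\int f_j\,d\mu\big)_j$. Any preimage $\nu_{\mathcal O}\in\mathcal{LE}_\omega$ of $\lambda$ then has $\nu_{\mathcal O}(z)\in\mathcal O$ a.e., and closedness of $\mathcal{LE}_\omega$ finishes as you indicate.

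The two routes are dual in the Alaoglu--Birkhoff sense: your explicit orbit-averaging constructs approximants to the projection onto $G'_\omega$-invariants, while the paper's minimal-norm argument locates that projection non-constructively. The paper's version avoids all the bookkeeping you flag in your last paragraph (choosing explicit $\tau_{i,k}$, verifying they preserve the microstate property along $\omega$, Chebyshev, and diagonalization over test functions), at the cost of being less explicit about where the measures $\mu_k$ actually come from. Your version has the advantage of making transparent exactly how the single microstate $\Theta$ is used and how the centralizer enters; the paper's version is shorter and sidesteps the need to invoke any ergodic theorem beyond the definition of ergodicity itself.
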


\begin{proof}
Since $\mathcal{LE}_{\omega}(G\actson (X,\mu))$ is closed inside $\Meas(Z_{\omega},\Prob(X)),$ it suffices to prove the following claim:

\emph{Claim. For every open neighborhood $\mathcal{O}$ of $\mu$ in the weak$^{*}$-topology, there is a $\nu_{\mathcal{O}}\in \mathcal{LE}_{\omega}(G\actson (X,\mu))$ so that $\nu_{\mathcal{O}}(z)\in \mathcal{O}$ for almost every $z\in Z_{\omega}.$}

So fix an open neighborhood $\mathcal{O}$ of $\mu.$ We may then find $f_{1},\cdots,f_{n}\in C(X)$ and an $\varepsilon>0$ so that
\[\mathcal{O}\supseteq \bigcap_{j=1}^{n}\left\{\nu\in \Prob(X):\left|\int f_{j}\,d\mu-\int f_{j}\,d\nu\right|<\varepsilon\right\}.\]
For $g\in C(X),$ define $I_{g}\colon \Prob(X)\to \C$ by $I_{g}(\nu)=\int g\,d\nu.$ Let $f=(f_{1},\cdots,f_{n})\in C(X)^{n},$ and define $I_{f}\colon \Prob(X)\to \C^{n}$ by $I_{f}(\nu)=(I_{f_{j}}(\nu))_{j=1}^{n}.$ Let $M=\max_{1\leq j\leq n}\|f_{j}\|,$ and let $D_{M}=\{z\in \C:|z|\leq M\}.$

By Proposition \ref{PA:countable internal sets} and Lemma \ref{L:le limits appendix}, we know that $(I_{f})_{*}(\mathcal{LE}_{\omega}(G\actson (X,\mu)))$ is a closed subset of $\Meas(Z_{\omega},D_{M}^{ n}).$ Let $\iota\colon \Meas(Z_{\omega},D_{M}^{n})\to L^{2}(Z_{\omega})^{\oplus n}$ be the natural inclusion map given by $\iota(k)=(\pi_{j}\circ k)_{j=1}^{n}$ where $ \pi_{j}\colon D_{M}^{n}\to D_{M}$ is the projection onto the $j^{th}$ coordinate. Set $K=\iota((I_{f})_{*}(\mathcal{LE}_{\omega}(G\actson (X,\mu))).$ It is not hard to show that $\iota$ is a homeomorphism onto its image, and that its image is closed. Since $(I_{f})_{*}(\mathcal{LE}_{\omega}(G\actson (X,\mu)))$ is a closed subset of $\Meas(Z_{\omega},D_{M}^{ n}),$ we see that $K$ is a closed subset of $L^{2}(Z_{\omega})^{\oplus n}.$ Since $(I_{f})_{*}$ is affine, we know that $K$ is convex.

So $K$ is a closed, convex subset of $L^{2}(Z_{\omega})^{\oplus n}.$ Thus there is a unique element $\lambda\in (I_{f})_{*}(\mathcal{LE}_{\omega}(G\actson (X,\mu)))$ of minimal $\|\cdot\|_{2}$-norm. Since $\mathcal{LE}_{\omega}(G\actson (X,\mu))$ is clearly $G'$-invariant, we know that $\lambda$ is $G'$-fixed, by uniqueness. By ergodicity, we have that $\lambda\in (\C 1)^{\oplus n}.$ So we may write $\lambda=(\lambda_{j} 1)_{j=1}^{n}$ for some $\lambda_{j}\in \C,1\leq j\leq n.$ Write $\lambda=(I_{f})_{*}(\nu_{\mathcal{O}})$ for some $\nu_{\mathcal{O}}\in \mathcal{LE}_{\omega}(G\actson (X,\mu)).$ Then
\[\lambda_{j}=\ip{\lambda_{j}1,1}=\ip{(I_{f_{j}})_{*}(\nu_{\mathcal{O}}),1}=\int (I_{f_{j}})_{*}(\nu_{\mathcal{O}})\,du_{\omega}(z).\]
Write $\nu_{\mathcal{O}}=\mathcal{E}((\nu_{k})_{k}),$ where $\nu_{k}$ is asymptotically supported on measure-theoretic microstates as $k\to\omega.$ Then:
\[\int (I_{f_{j}})_{*}(\nu_{\mathcal{O}})\,du_{\omega}(z)=\lim_{k\to\omega}\frac{1}{d_{k}}\sum_{l=1}^{d_{k}}\int f_{j}(\phi(l)),d\nu_{k,l}(\phi)=\lim_{k\to\omega}\int \int f_{j}\,d\phi_{*}(u_{d_{k}})\,d\nu_{k}(\phi).\]
Since $\nu_{k}$ is asymptotically supported on topological microstates, the above formula shows that $\int (I_{f_{j}})_{*}(\nu_{\mathcal{O}})\,du_{\omega}(z)=\int f_{j}\,d\mu.$ Thus $\lambda_{j}=\int f_{j},d\mu.$ Since $\lambda_{j}1=I_{f_{j}}(\nu_{\mathcal{O}}),$ this implies that $\int f_{j}\,d\nu_{\mathcal{O}}(z)=\int f_{j}\,d\mu$ for almost every $z\in Z_{\omega}$ and for all $1\leq j\leq n.$ This clearly implies that $\nu_{\mathcal{O}}(z)\in \mathcal{O}$ for almost every $z\in Z_{\omega},$ and this completes the proof.

\end{proof}

%

\begin{thebibliography}{10}

\bibitem{7s12}
M.~Abert, N.~Bergeron, I.~Biringer, T.~Gelander, N.~Nikolov, J.~Raimbault, and
  I.~Samet.
\newblock On the growth of {$L^2$}-invariants for sequences of lattices in
  {L}ie groups.
\newblock {\em Ann. of Math. (2)}, 185(3):711--790, 2017.

\bibitem{AVY14}
M.~Ab\'ert, Y.~Glasner, and B.~Vir\'ag.
\newblock Kesten's theorem for invariant random subgroups.
\newblock {\em Duke Math. J.}, 163(3):465--488, 2014.

\bibitem{AWVertical}
M.~Ab\'{e}rt and B.~Weiss.
\newblock Personal communication.

\bibitem{LW*AMW}
M.~Aizenman and J.~Wehr.
\newblock Rounding of first-order phase transitions in systems with quenched
  disorder.
\newblock {\em Phys. Rev. Lett.}, 62(21):2503--2506, 1989.

\bibitem{AldousLyons}
D.~Aldous and R.~Lyons.
\newblock Processes on unimodular random networks.
\newblock {\em Electron. J. Probab.}, 12:no. 54, 1454--1508, 2007.

\bibitem{AlpeevMaximal}
A.~Alpeev.
\newblock Weak containment and maximal sofic approximations.
\newblock {\em arXiv:1706.01864}.

\bibitem{AustinAdd}
T.~Austin.
\newblock Additivity properties of sofic entropy and measures on model spaces.
\newblock {\em Forum Math. Sigma}, 4:e25, 79, 2016.

\bibitem{BHV}
B.Bekka, P.~D.~L. Harpe, and A.~Valette.
\newblock {\em Kazhdan's Propety (T)}.
\newblock Cambridge University Press, 2008.

\bibitem{IRSGrigor}
M.~G. Benli, R.~Grigorchuk, and T.~Nagnibeda.
\newblock Universal groups of intermediate growth and their invariant random
  subgroups.
\newblock {\em Funct. Anal. Appl.}, 49(3):159--174, 2015.
\newblock Translation of Funktsional. Anal. i Prilozhen. {{\bf{4}}9} (2015),
  no. 3, 1--21.

\bibitem{Berg}
K.~Berg.
\newblock Convolution of invariant measures, maximal entropy.
\newblock {\em Math. Systems Theory}, 3:146--150, 1969.

\bibitem{BergeronGaboriau}
N.~Bergeron and D.~Gaboriau.
\newblock Asymptotique des nombres de {B}etti, invariants {$l^2$} et
  laminations.
\newblock {\em Comment. Math. Helv.}, 79(2):362--395, 2004.

\bibitem{CollinsBordenave}
C.~Bordenave and B.~Collins.
\newblock Eigenvalues of random lifts and polynomial of random permutations
  matrices.
\newblock {\em arXiv:1801.00876}.

\bibitem{Bow}
L.~Bowen.
\newblock Measure conjugacy invariants for actions of countable sofic groups.
\newblock {\em J. Amer. Math. Soc.}, 23(1):217--245, 2010.

\bibitem{Bowenfinvariant}
L.~Bowen.
\newblock A new measure conjugacy invariant for actions of free groups.
\newblock {\em Ann. of Math.}, 171(2):1387--1400, 2010.

\bibitem{BowenEntropy}
L.~Bowen.
\newblock Entropy for expansive algebraic actions of residually finite groups.
\newblock {\em Ergodic Theory Dynam. Systems}, 31(3):703--718, 2011.

\bibitem{BowenGroupoid}
L.~Bowen.
\newblock Entropy theory for sofic groupoids $\textrm{I}$: the foundations.
\newblock {\em J. Anal. Math.}, 124(1):149--233, 2014.

\bibitem{BowenIRSFurst}
L.~Bowen.
\newblock Random walks on random coset spaces with applications to
  {F}urstenberg entropy.
\newblock {\em Invent. Math.}, 196(2):485--510, 2014.

\bibitem{BowenLi}
L.~Bowen and H.Li.
\newblock Harmonic models and spanning forests of residually finite groups.
\newblock {\em J. Funct. Anal.}, 263(7):1769--1808, 2012.

\bibitem{BurtonKechrisMax}
P.~J. Burton and A.~S. Kechris.
\newblock Invariant random subgroups and action versus representation
  maximality.
\newblock {\em Proc. Amer. Math. Soc.}, 145(9):3961--3971, 2017.

\bibitem{CapLup}
V.~Capraro and M.~Lupini.
\newblock {\em Introduction to sofic and hyperlinear groups and {C}onnes'
  embedding conjecture}, volume 2136 of {\em Lecture Notes in Mathematics}.
\newblock Springer, Cham, 2015.
\newblock With an appendix by Vladimir Pestov.

\bibitem{ChungLi}
N.-P. Chung and H.~Li.
\newblock Homoclinic groups, {IE} groups, and expansive algebraic actions.
\newblock {\em Invent. Math.}, 199(3):805--858, 2015.

\bibitem{LWGraphs}
A.~Coja-Oghlan, C.~Efthymiou, and N.~Jaafari.
\newblock Local convergence of random graph colorings.
\newblock {\em Combinatorica}, 38(2):341--380, 2018.

\bibitem{ConwayOT}
J.~Conway.
\newblock {\em A Course in Operator Theory}.
\newblock Graduate Studies in Mathematics. American Mathematical Society,
  Providence, RI, 1991.

\bibitem{Den}
C.~Deninger.
\newblock Fuglede-{K}adison determinants and entropy for actions of discrete
  amenable groups.
\newblock {\em J. Amer. Math. Soc.}, 19(3):737--758, 2006.

\bibitem{DrutuSapir}
C.~Dru\c~tu and M.~Sapir.
\newblock Tree-graded spaces and asymptotic cones of groups.
\newblock {\em Topology}, 44(5):959--1058, 2005.
\newblock With an appendix by Denis Osin and Mark Sapir.

\bibitem{ESZ2}
G.~Elek and E.~Szabo.
\newblock Sofic representations of amenable groups.
\newblock {\em Proceedings of the AMS}, 139(2011):4285--4291.

\bibitem{Folland}
G.~Folland.
\newblock {\em Real Analysis: Modern Techinques and Their Applications}.
\newblock John Wiley \& Sons, Hoboken, NJ, second edition, 1999.

\bibitem{Fried}
J.~Friedman.
\newblock A proof of {A}lon's second eigenvalue conjecture and related
  problems.
\newblock {\em Mem. Amer. Math. Soc.}, 195(910):viii+100, 2008.

\bibitem{GabSew}
D.~Gaboriau and B.~Seward.
\newblock Cost, $\ell^{2}$-{B}etti numbers and the sofic entropy of some
  algebraic actions.
\newblock {\em to appear in J. Anal. Math.}

\bibitem{Me12}
B.~Hayes.
\newblock Local and doubly empirical convergence and the entropy of algebraic
  actions of sofic groups.
\newblock {\em Ergodic Theory Dynam. Systems}.
\newblock to appear.

\bibitem{Me13}
B.~Hayes.
\newblock Relative entropy and the {P}insker product formula for sofic groups.
\newblock {\em arXiv:1605.01747}.

\bibitem{Me5}
B.~Hayes.
\newblock Fuglede--{K}adison determinants and sofic entropy.
\newblock {\em Geom. Funct. Anal.}, 26(2):520--606, 2016.

\bibitem{Me7}
B.~Hayes.
\newblock Independence tuples and {D}eninger's problem.
\newblock {\em Groups Geom. Dyn.}, 11(1):245--289, 2017.

\bibitem{Me9}
B.~Hayes.
\newblock Mixing and spectral gap relative to {P}insker factors for sofic
  groups.
\newblock In {\em Proceedings of the 2014 {M}aui and 2015 {Q}inhuangdao
  conferences in honour of {V}aughan {F}. {R}. {J}ones' 60th birthday},
  volume~46 of {\em Proc. Centre Math. Appl. Austral. Nat. Univ.}, pages
  193--221. Austral. Nat. Univ., Canberra, 2017.

\bibitem{KapovichLeeb}
M.~Kapovich and B.~Leeb.
\newblock On asymptotic cones and quasi-isometry classes of fundamental groups
  of {$3$}-manifolds.
\newblock {\em Geom. Funct. Anal.}, 5(3):582--603, 1995.

\bibitem{KerrPartition}
D.~Kerr.
\newblock Sofic measure entropy via finite partitions.
\newblock {\em Groups Geom. Dyn.}, 7(3):617--632, 2013.

\bibitem{KLi2}
D.~Kerr and H.Li.
\newblock Soficity, amenability, and dynamical entropy.
\newblock {\em Amer. J. Math}, 135(3):721--761, 2013.

\bibitem{KLi}
D.~Kerr and H.~Li.
\newblock Entropy and the variational principle for actions of sofic groups.
\newblock {\em Invent. Math.}, 186(3):501--558, 2011.

\bibitem{KerrLi2}
D.~Kerr and H.~Li.
\newblock Combinatorial independence and sofic entropy.
\newblock {\em Comm. Math. Stat.}, 1(2):213--257, 2014.

\bibitem{LWMEta}
C.~K\"{u}lske.
\newblock Metastates in disordered mean-field models: random field and
  {H}opfield models.
\newblock {\em J. Statist. Phys.}, 88(5-6):1257--1293, 1997.

\bibitem{LiLiang2}
H.~Li and B.Liang.
\newblock Sofic mean length.
\newblock {\em arXiv:1510.07655}.

\bibitem{LiangPeters}
B.~Liang.
\newblock Entropy on modules over the group ring of a sofic group.
\newblock {\em Proceedings of the American Mathematical Society}, 10 2017.

\bibitem{Loeb}
P.~A. Loeb.
\newblock Conversion from nonstandard to standard measure spaces and
  applications in probability theory.
\newblock {\em Trans. Amer. Math. Soc.}, 211:113--122, 1975.

\bibitem{LyonsThom}
R.~Lyons and A.~Thom.
\newblock Invariant coupling of determinantal measures on sofic groups.
\newblock {\em Ergodic Theory Dynam. Systems}, 36(2):574--607, 2016.

\bibitem{LW*Measures}
A.~Montanari, E.~Mossel, and A.~Sly.
\newblock The weak limit of {I}sing models on locally tree-like graphs.
\newblock {\em Probab. Theory Related Fields}, 152(1-2):31--51, 2012.

\bibitem{Newman1996SpatialIA}
C.~Newman and D.~Stein.
\newblock Spatial inhomogeneity and thermodynamic chaos.
\newblock {\em Physical review letters}, 76 25:4821--4824, 1996.

\bibitem{Nica}
A.~Nica.
\newblock Asymptotically free families of random unitaries in symmetric groups.
\newblock {\em Pacific J. Math}, 157:295--310, 1993.

\bibitem{OsinIRS}
D.~Osin.
\newblock Invariant random subgroups of groups acting on hyperbolic spaces.
\newblock {\em Proc. Amer. Math. Soc.}, 145(8):3279--3288, 2017.

\bibitem{Pest}
V.~G. Pestov.
\newblock Hyperlinear and sofic groups: a brief guide.
\newblock {\em Bull. Symbolic Logic}, 14(4):449--480, 2008.

\bibitem{pestov_kwiatkowska_2012}
V.~G. Pestov and A.~Kwiatkowska.
\newblock {\em An introduction to hyperlinear and sofic groups}, page
  145–186.
\newblock London Mathematical Society Lecture Note Series. Cambridge University
  Press, 2012.

\bibitem{PoppArg}
S.~Popa.
\newblock Independence properties in subalgebras of ultraproduct {$\rm II_1$}
  factors.
\newblock {\em J. Funct. Anal.}, 266(9):5818--5846, 2014.

\bibitem{RoeCoarse}
J.~Roe.
\newblock {\em Lectures on coarse geometry}, volume~31 of {\em University
  Lecture Series}.
\newblock American Mathematical Society, Providence, RI, 2003.

\bibitem{Schmidt}
K.~Schmidt.
\newblock {\em Dynamical Systems of Algebraic Origin}, volume 128 of {\em
  Progress in Mathematics}.
\newblock Birkh\"{a}user Verlag, Basel, 1995.

\bibitem{StuckZimmer}
G.~Stuck and R.~J. Zimmer.
\newblock Stabilizers for ergodic actions of higher rank semisimple groups.
\newblock {\em Ann. of Math. (2)}, 139(3):723--747, 1994.

\bibitem{ThomDiophantine}
A.~Thom.
\newblock Sofic groups and {D}iophantine approximation.
\newblock {\em Comm. Pure Appl. Math.}, 61(8):1155--1171, 2008.

\bibitem{RobinAF}
R.~D. Tucker-Drob.
\newblock Mixing actions of countable groups are almost free.
\newblock {\em Proc. Amer. Math. Soc.}, 143(12):5227--5232, 2015.

\bibitem{Vershik12}
A.~M. Vershik.
\newblock Totally nonfree actions and the infinite symmetric group.
\newblock {\em Mosc. Math. J.}, 12(1):193--212, 216, 2012.

\bibitem{Wendel}
J.~G. Wendel.
\newblock Haar measure and the semigroup of measures on a compact group.
\newblock {\em Proc. Amer. Math. Soc.}, 5:923--929, 1954.

\end{thebibliography}

\end{document}